\newtheorem{thm}{Theorem}[section]
\newtheorem{cor}[thm]{Corollary}
\newtheorem{lem}[thm]{Lemma}
\newtheorem{prop}[thm]{Proposition}
\newtheorem{example}[thm]{Example}
\theoremstyle{definition}
\newtheorem{defn}{Definition}[section]
\numberwithin{equation}{section} \theoremstyle{remark}
\newtheorem{rem}{Remark}[section]
\def\<{\langle}
\def\>{\rangle}
\def\wh{\widehat}
\def\ra{\rightarrow}
\def\p{\partial}
\def\a{\alpha}
\def\w{\widetilde}
\def\D{{\cal D}}
\def\O{\Omega}
\def\l{{\lambda}}
\def \sm{\setminus}
\def\CC{{\bf C}}
\def\-{\overline}
\def\ale{\mathrel{\mathop{<}\limits_{\sim}}}
\def\o{\omega}
\def\e{\epsilon}
\def\endpf{\hbox{\vrule height1.5ex width.5em}}
\def\b{\beta}
\def\a{\alpha}
\def\endpf{\hbox{\vrule height1.5ex width.5em}}
\def\CC{\bf C}
\def\M*{\wt{M^*}}
\def\-{\overline}
\def\ale{\mathrel{\mathop{<}\limits_{\sim}}}
\def\ld{\lambda}
\def\O{\Omega}
\def\o{\omega}
\def\D{\Delta}
\def\sm{\setminus}
\def\wt{\widetilde}
\def\ra{\rightarrow}
\def\endpf{\hbox{\vrule height1.5ex width.5em}}
\def\a{\alpha}
\def\D{\Delta}
\def\b{\beta}
\def\a{\alpha}
\def\endpf{\hbox{\vrule height1.5ex width.5em}}
\def\beq{\begin{equation}}
\def\nneq{\end{equation}}
\def\beqn{\begin{eqnarray}}
\def\neqn{\end{eqnarray}}
\def\beqna{\begin{eqnarray*}}
\def\neqna{\end{eqnarray*}}
\def\bedis{\begin{displaymath}}
\def\nedis{\end{displaymath}}
\def\-{\overline}
\begin{document}

\title{\bf   Flattening  of  CR singular points and analyticity of local hull of
holomorphy}
\author{Xiaojun Huang \footnote{
Supported in part by NSF-1101481} and Wanke Yin\footnote{Supported
in part by   ANR-09-BLAN-0422,  NSFC-10901123, FANEDD-201117,
RFDP-20090141120010,
 and NSFC-11271291.}}
\date{}
\maketitle \tableofcontents

\bigskip

\section{Introduction} A primary goal in this paper  is to study the
question that asks  when a real analytic submanifold $M$ in
${\CC}^{n+1}$
bounds a real analytic (up to $M$) Levi-flat hypersurface  $\wh{M}$
near $p\in M$ such that $\wh{M}$  is  foliated by a family of
complex hypersurfaces moving along the normal direction of $M$ at
$p$, and gives the invariant local hull of holomorphy of $M$ near
$p$. This question is equivalent to the holomorphic flattening
problem for $M$ near $p$.

To be more precise, we first discuss some basic holomorphic property
for a real submanifold  in a complex space. For a point $q$ in a
real submanifold $M\subset {\mathbb C}^{n+1}$, there is an immediate
holomorphic invariant, namely, the complex dimension $CR_{M}(q)$ of
the tangent space of type $(1,0)$ at $q$. $CR_{M}(q)$ is an upper
semi-continuous function over $M$. $q$ is called a CR point of $M$
if $CR_M(q')\equiv CR_M(q)$ for all $q'(\approx q)\in M$. Otherwise,
$q$ is called a CR singular point of $M$. When $M$ near $p$ bounds a
Levi-flat hypersurface foliated by a family of complex hypersurfaces
moving along the normal direction of $M$ at $p$, then the tangent
space of $M$ at $p$ is a complex hyperplane. In this case $p$ must
be a CR singular point unless we are in the trivial and
uninteresting situation that $M$ is a complex hypersurface itself.

Investigations for CR manifolds and CR singular manifolds have very
different nature. There is a vast amount of work related to the
study of various problems for CR manifolds, which goes back to the
work of Poincar\'e [Po], Cartan [Cat] and  Chern-Moser [CM]. The
study of submanifolds with CR singular points at least dates back to
the fundamental paper of Bishop [Bis] in 1965. Since then, many
efforts have been paid to understand both the geometric and analytic
structures of such manifolds. Here, we mention the papers by
Kenig-Webster [KW1-2], Moser-Webster [MW], Bedford-Gaveau [BG],
Huang-Krantz [HK], Huang [Hu1], Gong [Gon1-3], Huang-Yin [HY1-2],
 Stolovitch [Sto],
Dolbeault-Tomassini-Zaitsev [DTZ1-2], Ahern-Gong [AG], Coffman
[Cof1-2], Lebl [Le1-2], Burcea [Va1], etc,  and many references
therein.

Let $M\subset {\mathbb C}^{n+1}$ be a codimension two real
submanifold with {\it CR singular points}. Then a simple linear
algebra computation shows that $CR_M(q)=n-1$ when $q$ is a CR point,
and $CR_M(q)=n$ when $q$ is a  CR singular point.
The general holomorphic (or, formal) flattening problem is then to
ask when
$M$ can be  transformed, by   a biholomorphic (formal equivalence,
respectively) mapping, to an open piece of the standard Levi-flat
hyperplane $({\mathbb C}^n\times {\mathbb R}^{1})\times \{0\}\subset
{\mathbb C}^{n+1}$.
A good understanding  to this problem  is crucial for understanding
many  geometric, analytic and dynamic properties of the manifolds.
For instance, by a classical theorem of Cartan, solving the problem
when $M$ bounds a real analytic (up to $M$) Levi-flat hypersurface
is equivalent to solving the holomorphic flattening problem of the
manifold.
Here, we refer the reader to the papers by Kenig-Webster [KW1],
Moser-Webster [MW], Huang-Krantz [HK], Gong [Gon1-3], Stolovitch
[Sto], Huang-Yin [HY1],  Dolbeault-Tomassini-Zaitsev[DTZ1], and many
references therein, for investigations along these lines.

The major difficulty for getting the flattening  property for $M$
lies in the complicated nature of the CR singular points. And, in
general, only non-degenerate CR singular points with a rich
geometric structure could be flattened.
To be more precise, we use $(z_1,\cdots,z_n,w)$ for the complex
coordinates of ${\mathbb C}^{n+1}$. We first make the following
definition. For  related concepts and many intrinsic discussions on
this matter, see the work in Stolovitch [Sto],
Dolbeault-Tomassini-Zaitsev [DTZ1], and Huang-Yin [HY2]:

\begin{defn} \label {101}Let $M$ be a codimension two real submanifold in
${\mathbb C}^{n+1}$. We say $q\in M$ is a non-degenerate CR singular
point, or a non-degenerate complex tangent point, if there is a
biholomorphic change of coordinates which maps $p$ to $0$ and in the
new coordinates $(z,w)$, $M$ is defined near $0$ by an equation of
the following form:
\begin{equation}\label{intr-01}
w=\sum_{j=1}^{n}\left(|z_j|^2+\lambda_j(z_j^2+\-{z_j^2})\right)+o(|z|^2)
\end{equation}
Here,  $0\le \lambda_1,\cdots, \lambda_n< \infty$.
$\{\lambda_1,\cdots,\lambda_n\}$ (counting multiplicity)   are
called the Bishop invariants of $M$ at $0$.
We call $\lambda_j$ an elliptic, parabolic or hyperbolic Bishop
invariant of $M$ at $0$ in terms of $\lambda_j<1/2, $
$\lambda_j=1/2$, or $\lambda_j>1/2, $ respectively.
\end{defn}
Notice that the set of Bishop invariants at  a non-degenerate CR
singular point $p\in M$  consists of  the only second order
biholomorphic invariants of $M$ at  $p\in M$. By the results in
Moser-Webster [MW] and Huang-Krantz [HK], in the case of complex
dimension two ($n+1=2$), any real analytic surface near an elliptic
CR singular point can be flattened. On the other hand, a generic
real analytic surface near a parabolic or hyperbolic CR singular
point can not be flattened, though it can be formally flattened
whenever the Bishop invariant is not exceptional. See the work of
Moser-Webster [MW], Gong [Gon 1-3] and a very recent paper by
Ahern-Gong [AG] on many discussions on this matter. Here, we recall
that a Bishop invariant $\lambda$ is called non-exceptional if the
following quadratic equation in $\nu$ has no roots of unity:
\begin{equation}\label {intro-03}
\lambda\nu^2-\nu+\lambda=0.
 \end{equation}
 However, the situation
for $n>1$ is very different. Consider the following codimension two
real analytic submanifold in ${\mathbb C}^3$:
\begin{example}
\label{intro-02}
\begin{equation}
\label{intro-003} M:=\{\
w=\sum_{j=1}^{2}|z_j|^2+2\Re\left(\sum_{j_1+ j_2\ge 3}
a_{j_1j_2}z_1^{j_1}z_2^{j_2}\right)+\sqrt{-1}\sum_{j_1\ge 2, j_2\ge
2}b_{j_1 \-{j_2}}z_1^{j_1}\-{z_2}^{j_2},\ \ b_{j\bar{ l}}=\-{b_{l\-
j}}.\}
\end{equation}
\end{example}
$M$ has  a non-degenerate CR singular point at $0$ and all Bishop
invariants of $M$ at $0$ are $0$ and thus all elliptic. It was shown
in Huang-Yin [HY2] ([Remark 2.7, HY2]) that $(M,0)$ can not even be
flattened to the order $m$ if $b_{j_1\-{j_2}}\not =0$ for some
$j_1+j_2\le m$. Namely,  if $b_{j_1\-{j_2}}\not =0$ for some
$j_1+j_2\le m$, then there is no holomorphic change of variables
(preserving the origin) such that in the new coordinates, $M$ is
defined near $0$ by an equation of the form $w=\rho$ with the
property that $\Im(\rho)$ vanishes at the origin to the order at
least $m$.

Example \ref{intro-02} shows that in higher dimensions, the geometry
from the nearby CR points also play a  role in the flattening
problem, while in the two variables case, the nearby points are
totally real and can  all be locally holomorphically flattened. Thus
the nearby points in the two dimension case has no influence for the
holomorphic property at a non-degenerate CR singular point.  Indeed,
suppose $M$ is already flattened and is defined by an equation of
the form $u=q(z,\-{z}),v=0$, where $w=u+iv$. Then the complex
hypersurface $S_{u_0}=:\{w=u_0+i0\}$ with $u_0\in {\mathbb R}$
intersects $M$ along a CR submanifold $E$ of CR dimension $(n-1)$
near $p_0$ if $S_{u_0}$ intersects $M$ (CR) transversally at $p_0$.
The points where $S_{u_0}$ is (CR) tangent to $M$ are apparently CR
singular points of $M$. Recall a well-known terminology (see [T] and
[Tu]): A point $p$ in a CR submanifold $N$ is called a non-minimal
point if $N$ contains a proper CR submanifold $ S$ containing $p$
such that $T^{(1,0)}_pS=T^{(1,0)}_pN.$ Hence, in such a terminology,
we have the following simple fact:

\medskip
{\it If $M$ can be flattened, then all CR points in $M$ are
non-minimal CR points.}

\medskip
We mention that the  necessary condition  for the non-minimality  of
CR points already appeared in the earlier  work of
Dolbeault-Tomassini-Zaitsev [DTZ1-2] and Lebl [Le1-2] on the study
of the general complex Plateau problem, which looks for  the
Levi-flat varieties (even maybe in the sense of current) bounded by
the given manifolds.

 Our main results, which we  state  below, demonstrate
that, with the non-minimality assumption at  CR points, the
existence of one  Bishop invariant not being parabolic, namely, not
equal to $\frac{1}{2}$, is good enough for the formal flattening and
the existence of just one elliptic Bishop invariant suffices for the
holomorphic flattening:

\begin{thm}\label{thmm1} Let $M\subset {\mathbb C}^{n+1}$ with $n>1$ be a
codimension two smooth  submanifold with $p\in M$ a non-degenerate
complex tangent point $p\in M$.  Suppose that one element $\lambda$
from the set of Bishop invariants of $M$ at $p$ is not parabolic,
namely, not equal to $\frac{1}{2}$. Also assume that all  CR points
of $M$ near $p$ are non-minimal. Then $M$ can be formally flattened
near $p$. Namely, for any positive integer $m$, there is a
holomorphic change of coordinates which maps $p$ to $0$ and maps $M$
to a manifold defined by an equation of the form $w=\rho(z,\-{z})$
with $\Im{\rho}$ vanishing at least to  the order $m$ at the origin.
\end{thm}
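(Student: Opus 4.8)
The plan is to set up a Moser–Webster / Huang–Krantz style normal form argument, but driven by the non-minimality of the nearby CR points rather than by the ellipticity of a single Bishop invariant. First I would fix the distinguished non-parabolic Bishop invariant, say $\lambda_n\neq\frac12$ (after permuting the $z_j$'s), and write $M$ near $0$ in the form \eqref{intr-01}. The key structural input is the hypothesis that every CR point of $M$ near $p$ is non-minimal: a CR point of $M$ has $CR_M(q)=n-1$, so non-minimality means that through each such $q$ there is a germ of a complex hypersurface of ${\mathbb C}^{n+1}$ contained in $M$. I would show that these hypersurfaces organize into a (singular, but real-analytic/smooth away from the CR singular locus) foliation, so that $M$ is swept out by a family of complex hypersurfaces; the CR singular set is exactly where leaves of this family become tangent to each other or degenerate. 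This is the same mechanism as in Example \ref{intro-02}: it is precisely the presence of the antiholomorphic terms $b_{j_1\bar j_2}z_1^{j_1}\bar z_2^{j_2}$ that obstructs the existence of such a foliation, so the non-minimality hypothesis is exactly what kills those terms order by order.

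Next I would proceed by induction on the order $m$. Assume that after a polynomial holomorphic change of coordinates $M$ is defined by $w=\rho(z,\bar z)$ with $\Im\rho=O(|z|^{m})$; I want to find a further change of the form $(z,w)\mapsto (z, w+ \text{(homog.\ holomorphic of degree }m))$ plus a higher-order correction in $z$, pushing $\Im\rho$ to order $m+1$. Expanding, this reduces to solving a linear "cohomological" equation for the degree-$m$ part of $\Im\rho$: I must write the obstruction as $\Im$ of a holomorphic polynomial modulo terms that can be absorbed. Here the two hypotheses enter in complementary ways. The non-minimality of the CR points, applied along the generic (CR) locus, forces the mixed holomorphic/antiholomorphic harmonics in the obstruction to vanish — concretely, restricting $M$ to the slices $\{w=u_0\}$ and using that each slice-tangency locus is non-minimal gives a system of linear constraints on the Fourier/harmonic coefficients of the degree-$m$ obstruction, exactly as in the discussion following Example \ref{intro-02}. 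What remains is a "Bishop-type" piece supported on the $z_j$-$\bar z_j$ variables, and on that piece the relevant linear operator is, up to lower order, the Moser–Webster operator whose symbol involves $\lambda_j\nu^2-\nu+\lambda_j$ in the $j$-th slot; since $\lambda_n\neq\frac12$ this operator is invertible in the $n$-th block at the formal level (the parabolic value $\frac12$ being exactly where the symbol acquires a double root causing a genuine obstruction), so the obstruction is solvable. Iterating and composing the successive polynomial changes gives, for each fixed $m$, the desired coordinates.

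The main obstacle — and the step I would spend the most care on — is proving rigorously that "all nearby CR points non-minimal" translates into the clean vanishing of the mixed harmonic terms in the obstruction at every order. Non-minimality is an a priori statement about the existence of complex subvarieties inside $M$, not a statement about Taylor coefficients; one must (i) show the family of complex hypersurfaces through CR points depends on parameters in a controlled (real-analytic, or formal-power-series) way, (ii) extract from the defining equations of these hypersurfaces enough linear relations on the coefficients $b_{\cdot}$-type terms of $\rho$, and (iii) check that these relations precisely cover the "bad" directions left unsolved by the Bishop/Moser–Webster operator associated to $\lambda_n\neq\frac12$. This is where I expect the genuine work to be, and where the $n>1$ hypothesis is essential: for $n=1$ the nearby points are totally real, non-minimality is vacuous, and indeed one cannot in general formally flatten a parabolic or exceptional Bishop point. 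A secondary technical point is bookkeeping the interaction between the $w$-change of variables and the induced nonlinear change in the $z$-variables, but that is routine once the cohomological equation is solvable; one keeps only the leading term and absorbs the rest into the next induction step.
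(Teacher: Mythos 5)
Your overall skeleton is right: proceed degree by degree, use a normalization to exhaust the freedom in the holomorphic change of coordinates, and use non-minimality to force the residual normal form to vanish; this is indeed the logic of Theorem \ref{norm} followed by Proposition \ref{lem3}. But two of your key assertions do not hold up, and they sit exactly at the steps you yourself flag as "the genuine work."

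First, the claim that non-minimality gives, through each nearby CR point, a germ of a \emph{complex hypersurface} of ${\mathbb C}^{n+1}$ contained in $M$ is dimensionally impossible: a complex hypersurface has real dimension $2n=\dim_{\mathbb R}M$, so it would have to be an open piece of $M$, contradicting that $M$ is CR of codimension two. What non-minimality actually gives is a proper CR submanifold $S\subset M$ with $T^{(1,0)}_qS=T^{(1,0)}_qM$; at best, $S$ is a complex submanifold of complex dimension $n-1$, not a hypersurface. More to the point, the paper does not argue through the geometry of such a foliation at all. It converts non-minimality directly into an involutivity condition on ${\rm Span}\{L_h,\bar L_h,T\}$ (with $T=[L_1,\bar L_1]$), and extracts from $[L_j,\bar L_k]$, $[L_1,T]\in{\rm Span}\{L_h,\bar L_h,T\}$ the three concrete algebraic identities (\ref{59eq01}), (\ref{59eq02}), (\ref{717eq4}) on the coefficients of the defining function, and then on $H=E^{(m)}$ (the system (\ref{add-new})). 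Your proposal never produces such relations; it only gestures at "extracting linear constraints," and that extraction is precisely the hard content. Second, the mechanism you propose for $\lambda_n\neq 1/2$ — invertibility of the Moser–Webster operator, symbol $\lambda\nu^2-\nu+\lambda$ — is not what happens. The paper explicitly does \emph{not} require the non-exceptional hypothesis that Moser–Webster invertibility would demand (roots of (\ref{intro-03}) not roots of unity), and the normalization step (Theorem \ref{norm}) is proved for \emph{all} $\lambda_n$, including $\lambda_n=1/2$. The hypothesis $\lambda_n\neq 1/2$ enters elsewhere: it makes $\theta=1-4\lambda_n^2\neq 0$, and that nondegeneracy is exactly what is needed for the determinant computations in Section 6 (e.g.\ $R^{[5]}_{1,2\hat m-1}=\alpha^{3\hat m-2}$ with $\alpha=(1-\xi)/2\neq 0$) and for the induction of Lemma \ref{lem11}. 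So the two hypotheses do not factor as "non-minimality kills mixed harmonics, $\lambda_n\neq 1/2$ handles the Bishop piece"; they interact throughout the vanishing argument. As written, your proposal would either need the non-exceptional assumption (weakening the theorem) or would stall at the point where you try to show the residual normal form is annihilated by the non-minimality constraints.
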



We mention  that the result in Theorem \ref{thmm1} holds even if $M$
is assumed just to be a formal submanifold with the same type of
assumptions, or we need only assume that the set of non-minimal CR
points over $M$ forms an open subset $O$ with $p\in \-{O}$.  (See
Theorem \ref{thm2} and Corollary \ref{thm3}.) However, as
demonstrated even in the two dimensional case by Moser-Webster [MW]
and Gong [Gon1-3], more geometric structure is needed to get the
holomorphic flattening in the above theorem. Indeed, making use of
the construction of holomorphic disks in Kenig-Webster [KW1] and
Huang-Krantz [HK], we have the following convergence result for
Theorem \ref{thmm1} under the assumption of at least one ellipticity
for the Bishop invariants:

\begin{thm}\label{thmm2} Let $M\subset {\mathbb C}^{n+1}$ with $n>1$ be a codimension two
real analytic CR manifold with $p\in M$ a non-degenerate complex
tangent point (namely, a non-degenerate CR singular point). Suppose
one of the Bishop invariants $\lambda$ of $M$ at $p$ is elliptic.
%
Then $M$ near $p$ can be holomorphically flattened
 if and only if all CR points of $M$
near $p$ are non-minimal.
\end{thm}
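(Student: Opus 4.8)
The plan is to deduce Theorem~\ref{thmm2} from the formal flattening already available in Theorem~\ref{thmm1} together with a convergence argument that exploits the ellipticity of the distinguished Bishop invariant $\lambda$. First I would invoke Theorem~\ref{thmm1}: since $n>1$, $\lambda$ elliptic in particular forces $\lambda\ne\frac12$, and we are assuming all CR points near $p$ are non-minimal, so $M$ is \emph{formally} flattenable at $p$. Thus after a formal change of coordinates $M$ is given by $w=\rho(z,\bar z)$ with $\Im\rho$ vanishing to infinite order at $0$; the real task is to upgrade this formal normalization to a genuine convergent one, i.e.\ to produce a biholomorphic (not merely formal) map taking $M$ into $\{\Im w=0\}$. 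The converse direction is the easy one and is already recorded in the excerpt: if $M$ can be holomorphically flattened, then each slice $S_{u_0}=\{w=u_0\}$ meets $M$ in a CR submanifold of the same $(1,0)$-dimension through each CR point, exhibiting every CR point as non-minimal.

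For the convergence I would follow the Kenig--Webster/Huang--Krantz disk technique referenced in the text. The ellipticity of one Bishop invariant, say $\lambda_1<\frac12$, lets one attach a continuous family of small analytic disks to $M$ along the corresponding elliptic complex tangent direction: holomorphic on the unit disk, continuous up to the boundary, with boundaries lying on $M$, and depending smoothly (indeed real-analytically in suitable parameters) on the remaining normal data. One first reduces, by a preliminary polynomial normalization, to the situation where the elliptic direction is decoupled enough to run the Bishop equation; solving that equation by the standard Hilbert-transform fixed-point scheme produces the disks and, by Kenig--Webster elliptic regularity / reflection, these disks are real analytic up to the boundary. The union of the boundaries of this family sweeps out $M$ near $p$, while the union of the disks themselves fills in a real-analytic hypersurface $\widehat M$ which is Levi-flat (being foliated by the attached analytic disks) and has $M$ as part of its boundary.

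The heart of the argument, and the step I expect to be the main obstacle, is to show that this $\widehat M$ is the \emph{flattened} picture, i.e.\ that the non-minimality hypothesis forces the family of disks to organize into the graph of a single real-valued function $u=q(z,\bar z)$ with the leaves being exactly the slices $\{w=\mathrm{const}\}$. Here one uses the non-minimality of every nearby CR point: along the CR locus of $M$ the unique complex hypersurface tangent to $M$ through a CR point (which exists precisely because the point is non-minimal) must coincide with the leaf of the disk foliation through that point, and these local complex hypersurfaces glue to a holomorphic foliation by complex hypersurfaces in a neighborhood. Quotienting by this foliation gives a holomorphic submersion $\Phi$ to a domain in $\CC$; one checks $\Phi$ is biholomorphic onto its image after composing with a correction, and that $\Phi$ maps $M$ into $\{\Im w=0\}$ because the formal flattening of Theorem~\ref{thmm1} forces the would-be imaginary part to vanish to infinite order, hence identically once analyticity is in hand. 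The delicate points are: (i) matching the formal normal form of Theorem~\ref{thmm1} with the geometric disk family so that the two constructions are compatible; (ii) proving the real analyticity up to $M$ and the smooth (real-analytic) dependence on parameters of the disk family, which is where the Kenig--Webster estimates and the ellipticity $\lambda_1<\frac12$ are essential; and (iii) showing the disks do not ``spiral'' but close up into embedded slices, which is exactly what non-minimality at the surrounding CR points rules out. Once these are established, setting $w'=\Phi$ and completing to a coordinate system gives the desired holomorphic flattening, completing the equivalence.
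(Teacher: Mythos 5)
Your outline --- use Theorem~\ref{thmm1} for formal flattening, then upgrade to convergence via Bishop disks along the elliptic direction using Kenig--Webster/Huang--Krantz estimates --- matches the paper's skeleton, and your treatment of the easy converse direction is correct. But the convergence half contains a genuine gap. You assert that the hypersurface $\widehat{M}$ swept out by the attached disks is ``Levi-flat (being foliated by the attached analytic disks).'' For $n>2$ this is false: the disks are one-complex-dimensional, so containing them only forces one direction of the Levi form to vanish, whereas Levi-flatness requires the full $n\times n$ Hermitian form to vanish. The paper itself flags precisely this point: the disk-built hypersurface ``has only one Levi-flat direction (along the elliptic direction)'' and ``can not be Levi flat without the non-minimality property from the nearby CR points.'' The step you dispose of in a parenthetical is the crux of the theorem.

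Your attempted repair --- match the complex hypersurfaces in $M$ at non-minimal CR points to the leaves of the disk foliation, glue, and conclude the disks organize into slices --- is not substantiated and does not work as described. The CR orbits coming from non-minimality are $(n-1)$-complex-dimensional submanifolds of $M$, the disks are one-complex-dimensional curves in the interior; they cannot ``coincide,'' and you offer no mechanism for the orbits to propagate compatibly into the interior $\widehat{M}$. The paper's actual argument routes non-minimality exclusively through the formal normalization and then exploits analyticity: the change of coordinates in Theorem~\ref{thm1} is of the special form $(z'=z,\ w'=w+o(|z|^2,w))$, which preserves every slice $t=(z_2,\dots,z_n)=\text{const}$, and so --- by the uniqueness of the two-dimensional Kenig--Webster hull applied slice by slice --- maps the disk-built $\widehat{M_N}$ biholomorphically onto $\widehat{M'_{N'}}$ for any $N'>N$. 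Comparing Levi forms under this biholomorphism, the real-analytic Levi form of $\widehat{M_N}$ vanishes to order $O(|z|^{N'/2-3})$ at $0$ for every $N'$, hence vanishes identically, and Levi-flatness follows. It is this interplay between the special form of the formal normalization, slice-wise uniqueness of the disk family, and analyticity of $\widehat{M_N}$ --- not any geometric match-up of disks with non-minimal CR orbits --- that closes the gap you leave open.
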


As we mentioned above, by the classical  Cartan theorem ([Cat]),
Theorem \ref{thmm2} is equivalent to the following geometric
theorem:

\begin{thm}\label{thmm3} Let $M\subset {\mathbb C}^{n+1}$ with $n>1$ be a codimension
 two real analytic CR manifold with
$p\in M$ a non-degenerate complex tangent point. Suppose one of the
Bishop invariants $\lambda$ of $M$ at $p$ is elliptic. Also assume
that all CR points of $M$ near $p$ are non-minimal. Then the local
hull of holomorphy $\wh{M}$ of $M$ near $p$ is a real analytic
Levi-flat hypersurface which has  $M$ near $p$ as part of its real
analytic boundary. Moreover $\wh{M}$   is  foliated by a family of
smooth complex hypersurfaces in ${\mathbb C}^{n+1}$, that moves
along the transversal direction of the tangent space of $M$ at $p$.
\end{thm}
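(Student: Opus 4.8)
The plan is to derive Theorem \ref{thmm3} from Theorem \ref{thmm2} together with Cartan's theorem, so the real work is to explain how a holomorphically flattened $M$ produces the claimed real analytic Levi-flat hull with a foliation transversal to $T_pM$. By Theorem \ref{thmm2}, under the stated hypotheses (one elliptic Bishop invariant, all nearby CR points non-minimal) there is a biholomorphic change of coordinates $(z,w)\mapsto(z,w)$ near $p=0$ in which $M$ is defined by $v=0,\ u=\rho(z,\-z)$, where $w=u+iv$ and $\rho$ is real analytic with $\rho(0)=0$, $d\rho(0)=0$. So from now on I work in these flattening coordinates.

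First I would produce the candidate hull. Since $M\subset\{v=0\}$ and $M$ has an elliptic complex tangent at $0$, the Bishop-disk construction of Kenig--Webster [KW1] and Huang--Krantz [HK] (which is already invoked to prove Theorem \ref{thmm2}) attaches to $M$ a real analytic family of small analytic disks filling out a real analytic hypersurface-with-boundary $\wh M$; alternatively, once $M=\{u=\rho(z,\-z),v=0\}$ is flattened one checks directly that $\wh M:=\{v=0,\ u\le \rho^{*}(z,\-z)\}$ (suitably interpreted near $0$ using the ellipticity to know on which side the filling disks lie) is a real analytic hypersurface in the real hyperplane $\{v=0\}\cong\mathbb C^{n}\times\mathbb R$ with real analytic boundary $M$. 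Being contained in the totally real hyperplane $\{v=0\}$ is a stronger statement than Levi-flatness; in particular $\wh M$ is trivially Levi-flat. I then verify that $\wh M$ is the local hull of holomorphy of $M$: holomorphic convexity/maximality of $\wh M$ follows because any function holomorphic near $M$ extends, by the analytic-disk/continuity-principle argument, across the disks bounded by $M$, and there is no larger set to which forced extension occurs since $\wh M$ already sits in a maximal totally real hull inside $\{v=0\}$; here I would cite the hull computations in [KW1], [HK], [HY1] in the flattened model.

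Next I would describe the foliation. In the flattening coordinates the hyperplane $\{v=0\}$ is sliced by the complex hypersurfaces $S_{c}:=\{w=c\}$, $c\in\mathbb R$, and $\wh M\cap S_{c}=\{z: \rho(z,\-z)\ge c\}\times\{w=c\}$ is, for $c$ slightly less than the elliptic critical value, a smooth ball-like CR submanifold of $\mathbb C^{n}$; these leaves $\{w=c\}\cap\wh M$ are smooth complex hypersurfaces (open pieces of $\{w=c\}$) and they move along the real $w$-axis, which is exactly the transversal direction to $T_0M=\{w=0\}$. So the required foliation is simply the level-set foliation of the coordinate function $w$ restricted to $\wh M$; smoothness of the leaves is clear, and real analyticity of the foliation follows from real analyticity of $\rho$. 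Transporting everything back through the biholomorphism of Theorem \ref{thmm2} gives the statement in the original coordinates, with $M$ as real analytic boundary and the foliation moving transversally to $T_pM$.

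The main obstacle I anticipate is \emph{not} the algebra of the flattened model but making the hull statement precise near the elliptic point itself: one must be sure that the filled-in set $\wh M$ is a manifold-with-boundary up to and including $0$ (the elliptic complex tangent is where the Bishop disks degenerate to a point), and that it genuinely is the hull of holomorphy rather than merely a Levi-flat extension of $M$. This is where I would lean hardest on the Kenig--Webster and Huang--Krantz disk estimates and on the non-minimality of the nearby CR points, which guarantees that along $M\setminus\{0\}$ the local hull does not bulge out of $\{v=0\}$; combined with Cartan's theorem, which upgrades the real analytic Levi-flat hypersurface to the flattening and vice versa, this closes the equivalence and yields Theorem \ref{thmm3}.
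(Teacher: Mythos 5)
Your proposal is correct and follows essentially the same route as the paper: once Theorem \ref{thmm2} (which the paper derives from Theorems \ref{thm1} and \ref{new-1}) supplies the holomorphic flattening, the hull lies inside the real hyperplane $\{\Im w=0\}$, is automatically Levi-flat, is foliated by the real slices $\{w=c\}$ moving transversally to $T_pM$, and the paper's proof of Theorem \ref{thmm3} in Section 7 uses exactly this observation together with the biholomorphic invariance of the local hull. One small terminological slip: the hyperplane $\{v=0\}=\CC^{n}\times\RR$ is a Levi-flat real hyperplane, not a totally real one (it carries $n$ complex tangent directions), though your conclusion that containment in it forces $\wh M$ to be Levi-flat is of course still correct.
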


\begin{example} \label {exam-beg}{\rm
Define $M\subset {\mathbb C}^3=:\{(z_1,z_2,w)\}$ by the following
equation near $0$:
$$w=q(z,\-{z})+p(z,\-{z})+iE(z,\-{z}).$$
Here
$q=|z_1|^2+\ld_1(z_1^2+\-{z_1^2})+|z_2|^2+\ld_2(z_2^2+\-{z_2^2})$
with $0\le \ld_1,\ld_2<\infty$, and
$$p(z,\-{z})+iE(z,\-{z})=\mu_1|z_1|^2({z_1}+\ld_1\-{z_1})+\mu_2|z_2|^2({z_2}
+\ld_2\-{z_2})+\mu_1z_1(|z_2|^2+\ld_2\-{z_2^2})+
\mu_2{z_2}(|z_1|^2+\ld_1\-{z_1^2}).$$ Here $\mu_1, \mu_2$ are two
complex numbers. Then, $M$ is non-minimal at its CR points near its
non-degenerate CR singular point $0$. (See Example
\ref{example-new}.) Hence, our result says that when one of the
$\ld_1,\ld_2$ is not $\frac{1}{2}$, then $M$ can be formally
flattened at $0$; and when one of the $\ld_1,\ld_2$ is less than
$\frac{1}{2}$, then $M$ can be holomorphically flattened near $0$.

In this example, $M\sm \{0\}$ near $0$ is foliated by a family of
three dimensional strongly pseudoconvex CR manifolds---  the
intersections of $M$ with  real hypersurfaces $K_c: q(z,\-{z})=c$
with $c\in {\mathbb R}.$ (When both $\ld_1,\ld_2$ are elliptic,
$c>0$). Assume that one of the Bishop invariants $\{\ld_1,\ld_2\}$
is not elliptic. Then there is an orbit corresponding to $c=0$, that
extends to the CR singular point  with it as its non-smooth point.
Also none of the orbits closes up near $0$.
 }
\end{example}

We next say a few words about the proof of our main geometric
result: To prove Theorem \ref{thmm3}, we first slice $M$ near the
complex tangent point $p$ by a family of two dimensional complex
planes along the elliptic direction. We then get a family of
elliptic Bishop surfaces. Now each one bounds a three dimensional
Levi flat CR manifold and their union forms a codimension one subset
$\wt{M}$ in ${\CC}^{n+1}$  with $M$ as part of its boundary. An
analysis, based on Bishop disks, similar to that in Kenig-Webster
[KW1], and in particular, in Huang-Krantz [HK], shows that $\wt{M}$
is a real analytic hypersurface with $M$ as part of its real
analytic boundary. However, all we know from this construction is
that $\wt{M}$ has only one Levi-flat direction (along the elliptic
direction). And it is not clear at all if $\wt{M}$ is flat along the
parameter directions. In fact, $\wt{M}$ can not be Levi flat without
the non-minimality property  from  the nearby CR points. Now, the
crucial issue is that, with the assumption of the non-minimality at
the nearby CR points, we can find a formal transformation which
makes $M$ formally flattened, while any finite order truncation of
this transformation preserves $\wt{M}$.  The existence of this
transformation is the content of Theorem \ref{thm1}, which is a more
general but also more technical version of Theorem \ref{thmm1}.
(Notice that in the two dimensional setting, the uniqueness of
$\wt{M}$ is done normally by showing that $\wt{M}$ is the local hull
of holomorphy of $M$ and thus is invariant under biholomorphic
transformation. However, this is more or less equivalent to proving
that  $\wt{M}$ is Levi-flat. Hence it   can not be achieved in this
way  in higher dimensions.) After this is done, we see that the
Levi-form of $\wt{M}$  vanishes to any high order as we like. Since
$\wt{M}$ is real analytic up to $M$, by the unique continuation
property for real-analytic functions, we conclude that the Levi-form
of $\wt{M}$ vanishes everywhere. Thus, $\wt{M}$ must be Levi-flat
everywhere.

 Most part of the paper is
devoted to the proof Theorem \ref{thm1} (a more general and more
technical version of Theorem \ref{thmm1}).
Here one sees an essential  difference from arguments in the two
dimensional case. Indeed, the phenomenon is also different in this
setting as there is no need to impose   the non-exceptional property
for even a single Bishop invariant. Our basic idea for the proof of
Theorem \ref{thm1} goes as follows: Suppose $M$ in Theorem
\ref{thm1} is flattened to order $m-1$. We first normalize the
$m^{\rm th}$-order of the imaginary part of the defining function of
$M$ to fix all possible free choices of coordinates. This will be
done in Theorem \ref{norm}. Then we show that the non-minimality of
the nearby CR points forces the vanishing of such a normal form. In
$\S 2$ and $\S 3$, we will derive three basic equations that must be
satisfied for $M$ under the non-minimality assumption. These will be
used in $\S 4$ and $\S 5$ to prove Theorem \ref{thm1}.


 Theorem \ref{thmm2} is equivalent to
Theorem \ref{thmm3} by  a classical result of Cartan which states
that a real analytic  hypersurface is Levi-flat if and only if it
can be transformed locally to an open piece of the standard
Levi-flat hyperplane defined by $\Im{w}=0$. When all Bishop
invariants at $p$ are elliptic, we mention that Theorem \ref{thmm3}
can also be derived by combining the results obtained in
Dolbeault-Tomassini-Zaitsev [DTZ1-2] and the work in a very recent
preprint by Burcea [Bur2] with  a different approach. (The work in
Dolbeault-Tomassini-Zaitsev [DTZ1-2]  contains other very nice
global results.) The arguments based on Dolbeault-Tomassini-Zaitsev
[DTZ1-2] and Burcea [Bur2] depend strongly on all the ellipticity of
Bishop invariants and  requires that the CR orbits in $M$ near the
CR singular point form a family of compact strongly pseudoconvex
manifolds shrinking down to the complex tangent such that the
Harvey-Lawson theorem applies.
This   is  certainly  not the case even when one non-elliptic Bishop
invariant at the CR singular point appears.

We also include an appendix to give a detailed proof of Theorem
\ref{thm1} in the special case of $n=2$ and $m=3$. The reader may
like to read the Appendix before reading $\S 4-\S 6$. By including
such an appendix, we hope it will help the reader to see the basic
ideas, through a simple case,  the  complicated argument for the
proof of Theorem \ref{thm1} in the general setting in $\S 4-\S6$.

\medskip
{\bf Acknowledgment}: The major part of the paper  was  completed in
the summer of 2011 when  the first author was visiting Wuhan
University. The first author would like to thank the School of
Mathematics and Statistics, Wuhan University for the hospitality
during his stay. Part of the work in the paper was done while the
second author was taking a year long visit at Rutgers University at
New Brunswick in 2009. The second author likes to thank this
institute for the hospitality during his stay. The second author
also likes  very much  to thank Nordine Mir for his many helps both
in his mathematics and in other arrangements during his stay at
Universit\'e de Rouen, through a European Union postdoctoral
fellowship.

\section{An immediate consequence for non-minimality near CR points}

 Let $(M,0)$ be a smooth submanifold of codimension two in
${\mathbb{C} }^{n+1}$ with $0\in M$ as a CR singular point. Assume
that the CR singular point at $0\in M$ is non-degenerate as defined
in Definition \ref{101} such that after a holomorphic change of
coordinates, $M$ near $0$ is defined by an equation of the form:

\begin{equation}
w=q(z,\-{z})+p(z,\-z)+iE(z,\-z), \label{429eq1}
\end{equation}
where $q(z,\-z)=\sum_{i=1}^{n}(|z_i|^2+\lambda_i(z_i^2+\-z_i^2))$
with $0\le \lambda_1,\cdots, \lambda_n<\infty$  being the Bishop
invariants of $M$ at $0$, $\text{Ord}(p),\text{Ord}(E)\geq 3$ and
both $p(z,\-z)$ and $E(z,\-z)$ are real-valued smooth functions. For
convenience of notation, we also write
\begin{equation}
F(z,\-{z})=p(z,\-{z})+iE(z,\-z)\ \text{and}\ G(z,\-z)=
q(z,\-{z})+p(z,\-z). \label{82eq1}
\end{equation}
Then we have
$$
w=q(z,\-{z})+F(z,\-z)=G(z,\-{z})+iE(z,\-z).
$$

In what follows, as is standard in the literature, we write $
\chi_{\a}=\frac{\p \chi}{\p z_\a},\ \chi_{\-{\a}}=\frac{\p \chi}{\p
\-z_{\a}}$ with $1\leq \a\leq n$ for a smooth function $\chi(z,\-z)$
in $z$. For $1\leq j\leq n-1$, we define
\begin{equation}\begin{split}\label{325eq2}
L_j=&(G_n-iE_n)\frac{\p}{\p z_j}-(G_j-iE_j)\frac{\p}{\p
z_n}+2i(G_nE_j-G_jE_n)\frac{\p}{\p w}\\
:=&A\frac{\p}{\p z_j}-B_{(j)}\frac{\p}{\p z_n}+C_{(j)}\frac{\p}{\p
w}.
\end{split}\end{equation}
Then we have
\begin{equation*}\begin{split}
L_j(-w+G+iE)&=(G_n-iE_n)(G_j+iE_j)-(G_j-iE_j)(G_n+iE_n)\\
&\ \ -2i(G_nE_j-G_jE_n)=0,\\
L_j(\-{-w+G+iE})&=(G_n-iE_n)(G_j-iE_j)-(G_j-iE_j)(G_n-iE_n) =0.
\end{split}\end{equation*}
Hence $L_1,\cdots,L_{n-1}$ are complex tangent vector fields of type
$(1,0)$ along $M$ near $0$. Moreover, for $1\le j,k\le n-1,$ a
straightforward computation shows that
\begin{equation}\begin{split}
[L_j,\-L_k]=&\left[A\frac{\p}{\p z_j}-B_{(j)}\frac{\p}{\p
z_n}+C_{(j)}\frac{\p}{\p w},\-A\frac{\p}{\p
\-z_k}-\-{B_{(k)}}\frac{\p}{\p
\-z_n}+\-{C_{(k)}}\frac{\p}{\p \-w}\right]\\
 =&\lambda_{(1jk)}\frac{\p}{\p
\-z_k}+\lambda_{(2jk)}\frac{\p}{\p
\-z_n}+\lambda_{(3jk)}\frac{\p}{\p \-w}+\lambda_{(4jk)}\frac{\p}{\p
z_j}+\lambda_{(5jk)}\frac{\p}{\p z_n}+\lambda_{(6jk)}\frac{\p}{\p
w},
\end{split}\end{equation}
where
\begin{equation}\begin{split}\label{325eq3}
\lambda_{(1jk)}=A\cdot (\-A)_j-B_{(j)}(\-{A})_n,& \ \lambda_{(4jk)}
=-\-A\cdot A_{\-k}+\-{B_{(k)}}{A}_{\-n},\\
\lambda_{(2jk)}=-A\cdot (\-{B_{(k)}})_j+B_{(j)}(\-{B_{(k)}})_n,&\
\lambda_{(5jk)}=\-A\cdot ({B_{(j)}})
_{\-k}-\-{B_{(k)}}({B_{(j)}})_{\-n},\\
\lambda_{(3jk)}=A\cdot (\-{C_{(k)}})_j-B_{(j)}(\-{C_{(k)}})_n,&\
\lambda_{(6jk)}=-\-A\cdot
({C_{(j)}})_{\-k}+\-{B_{(k)}}({C_{(j)}})_{\-n}.
\end{split}\end{equation}
Notice that
\begin{equation}\label{57eq8}
\lambda_{(1jk)}=-\-{\lambda_{(4kj)}},\
\lambda_{(2jk)}=-\-{\lambda_{(5kj)}},\
\lambda_{(3jk)}=-\-{\lambda_{(6kj)}}.
\end{equation}
In what follows, write $\ w_j=z_j+2\lambda_j\-z_j\  \text{for}\
1\leq j\leq n$. Suppose that $E\not\equiv 0$. We write in what
follows that
\begin{equation}\label{eh}
\text{Ord}(E)=m\ \text{and}\ H(z,\-{z}):=E^{(m)}(z,\-z).
\end{equation}
 From (\ref{325eq2}),  we get the following approximation
properties:
\begin{equation}\begin{split}\label{326eq4}
A=\-w_n+O(2),\ B_{(j)}=\-w_j+O(2),\ C_{(j)}=2i\-{\Phi_{(j)}}+O(m+1).
\end{split}\end{equation}
Here  (and also in what follows), we have
\begin{equation}\label{phiexp}
  \Phi_{(j)}=w_nH_{\-j}-w_jH_{\-n},\hbox{ and we write}\  \Phi=\Phi_{(1)}.
\end{equation}
For  future applications, we also write
\begin{equation}\begin{split}\label{psiexp}
\Psi_{(jk)}=w_n\-{w_n}
(\Phi_{(j)})_k-{w_n}\-{w_k}(\Phi_{(j)})_n+\-{w_k}\cdot \Phi_{(j)},\
\Psi=\Psi_{(11)}.
\end{split}\end{equation}

Substituting these approximation properties to (\ref{325eq3}), we
obtain
\begin{equation*}\begin{split}
\lambda_{(1jk)}=&\big(\-w_n+O(2)\big)\cdot
\big((w_n)_j+O(1)\big)-\big(\-w_j+O(2)\big)\cdot
           \big((w_n)_n+O(1)\big)
           =-\-w_j+O(2),\\
\lambda_{(2jk)}=&-\big(\-w_n+O(2)\big)\cdot
\big((w_k)_j+O(1)\big)+\big(\-w_j+O(2)\big)\cdot
           \big((w_k)_n+O(1)\big)=-\delta_{jk}\-w_n+O(2),\\
\lambda_{(3jk)}=&\big(\-w_n+O(2)\big)\cdot
\big(-2i\Phi_{(k)}+O(m+1)\big)_j -\big(\-w_j+O(2)\big)\cdot
\big(-2i\Phi_{(k)}+O(m+1)\big)_n\\
=&-2i\-w_n(\Phi_{(k)})_j+2i\-w_j(\Phi_{(k)})_n+O(m+1).
\end{split}\end{equation*}
Combining these relations with (\ref{57eq8}), we get
\begin{equation}\begin{split}\label{326eq5}
\lambda_{(1jk)}=&-\-w_j+O(2),\ \lambda_{(2jk)}=-\delta_{jk}\-w_n+O(2),\\
\lambda_{(4jk)}=&w_k+O(2),\ \ \ \ \ \lambda_{(5jk)}=\delta_{jk}w_n+O(2),\\
\lambda_{(3jk)}=&-2i\-w_n(\Phi_{(k)})_j+2i\-w_j(\Phi_{(k)})_n+O(m+1),\\
\lambda_{(6jk)}=&-2iw_n(\-{\Phi_{(j)}})_{\-{k}}+2iw_k(\-{\Phi_{(j)}})_{\-{n}}+O(m+1).
\end{split}\end{equation}

In what follows, we further assume  that $M$ is non-minimal at its
CR points. Write ${\cal S}$ for the set of CR singular points of $M$
near $0$. Suppose that $0$ is not an isolated point in ${\cal S}$.
Notice that $T^{(1,0)}_{0}M=\hbox {span} \{\frac{\p }{\p
z_1}|_0,\cdots, \frac{\p }{\p z_n}|_0 \}$. For $p_0\in {\cal S}$
close to $0$, we easily see that  $T^{(1,0)}_{p_0}M=\hbox {span}
\{X_1,\cdots, X_n\}$ for certain  tangent vectors of type $(1,0)$ of
the form: $X_j=\frac{\p }{\p z_j}|_{p_0}+b_j\frac{\p }{\p
w}|_{p_0}$, $j=1,\cdots, n$. Since $X_j(\-{-w+q+F})=0$,  we get
$\-{z_j}+2\lambda_j{z_j}=O(|z|^2)$ when $q_0\approx 0$. Write
$z_j=x_j+\sqrt{-1}y_j$. Then $(1+2\lambda_j)x_j=O(|z|^2),
(1-2\lambda_j)y_j=O(|z|^2)$ for $j=1,\cdots, n$. By the implicit
function theorem,
 we conclude that
${\cal S}$ is contained in a submanifold of $M$ near $0$ which has
at most real dimension $n$; and when none of the Bishop invariants
of $M$ at $0$ is parabolic, the only CR singular point of $M$ near
$0$ is $0$ itself.

We next claim that there is an open dense subset ${\cal O}_i$ of $M$
near $0$ such that for any  $q_0\in {\cal O}_i$, at $q_0$, it holds
that
\begin{equation}\begin{split}\label{326eq2}
[L_i,\-L_i]\not \in \text{Span}\{L_j,\-{L_j}\}_{1\leq j\leq n-1}\
\text{for}\ 1\leq i\leq n-1.
\end{split}\end{equation}
We prove the claim by contradiction. Suppose that we have at $q_0\in
{\cal O}_i$ the following:
\begin{equation}\begin{split}
[L_i,\-L_i]&=\sum_{l=1}^{n-1}\big(\hat{\a}_lL_l+\hat{\b}_l\-L_l\big)\\
           &=\sum_{l=1}^{n-1}\hat{\a}_l\big(A\frac{\p}{\p z_l}-B_{(l)}\frac{\p}{\p
           z_n}+C_{(l)}\frac{\p}{\p w}\big)
           +\sum_{l=1}^{n-1}\hat{\b}_l\big(\-A\frac{\p}{\p \-z_l}-\-{B_{(l)}}\frac{\p}{\p
           \-z_n}+\-{C_{(l)}}\frac{\p}{\p \-w}\big).
\end{split}\end{equation}
Then by considering the coefficients of $\frac{\p}{\p z_l}$ and
$\frac{\p}{\p \-z_l}$ for $1\leq l\leq n-1$, we obtain
\begin{equation*}\begin{split}
\hat{\a}_l=\hat{\b}_l=0\ \text{for}\ l\neq i,
\lambda_{(4ii)}=A\hat{\a}_i,\ \lambda_{(5ii)}=-B_{(i)}\hat{\a}_i.
\end{split}\end{equation*}
Eliminating $\hat{\a}_i$ from the above,  we get $
A\lambda_{(5ii)}+B_{(i)}{\lambda_{(4ii)}}=0 . $ Combining this with
the approximation properties (\ref{326eq4}) and (\ref{326eq5}), we
get $|w_i|^2+|w_n|^2+O(3)=0$ at $q_0\in {\cal O}_i$. Now, write this
equation as
\begin{equation}\label {0*}
|w_i|^2+(1+2\l_n)^2x_n^2+(1-2\l_n)^2y_n^2+h_0+h_1x_n+h_2x_n^2+O(x_n^3)=0.
\end{equation}
Notice that when $M$ is real analytic, it defines a closed proper
analytic variety over $M$ and ${\cal O}_i$ can be simply defined as
its compliment. In general, suppose it defines a subset which
contains an open subset $V_i$ with $0\in \-{V_i}$. Differentiating
(\ref{0*}) with respect to $x_n$, we get the following  over $V_i$:
$$\left(2(1+2\l_n)^2+2h_2\right)x_n=-h_1+O(x_n^2).$$
Since $h_2=o(1)$, by the implicit function theorem, the above
defines a proper submanifold in $M$. This is contradiction.

In the following, we write ${\cal O}=\cap_{i=1}^{n-1}{\cal O}_i\sm
{\cal S},$ which ${\cal O}$ is an open dense subset of $M$ near $0$.
In particular,  at $q_0(\approx 0)\in  {\cal O}$, we have
$$
T:=[L_1,\-L_1]\not \in \text{Span}\{L_j,\-{L_j}\}_{1\leq j\leq n-1}.
$$
Hence, by the Frobenius theorem, the non-minimality at the subset
${\cal O}$ of the CR points (sufficiently close to 0) is equivalent
to the following property when restricted to the subset  ${\cal O}$:
\begin{equation}\label{716eq1}
    [L_i,\-L_j],[[L_i,\-L_j],L_k]\in \text{Span}\big\{\{L_h,\-{L_h}\}_{1\leq h\leq n-1},T\big\}
    \ \text{for}\ 1\leq i,j,k\leq n-1.
\end{equation}
Recall the following notation we set up before:
$$ T=\lambda_{(111)}\frac{\p}{\p
\-z_1}+\lambda_{(211)}\frac{\p}{\p
\-z_n}+\lambda_{(311)}\frac{\p}{\p \-w}+ \lambda_{(411)}\frac{\p}{\p
z_1}+\lambda_{(511)}\frac{\p}{\p z_n}+\lambda_{(611)}\frac{\p}{\p
w}.$$
 Next we give  equivalent conditions for (\ref{716eq1}), which
are much easier to apply.

\medskip
First, since $[L_j,\-L_k]\in \text{Span}\{\{L_h,\-{L_h}\}_{1\leq
h\leq n-1},T\}$ with $1\leq j,k\leq n-1$ over $M\sm {\cal S}$, we
have, over $M\sm {\cal S}$, the following
$$
[L_j,\-L_k]=\sum\limits_{l=1}^{n-1}(\a_l L_l+\b_l \-L_l)+\gamma T\
\text{for some coefficients}\ \alpha_l,\beta_l,\gamma.
$$
Namely, we have
\begin{equation*}\begin{split}
&\lambda_{(1jk)}\frac{\p}{\p \-z_k}+\lambda_{(2jk)}\frac{\p}{\p
\-z_n}+\lambda_{(3jk)}\frac{\p}{\p \-w}+ \lambda_{(4jk)}\frac{\p}{\p
z_j}+\lambda_{(5jk)}\frac{\p}{\p z_n}+\lambda_{(6jk)}\frac{\p}{\p
w}\\
=&\sum\limits_{l=1}^{n-1}\a_l \left(A\frac{\p}{\p
z_l}-B_{(l)}\frac{\p}{\p z_n}+C_{(l)}\frac{\p}{\p
w}\right)+\sum\limits_{l=1}^{n-1}\b_l \left(\-A\frac{\p}{\p
\-z_l}-\-{B_{(l)}}\frac{\p}{\p \-z_n}+\-{C_{(l)}}\frac{\p}{\p \-w}\right)\\
&+\gamma \left(\lambda_{(111)}\frac{\p}{\p
\-z_1}+\lambda_{(211)}\frac{\p}{\p
\-z_n}+\lambda_{(311)}\frac{\p}{\p \-w}+ \lambda_{(411)}\frac{\p}{\p
z_1}+\lambda_{(511)}\frac{\p}{\p z_n}+\lambda_{(611)}\frac{\p}{\p
w}\right).
\end{split}\end{equation*}

Comparing the coefficients of $\{\frac{\p}{\p z_h},\frac{\p}{\p
\-{z}_h}\}_{1\leq h\leq n-1},\frac{\p}{\p w},\frac{\p}{\p \-w}$,
respectively, we  get, over ${\cal O}$,  the following:

(I) If $j\neq 1 \ \hbox{and }\ k\neq 1$, then we have $\a_l,\b_l=0$
for $l\neq 1,l\neq j, l\neq k$. Moreover,  we have
\begin{equation}\begin{split}\label{39eq1}
\-A\cdot \b_1+\gamma\cdot \lambda_{(111)}=0,\ A\cdot\a_1+\gamma\cdot
\lambda_{(411)}=0,\ \lambda_{(1jk)}=\-A\cdot \b_k,\
\lambda_{(4jk)}=A\cdot
\a_j,\\
\lambda_{(2jk)}=-\b_1\cdot
\-{B_{(1)}}-\b_k\cdot\-{B_{(k)}}+\gamma\cdot \lambda_{(211)},\
\lambda_{(3jk)}=\b_1\cdot
\-{C_{(1)}}+\b_k\cdot\-{C_{(k)}}+\gamma\cdot \lambda_{(311)},\\
\lambda_{(5jk)}=-\a_1\cdot {B_{(1)}}-\a_j\cdot{B_{(j)}}+\gamma\cdot
\lambda_{(511)},\ \lambda_{(6jk)}=\a_1\cdot
{C_{(1)}}+\a_j\cdot{C_{(j)}}+\gamma\cdot \lambda_{(611)}.
\end{split}\end{equation}

(II) If $j=1$ but $k\neq1$, then we get (i) $\a_l=0$ for $l\not 1$
and (ii) $\b_l=0$ for $l\neq 1,l\neq k$. Moreover, we have
\begin{equation}\begin{split}\label{39eq2}
&\-A\cdot \b_1+\gamma\cdot \lambda_{(111)}=0,\
\lambda_{(41k)}=A\cdot\a_1+\gamma\cdot \lambda_{(411)},\
\lambda_{(11k)}=\-A\cdot \b_k,\\
&\lambda_{(21k)}=-\b_1\cdot
\-{B_{(1)}}-\b_k\cdot\-{B_{(k)}}+\gamma\cdot \lambda_{(211)},\
\lambda_{(31k)}=\b_1\cdot
\-{C_{(1)}}+\b_k\cdot\-{C_{(k)}}+\gamma\cdot \lambda_{(311)},\\
&\lambda_{(51k)}=-\a_1\cdot B_{(1)}+\gamma\cdot \lambda_{(511)},\
\lambda_{(61k)}=\a_1\cdot C_{(1)}+\gamma\cdot \lambda_{(611)}.
\end{split}\end{equation}

Back to (\ref{39eq1}), we  get from its first line that
$$
\-A\b_1=- \gamma\cdot \lambda_{(111)},\ A\a_1=- \gamma\cdot
\lambda_{(411)},\ \-A\b_k=\lambda_{(1jk)},\ A\a_j=\lambda_{(4jk)}.
$$
Multiplying $\-A$ and $A$ to the  second and the third lines  in
(\ref{39eq1}), respectively, and making use of  the just obtained
relations, we obtain over $ {\cal O}$ the following:
\begin{align*}
&\-A\lambda_{(2jk)}= \gamma\cdot \lambda_{(111)}\cdot
\-{B_{(1)}}- \lambda_{(1jk)}\cdot\-{B_{(k)}}+\-A\gamma\cdot \lambda_{(211)},\\
&\-A\lambda_{(3jk)}=- \gamma\cdot \lambda_{(111)}\cdot
\-{C_{(1)}}+ \lambda_{(1jk)}\cdot\-{C_{(k)}}+\-A\gamma\cdot \lambda_{(311)},\\
&A\lambda_{(5jk)}=\gamma\cdot \lambda_{(411)}\cdot
{B_{(1)}}-\lambda_{(4jk)}\cdot{B_{(j)}}
    +A\gamma\cdot    \lambda_{(511)},\\
&A\lambda_{(6jk)}=-\gamma\cdot \lambda_{(411)}\cdot
  {C_{(1)}}+ \lambda_{(4jk)}\cdot{C_{(j)}}+A\gamma\cdot \lambda_{(611)}.
\end{align*}
After rewriting,  we get
\begin{equation}\begin{split}\label{717eq1}
\big(\-A\cdot\lambda_{(211)}+ \-{B_{(1)}}\cdot
\lambda_{(111)}\big)\cdot
\gamma=&\-A\lambda_{(2jk)}+ \lambda_{(1jk)}\cdot\-{B_{(k)}},\\
\big(\-A\cdot\lambda_{(311)}-
\-{C_{(1)}}\cdot\lambda_{(111)}\big)\cdot
\gamma=&\-A\lambda_{(3jk)}-\lambda_{(1jk)}\cdot\-{C_{(k)}}, \\
\big(A\cdot\lambda_{(511)}+B_{(1)}\cdot \lambda_{(411)}\big)\cdot
\gamma=&A\lambda_{(5jk)}+\lambda_{(4jk)}\cdot{B_{(j)}},\\
\big(A\cdot\lambda_{(611)}-{C_{(1)}\cdot \lambda_{(411)}}\big)\cdot
\gamma=&A\lambda_{(6jk)}-\lambda_{(4jk)}\cdot{C_{(j)}}.
\end{split}\end{equation}
Thus we  get from the first and the last equations in (\ref{717eq1})
the following relation over $ {\cal O}$:
\begin{equation}\begin{split}\label{59eq01}
&(\-A\cdot \lambda_{(2jk)}+\-{B_{(k)}}\cdot \lambda_{(1jk)})\cdot
(A\cdot
\lambda_{(611)}-C_{(1)}\cdot \lambda_{(411)})\\
&=(\-A\cdot \lambda_{(211)}+\-{B_{(1)}}\cdot \lambda_{(111)})\cdot
(A\cdot \lambda_{(6jk)}-C_{(j)}\cdot \lambda_{(4jk)})
\end{split}\end{equation}
After taking a limit, we see the equation in (\ref{59eq01}) holds
over $M$ near $0$.

\medskip
 Next, we solve (\ref{39eq2}) by the same
argument as that used to solve (\ref{39eq1}). In fact, we  get from
the first line of (\ref{39eq2}) that
$$
\-A\b_1=-\gamma\cdot \lambda_{(111)},\
A\a_1=\lambda_{(41k)}-\gamma\cdot \lambda_{(411)},\
\-A\b_k=\lambda_{(11k)}.
$$
Multiplying $\-A$ and $A$ to  the second and the third lines of the
equations in (\ref{39eq2}), respectively, and using the just
obtained relations, we have over $M\sm {\cal S}$
 \begin{align*}
&\-A\lambda_{(21k)}=\gamma\cdot \lambda_{(111)}\cdot
\-{B_{(1)}}- \lambda_{(11k)}\cdot\-{B_{(k)}}+\-A\gamma\cdot \lambda_{(211)},\\
&\-A\lambda_{(31k)}=-\gamma\cdot \lambda_{(111)}\cdot
\-{C_{(1)}}+\lambda_{(11k)}\cdot\-{C_{(k)}}+\-A\gamma\cdot \lambda_{(311)},\\
&A\lambda_{(51k)}=-
(\lambda_{(41k)}-\gamma\cdot \lambda_{(411)})\cdot {B_{(1)}}+A\gamma\cdot    \lambda_{(511)},\\
&A\lambda_{(61k)}=(\lambda_{(41k)}-\gamma\cdot \lambda_{(411)})\cdot
{C_{(1)}}+A\gamma\cdot \lambda_{(611)}.
\end{align*}
Rearranging the terms and replacing $\gamma$ by $\hat{\gamma}$, we
get over $M\sm {\cal S}$
\begin{equation}\begin{split}\label{717eq2}
\big(\-A\lambda_{(211)}+ \lambda_{(111)}\cdot\-{B_{(1)}}\big)\cdot
\hat{\gamma}=&\-A\lambda_{(21k)}+
\lambda_{(11k)}\cdot\-{B_{(k)}},\\
\big(\-A\lambda_{(311)}- \lambda_{(111)}\cdot\-{C_{(1)}}\big)\cdot
\hat{\gamma}=&\-A\lambda_{(31k)}-\lambda_{(11k)}\cdot\-{C_{(k)}}, \\
\big(A\lambda_{(511)}+ \lambda_{(411)}\cdot{B_{(1)}}\big)\cdot
\hat{\gamma}=&A\lambda_{(51k)}+\lambda_{(41k)}\cdot{B_{(1)}},\\
\big(A\lambda_{(611)}-\lambda_{(411)}\cdot{C_{(1)}}\big)\cdot
\hat{\gamma}=&A\lambda_{(61k)}-\lambda_{(41k)}\cdot{C_{(1)}}.
\end{split}\end{equation}
From the first and the last equations in (\ref{717eq2}), we see
that, after taking a limit, the following equation holds  near $0$:
\begin{equation}\begin{split}\label{59eq02}
&(\-A\cdot \lambda_{(21k)}+\-{B_{(k)}}\cdot \lambda_{(11k)})\cdot
(A\cdot \lambda_{(611)}-C_{(1)}\cdot \lambda_{(411)})\\&=(\-A\cdot
\lambda_{(211)}+\-{B_{(1)}}\cdot \lambda_{(111)})\cdot (A\cdot
\lambda_{(61k)}-C_{(1)}\cdot \lambda_{(41k)}).
\end{split}\end{equation}

Next we will examine $[L_1,T]$.  A direct computation shows that
\begin{equation}\begin{split}\label{0509eq1}
[L_1,T]=\Gamma_{(1)}\frac{\p}{\p \-z_1}+\Gamma_{(2)}\frac{\p}{\p
  \-z_n}+\Gamma_{(3)}\frac{\p}{\p \-w}+\Gamma_{(4)}\frac{\p}{\p
  z_1}+\Gamma_{(5)}\frac{\p}{\p z_n}+\Gamma_{(6)}\frac{\p}{\p
  w}.
\end{split}\end{equation}
where
\begin{equation}\begin{split}\label{1.25}
\Gamma_{(1)}&=A(\lambda_{(111)})_1-B_{(1)}(\lambda_{(111)})_n,\\
\Gamma_{(2)}&=A(\lambda_{(211)})_1-B_{(1)}(\lambda_{(211)})_n,\\
\Gamma_{(3)}&=A(\lambda_{(311)})_1-B_{(1)}(\lambda_{(311)})_n,\\
\Gamma_{(4)}&=A(\lambda_{(411)})_1-B_{(1)}(\lambda_{(411)})_n-\lambda_{(111)}A_{\-1}-\lambda_{(211)}
  A_{\-n}-\lambda_{(411)}A_1-\lambda_{(511)}A_n,\\
\Gamma_{(5)}&=A(\lambda_{(511)})_1-B_{(1)}(\lambda_{(511)})_n+\lambda_{(111)}
(B_{(1)})_{\-1}+\lambda_{(211)}(B_{(1)})_{\-n}
    +\lambda_{(411)}(B_{(1)})_1+\lambda_{(511)}(B_{(1)})_n,\\
\Gamma_{(6)}&=A(\lambda_{(611)})_{1}-B_{(1)}(\lambda_{(611)})_{n}
-\lambda_{(111)}(C_{(1)})_{\-1}-\lambda_{(211)}
(C_{(1)})_{\-n}-\lambda_{(411)}(C_{(1)})_{1}-\lambda_{(511)}(C_{(1)})_{n}.
\end{split}\end{equation}

 Suppose that over ${\cal O}$ we have
\begin{equation*}\begin{split}
[L_1,T]=&\sum\limits_{h=1}^{n-1}\left(\kappa_{(h)} L_h+\sigma_{(h)} \-{L_h}\right)+\tau T\\
=&\sum\limits_{h=1}^{n-1}\kappa_{(h)} \big(A\frac{\p}{\p
z_h}-B_{(h)}\frac{\p}{\p z_n}+C_{(h)}\frac{\p}{\p
    w}\big)+\sum\limits_{h=1}^{n-1}\sigma_{(h)} \big(\-A\frac{\p}{\p \-z_h}-\-{B_{(h)}}\frac{\p}{\p
    \-z_n}+\-{C_{(h)}}\frac{\p}{\p\-w}\big)\\
    &+\tau\big(\lambda_{(111)}\frac{\p}{\p \-z_1}+\lambda_{(211)}\frac{\p}{\p
  \-z_n}+\lambda_{(311)}\frac{\p}{\p \-w}+\lambda_{(411)}\frac{\p}{\p
  z_1}+\lambda_{(511)}\frac{\p}{\p z_n}+\lambda_{(611)}\frac{\p}{\p w}\big).
\end{split}\end{equation*}

Then combining this with (\ref{0509eq1}), we  get
$\kappa_{(h)}=0,\sigma_{(h)}=0$ for $h\neq 1$ and
\begin{equation}\begin{split}\label{39eq200}
&\Gamma_{(1)}=\-A\cdot \sigma_{(1)}+\tau\cdot \lambda_{(111)},\
\Gamma_{(4)}=A\cdot \kappa_{(1)}+\tau\cdot \lambda_{(411)},\\
&\Gamma_{(2)}=-\-{B_{(1)}}\cdot \sigma_{(1)}+\tau\cdot
\lambda_{(211)},\
\Gamma_{(3)}=\-{C_{(1)}}\cdot \sigma_{(1)}+\tau\cdot \lambda_{(311)},\\
&\Gamma_{(5)}=-{B_{(1)}}\cdot \kappa_{(1)}+\tau\cdot
\lambda_{(511)},\ \Gamma_{(6)}={C_{(1)}}\cdot \kappa_{(1)}+\tau\cdot
\lambda_{(611)}.
\end{split}\end{equation}
We  get from  the first line in (\ref{39eq200}) that
$$
\-A\sigma_{(1)}=\Gamma_{(1)}-\tau\cdot\lambda_{(111)},\
A\kappa_{(1)}=\Gamma_{(4)}-\tau\cdot \lambda_{(411)}.
$$
Multiplying $\-A$ and $A$ to  the second and the third lines of the
equations in (\ref{39eq200}), respectively, and using the just
obtained relations, we have over ${\cal O}$
\begin{equation*}\begin{split}
&\-A\Gamma_{(2)}=-\-{B_{(1)}}\cdot
(\Gamma_{(1)}-\tau\cdot\lambda_{(111)})+\-A\tau\cdot
\lambda_{(211)},\\
 &\-A\Gamma_{(3)}=\-{C_{(1)}}\cdot
(\Gamma_{(1)}-\tau\cdot\lambda_{(111)})+\-A\tau\cdot
\lambda_{(311)},\\
&A\Gamma_{(5)}=-{B_{(1)}}\cdot
(\Gamma_{(4)}-\tau\cdot\lambda_{(411)})+A\tau\cdot
\lambda_{(511)},\\
 &A\Gamma_{(6)}={C_{(1)}}\cdot
(\Gamma_{(4)}-\tau\cdot\lambda_{(411)})+A\tau\cdot \lambda_{(611)}.
\end{split}\end{equation*}
Rearranging the terms, we get over ${\cal O}$
\begin{equation}\begin{split}\label{717eq3}
\big(\-A\lambda_{(211)}+\lambda_{(111)}\cdot
     \-{B_{(1)}}\big)\cdot \tau=&\-A\Gamma_{(2)}+\Gamma_{(1)}\cdot \-{B_{(1)}},\\
\big(\-A\lambda_{(311)}- \lambda_{(111)}\cdot
     \-{C_{(1)}}\big)\cdot \tau=&\-A\Gamma_{(3)}-\Gamma_{(1)}\cdot
     \-{C_{(1)}},\\
\big(A\lambda_{(511)}+
     \lambda_{(411)}\cdot{B_{(1)}}\big)\cdot \tau=
     &A\Gamma_{(5)}+\Gamma_{(4)}\cdot {B_{(1)}},\\
\big(A\lambda_{(611)}- \lambda_{(411)}\cdot{C_{(1)}}\big)\cdot
    \tau=&A\Gamma_{(6)}-\Gamma_{(4)}\cdot
     {C_{(1)}}.
\end{split}\end{equation}
As before, from the first two equations in (\ref{717eq3}), we
obtain, near $0$, the following:
\begin{equation}  \begin{split}\label{717eq4}
  &(\-A\cdot\Gamma_{(2)}+\Gamma_{(1)}\cdot\-{B_{(1)}})\cdot (\-A\cdot \lambda_{(311)}-\lambda_{(111)}\cdot
  \-{C_{(1)}})\\
  =&(\-A\cdot\Gamma_{(3)}-\Gamma_{(1)}\cdot\-{C_{(1)}})
  \cdot (\-A\cdot \lambda_{{(211)}}+\lambda_{(111)}\cdot
  \-{B_{(1)}}).
\end{split}\end{equation}

At last, if both $[L_j,\-L_k]$ and $[L_1,T]$ are contained in the
span of $\{L_h,\-{L_h}\}_{1\leq h\leq n-1}$ and $T$. Then we have
$$
[L_k,T]=-\big[L_1,[\-{L_1},L_k]\big]-\big[\-{L_1},[L_k,L_1]\big]\in
\text{Span}\{\{L_h,\-{L_h}\}_{1\leq h\leq n-1},T\}.
$$

Summarizing the above, we have proved the following:

\begin{prop}\label{429prop1}
  Let $(M,0)$ be a $2n$-dimensional real manifold in $\mathbb{C}^{n+1}$
  defined by (\ref{429eq1}). Suppose $M$ is non-minimal at the CR
points near the origin. Then there is an open dense subset ${\cal
O}$ of $M$ near $0$ such that   the systems (\ref{717eq1}),
(\ref{717eq2}) and (\ref{717eq3}) are solvable over $ {\cal O}$ with
unknowns $\gamma$, $\hat{\gamma}$, $\tau$, respectively. In
particular, when $M$ is non-minimal at the CR points near the
origin,  we have (\ref{59eq01}), (\ref{59eq02}), and (\ref{717eq4})
near the origin.
\end{prop}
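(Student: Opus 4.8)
The plan is to recast the non-minimality hypothesis as a Frobenius integrability condition for the CR tangent bundle of $M$ enlarged by one real direction, and then to read off from that condition the three solvable systems and the three denominator-free identities in the statement. First I would introduce the type $(1,0)$ tangent vector fields $L_1,\dots,L_{n-1}$ as in (\ref{325eq2}), check directly that $L_j(-w+G+iE)=0$ and $L_j(\overline{-w+G+iE})=0$, and compute the brackets $[L_j,\overline{L_k}]$, recording their coefficients $\lambda_{(1jk)},\dots,\lambda_{(6jk)}$ as in (\ref{325eq3}) together with the conjugation symmetry (\ref{57eq8}). Substituting the normal form (\ref{429eq1}) then yields the leading-order expansions (\ref{326eq4}) and (\ref{326eq5}) for $A$, $B_{(j)}$, $C_{(j)}$ and for the $\lambda_{(\cdot)}$; these expansions are what makes all later elimination steps effective.

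Next I would isolate the open dense set ${\cal O}$ on which $T:=[L_1,\overline{L_1}]$ genuinely supplements the CR bundle. The heart of this is (\ref{326eq2}): assuming to the contrary that $[L_i,\overline{L_i}]\in\text{Span}\{L_j,\overline{L_j}\}_{j\le n-1}$ at a point, comparison of the $\partial/\partial z_l$ and $\partial/\partial\overline{z_l}$ coefficients forces all coefficients but the $i$-th to vanish and then gives $A\lambda_{(5ii)}+B_{(i)}\lambda_{(4ii)}=0$, which by (\ref{326eq4})--(\ref{326eq5}) reads $|w_i|^2+|w_n|^2+O(3)=0$; differentiating in $x_n$ and invoking the implicit function theorem shows that its solution set is a proper submanifold, a contradiction (and a proper analytic subvariety in the real-analytic case). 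Along the way one also records that ${\cal S}$ lies in a submanifold of real dimension at most $n$ and reduces to $\{0\}$ when no Bishop invariant is parabolic. Setting ${\cal O}=\bigcap_{i=1}^{n-1}{\cal O}_i\setminus{\cal S}$, the field $T$ is transverse to the CR bundle on ${\cal O}$.

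On ${\cal O}$ the Frobenius theorem says that non-minimality of the CR points is equivalent to (\ref{716eq1}): $[L_i,\overline{L_j}]$ and $[[L_i,\overline{L_j}],L_k]$ all lie in $\text{Span}\{\{L_h,\overline{L_h}\}_{h\le n-1},T\}$. I would expand these memberships by comparing coefficients of the coordinate fields. For $[L_j,\overline{L_k}]$ this splits into the cases $j,k\ne1$ and $j=1,k\ne1$, giving the systems (\ref{39eq1}) and (\ref{39eq2}) in the unknowns $\alpha_l,\beta_l$ and $\gamma$ (resp. $\hat\gamma$); eliminating $\alpha_1,\alpha_j,\beta_1,\beta_k$ by multiplying the appropriate lines by $A$ or $\overline{A}$ collapses these to the four-equation systems (\ref{717eq1}), (\ref{717eq2}). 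Repeating the recipe on the explicit bracket $[L_1,T]$ of (\ref{0509eq1})--(\ref{1.25}), using $[L_1,T]\in\text{Span}\{\{L_h,\overline{L_h}\},T\}$, produces (\ref{39eq200}) and then, after eliminating $\kappa_{(1)},\sigma_{(1)}$, the system (\ref{717eq3}) in the unknown $\tau$. No separate treatment of $[L_k,T]$ is needed: by the Jacobi identity it equals $-[L_1,[\overline{L_1},L_k]]-[\overline{L_1},[L_k,L_1]]$, hence lies automatically in the span once the previous memberships hold. This establishes the solvability assertion on ${\cal O}$.

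Finally, from the first and last equations of (\ref{717eq1}) I would eliminate $\gamma$ to get (\ref{59eq01}), from those of (\ref{717eq2}) eliminate $\hat\gamma$ to get (\ref{59eq02}), and from the first two equations of (\ref{717eq3}) eliminate $\tau$ to get (\ref{717eq4}). Each resulting identity is a cross-product of the smooth functions $A,B_{(\cdot)},C_{(\cdot)},\lambda_{(\cdot)}$ with no denominators, so although derived only on the open dense set ${\cal O}$ it extends by continuity to all of $M$ near $0$. I expect the main obstacle to be the bookkeeping in the Frobenius step: arranging the coefficient comparisons so that elimination of the auxiliary unknowns $\alpha,\beta,\kappa,\sigma$ really produces closed systems in the single unknowns $\gamma,\hat\gamma,\tau$, and handling uniformly the degenerate sub-cases ($j$ or $k$ equal to $1$, or $j=k$); a secondary subtlety is making the exceptional loci provably proper in the merely smooth category, where only the implicit function theorem, rather than the structure theory of analytic sets, is available.
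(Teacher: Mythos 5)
Your proposal follows essentially the same route as the paper's own proof: introduce $L_1,\dots,L_{n-1}$ and $T=[L_1,\overline{L_1}]$, carve out ${\cal O}$ via the contradiction/implicit-function argument leading to $|w_i|^2+|w_n|^2+O(3)=0$, use Frobenius to get (\ref{716eq1}), compare coordinate coefficients of $[L_j,\overline{L_k}]$ and $[L_1,T]$ to produce (\ref{39eq1}), (\ref{39eq2}), (\ref{39eq200}), eliminate the auxiliary unknowns to get (\ref{717eq1})--(\ref{717eq3}), and cross-multiply pairs of equations to obtain the denominator-free identities (\ref{59eq01}), (\ref{59eq02}), (\ref{717eq4}), which then extend to $M$ near $0$ by continuity. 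The only cosmetic difference is phrasing; the decomposition, the key lemmas, and the order of elimination all match the paper.
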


We mention that (\ref{59eq01}), (\ref{59eq02}),  and (\ref{717eq4})
are what we need for the proof of Theorem \ref{thmm1}.

\section{Derivation of three basic equations and  statement of Theorem \ref{thm1}}

Let $(M,0)$ be a $(2n)$-dimensional smooth real submanifold in
$\mathbb{C}^{n+1}$ defined by (\ref{429eq1}). Suppose that $M$ is
non-minimal at its CR points near $0$ and the order of $E(z,\-z)$ is
$m(\geq 3)$.  We first  study three basic relations for terms in
$E^{(m)}$, by making use of  (\ref{59eq01}), (\ref{59eq02}) and
(\ref{717eq4}).

By (\ref{326eq4}) and (\ref{326eq5}), we have
\begin{equation*}\begin{split}
\-A\cdot \lambda_{(2jk)}+\-{B_{(k)}}\cdot
\lambda_{(1jk)}&=(w_n+O(2))\cdot(-\-w_n\delta_{jk}+O(2))+(w_k+O(2))\cdot(-\-w_j+O(2))\\
&=-(|w_n|^2\cdot \delta_{jk}+\-w_jw_k)+O(3),\\
 A\cdot \lambda_{(6jk)}-C_{(j)}\cdot \lambda_{(4jk)}&=(\-w_{n}+O(2))\cdot
\lambda_{(6jk)}-C_{(j)}\cdot (w_k+O(2))\\
&=\-w_n\cdot \lambda_{(6jk)}-w_kC_{(j)}+O(m+2)\\
&=\-w_n\cdot (-2iw_n)\cdot (\-{\Phi_{(j)}})_{\-k}+\-w_n\cdot
2iw_k(\-{\Phi_{(j)}})_{\-n}
-2iw_k\cdot \-{\Phi_{(j)}}+O(m+2)\\
&=-2i\-{\Psi_{(jk)}}+O(m+2).
\end{split}\end{equation*}
Substituting these relations to (\ref{59eq01}), we get
\begin{equation}\begin{split}
&(|w_n|^2\cdot \delta_{jk}+\-w_jw_k)\cdot (-\-{\Psi_{(11)}}+O(m+2))+O(m+4)\\
&=(|w_n|^2+|w_1|^2)\cdot (-\-{\Psi_{(jk)}}+O(m+2))+O(m+4).
\end{split}\end{equation}
Hence we obtain
\begin{equation}\begin{split}\label{2}
&(|w_n|^2\cdot \delta_{jk}+\-w_jw_k)\cdot
\-{\Psi_{(11)}}=(|w_n|^2+|w_1|^2)\cdot \-{\Psi_{(jk)}}.
\end{split}\end{equation}

Notice that (\ref{59eq02}) is the same as (\ref{59eq01}) except that
$j$ is replaced by $1$. By the same method  as that used to handle
(\ref{59eq01}), we  get the following equation which is the same as
(\ref{2}) except that $j$ is replaced by $1$.
\begin{equation}\begin{split}\label{3}
&(|w_n|^2\cdot \delta_{1k}+\-w_1w_k)\cdot
\-{\Psi_{(11)}}=(|w_n|^2+|w_1|^2)\cdot \-{\Psi_{(1k)}}.
\end{split}\end{equation}

 We next  derive an equation from (\ref{717eq4}). From
(\ref{326eq4}), (\ref{326eq5}) and (\ref{1.25}), we obtain
\begin{equation}  \begin{split}\label{gamma}
\Gamma_{(1)}=&(\-w_n+O(2))\cdot(-2\lambda_1+O(1))-(\-w_1+O(2))\cdot
O(1)=-2\lambda_1\-w_n+O(2),\\
\Gamma_{(2)}=&(\-w_n+O(2))\cdot O(1)-(\-w_1+O(2))\cdot
(-2\lambda_n+O(1))=2\lambda_n\-w_1+O(2),\\
\frac{1}{2i}\Gamma_{(3)}=&(\-w_n+O(2))\cdot\big(
             -\-{w_n}\Phi_1+\-{w_1}\Phi_n+O(m+1) \big)_1\\
         &-(\-w_1+O(2))\cdot\big(
             -\-{w_n}\Phi_1+\-{w_1}\Phi_n+O(m+1) \big)_n\\
=&-\-{w_n}\big(\-{w_n}\Phi_1-\-{w_1}\Phi_n \big)_1
         +\-{w_1}\big(\-{w_n}\Phi_1-\-{w_1}\Phi_n\big)_n+O(m+1).
\end{split}\end{equation}
Hence we have
\begin{equation}  \begin{split} \label{0506eq3}
\-A\Gamma_{(2)}+\-B_{(1)}\Gamma_{(1)}&=(w_n+O(2))\cdot
(2\lambda_n\-w_1+O(2))+(w_1+O(2))\cdot
(-2\lambda_1\-w_n+O(2))\\
&=2\lambda_nw_n\-w_1-2\lambda_1w_1\-w_n+O(3).
\end{split}\end{equation}
From (\ref{326eq4}), (\ref{psiexp}) and (\ref{gamma}), we obtain
\begin{equation}\begin{split}\label{0506eq4}
&\frac{1}{2i}(\-A\Gamma_{(3)}-\Gamma_{(1)}\-{C_{(1)}})\\
=&\big(w_n+O(2)\big)\cdot
\Big\{-\-{w_n}\big(\-{w_n}\Phi_1-\-{w_1}\Phi_n \big)_1
         +\-{w_1}\big(\-{w_n}\Phi_1-\-{w_1}\Phi_n\big)_n+O(m+1)\Big\}\\
         &-\big(-2\lambda_1\-w_n+O(2)\big)\cdot\big(-\Phi+O(m+1)\big)\\
=&-\-{w_n}\big(w_n\-{w_n}\Phi_1-w_n\-{w_1}\Phi_n\big)_1
+\-{w_1}\big(w_n\-{w_n}\Phi_1-w_n\-{w_1}\Phi_n\big)_n\\
&-\-{w_1}\big(\-{w_n}\Phi_1-\-{w_1}\Phi_n\big)-2\lambda_1
\-{w_n}\Phi+O(m+2)\\
=&-\-{w_n}\big(\Psi-\-{w_1}\Phi\big)_1+\-{w_1}\big(\Psi-\-{w_1}\Phi\big)_n
-\-{w_1}\big(\-{w_n}\Phi_1-\-{w_1}\Phi_n\big)-2\lambda_1
\-{w_n}\Phi+O(m+2)\\
=&-\-{w_n}\Psi_1+\-{w_1}\Psi_n+O(m+2).
\end{split}\end{equation}
By (\ref{326eq4}) and (\ref{326eq5}), we get
\begin{equation}\begin{split}\label{0506eq1}
\frac{1}{2i}(\-A\lambda_{(311)}-\lambda_{(111)}\-{C_{(1)}})
=&(w_n+O(2))\cdot\big\{
-\-w_n\Phi_{1}+\-w_1\Phi_{n}+O(m+1)\big\}\\
&-(-\-w_1+O(2))\cdot \big\{-\Phi+O(m+1)\big\}\\
=&-w_n\-{w_n}\Phi_{1}+w_n\-{w_1}\Phi_{n}-\-w_1 \Phi
+O(m+2)\\
=&-\Psi+O(m+2),\\
\-A\lambda_{(211)}+\-{B_{(1)}}\lambda_{(111)}=&(w_n+O(2))\cdot
(-\-w_n+O(2))+(w_1+O(2))\cdot (-\-w_1+O(2))\\
=&-(|w_n|^2+|w_1|^2)+O(3).
\end{split}\end{equation}

Substituting  (\ref{0506eq3})-(\ref{0506eq1}) into (\ref{717eq4}),
we obtain
\begin{equation*}  \begin{split}
&-\big(|w_n|^2+|w_1|^2+O(3)\big)\cdot
\big\{-\-{w_n}\Psi_1+\-{w_1}\Psi_n+O(m+2)\big\}\\
&=\big(2\lambda_nw_n\-w_1-2\lambda_1w_1\-w_n+O(3)\big)\cdot
\big\{-\Psi+O(m+2)\big\}.
\end{split}\end{equation*}
Hence we get
\begin{equation}  \begin{split}\label{5}
  &\big(|w_n|^2+|w_1|^2\big)\cdot
\big(\-{w_n}\Psi_1-\-{w_1}\Psi_n\big)+\big(2\lambda_nw_n\-w_1-2\lambda_1w_1\-w_n\big)\cdot
\Psi=0.
\end{split}\end{equation}

\medskip
Now, for convenience of the reader, we put together, in the
following, the equations in (\ref{2}), (\ref{3}) and (\ref{5}), that
will be all we need to use to prove Theorem \ref{thmm1}:

\begin{equation} \label{add-new}
\begin{split}
&(|w_n|^2\cdot \delta_{1k}+\-w_1w_k)\cdot
\-{\Psi_{(11)}}=(|w_n|^2+|w_1|^2)\cdot \-{\Psi_{(1k)}}, \ \  1<k<n \\
&(|w_n|^2\cdot \delta_{jk}+\-w_jw_k)\cdot
\-{\Psi_{(11)}}=(|w_n|^2+|w_1|^2)\cdot \-{\Psi_{(jk)}},\  1,\ j, k< n\\
&\big(|w_n|^2+|w_1|^2\big)\cdot
\big(\-{w_n}\Psi_1-\-{w_1}\Psi_n\big)+\big(2\lambda_nw_n\-w_1-2\lambda_1w_1\-w_n\big)\cdot
\Psi=0 \ \ \hbox{with}\\
&\Psi_{(jk)}=w_n\-{w_n}
(\Phi_{(j)})_k-{w_n}\-{w_k}(\Phi_{(j)})_n+\-{w_k}\cdot \Phi_{(j)},\
\ \Psi=\Psi_{(11)},\
 \hbox{where}\\
&  \Phi_{(j)}=w_nH_{\-j}-w_jH_{\-n},\ \Phi=\Phi_{(1)}, \ H=E^{(m)},\
 \  w_l=z_l+2\lambda_l\-z_l\  \text{for}\ 1\leq l\leq n.\\
\end{split}
\end{equation}
\bigskip

Notice that when $n=2$, the first two equations in (\ref{add-new})
disappear and we only have the third one to use.

\medskip
  We will use (\ref{add-new}) to
prove the following theorem, which includes Theorem \ref{thmm1} as
its special case:

\begin{thm}\label{thm1}
Let $(M,0)$ be a $(2n)$-dimensional smooth real submanifold in
$\mathbb{C}^{n+1}$ defined by (\ref{429eq1}). Suppose that there
exists an $i\in [1,n]$ such that $\lambda_i\neq 1/2$. We further
suppose that $M$ is non-minimal at its CR points near $0$. Then for
any positive integer $N$, there exists a holomorphic transform of
{\it the special form} $(z,w)\rightarrow (z'=z,\ w'=w+o(|z|^2,w))$
such that in the new coordinates, $M$ is defined by an equation of
the form: $w'=\rho(z',\-z')$ with $\Im\rho$ vanishing at least to
the order $N$.
\end{thm}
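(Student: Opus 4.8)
The plan is to argue by induction on the order to which $M$ is already flattened. Suppose $M$ is defined by (\ref{429eq1}) and is already flattened to order $m-1$, i.e.\ $\mathrm{Ord}(E)=m\ge 3$, and set $H=E^{(m)}$. The goal is to find a holomorphic change of the special form $w'=w+g(z)$ with $g$ a homogeneous polynomial of degree $m$ (or more precisely, of the form that only kills the top imaginary term) such that in the new coordinates $\mathrm{Ord}(E')\ge m+1$; iterating and taking a formal limit then proves Theorem \ref{thm1}. The heart of the matter is therefore to show that $H=E^{(m)}$ is, up to a correction realizable by such a coordinate change, forced to vanish. First I would invoke Proposition \ref{429prop1}: the non-minimality of $M$ at its nearby CR points yields the three basic equations (\ref{add-new}) satisfied identically (in $z,\bar z$) by the quantities $\Psi_{(jk)}$ and $\Psi$ built from $H$. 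So the problem is reduced to a purely algebraic one: classify the homogeneous polynomials $H(z,\bar z)$ of degree $m$ (real-valued, since $E$ is real, so $H=\bar H$) for which (\ref{add-new}) holds, and show that every such $H$ can be removed by a transformation $w\mapsto w+g(z)$.

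The key steps, in order, are as follows. First, expand everything in the "weighted" variables $w_l=z_l+2\lambda_l\bar z_l$; when $\lambda_l\ne 1/2$ the map $(z_l,\bar z_l)\mapsto(w_l,\bar w_l)$ is an $\mathbb{R}$-linear (indeed $\mathbb{C}$-bilinear in $(z,\bar z)$) change of coordinates on each factor, which is exactly why the hypothesis "some $\lambda_i\ne 1/2$" enters. Second, treat the third equation of (\ref{add-new}) as the main driver: it is a first-order PDE-type identity $(|w_n|^2+|w_1|^2)(\bar w_n\Psi_1-\bar w_1\Psi_n)+(2\lambda_n w_n\bar w_1-2\lambda_1 w_1\bar w_n)\Psi=0$ relating $\Psi$ and its derivatives in the $z_1,z_n$ directions; combined with the definition $\Psi=\Psi_{(11)}=w_n\bar w_n\Phi_1-w_n\bar w_1\Phi_n+\bar w_1\Phi$ and $\Phi=w_nH_{\bar 1}-w_1H_{\bar n}$, this should force $\Phi$, and then $H_{\bar 1},H_{\bar n}$, into a very restricted form. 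Third, use the first two equations of (\ref{add-new}) (available when $n>2$) to propagate the restriction from the distinguished pair of indices $(1,n)$ to all pairs $(j,k)$, so that every $H_{\bar j}$ is controlled. Fourth, integrate: conclude that $H(z,\bar z)$ must be (the real part of) a holomorphic polynomial, i.e.\ $H=2\Re(g(z))$ for a homogeneous $g$ of degree $m$ — equivalently $E^{(m)}$ is pluriharmonic — which is precisely the condition that the change $w\mapsto w+\sqrt{-1}\,g(z)$ (suitably normalized) eliminates $E^{(m)}$. Fifth, carry out that change of coordinates and check the lower-order terms are unaffected at order $\le m$, completing the induction step; then assemble the formal map as the infinite composition.

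I expect the main obstacle to be the third step — the passage from "$H$ is constrained along the $(1,n)$ axis" to "$H$ is constrained in all directions" — together with the bookkeeping needed to show the constraint is exactly pluriharmonicity modulo the allowed coordinate freedom, rather than something slightly weaker. The equations (\ref{add-new}) are quadratic in the $w$'s times linear in $H$ and its first derivatives, so extracting clean consequences requires carefully dividing by the (generically nonvanishing) factors $|w_n|^2+|w_1|^2$ and $w_n$, keeping track of where these vanish, and handling the degenerate sub-cases (e.g.\ when several $\lambda_j$ coincide, or $\lambda_1=0$, or $n=2$ where only the third equation survives). A secondary subtlety is that Theorem \ref{norm} (the normalization of the $m$-th order term, referenced in the introduction) is presumably needed to pin down the remaining gauge freedom before one can assert that the surviving normal form is literally zero; I would set up that normalization first and then show (\ref{add-new}) kills the normalized term. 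Finally, there is the routine but nonzero task of verifying that the sequence of degree-by-degree transformations converges in the formal (Krull) topology, which is standard since each one is of the form $w\mapsto w+O(|z|^m)$ with $m\to\infty$.
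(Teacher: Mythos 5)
Your high-level skeleton is right --- induct on the flattening order, invoke Proposition~\ref{429prop1} to obtain the system~(\ref{add-new}), treat the problem as a purely algebraic constraint on the homogeneous polynomial $H=E^{(m)}$, and then remove $H$ by a coordinate change of the allowed special form. But the pivotal step~4 of your plan, the assertion that the constraint forces $H$ to be pluriharmonic (i.e.\ $H=2\Re g(z)$ for holomorphic $g$), is false, and it is precisely where the real work of the theorem lives.

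The point you miss is that the allowed gauge transformations $w\mapsto w+B(z,w)$ with $B(z,w)=\sum_{|I|+2j=m}b_{(Ij)}z^I w^j$ include nontrivial $w$-dependence, and on $M$ one has $w=q(z,\-z)+O(3)$, so $\Im\bigl(B(z,q(z,\-z))\bigr)$ contributes, in degree $m$, terms that are nowhere near pluriharmonic. Concretely, take $m=3$, $\lambda_n=0$, and $B(z,w)=\sqrt{-1}\,z_n w$. Then $\Im(B(z,q))$ contains $|z_n|^2\Re(z_n)=\tfrac12(z_n^2\-{z_n}+z_n\-{z_n}^2)$, which has nonzero mixed second derivative $\partial^2/\partial z_n\partial\-{z_n}$, hence is not pluriharmonic. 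On the other hand, since the system~(\ref{add-new}) is invariant under such biholomorphisms (start from the trivially flattened $M_0:\{w=q(z,\-z)\}$, which is non-minimal at its CR points, and transform it), every polynomial arising this way \emph{does} satisfy~(\ref{add-new}). So the solution space of your constraint strictly contains the pluriharmonic polynomials, and your conclusion ``non-minimality $\Rightarrow$ pluriharmonic'' cannot hold. A transformation of the restricted form $w\mapsto w+\sqrt{-1}\,g(z)$ is therefore not enough to kill a general admissible $H$, which is why the theorem is stated with the more general special form $w'=w+o(|z|^2,w)$.

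What the paper actually does is complementary to, and more delicate than, what you propose. It first uses the \emph{full} gauge freedom in $B(z,w)$ to put $H$ into a normal form (Theorem~\ref{norm}), which annihilates a space of coefficients of $H$ whose description depends rather intricately on $m\pmod 6$ and on whether the chosen $\lambda_n$ vanishes; this is not a secondary tidying step but the first half of the argument, and establishing it already requires nontrivial linear algebra on binomial-coefficient matrices. Only then does it feed the normalized $H$ into~(\ref{add-new}) and run a long induction on the $\-{z_1}$-degree (Lemmas~\ref{lem01},~\ref{lem11}, together with Lemma~\ref{lem1} to propagate from the $(1,n)$-pair to arbitrary indices when $n\ge 3$), culminating in the nonsingularity of the matrices $S,R^\pm,N,T$ of Section~6 to conclude that the normalized $H$ vanishes. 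So the correct statement is that the space of $H$ allowed by~(\ref{add-new}) coincides with the orbit of $0$ under the $B(z,w)$-changes --- not with the pluriharmonic polynomials --- and proving this equality needs the full normalization/determinant apparatus, which your outline omits.
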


\begin {rem}

Let $M$ be a formal $(2n)$-manifold in ${\mathbb C}^{n+1}$ near $0$
defined by a formal equation of the form $w=q(z,\-{z})+O(|z|^3)$.
Here,  as before,
$q(z,\-z)=\sum_{i=1}^{n}(|z_i|^2+\lambda_i(z_i^2+\-z_i^2))$ with
$0\le \lambda_1,\cdots, \lambda_n< \infty$. Then we can similarly
define the formal vector fields $\{L_1, \cdots, L_{n-1}, T\}$. We
call that $M$ is formally non-minimal  if (\ref{59eq01}),
(\ref{59eq02}), and (\ref{717eq4}) hold in the formal sense. Then
the exact proof for Theorem \ref{thm1} can be used to prove the
following:
\end {rem}

\begin{thm}\label{thm2}
Let $(M,0)$ be a $(2n)$-dimensional formal submanifold in
$\mathbb{C}^{n+1}$ defined by (\ref{429eq1}). Suppose that there
exists an $i\in [1,n]$ such that $\lambda_i\neq 1/2$. Further assume
that $M$ is formally non-minimal. Then for any positive integer $N$,
there exists a holomorphic transform of {\it the special form}
$(z,w)\rightarrow (z'=z,\ w'=w+o(|z|^2,w))$ such that in the new
coordinates, $M$ is defined by an equation of the form:
$w'=\rho(z',\-z')$ with $\Im\rho$ vanishing at least to the order
$N$.
\end{thm}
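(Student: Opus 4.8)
\textbf{Proof proposal for Theorem \ref{thm2}.}
The plan is to observe that the proof of Theorem \ref{thm1} uses smoothness (or real analyticity) in exactly one place, and that this place disappears in the formal category. That place is Section 2: there, geometric non-minimality at the CR points of $M$ is converted, via the Frobenius theorem on the open dense subset ${\cal O}$ together with a limiting argument, into the three pointwise identities (\ref{59eq01}), (\ref{59eq02}) and (\ref{717eq4}) near $0$. For a formal $M$ this conversion is simply not needed, because \emph{formally non-minimal} is defined to mean that (\ref{59eq01}), (\ref{59eq02}) and (\ref{717eq4}) hold as identities of formal power series; the vector fields $L_1,\dots,L_{n-1}$ and $T$, and all of the coefficients $A$, $B_{(j)}$, $C_{(j)}$, $\lambda_{(ijk)}$, $\Gamma_{(i)}$ appearing there, are built polynomially from the formal defining function of $M$, so they make perfect sense formally.

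Granting those three identities, I would next check that the derivation carried out in Section 3 --- passing from (\ref{59eq01}), (\ref{59eq02}), (\ref{717eq4}) to the three basic equations collected in (\ref{add-new}) --- is entirely algebraic: one Taylor-expands the coefficients using (\ref{326eq4})--(\ref{326eq5}) and (\ref{1.25}), multiplies, and compares leading homogeneous parts, with no use of limits, of the density of ${\cal O}$, or of unique continuation. Hence, for a formal $M$ satisfying the hypotheses, (\ref{add-new}) holds verbatim as a system of identities among homogeneous polynomials in $z,\bar z$, with $H=E^{(m)}$, $w_l=z_l+2\lambda_l\bar z_l$, and $\Phi_{(j)}$, $\Psi_{(jk)}$ as defined there.

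Finally I would run the induction that constitutes the proof of Theorem \ref{thm1} (Sections 4--6): assuming $M$ has been flattened to order $m-1$, write $\mathrm{Ord}(E)=m\ (\geq 3)$ and $H=E^{(m)}$, apply the normalization of Theorem \ref{norm} to remove the residual freedom in coordinates of the special form $(z,w)\mapsto(z,\,w+o(|z|^2,w))$, and then feed the normalized $H$ into (\ref{add-new}); using the hypothesis $\lambda_i\neq 1/2$ for some $i$, conclude that the normalized $H$ vanishes identically, so that a further transformation of the special form raises $\mathrm{Ord}(E)$ to at least $m+1$, i.e. flattens $M$ to order $m$. Every ingredient here --- the normalization, the algebraic use of (\ref{add-new}), the construction of the $m$-th order correction --- is an assertion about finitely many homogeneous polynomials and truncations to finite order, and so is blind to the distinction between convergent and formal data. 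Iterating from $m=3$ up to $m=N$ and composing the finitely many transformations of the special form (whose composition is again of the special form) produces the required formal transformation flattening $\Im\rho$ to order $N$; since the flattening is only to finite order, no convergence statement is claimed or needed.

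The main obstacle, then, is not a new mathematical difficulty but a careful audit: one must verify that nowhere in Sections 4--6, nor in Theorem \ref{norm}, does convergence, real analyticity, or an open-dense-set/limiting argument slip in, i.e. that each step genuinely reduces to polynomial identities. I expect this to go through because the whole mechanism of Theorem \ref{thm1} --- slicing and Bishop disks are not used there, only the algebraic normalization and the equations (\ref{add-new}) --- is already formal in nature; the smooth/analytic input in the paper enters only later, in the geometric Theorems \ref{thmm2}--\ref{thmm3} via the Bishop-disk construction, which plays no role here.
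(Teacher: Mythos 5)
Your proposal is correct and matches the paper's approach: the remark immediately preceding Theorem \ref{thm2} in the paper states that once formal non-minimality is \emph{defined} to mean (\ref{59eq01}), (\ref{59eq02}), (\ref{717eq4}) hold as formal identities, "the exact proof for Theorem \ref{thm1} can be used," which is precisely your observation that Section 2 (the only place limits and open-dense-set arguments enter) is bypassed while Sections 3--6 and Theorem \ref{norm} are purely algebraic manipulations of homogeneous polynomials and finite-order truncations.
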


As a corollary, when $M$ is smooth near the non-degenerate CR
singular point $p=0$, defined by (\ref{429eq1}),  and when the set
$O$ of non-minimal CR points has $p=0$ in its closure, then one sees
that (\ref{59eq01}), (\ref{59eq02}), and (\ref{717eq4}) hold  in an
open subset of ${\mathbb C}^{n}$    that has $0$ in its boundary.
We see that they must hold for all $z$ in a neighborhood of $0\in
{\mathbb C}^{n}$ in the formal sense. Hence, we see that $M$ is
formally non-minimal as just defined. Thus we have the following:

\begin{cor}\label{thm3}
Let $(M,0)$ be a $(2n)$-dimensional smooth submanifold in
$\mathbb{C}^{n+1}$ defined by (\ref{429eq1}) near $0$. Suppose that
there exists an $i\in [1,n]$ such that $\lambda_i\neq 1/2$. Further
assume that the set of non-minimal CR points of $M$ forms an open
subset with $0$ in its closure . Then for any positive integer $N$,
there exists a holomorphic transform of {\it the special form}
$(z,w)\rightarrow (z'=z,\ w'=w+o(|z|^2,w))$ such that in the new
coordinates, $M$ is defined by an equation of the form:
$w'=\rho(z',\-z')$ with $\Im\rho$ vanishing at least to the order
$N$.
\end{cor}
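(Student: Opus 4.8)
The plan is to derive Corollary \ref{thm3} as a formal consequence of Theorem \ref{thm2}. The only gap between the hypothesis of the corollary and the hypothesis of Theorem \ref{thm2} is the notion of \emph{formal non-minimality}, so the core of the argument is to verify that a smooth $M$ defined by (\ref{429eq1}), whose set of non-minimal CR points contains $0$ in its closure, is automatically formally non-minimal; once this is established we simply invoke Theorem \ref{thm2}. First I would recall from the discussion leading to Proposition \ref{429prop1} that the functions $A$, $B_{(j)}$, $C_{(j)}$, and all the $\lambda_{(\ell jk)}$ and $\Gamma_{(\ell)}$ are smooth functions defined in a full neighborhood of $0$ in $M$ (equivalently, in a neighborhood of $0\in\mathbb{C}^n$ after using $(z,\bar z)$ as coordinates on $M$ via the graphing function), built algebraically out of $q$, $p$, $E$ and their derivatives. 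In particular, the three displayed scalar identities (\ref{59eq01}), (\ref{59eq02}), and (\ref{717eq4}) have left-hand and right-hand sides that are honest smooth functions on a neighborhood of $0$ in $M$.

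The second step is the unique-continuation / density argument. By hypothesis the set $O$ of non-minimal CR points is open and $0\in\overline{O}$. Proposition \ref{429prop1}, applied on the open subset $O$ (which we may shrink so that $O\subset M\setminus\mathcal{S}$ and $O\cap\mathcal{O}$ is still open and nonempty, using that $\mathcal{O}$ is open dense), tells us that (\ref{59eq01}), (\ref{59eq02}), and (\ref{717eq4}) hold on $O\cap\mathcal{O}$, hence on an open subset $V$ of $M$ with $0\in\overline V$. Each such identity asserts the vanishing of a smooth function $\chi$ on $V$. Now I use the fact that $\chi$ is a polynomial expression in $z,\bar z$ with coefficients that are smooth functions (from $p,E$), and that the \emph{formal Taylor series at $0$} of such a $\chi$ is determined by the jets of $p$ and $E$. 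Since $\chi\equiv 0$ on an open set $V$ accumulating at $0$, all partial derivatives of $\chi$ vanish at $0$: indeed for any multi-index we may differentiate along $V$ and pass to the limit $q_0\to 0$ through $V$. Therefore the formal power series of $\chi$ at $0$ is identically zero, i.e. (\ref{59eq01}), (\ref{59eq02}), and (\ref{717eq4}) hold in the formal sense at $0$. This is exactly the definition of $M$ being formally non-minimal given in the Remark preceding Theorem \ref{thm2}.

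With formal non-minimality in hand, and the standing assumption that some $\lambda_i\neq 1/2$, Theorem \ref{thm2} applies verbatim and produces, for every positive integer $N$, a holomorphic (formal) transformation of the special form $(z,w)\mapsto(z'=z,\ w'=w+o(|z|^2,w))$ flattening $\Im\rho$ to order $N$; truncating this formal transformation at order $N$ gives a genuine holomorphic polynomial transformation with the same effect, which is what the corollary claims. The main obstacle is really a bookkeeping subtlety rather than a deep point: one must be careful that the open set $V$ on which the three identities are known to hold is not artificially thinned out when intersecting $O$ with the open dense set $\mathcal{O}$ from Proposition \ref{429prop1} — but since $\mathcal{O}$ is open and dense and $O$ is open with $0\in\overline O$, the intersection $O\cap\mathcal{O}$ is still open, nonempty, and has $0$ in its closure, so the limiting argument at $0$ goes through. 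A second minor point to handle cleanly is that the parametrization of $M$ near $0$ by $(z,\bar z)$ is smooth and the identities, after substitution of the graphing function $w=G+iE$, become identities among smooth functions of $z,\bar z$ alone, so that "vanishing on an open set accumulating at $0$" legitimately forces vanishing of the full formal Taylor expansion.
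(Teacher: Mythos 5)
Your proposal is correct and follows essentially the same route as the paper: derive (\ref{59eq01}), (\ref{59eq02}), and (\ref{717eq4}) on an open set $V$ with $0\in\overline V$ from the non-minimality on $O$, use the smoothness of the quantities and the vanishing on $V$ to force all Taylor coefficients at $0$ to vanish (i.e., the identities hold formally at $0$), conclude $M$ is formally non-minimal, and invoke Theorem \ref{thm2}. The paper states this in a couple of sentences, while you supply the missing details (the elementary continuity-of-derivatives argument and the check that $O\cap\mathcal{O}$ is still open with $0$ in its closure), which are indeed the points that need verifying.
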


We notice that under the hypothesis in  Corollary \ref{thm3}, when
$M$ is real analytic, it is easy to see that $M$ must be non-minimal
at any CR point near $p=0$. Hence, Corollary \ref{thm3} does not
give any new result in the real analytic category.

\section{Formal flattening  near a CR singular point,   Proof of Theorem \ref{thm1}---Part I}

Before reading $\S 4$-$\S 6$, the reader is suggested  to read the
Appendix in $\S 8$ for the proof in the special case when $n=2,
m=3$, to see  basic ideas behind all these complicated computations.

 We use
the notations and definitions set up so far for the proof of Theorem
\ref{thm1}. Due to the complicated nature of the argument, we
divided our proof into two parts. In this part, we give an initial
normalization by using biholomorphic change of coordinates without
involving the non-minimality at CR points.

 Throughout this and the next sections, we also set up the
following convention:
\begin{equation}\label{conv}
  E_{(I,J)}=0 \ \text{if one of the indices in $I$ or $J$ is negative.}
\end{equation}
For   quantities $a,b_1,\cdots,b_t$, we write
$$
a=\mathcal{F}\{b_1,\cdots,b_t\}\ \text{or}\
a=\mathcal{F}\{(b_j)_{1\leq j\leq t}\}
$$
if $a=\sum_{j=1}^{t}(c_jb_j+d_j\-{b_j})$. Here, when $b_j's$ are
complex numbers, we require that $c_j, d_j$ are complex numbers.
When $a, b_j$ are polynomials in $(z,\-{z})$, we require $c_j, d_j$
are polynomials in $(z,\-{z})$, too.

For $\S 4-\S 6$, we make the range of indices $j, k\in [2,n-1]$ if
$n\ge 3$. For any homogeneous polynomial $\chi(z,\-{z})$ of degree
$k\ge 1$, write
$$\chi=\sum_{\a\ge 0,\b\ge 0, |\a|+|\b|=k}H_{(\a,\b)}z^{\a}\-{z^{\b}}.$$

Set
\begin{equation}\label{xieta}
\begin{split}
&\xi=2\lambda_n,\ \eta=2\lambda_1,\ \theta=1-\xi^2,\\
&H_{[tsrh]}=H_{(te_n+se_1,re_n+he_1)}\ \text{for}\ t+s+r+h=m,\\
&\Phi_{[tsrh]}=\Phi_{(te_n+se_1,re_n+he_1)}\ \text{for}\ t+s+r+h=m,\\
&\Psi_{[tsrh]}=\Psi_{(te_n+se_1,re_n+he_1)}\ \text{for}\
t+s+r+h=m+1.
\end{split}
\end{equation}
Here $e_j$ is the $n$-tuple with its $j^{\rm{th}}$ element $1$ and
zero for the others. By (\ref{phiexp}), we have
\begin{equation}\label{atsrh}
\begin{split}
\Phi_{[tsrh]}=&\xi(h+1)H_{[ts(r-1)(h+1)]}+(h+1)H_{[(t-1)sr(h+1)]}\\
         &-(r+1)H_{[t(s-1)(r+1)h]}-\eta(r+1)H_{[ts(r+1)(h-1)]}.
\end{split}
\end{equation}
From (\ref{psiexp}), we obtain
\begin{equation}\label{btsrh}
\begin{split}
\Psi_{[tsrh]}=&(s+1)\big\{\xi\Phi_{[t(s+1)(r-2)h]}+(1+\xi^2)\Phi_{[(t-1)(s+1)(r-1)h]}
+\xi\Phi_{[(t-2)(s+1)rh]}\big\}\\
&-\xi(t+1)\Phi_{[(t+1)s(r-1)(h-1)]}
-t\Phi_{[tsr(h-1)]}-\xi\eta(t+1)\Phi_{[(t+1)(s-1)(r-1)h]}\\
& -\eta t\Phi_{[t(s-1)rh]}+\Phi_{[tsr(h-1)]}+\eta\Phi_{[t(s-1)rh]}.
\end{split}
\end{equation}
Notice that $\Phi_{[tsrh]}$ are $\Psi_{[t's'r'h']}$ are understood
as $0$ if one of their indices is negative.

Collecting the coefficients of
$z_n^tz_1^{s-1}\-{z_n}^{r+3}\-{z_1}^h$ for $t\geq 0$, $s\geq 1$,
$r\geq -3$ and $h=m+1-t-s-r\geq 0$ in (\ref{5}), we get
\begin{equation}\label{bequation}
\begin{split}
&s\big\{\xi
\Psi_{[tsrh]}+(2\xi^2+1)\Psi_{[(t-1)s(r+1)h]}+(\xi^3+2\xi)\Psi_{[(t-2)s(r+2)h]}
+\xi^2\Psi_{[(t-3)s(r+3)h]}\big\}\\
&+s\eta\big\{\Psi_{[ts(r+2)(h-2)]}+\xi
\Psi_{[(t-1)s(r+3)(h-2)]}\big\}
+(1+\eta^2)(s-1)\big\{\Psi_{[t(s-1)(r+2)(h-1)]}\\
&+\xi\Psi_{[(t-1)(s-1)(r+3)(h-1)]}\big\}
+(s-2)\eta\big\{\Psi_{[t(s-2)(r+2)h]}+\xi\Psi_{[(t-1)(s-2)(r+3)h]}\big\}\\
&-\big\{(t+1)\xi \Psi_{[(t+1)(s-1)(r+1)(h-1)]}
+t(1+\xi^2)\Psi_{[t(s-1)(r+2)(h-1)]}+(t-1)\xi \Psi_{[(t-1)(s-1)(r+3)(h-1)]}\big\}\\
&-\eta\big\{(t+1)\xi
\Psi_{[(t+1)(s-2)(r+1)h]}+t(1+\xi^2)\Psi_{[t(s-2)(r+2)h]}
+(t-1)\xi \Psi_{[(t-1)(s-2)(r+3)h]}\big\}\\
&-(t+1)\big\{\eta \Psi_{[(t+1)(s-1)(r+3)(h-3)]}+(2\eta^2+1)\Psi_{[(t+1)(s-2)(r+3)(h-2)]}\\
&     +(\eta^3+2\eta)\Psi_{[(t+1)(s-3)(r+3)(h-1)]}+\eta^2\Psi_{[(t+1)(s-4)(r+3)h]}\big\}\\
&+\xi\big\{\xi
\Psi_{[t(s-1)(r+2)(h-1)]}+\Psi_{[(t-1)(s-1)(r+3)(h-1)]}\big\}
+\xi\eta\big\{\xi \Psi_{[t(s-2)(r+2)h]}+\Psi_{[(t-1)(s-2)(r+3)h]}\big\}\\
&-\eta\big\{\eta \Psi_{[t(s-1)(r+2)(h-1)]}+\xi\eta
\Psi_{[(t-1)(s-1)(r+3)(h-1)]}+\Psi_{[t(s-2)(r+2)h]}+\xi
\Psi_{[(t-1)(s-2)(r+3)h]}\big\}=0.
\end{split}
\end{equation}

Notice that (\ref{bequation}) takes  the following form:
\begin{equation*}
\begin{split}
s\big\{&\xi \Psi_{[tsrh]}+(2\xi^2+1)\Psi_{[(t-1)s(r+1)h]}+(\xi^3+2\xi)\Psi_{[(t-2)s(r+2)h]}\\
&+\xi^2\Psi_{[(t-3)s(r+3)h]}\big\}+
\mathcal{F}\{(\Psi_{[t's'r'h']})_{s'+h'\leq s+h-2,s'\leq s,h'\leq
h}\}=0.
\end{split}
\end{equation*}
Thus for $s\geq 1$, by keeping use this property, we can inductively
get
\begin{equation}\label{beq}
\begin{split}
\Psi_{[tsrh]}=\mathcal{F}\{(\Psi_{[t's'r'h']})_{s'+h'\leq
s+h-2,s'\leq s,h'\leq h}\}.
\end{split}
\end{equation}
Substituting (\ref{btsrh}) into (\ref{beq}), we get, for $s\geq 1$,
the following
\begin{equation*}
\begin{split}
&(s+1)\big\{\xi\Phi_{[t(s+1)(r-2)h]}+(1+\xi^2)\Phi_{[(t-1)(s+1)(r-1)h]}
+\xi\Phi_{[(t-2)(s+1)rh]}\big\}\\
&=\mathcal{F}\{(\Phi_{[t's'r'h']})_{s'+h'\leq s+h-1,s'\leq
s+1,h'\leq h}\}.
\end{split}
\end{equation*}
Hence for $s\geq 2$, we can inductively obtain
\begin{equation}\label{111}
\begin{split}
\Phi_{[tsrh]}=\mathcal{F}\{(\Phi_{[t's'r'h']})_{s'+h'\leq
s+h-2,s'\leq s,h'\leq h}\}.
\end{split}
\end{equation}
Substituting (\ref{atsrh}) into (\ref{111}), we get, for $s\geq 2$
and $h\geq0$, the following
\begin{equation*}
\begin{split}
\xi(h+1)H_{[ts(r-1)(h+1)]}+(h+1)H_{[(t-1)sr(h+1)]}=\mathcal{F}\{(H_{[t's'r'h']})_{s'+h'\leq
s+h-1,s'\leq s,h'\leq h+1}\}.
\end{split}
\end{equation*}
Hence for $s\geq 2$ and $h\geq 1$, we inductively get that
\begin{equation*}\begin{split}
H_{[ts(m-t-s-h)h]}=\mathcal{F}\big\{(H_{[t's'(m-t'-s'-h')h']})_{s'+h'\leq
s+h-2,s'\leq s,h'\leq h}\big\}.
\end{split}\end{equation*}
Notice that $H_{[tsrh]}=\-{H_{[rhts]}}$.  Keeping applying the above
until the assumption that $s\ge 2$ and $h\ge 1$ do not hold anymore,
  we can  inductively get the following crucial formula:
\begin{equation}\begin{split}\label{111ind}
H_{[ts(m-t-s-h)h]}=\mathcal{F}\big\{(H_{[t'1(m-t'-2)1]})_{1\leq
t'\leq m-2},(H_{[t'0(m-t'-i)i]})_{i\leq \max(s,h),0\leq t'\leq
m-i}\big\}.
\end{split}\end{equation}

Substituting (\ref{phiexp}) and (\ref{psiexp}) into (\ref{3}), we
get the following equation:
\begin{equation}\begin{split}\label{33}
&\-w_1w_k\cdot(|w_n|^2H_{1\-1}-w_n\-w_1H_{n\-1}-w_1\-w_nH_{1\-n}+|w_1|^2H_{n\-n})\\
&=(|w_n|^2+|w_1|^2)\cdot(|w_n|^2H_{1\-k}-w_n\-w_1H_{n\-k}-w_k\-w_nH_{1\-n}+w_k\-w_1H_{n\-n}).
\end{split}\end{equation}
Notice that it takes  the form
\begin{equation*}
\begin{split}
-|w_n|^4H_{1\-k}+\sum\limits_{i_n+j_n\leq
3}z_1^{i_1}z_k^{i_k}z_n^{i_n}\-{z_1}^{j_1}\-{z_k}^{j_k}\-{z_n}^{j_n}\frac{
\p^{h_1}}{\p z_1^{h_1}}\frac{ \p^{h_k}}{\p z_k^{h_k}}\frac{
\p^{h_n}}{\p z_n^{h_n}}\frac{ \p^{l_1}}{\p \-{z_1}^{l_1}}\frac{
\p^{l_k}}{\p \-{z_k}^{l_k}}\frac{ \p^{l_n}}{\p \-{z_1}^{l_n}}H=0,
\end{split}
\end{equation*}
where $i_1+i_k+i_n+j_1+j_k+j_n-(h_1+h_k+h_n+l_1+l_k+l_n)=2$. Hence
we get
\begin{equation}\begin{split}\label{1kind}
H_{(te_n+e_1+I,re_n+e_k+J)}=\mathcal{F}\big(\{H_{(t'e_n+I',r'e_n+J')}\}_{t'+r'>t+r
}\big).
\end{split}\end{equation}

Similarly, substituting (\ref{phiexp}) and (\ref{psiexp}) into
(\ref{2})  and setting $j=k(\neq 1)$, we get the following equation:
\begin{equation}\begin{split}\label{11}
&(|w_n|^2+|w_k|^2)\cdot(|w_n|^2H_{1\-1}-w_n\-w_1H_{n\-1}-w_1\-w_nH_{1\-n}+|w_1|^2H_{n\-n})\\
&=(|w_n|^2+|w_1|^2)\cdot(|w_n|^2H_{k\-k}-w_n\-w_kH_{n\-1}-w_1\-w_nH_{k\-n}+|w_k|^2H_{n\-n}).
\end{split}\end{equation}
Similar to (\ref{1kind}), for any fixed $s,h\geq 1$,  we  get
\begin{equation}\begin{split}\label{22}
&shH_{(te_n+se_k,re_n+he_k)}-H_{(te_n+(s-1)e_k+e_1,re_n+(h-1)e_k+e_1)}\\
&+\mathcal{F}\{(H_{(t'e_n+I,r'e_n+J)})_{t'+r'>t+r}\}=0.
\end{split}\end{equation}

Next we prove the following lemma, which is only needed for $n\geq
3$.

\begin{lem}\label{lem1} Suppose that $n\geq 3$. For any given $j$ with $j\geq 1$ and any given
$I=(i_1,\cdots,i_n)$ with $i_1=i_k=i_n=0$, suppose that
$H_{(te_n+se_1,re_n+he_1+I+(j'-2)e_k)}=0$ for
 all $t,s,r,h\geq 0$, $j'\leq j$ and $t+s+r+h=m+2-|I|-j'$. Then
 \begin{equation}\begin{split}
&H_{(te_n+se_1,re_n+he_1+I+je_k)}
=\mathcal{F}\{(H_{(t'e_n,r'e_n+h'e_1+I+je_k)})_{r'\geq t'}\},\\
&\text{where}\ t+s+r+h=t'+r'+h'=m-|I|-j.
\end{split}\end{equation}
\end{lem}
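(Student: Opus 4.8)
The plan is to mimic, in the $k$-direction, the inductive scheme that was already used in deriving (\ref{111ind}) from the first two equations of (\ref{add-new}), but now keeping track of the extra multi-index $I$ and the power $j$ of $z_k$. Concretely, I would expand equation (\ref{2}) (equivalently (\ref{11}) with $j=k\neq 1$) after substituting (\ref{phiexp}) and (\ref{psiexp}), and collect the coefficient of a suitable monomial $z_n^t z_1^s z_k^{j} \-{z_n}^{r}\-{z_1}^{h}\-{z_k}^{*}\cdot z^{I'}\-{z}^{J'}$, organized so that the appearance of the pure factor $|w_n|^4 = (z_n+\xi\-{z_n})^2(\-{z_n}+\xi z_n)^2$ on the left isolates the top term $H_{(te_n+se_1,re_n+he_1+I+je_k)}$. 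The hypothesis that all the ``lower'' coefficients $H_{(\cdots+I+(j'-2)e_k)}$ with $j'\le j$ vanish is exactly what is needed to kill the terms in (\ref{11}) carrying $\-{z_k}$-factors coming from the $|w_k|^2$ pieces and from differentiations $H_{k\-k}$, $H_{k\-n}$, so that the remaining relation is one purely among the coefficients $H_{(\cdots s e_1 \cdots h e_1 \cdots + j e_k)}$ with no $z_k$ or $\-{z_k}$ differentiations, i.e. in the variables $(t,s,r,h)$ only, with $t+s+r+h = m-|I|-j$.

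Once that reduction is in place, the situation is formally identical to the one handled in (\ref{bequation})--(\ref{111ind}): I would run the same three-stage descent. First, from the analogue of (\ref{bequation}) extract, for $s\ge 1$, a relation of the shape $\Psi_{[\cdots]}=\mathcal{F}\{(\Psi_{[\cdots]})_{s'+h'\le s+h-2}\}$ and iterate to get the analogue of (\ref{beq}); then push this through (\ref{btsrh}) to get the analogue of (\ref{111}) for $\Phi$'s with $s\ge 2$; then through (\ref{atsrh}) to lower $s$ and $h$ on the $H$'s, exactly as in the passage to (\ref{111ind}). The endpoint of this descent is that every $H_{(te_n+se_1,re_n+he_1+I+je_k)}$ is an $\mathcal{F}$-combination of those with $s=0$ and (by the symmetry $H_{[tsrh]}=\-{H_{[rhts]}}$, which persists in the presence of the self-conjugate index data) with the roles of $(t,s)$ and $(r,h)$ normalized so that $r'\ge t'$ and $h'=0$ can be arranged on one side, giving precisely the claimed form $\mathcal{F}\{(H_{(t'e_n,r'e_n+h'e_1+I+je_k)})_{r'\ge t'}\}$.

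The main obstacle I anticipate is bookkeeping, not a new idea: one must verify that the extra ``spectator'' indices $I$ (with $i_1=i_k=i_n=0$) genuinely pass through every one of the substitutions (\ref{atsrh}), (\ref{btsrh}) and the coefficient extraction from (\ref{bequation}) without interacting with the descent, so that the hypothesis ``$H_{(\cdots+I+(j'-2)e_k)}=0$ for $j'\le j$'' is exactly strong enough --- no more, no less --- to suppress all $\-{z_k}$- and $z_k$-derivative terms produced along the way. In particular I would have to be careful that the nonvanishing of $\theta=1-\xi^2$ (or more precisely of the relevant leading coefficients built from $\xi$) is what makes the successive linear systems solvable at each stage, just as $\lambda_i\neq 1/2$ was used earlier; if $\xi$ is such that some pivot degenerates one must instead use the symmetric relation obtained by swapping $1\leftrightarrow n$. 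After that, the only remaining point is to convert the descent's output (coefficients with $s=0$) into the stated normal form by the conjugation symmetry, which is immediate. I expect the proof to be a page of indexed computation following this template, with no genuinely new difficulty beyond the one already met in establishing (\ref{111ind}).
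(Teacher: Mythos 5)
You chose the wrong one of the three basic equations, and this is not a bookkeeping issue but a structural one. The paper's proof of Lemma \ref{lem1} starts from (\ref{33}), i.e.\ from equation (\ref{3}), not from (\ref{2})/(\ref{11}) as you propose. This choice matters. In (\ref{33}) the only appearance of the index $k$ is through the single multiplicative factor $w_k$ on the left and through the first-order derivatives $H_{1\bar k},\,H_{n\bar k}$ and the factor $w_k$ on the right; never through a $z_k$-derivative and never without a compensating factor of $w_k$ or $\p_{\bar z_k}$. Consequently, after projecting onto $\bar z^{\,I+(j-1)e_k}P^{(m+3-|I|-j)}_{(1n\bar1\bar n)}$, every term except the $H_{1\bar k}$ and $H_{n\bar k}$ ones forces the $\bar z_k$-exponent inside $H$ down to $j-2$ at most (because $w_k$ contributes the missing $\bar z_k$ once $z_k$-powers are projected out), and that is \emph{exactly} what the hypothesis kills. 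In (\ref{11}), by contrast, the left side contains $|w_n|^2(|w_n|^2H_{1\bar1}-\cdots)$ which carries no $k$-factor at all, so its contribution at $\bar z_k$-degree $j-1$ (or $j$) comes from $H$ itself with $\bar z_k$-exponent $j-1$ (or $j$), i.e.\ $j'=j+1$ or $j'=j+2$, outside the hypothesis; and the right side contains $H_{k\bar k}$ and $H_{k\bar n}$, whose $\p_{z_k}$-derivative requires an $H$-coefficient whose \emph{first} multi-index has an $e_k$-component, which is not of the form $t'e_n+s'e_1$ and hence is controlled neither by the hypothesis nor by the reality symmetry $H_{(\a,\b)}=\overline{H_{(\b,\a)}}$. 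So the claim that the hypothesis ``is exactly what is needed to kill the terms\ldots coming from the $|w_k|^2$ pieces and from differentiations $H_{k\bar k}$, $H_{k\bar n}$'' does not hold, and the reduction you want is not available from (\ref{2})/(\ref{11}). (Relation (\ref{11}) is indeed used in the paper, but later, to derive (\ref{22}), which serves a different purpose in the endgame of the proof of Theorem \ref{thm1}.)

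A second, more structural mismatch: once the paper has the projected relation (\ref{326eq3}), it does \emph{not} re-run a $\Psi\to\Phi\to H$ cascade as you propose. That three-stage descent belongs to equation (\ref{5}) and is used for Proposition \ref{lem3}, not for Lemma \ref{lem1}. Here the paper works directly with the first-order $H$-recurrences (\ref{227eq3}), where the pivot is the coefficient $s\geq1$ on the left; $\theta=1-\xi^2\neq0$ plays no role whatsoever in this lemma, so the caveat you raise about pivots degenerating is irrelevant. The descent in $s$ gives (\ref{226eq4}) (which reduces to $s'=0$ while keeping $h'$ free, with a monotonicity constraint $r'-t'\geq r-t$); the swap $z_1\leftrightarrow z_n$ gives (\ref{226eq04}); combining the two as in (\ref{0509eq3}) forces $s'=0$ together with $r'\geq t'$. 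Note in particular that the conclusion has $h'$ free, not $h'=0$ as you wrote. If you want to repair your argument, replace (\ref{2})/(\ref{11}) by (\ref{3})/(\ref{33}) (equivalently, take the index called $j$ in (\ref{2}) to be $1$, not $k$), project against the fixed $\bar z^{\,I+(j-1)e_k}$ factor as above, and then run an induction on $s$ directly on $H$ followed by the $1\leftrightarrow n$ symmetry.
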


\begin{proof}[Proof of Lemma \ref{lem1}]
Set
\begin{equation*}  \begin{split}
P^{(l)}&=\{\text{homogeneous  polynomials of degree}\ l\}\ \text{and}\\
P^{(l)}_{(1n\-1\-n)}&=\{\text{homogeneous  polynomials   of degree
$l$ in $(z_1,z_n,\-{z_1}, \-{z_n})$} \}.
\end{split}\end{equation*}

In (\ref{33}), the coefficients of  terms other than
$(|w_n|^2+|w_1|^2)(|w_n|^2H_{1\-k}-w_n\-{w_1}H_{n\-k})$, when
projected to the space of  polynomials of the form:
$\-z^{I+(j-1)e_k}P^{(m+3-|I|-j)}_{(1n\-1\-n)}$, is a linear
combination of $H_{(t'e_n+s'e_1,r'e_n+h'e_1+I+(j'-2)e_k)}$ with
$j'\leq j$, which are $0$ by our assumption. Here and in what
follows we equip the space of polynomials in $(z,\-{z})$ with
$\{z^\a\-{z^\b}\}$ as an ortho-normal
 basis. Hence by considering
terms projected to the space
$\-z^{I+(j-1)e_k}P^{(m+3-|I|-j)}_{(1n\-1\-n)}$ in (\ref{33}), we get
$$
|w_n|^2(|w_n|^2+|w_1|^2)H_{1\-k}-w_n\-{w_1}(|w_n|^2+|w_1|^2)H_{n\-k}=0\
\text{mod}\left(
\{\-z^{I+(j-1)e_k}P^{(m+3-|I|-j)}_{(1n\-1\-n)}\}^c\right).
$$
Here for a subspace $A$ of the space of polynomials, we write $A^c$
for its compliment.
 Namely, we have
\begin{equation}\label{326eq3}
(\-z_n+2\lambda_nz_n)H_{1\-k}-(\-z_1+2\lambda_1z_1)H_{n\-k}=0\
\text{mod}\left(
\{\-z^{I+(j-1)e_k}P^{(m-|I|-j)}_{(1n\-1\-n)}\}^c\right ).
\end{equation}
Considering the coefficients of
$z_1^{s-1}{\-z}^{I+(j-1)e_k}\-z_1^{h}{\-z_n}^{t+r+1}$ and
$z_1^{s-1}z_n^t{\-z}^{I+(j-1)e_k}\-z_1^{h}{\-z_n}^{r+1}$,
respectively, with $r=m-t-s-|I|-j-h$, $t\geq 0$, $h\geq 0$, $s\geq
1$ in (\ref{326eq3}), we get
\begin{equation}\begin{split}\label{227eq3}
&sH_{(se_1,((t+r)e_n+he_1+I+je_k)}\\
=&H_{(e_n+(s-1)e_1,(t+r+1)e_n+(h-1)e_1+I+je_k)}
+2\lambda_1H_{(e_n+(s-2)e_1,(t+r+1)e_n+he_1+I+je_k)}, \ \text{and}\\
  &s\big(H_{(te_n+se_1,re_n+he_1+I+je_k)}+2\lambda_n
  H_{((t-1)e_n+se_1,(r+1)e_n+he_1+I+je_k)}\big)\\
=&(t+1)\big(H_{((t+1)e_n+(s-1)e_1,(r+1)e_n+(h-1)e_1+I+je_k)}
+2\lambda_1H_{((t+1)e_n+(s-2)e_1,(r+1)e_n+he_1+I+je_k)}\big).
\end{split}\end{equation}
Hence for $s\geq 1$, we obtain
\begin{equation}\begin{split}\label{29eq3}
H_{(te_n+se_1,re_n+he_1+I+je_k)}=\mathcal{F}
\{&(H_{(t'e_n+(s-1)e_1,r'e_n+(h-1)e_1+I+je_k)})_{r'-t'\geq r-t},\\
&( H_{(t'e_n+(s-2)e_1,r'e_n+he_1+I+je_k)})_{r'-t'\geq r-t}\}.
\end{split}\end{equation}

Next we prove by induction that
\begin{equation}\begin{split}\label{226eq4}
H_{(te_n+se_1,re_n+he_1+I+je_k)}=\mathcal{F}\{(H_{(t'e_n,r'e_n+h'e_1+I+je_k)})_{r'-t'\geq
r-t}\}.
\end{split}\end{equation}

In fact, the claim holds automatically  for $s=0$. If $s=1$,
(\ref{226eq4}) follows from (\ref{29eq3}). Now we suppose that
(\ref{226eq4}) holds for $s<s_0$, we can get by (\ref{29eq3}) that
\begin{equation}\begin{split}\label{29eq05}
H_{(te_n+s_0e_1,re_n+he_1+I+je_k)}&=\mathcal{F}
\{(H_{(t'e_n+(s_0-1)e_1,r'e_n+(h-1)e_1+I+je_k)})_{r'-t'>r-t},\\
&\hskip 1.8cm ( H_{(t'e_n+(s_0-2)e_1,r'e_n+he_1+I+je_k)})_{r'-t'\geq r-t}\}\\
&=\mathcal{F}\big\{(H_{(t'e_n,r'e_n+h'e_1+I+je_k)})_{r'-t'\geq
r-t}\}.
\end{split}\end{equation}
The last equality follows from our assumption. Hence (\ref{226eq4})
also holds for $s_0$. This finishes the proof of (\ref{226eq4}).

By interchanging the role of $z_1$ and $z_n$ in (\ref{226eq4}), we
can get
\begin{equation}\label{226eq04}
H_{(te_n+se_1,re_n+he_1+I+je_k)}
=\mathcal{F}\{(H_{(s'e_1,r'e_n+h'e_1+I+je_k)})_{h'-s'\geq h-s}\}.
\end{equation}
As a special case of (\ref{226eq04}) or (\ref{226eq4}), we obtain
the following:
\begin{equation}\begin{split}\label{0509eq3}
&H_{(t'e_n,r'e_n+h'e_1+I+je_k)}=\mathcal{F}\{(H_{(s''e_1,r''e_n+h''e_1+I+je_k)})_{h''-s''\geq h'}\}.\\
&H_{(s''e_1,r''e_n+h''e_1+I+je_k)}=\mathcal{F}\{(H_{(t'''e_n,r'''e_n+h'''e_1+I+je_k)})_{r'''-t'''\geq
r''}\}.
\end{split}\end{equation}

Now we  conclude from (\ref{226eq4}) and (\ref{0509eq3}) that
\begin{equation*}
H_{(te_n+se_1,re_n+he_1+I+je_k)}
=\mathcal{F}\{(H_{(t'''e_n,r'''e_n+h'''e_1+I+je_k)})_{r'''-t'''\geq
0}\}.
\end{equation*}
This completes the proof of Lemma \ref{lem1}.\end{proof}

For the rest of this section, for simplicity of notation, we assume
that $\lambda_n$ is the smallest non-parabolic Bishop invariant,
namely, the smallest one that is not equal to $ \frac{1}{2}$.
Then we have the following normalization for $E(z,\-z)$. We notice
that the following result holds in general even without assuming the
non-minimality condition at CR points.  Also, in this result, there
is no need
 to assume that $\l_n\not = \frac{1}{2}$.

\begin{thm}\label{norm}
 For any given $l\geq 3$, there exists a holomorphic transformation
near the origin $(z,w)\rightarrow (z'=z,w'=w+o(|z|^2,w))$ such that
in the new coordinates, the $E(z,\-z)$ defined in (\ref{429eq1})
satisfies
\begin{equation}\label{ng}
  E_{(I,0)}=E_{(te_n+J,se_n)}=0\ \text{for}\ t\geq s,\ |J|\neq 0,\ |I|=t+s+|J|\leq
  l.
\end{equation}
Moreover, we have the following  normalizations:\\
 (I) When $\lambda_n=0$, we have
\begin{equation}\begin{split}\label{lambda0}
E_{(te_n,se_n)}=0 \ \text{for}\ t\geq s.
\end{split}\end{equation}
 (II) When $\lambda_n\neq 0$, for any $m_0\leq l$, the
normalization is divided into the following six
cases:\\

(II$_{-3}$)
 If $m_0=6\hat{m}-3$, then we have
\begin{equation}\begin{split}\label{n-3}
&E_{(te_n,se_n)}=0\ \text{for}\  4\hat{m}-1\leq t\leq m_0-1,\\
&E_{((2t+1)e_n+e_1,(m_0-2t-3)e_n+e_1)}=0\ \text{for}\ 2\hat{m}-2\leq
t\leq 3\hat{m}-3.
\end{split}\end{equation}

(II$_{-2}$) If $m_0=6\hat{m}-2$, then we have
\begin{equation}\begin{split}\label{n-2}
&E_{(te_n,se_n)}=0\ \text{for}\  4\hat{m}-1\leq t\leq m_0-1,\\
&E_{((2t+1)e_n+e_1,(m_0-2t-3)e_n+e_1)}=0\ \text{for}\ 2\hat{m}-1\leq
t\leq 3\hat{m}-3,\\
&{{\Re}}E_{((4\hat{m}-3)e_n+e_1,(2\hat{m}-1)e_n+e_1)}=0.
\end{split}\end{equation}

(II$_{-1}$)
 If $m_0=6\hat{m}-1$, then we have
\begin{equation}\begin{split}\label{n-1}
&E_{(te_n,se_n)}=0\ \text{for}\  4\hat{m}\leq t\leq m_0-1,\\
&E_{((2t+1)e_n+e_1,(m_0-2t-3)e_n+e_1)}=0\ \text{for}\ 2\hat{m}-1\leq
t\leq 3\hat{m}-2.
\end{split}\end{equation}

(II$_{0}$) If $m_0=6\hat{m}$, then we have
\begin{equation}\begin{split}\label{n0}
&E_{(te_n,se_n)}=0\ \text{for}\  4\hat{m}+1\leq t\leq m_0-1,\\
&E_{((2t+1)e_n+e_1,(m_0-2t-3)e_n+e_1)}=0\ \text{for}\ 2\hat{m}-1\leq
t\leq 3\hat{m}-2,\\
&{{\Re}}E_{(4\hat{m}e_n,2\hat{m}e_n)}=0.
\end{split}\end{equation}

(II$_{1}$)
 If $m_0=6\hat{m}+1$, then we have
\begin{equation}\begin{split}\label{n1}
&E_{(te_n,se_n)}=0\ \text{for}\  4\hat{m}+1\leq t\leq m_0-1,\\
&E_{((2t+1)e_n+e_1,(m_0-2t-3)e_n+e_1)}=0\ \text{for}\  2\hat{m}\leq
t\leq 3\hat{m}-1.
\end{split}\end{equation}

(II$_{2}$) If $m_0=6\hat{m}+2$, then we have
\begin{equation}\begin{split}\label{n2}
&E_{(te_n,se_n)}=0\ \text{for}\  4\hat{m}+2\leq t\leq m_0-1,\\
&E_{((2t+1)e_n+e_1,(m_0-2t-3)e_n+e_1)}=0\ \text{for}\  2\hat{m}\leq
t\leq 3\hat{m}-1,\\
&{{\Re}}E_{((4\hat{m}+1)e_n,(2\hat{m}+1)e_n)}=0.
\end{split}\end{equation}


\end{thm}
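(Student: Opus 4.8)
The plan is to construct the flattening-to-order-$l$ coordinate change by an inductive normalization in the total degree, following the classical ``killing coefficients order by order'' strategy that goes back to Moser--Webster and Huang--Krantz, but adapted to the higher-dimensional bookkeeping set up in \S 3. At degree $k$ (for $3\le k\le l$), assuming the normalizations \eqref{ng}--\eqref{n2} have been achieved for all degrees $<k$, I would apply a transformation of the special form $(z,w)\mapsto(z'=z,\ w'=w+g_k(z,w))$ where $g_k$ collects a homogeneous polynomial of weighted degree $k$ in $(z,\bar z)$ together with the terms needed to handle the mixed $w$-dependence; pulling back the defining equation $w=q(z,\bar z)+F$ and reading off the degree-$k$ part shows that the change in $E^{(k)}$ is governed by an affine-linear operator whose ``homogeneous part'' is essentially multiplication/substitution by the quadric $q$ (equivalently, the operator $\chi\mapsto \chi(z,\bar z)$ composed with the substitution $w\mapsto q$). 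The core linear-algebra fact I would isolate first is: modulo the image of this operator, a degree-$k$ polynomial $E^{(k)}(z,\bar z)$ has a canonical representative supported on the monomials appearing on the right-hand sides of \eqref{ng}, \eqref{lambda0}, and \eqref{n-3}--\eqref{n2}. This is exactly the statement that those monomial families form a complement to the range of the normalization operator.

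The bulk of the work is the explicit description of that complement, and this is where the six-case structure in part (II) enters. First I would dispose of \eqref{ng}: the monomials $z^I$ (pure holomorphic, $|J|=0$ case absorbed) and $z^{te_n+J}\bar z^{\,se_n}$ with $t\ge s$, $|J|\ne 0$ lie in the range of the operator because the factor $q$ contributes a term $|z_n|^2$, so multiplying a lower-degree monomial of the appropriate type by $|z_n|^2$ produces the monomial to be killed while feeding back only into terms already normalized or into the reality constraint $E=\bar E$; the inequality $t\ge s$ is what makes the recursion terminate. Then, for the genuinely resonant piece --- the coefficients $E_{(te_n,se_n)}$ and $E_{((2t+1)e_n+e_1,(m_0-2t-3)e_n+e_1)}$, i.e. the pure ``$z_n,\bar z_n$'' and ``$(z_n,z_1)$-diagonal'' monomials --- the operator's action is by the $1\times 1$ or $2\times 2$ ``Bishop-type'' substitution in the single variable $z_n$ with invariant $\lambda_n$, plus the $z_1$-coupling with $\lambda_1$; the kernel/cokernel of this finite-dimensional operator on each fixed homogeneous slice depends on $m_0\bmod 6$ through the arithmetic of the exponents $2t+1$ versus $m_0-2t-3$ (the condition $t\approx m_0/3$), and on whether the diagonal coefficient is forced to be real (giving the ${\Re}$-conditions in II$_{-2}$, II$_0$, II$_2$). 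The case $\lambda_n=0$ \eqref{lambda0} is the degenerate extreme where the substitution operator is simply $\chi\mapsto |z_n|^2\chi$, so only the ``balanced'' monomials $t=s$ survive and everything with $t>s$ (hence, by reality, all of them with $t\ne s$) can be removed; this is the cleanest case and I would present it first as a warm-up.

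The main obstacle, as in all such normal-form theorems, is the cokernel computation: verifying that the particular lists in \eqref{n-3}--\eqref{n2} are \emph{exactly} a complement --- no smaller, no larger --- to the range. Concretely this means checking, slice by slice in the degree and in the power of $z_1$, that the relevant (small, explicit) matrix built from $\lambda_n$ (and $\lambda_1$, $\xi$, $\eta$ in the notation \eqref{xieta}) has the predicted rank, and that the residual reality constraint $E_{(I,J)}=\overline{E_{(J,I)}}$ knocks the self-conjugate diagonal terms $E_{(2t+1)e_n+e_1,\,(2t+1)e_n+e_1)}$ (when $m_0-2t-3=2t+1$, i.e. at the balance point) down to their real parts, producing precisely the $\Re$-normalizations. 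I expect the $2\times2$ coupling between the $(z_n,\bar z_n)$-tower and the $(z_n^{2t+1}z_1,\bar z_n^{\,r}\bar z_1)$-tower to be the delicate point, since a degree-$k$ change of variable in $w$ of the form $c\,z_n^a$ affects \emph{both} families simultaneously, so one must track the $2$-dimensional image carefully rather than treating the two monomial types independently. Once this rank count is done, the theorem follows by assembling the degree-by-degree changes into a single formal (or, since we only need finite order $l$, polynomial) transformation of the asserted special form; the reader-friendly $n=2$, $m=3$ instance promised in the Appendix is the $\hat m$-smallest case of II and can be checked by hand as a sanity check on the general rank computation.
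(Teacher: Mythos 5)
Your outline tracks the paper's strategy closely: the paper indeed works degree by degree with a holomorphic change $w'=w+B(z,w)$, writes $B^{(m_0)}(z,w)=\sum_{|I|+2j=m_0}b_{(Ij)}z^I w^j$, reads off $\Im\bigl(B^{(m_0)}(z,q(z,\bar z))\bigr)=E'^{(m_0)}-E^{(m_0)}$, matches dimensions, and then proves injectivity of the projected operator. You also correctly isolate the actual difficulty: because $q$ contains $|z_1|^2$, a single coefficient $b_{(0hj)}$ simultaneously feeds the $z_n^t\bar z_n^s$ tower and the $|z_1|^2 z_n^{t-1}\bar z_n^{s-1}$ tower, so the rank count cannot be done tower by tower, and the reality constraint knocks the self-conjugate diagonal entry down to its real part. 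That diagnosis is accurate.

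However, there is a genuine gap: the proposal stops exactly at the point where the theorem's content begins. The statement is not merely that \emph{some} transversal to the range exists (which would be soft linear algebra); it is that the \emph{specific} coefficient lists in \eqref{n-3}--\eqref{n2}, with the precise index ranges depending on $m_0 \bmod 6$ and the $\Re$-conditions appearing exactly in the even subcases, form such a transversal. Proving this requires the paper's introduction of the auxiliary quantities $I_{kl},J_{kl}$ from \eqref{ij0}, the identities $J_{kl}=-\overline{I_{kl}}$ and $I_{kl}=\binom{k+l}{k}\lambda_n^l I_{k+l,0}$ of \eqref{ij}, and the case-by-case inductions (e.g.\ \eqref{-3ind}, \eqref{-2ind}) that collapse the coupled system to a triangular one and conclude $b_{(0(m_0-2k)k)}=0$. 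You correctly say ``once this rank count is done, the theorem follows,'' but that is the theorem; deferring it leaves the proof unfinished.

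Two smaller points worth fixing. First, the phrase ``those monomial families form a complement to the range of the normalization operator'' has the roles reversed: the monomials in \eqref{ng}--\eqref{n2} are the ones being \emph{killed}, so they constitute a subspace onto which the range projects isomorphically, while the normal-form invariants (the surviving part of $E$) live in the \emph{complementary} monomials. Second, $g_k$ must be holomorphic in $(z,w)$ — your ``homogeneous polynomial of weighted degree $k$ in $(z,\bar z)$'' is a slip; if $\bar z$ were allowed the normalization would trivialize, and the whole point of the substitution $w\mapsto q(z,\bar z)+\cdots$ is that the $\bar z$-dependence enters only through pulling back a holomorphic $B$.
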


\begin{proof}[Proof of Theorem \ref{norm}]

Suppose that $z'=z,\ w'=w+B(z,w)$ transforms
$w=q(z,\-z)+p(z,\-z)+iE(z,\-z)$ to
$w'=q(z',\-z')+p'(z',\-z')+iE'(z',\-z')$, where $p'(z',\-z')$ and
$E'(z',\-z')$ are real valued and both of their orders are at least
three. Then
\begin{equation}
    q(z,\-z)+p(z,\-z)+iE(z,\-z)+B(z,w)=q(z,\-z)+p'(z,\-z)+iE'(z,\-z).
\end{equation}
Hence we get
\begin{equation}
    {\Im}(B(z,w))=E'(z,\-z)-E(z,\-z).
\end{equation}
Set
$$
B^{(m_0)}(z,w)=\sum\limits_{|I|+2j=m_0} b_{(Ij)}z^Iw^{j}.
$$
We further normalize $B(z,\-z)$ such that
Re$(b_{(0\frac{m_0}{2})})=0$ if $m_0$ is even. Then the real
dimension of  $B^{(m_0)}$ is
 \begin{equation}\begin{split}\label{bdim}
&2\cdot \sharp\big\{(i_1,\cdots,i_{n},j)\in \mathbb{R}^{n+1}:\
i_1,\cdots,i_{n},j\geq 0,\  i_1+\cdots+i_{n}+2j=m_0,\ 2j\neq m_0
\big\}+\delta_{m_0}\\
=&2\cdot \sharp\big\{(i_1,\cdots,i_{n},j)\in \mathbb{R}^{n+1}:\
I'\neq 0,\ i_1+\cdots+i_{n}+2j=m_0
\big\}+2[\frac{m_0+1}{2}]+\delta_{m_0}.
\end{split}\end{equation}
Here $\delta_{m_0}=1$ when $m_0$ is even and $0$, otherwise.\\
 The
dimension of the term $\sum\limits_{I'\neq
0,|I|+2j=m_0}a_{(Ij)}(z')^{I'}z_n^{i_n}|z_n|^{2j}$ is
\begin{equation}\begin{split}\label{in0dim}
&2\cdot \sharp\big\{(i_1,\cdots,i_{n},j)\in \mathbb{R}^{n+1}:\
i_1,\cdots,i_{n},j\geq 0,\ I'\neq 0,\ i_1+\cdots+i_{n}+2j=m_0\}.
\end{split}\end{equation}

{\bf (I)} Assume that $\lambda_n=0$. Set
$$
\hat{P}^{(m_0)}=\big\{\text{polynomials  of the form}\
2{\Re}\sum_{|I|+2j=m_0}
    a_{(Ij)}z^I|z_n|^{2j},\ {\Im}(a_{0,[m_0/2]})=0\ \text{for $m_0$ even}\big\}.
$$
To get the normalization condition (\ref{ng}) and (\ref{lambda0}),
we only need to prove that
\begin{equation}\label{227eq4}
{\Im}\big(B^{(m_0)}(z,q(z,\-z))\big)\big|_{\hat{P}^{(m_0)}}=Q^{(m_0)}(z,\-z)
\end{equation}
is solvable for any  $Q^{(m_0)}(z,\-z)\in \hat{P}^{(m_0)}$. Notice
that $\hat{P}^{(m_0)}$ and the space  $\{B^{(m_0)}(z,q(z,\-{z}))\}$
have the same dimension. Here, we recall that for a polynomial $A$
and a subspace of polynomials, we write $A|_P$ for the projection of
$A$ to $P$. Hence to prove (\ref{227eq4}), we need to show that
$$
 {\Im}\big(B^{(m_0)}(z,{q}(z,\-z))\big)\big|_{\hat{P}^{(m_0)}}=0\
  \hbox{and}\ {\Re}(b_{(0\frac{m_0}{2})})=0\
 \text{for $m_0$  even}\Longleftrightarrow B=0.
$$

By considering the coefficients of terms involving only $z_n$ and
$\-z_n$, we get
$$
{\Im}\big({\sum}_{i+2j=m_0}b_{(0i_nj)} z_n^{i_n}|z_n|^{2j}\big)=0.
$$
Thus we get $b_{(0i_nj)}=0$. Suppose that $b_{(I'i_nj)}=0$ for
$|I'|\leq k_0$. Considering  terms of the form:
$z^{I'}z_n^{i_n}|z_n|^j$ with $|I'|=k_0+1$, we get
$$
{\sum}_{|I'|=k_0+1,i+2j=m}b_{(I'i_nj)}z'^{I'} z_n^{i_n}|z_n|^{2j}=0,
$$
from which it follows that $b_{(I'i_nj)}=0$. Thus we get
$B^{(m_0)}(z,\-z)=0$.
\bigskip

{\bf (II)} Assume that $\lambda_n\neq 0$. Write $\w{P}$ for the
space of polynomials of the form:
$|z_1|^2P_1(z_n,\-z_n)+P_2(z_n,\-z_n)$. Then
\begin{equation}
\begin{split}
{\Im}B(z,q(z,\-z))\big|_{\w{P}}=&{\Im}B(0,z_n,|z_n|^2+\lambda_nz_n^2
+\lambda_n\-z_n^2+|z_1|^2)\big|_{\w{P}}\\
=&\sum\limits_{h+2j=m_0}
\frac{1}{2i}(b_{(0hj)}z_n^h-\-{b_{(0hj)}}\-z_n^h)(|z_n|^2+\lambda_nz_n^2
+\lambda_n\-z_n^2+|z_1|^2)^j\big|_{\w{P}}\\
=&\sum\limits_{h+2j=m_0}
\frac{1}{2i}{b_{(0hj)}}z_n^h(|z_n|^2+\lambda_nz_n^2+\lambda_n\-z_n^2+|z_1|^2)^j\big|_{\w{P}}\\
&-\sum\limits_{h+2j=m_0}
\frac{1}{2i}\-{{b_{(0hj)}}}\-{z_n}^h(|z_n|^2+\lambda_nz_n^2+\lambda_n\-z_n^2+|z_1|^2)^j\big|_{\w{P}}\\
:=&\w{I}\big|_{\w{P}}-\w{J}\big|_{\w{P}}.
\end{split}
\end{equation}
Here, for a subspace $A$ of the space of polynomials and for a
polynomial $X$, we write $X|_A$ for the projection of $X$ to $A$.
Write
\begin{equation}\label{ij0}
\begin{split}
I_{kl}=\sum\limits_{h+2j=m_0,j\geq
k+l}\frac{1}{2i}{b_{(0hj)}}(_{j-k-l}^j)\lambda_n^{j-k}(_l^{k+l}),\
J_{kl}=\sum\limits_{h+2j=m_0,j\geq
k+l}\frac{1}{2i}\-{{b_{(0hj)}}}(_{j-k-l}^j)\lambda_n^{j-k}(_l^{k+l}).
\end{split}
\end{equation}
Then we have
\begin{equation}\label{ij}
\begin{split}
J_{kl}=-\-{I_{kl}},\ I_{kl}=(_k^{k+l})\lambda_n^lI_{k+l,0}.
\end{split}
\end{equation}
A direct computation shows that
\begin{equation}
\begin{split}
\w{I}\big|_{\w{P}}=&\sum\limits_{h+2j=m_0}\frac{1}{2i}{b_{(0hj)}}
z_n^h\sum\limits_{0\leq k+l\leq
j}(_{j-k-l}^j)(\lambda_nz_n^2)^{j-k-l}
              (_l^{k+l})(\lambda_n \-z_n^2)^l|z_n|^{2k}\\
              &+\sum\limits_{h+2j=m_0}\frac{1}{2i}{b_{(0hj)}}z_n^h
              \sum\limits_{0\leq k+l\leq j}(_{j-k-l}^j)(\lambda_nz_n^2)^{j-k-l}
              (_l^{k+l})(\lambda_n \-z_n^2)^l|z_n|^{2(k-1)}\cdot
              k|z_1|^2\\
              =&\sum\limits_{0\leq k+2l\leq m_0,k+l\leq \frac{m_0}{2}}I_{kl}z_n^{m_0-k-2l}\-z_n^{k+2l}
                +\sum\limits_{k\geq 1,0\leq k+2l\leq m_0,\atop{k+l\leq \frac{m_0}{2}}}kI_{kl}|z_1|^2
                z_n^{m_0-k-2l-1}\-z_n^{k+2l-1}.
\end{split}
\end{equation}
Similarly, we have
\begin{equation}
\begin{split}
\w{J}\big|_{\w{P}}=&\sum\limits_{0\leq k+2l\leq m\atop{k+l\leq
\frac{m_0}{2}}}J_{kl}z_n^{k+2l}\-{z_n}^{m_0-k-2l}
                +\sum\limits_{k\geq 1,0\leq k+2l\leq m_0\atop{k+l\leq \frac{m_0}{2}}}kJ_{kl}|z_1|^2
                z_n^{k+2l-1}\-{z_n}^{m_0-k-2l-1}.
\end{split}
\end{equation}
Hence the coefficients of $z_n^t\-z_n^s(t\geq s, t+s=m_0)$ and
$z_n^{t-1}\-z_n^{s-1}|z_1|^2(t\geq s, t+s=m_0)$ in
Im$(B(z,q(z,\-z)))$ are, respectively, the following:
\begin{equation}\label{00coe}
\sum\limits_{k+2l=s}I_{kl}-\sum\limits_{k+2l=t\atop {k+l\leq
m_0/2}}J_{kl}\ \ \ \text{and}\ \ \
\sum\limits_{k+2l=s}kI_{kl}-\sum\limits_{k+2l=t\atop {k+l\leq
m_0/2}}kJ_{kl}.
\end{equation}

{\bf (II$_{-3}$)}: In this case, we have set $m_0=6\hat{m}-3$. Write
 \begin{equation*}\begin{split}
\hat{P}_{-3}^{(m_0)}=\big\{&\text{polynomials of the form}
\sum\limits_{t\geq
4\hat{m}-1}a_tz_n^t\-z_n^{m_0-t}\\&+\sum\limits_{t\geq
2\hat{m}-2}b_tz_n^{2t+1}\-z_n^{m_0-2t-3}|z_1|^2+\sum\limits_{I'\neq
0}C_{I't}(z')^{I'}z_n^{m-|I'|-2t}|z_n|^{2t}\big\}.
\end{split}\end{equation*}
To get the normalization condition (\ref{ng}) and (\ref{n-3}), we
only need to prove that
\begin{equation}\label{227eq-3}
{\Im}\big(B^{(m_0)}(z,q(z,\-z))\big)\big|_{\hat{P}_{-3}^{(m_0)}}=Q^{(m_0)}(z,\-z)
\end{equation}
is solvable for any real valued polynomial $Q^{(m_0)}(z,\-z)\in
\hat{P}_{-3}^{(m_0)}$.
 Notice that the dimension of the space of polynomials of the form:
 $\sum\limits_{t\geq 4\hat{m}-1}a_tz_n^t\-z_n^{m_0-t}+\sum\limits_{t\geq
2\hat{m}-2}b_tz_n^{2t+1}\-z_n^{m_0-2t-3}|z_1|^2$ is
 \begin{equation}\begin{split}
2\big(6\hat{m}-3-(4\hat{m}-2)\big)+2\big(\frac{6\hat{m}-6}{2}-(2\hat{m}-3)\big)=6\hat{m}-2
=2\big[\frac{6\hat{m}-2}{2}\big].
\end{split}\end{equation}
Combining this with (\ref{bdim}) and (\ref{in0dim}), we know that
the space $\{B^{(m_0)}(z,q(z,\-{z}))\}$ and the space
$\hat{P}_{-3}^{(m_0)}$ have the same dimension. Hence to prove
(\ref{227eq-3}),  we  need to show that $B^{(m_0)}=0$ if
\begin{equation}\label{-3e00}
\begin{split}
 {\Im}\big(B^{(m_0)}(z,{q}(z,\-z))\big)\big|_{\hat{P}_{-3}^{(m_0)}}=0.
\end{split}
\end{equation}

By (\ref{00coe}), the condition (\ref{n-3}) gives  that
\begin{equation}\label{-3e}
\begin{split}
&\sum\limits_{k+2l=2t-1}kI_{kl}=\sum\limits_{k+2l=m_0-2t+1\atop
{k+l\leq m_0/2}}kJ_{kl},\
\sum\limits_{k+2l=2t-1}I_{kl}=\sum\limits_{k+2l=m_0-2t+1\atop
{k+l\leq m_0/2}}J_{kl},\\
&\sum\limits_{k+2l=2t}I_{kl}=\sum\limits_{k+2l=m_0-2t\atop {k+l\leq
m_0/2}}J_{kl}\ \text{for} \ 1\leq t\leq  \hat{m}-1,
\end{split}
\end{equation}
and
\begin{equation}\label{-3eee}
\begin{split}
&\sum\limits_{k+2l=2\hat{m}-1}kI_{kl}=\sum\limits_{k+2l=m_0-2\hat{m}+1\atop
{k+l\leq m_0/2}}kJ_{kl}.
\end{split}
\end{equation}

Next we prove by induction that, for $1\leq t\leq \hat{m}-1$, we
have
\begin{equation}\label{-3ind}
I_{2t-1,0}=I_{2t,0}=J_{0,3\hat{m}-1-t}=0
\end{equation}

Setting $t=1$ in (\ref{-3e}), we get
$$
I_{10}=0,\ I_{1,0}=J_{0,3\hat{m}-2},\
I_{20}+I_{01}=J_{1,3\hat{m}-3}.
$$
Together with (\ref{ij}), we obtain
$I_{10}=I_{20}=J_{0,3\hat{m}-2}=0$.

Suppose that  (\ref{-3ind})
 holds
 for  $t\leq t_0\in [1, \hat{m}-2]$. Setting $t=t_0+1$ in
(\ref{-3e}) and making use of the relations above, we get
$$
I_{2t_0+1,0}=0,\ I_{2t_0+1,0}=J_{0,3\hat{m}-t_0-2},\
I_{2t_0+2,0}+I_{2t_0,1}=J_{1,3\hat{m}-t_0-3}.
$$
Combining this with (\ref{ij}), we obtain (\ref{-3ind}) for
$t=t_0+1$. Hence (\ref{-3ind}) holds for $1\leq t\leq \hat{m}-1$.
Substituting the relations in (\ref{-3ind}) to (\ref{-3eee}), we get
$I_{2\hat{m}-1,0}=0$. Thus we get
\begin{equation}\label{-3indc}
I_{t,0}=J_{0,t'}=0\ \text{for}\ 1\leq t\leq 2\hat{m}-1,\
2\hat{m}\leq t'\leq 3\hat{m}-2.
\end{equation}
Namely, we have $I_{t,0}=0$ for $1\leq t\leq 3\hat{m}-2$. By
(\ref{ij0}), we know that
\begin{equation}\label{-3ik0}
I_{k0}=\frac{1}{2i}b_{(0(m_0-2k)k)}+\mathcal{F}\{(b_{(0(m_0-2t)t)})_{t>k}\}.
\end{equation}
In particular, we have $b_{(01(3\hat{m}-2))}=2iI_{(3\hat{m}-2)0}$.
Combining this with (\ref{-3indc}) and (\ref{-3ik0}), we inductively
get $b_{(0(m_0-2k)k)}=0$.

Suppose that $b_{(I'i_nj)}=0$ for $|I'|\leq k_0$. Next we will prove
$b_{(I'i_nj)}=0$ for $|I'|=k_0+1$. Considering all terms of  forms
$z^{I'}z_n^{h}|z_n|^{2j}$ with $|I'|=k_0+1,$ $|I'|+h+2j=m_0$ in
(\ref{-3e00}), we get
\begin{equation}\label{i005}
\begin{split}
{\sum}_{|I'|=k_0+1,h+2j=m_0-|I'|}b_{(I'hj)}z'^{I'}
z_n^{h}\big(|z_n|^{2}+\lambda_nz_n^2+\lambda_n\-z_n^2\big)^j\big|_{\hat{P}_{-3}^{(m_0)}}=0,
\end{split}
\end{equation}
Write
\begin{equation}\label{i002}
\begin{split}
\hat{I}_{kl}=\sum\limits_{h+2j=m_0-|I'|,j\geq
k+l}{b_{(I'hj)}}(_{j-k-l}^j)\lambda_n^{j-k}(_l^{k+l}).
\end{split}
\end{equation}
Then we have
\begin{equation}\label{ij00}
\begin{split}
 \hat{I}_{kl}=(_k^{k+l})\lambda_n^l\hat{I}_{k+l,0}.
\end{split}
\end{equation}
A direct computation shows that
\begin{equation*}
\begin{split}
&{\sum}_{|I'|=k_0+1,h+2j=m_0-|I'|}b_{(I'hj)}z'^{I'}
z_n^{h}\big(|z_n|^{2}+\lambda_nz_n^2+\lambda_n\-z_n^2\big)^j\big|_{\hat{P}_{-3}^{(m_0)}}\\
=&{\sum}_{|I'|=k_0+1,h+2j=m_0-|I'|}b_{(I'hj)}z'^{I'}
z_n^h\sum\limits_{0\leq k+l\leq j}(_{j-k-l}^j)(\lambda_nz_n^2)^{j-k-l}
              (_l^{k+l})(\lambda_n \-z_n^2)^l|z_n|^{2k}\big|_{\hat{P}_{-3}^{(m_0)}}\\
     =&\sum\limits_{|I'|=k_0+1\atop{0\leq k+2l\leq \frac{m_0-|I'|}{2}}}\hat{I}_{kl}z'^{I'}
     z_n^{m_0-|I'|-k-2l}\-z_n^{k+2l}.
\end{split}
\end{equation*}
Thus (\ref{i005}) is equivalent to
\begin{equation}\label{i001}
\begin{split}
\sum\limits_{k+2l=\check{m}}\hat{I}_{kl}=0\ \text{for}\ 1\leq
\check{m}\leq \big[\frac{m_0-|I'|}{2}\big].
\end{split}
\end{equation}
Setting $\check{m}=1$ in (\ref{i001}), we get $\hat{I}_{10}=0$.
Combining this with (\ref{i001}) and (\ref{ij00}), we inductively
obtain $\hat{I}_{t,0}=0$ for $1\leq t\leq [\frac{m_0-|I'|}{2}]$.
From (\ref{i002}), we know that
\begin{equation}\label{-3ik00}
\hat{I}_{k0}=b_{(I'(m_0-|I'|-2k)k)}+\mathcal{F}\{(b_{(I'(m_0-|I'|-2t)t)})_{t>k}\}.
\end{equation}
In particular, we have
$$
b_{\big(I'(m_0-|I'|-2[\frac{m_0-|I'|}{2}])[\frac{m_0-|I'|}{2}]\big)}=\hat{I}_{[\frac{m_0-|I'|}{2}],0}.
$$
Combining this with  (\ref{-3ik00}), we inductively get
$b_{(I'hj)}=0$ for $|I'|= k_0+1,h+2j=m_0-|I'|$. Thus we get
$B^{(m_0)}(z,\-z)=0$.
\bigskip

{\bf (II$_{-2}$)} In this case, we have set $m_0=6\hat{m}-2$. Write
 \begin{equation*}\begin{split}
\hat{P}_{-2}^{(m_0)}=\big\{&\text{polynomials of the form}\
\sum\limits_{t\geq
4\hat{m}-1}a_tz_n^t\-z_n^{m_0-t}+\sum\limits_{t\geq
2\hat{m}-1}b_tz_n^{2t+1}\-z_n^{m_0-2t-3}|z_1|^2\\
&+2{\Re}\big(b_{4\hat{m}-3}z_n^{4\hat{m}-3}\-z_n^{2\hat{m}-1}|z_1|^2\big)+\sum\limits_{I'\neq
0,0\leq 2t\leq m_0-|I'|}C_{I't}(z')^{I'}z_n^{m-|I'|-2t}|z_n|^{2t}
\big\}.
\end{split}\end{equation*}
To get the normalization condition (\ref{ng}) and (\ref{n-2}), we
only need to prove that
\begin{equation}\label{227eq-2}
{\Im}\big(B^{(m_0)}(z,q(z,\-z))\big)\big|_{\hat{P}_{-2}^{(m_0)}}=Q^{(m_0)}(z,\-z)
\end{equation}
is solvable for any real valued formal power series
$Q^{(m_0)}(z,\-z)\in \hat{P}_{-2}^{(m_0)}$.

 The dimension of
 $$\sum\limits_{t\geq 4\hat{m}-1}a_tz_n^t\-z_n^{m_0-t}+\sum\limits_{t\geq
2\hat{m}-1}b_tz_n^{2t+1}\-z_n^{m_0-2t-3}|z_1|^2
+2{\Re}\big(b_{4\hat{m}-3}z_n^{4\hat{m}-3}\-z_n^{2\hat{m}-1}|z_1|^2\big)
$$
is
 \begin{equation}\begin{split}
2\big(6\hat{m}-2-(4\hat{m}-2)\big)+2\big(\frac{6\hat{m}-6}{2}
-(2\hat{m}-2)\big)+1=6\hat{m}-1=2\big[\frac{6\hat{m}-1}{2}\big]+1.
\end{split}\end{equation}
Combining this with (\ref{bdim}) and (\ref{in0dim}), we know that
$B^{(m_0)}(z,\-z)$ and  $\hat{P}_{-2}^{(m_0)}$ have the same
dimension. Hence to prove (\ref{227eq-2}), now we only need to prove
that $B^{(m_0)}\equiv 0$ if
 \begin{equation}\begin{split}\label{-2b0}
  {\Im}\big(B^{(m_0)}(z,{q}(z,\-z))\big)
 \big|_{\hat{P}_{-2}^{(m_0)}}=0,\ {\Re}(b_{0(3\hat{m}-1)})=0.
\end{split}\end{equation}

By (\ref{00coe}), the condition (\ref{n-2}) means that
\begin{equation}\label{-2e}
\begin{split}
&\sum\limits_{k+2l=2t-1}I_{kl}=\sum\limits_{k+2l=m_0-2t+1\atop
{k+l\leq m_0/2}}J_{kl},\
\sum\limits_{k+2l=2t}kI_{kl}=\sum\limits_{k+2l=m_0-2t\atop
{k+l\leq m_0/2}}kJ_{kl},\\
&\sum\limits_{k+2l=2t}I_{kl}=\sum\limits_{k+2l=m_0-2t\atop {k+l\leq
m_0/2}}J_{kl}\ \text{for} \ 1\leq t\leq  \hat{m}-1,
\end{split}
\end{equation}
and
\begin{align}
&\sum\limits_{k+2l=2\hat{m}-1}I_{kl}=\sum\limits_{k+2l=m_0-2\hat{m}+1\atop
{k+l\leq m_0/2}}J_{kl}, \
\sum\limits_{k+2l=2\hat{m}}{\Re}\big(kI_{kl}\big)=\sum\limits_{k+2l=m_0-2\hat{m}\atop
{k+l\leq m_0/2}}{\Re}\big(kJ_{kl}\big).       \label{-2e2}
\end{align}

Next we prove by induction that  the following hods for $1\leq t\leq
\hat{m}-1$:
\begin{equation}\label{-2ind}
\begin{split}
I_{j0}&=(_j^{3\hat{m}-1})\lambda_n^{3\hat{m}-1-j}J_{3\hat{m}-1,0}\
\text{for}\ 1\leq j\leq 2t,\\
J_{j0}&=(_j^{3\hat{m}-1})\lambda_n^{3\hat{m}-1-j}J_{3\hat{m}-1,0}\
\text{for}\ 3\hat{m}-1-t\leq j\leq 3\hat{m}-2.
\end{split}
\end{equation}

Setting $t=1$ in (\ref{-2e}), we get
$$
I_{10}=J_{1,3\hat{m}-2},\ 2I_{2,0}=2J_{2,3\hat{m}-3},\
I_{20}+I_{01}=J_{2,3\hat{m}-3}+J_{0,3\hat{m}-2}.
$$
Hence we obtain
\begin{equation}\label{t1cal}
\begin{split}
I_{10}&=(3\hat{m}-1)\lambda_n^{3\hat{m}-2}J_{3\hat{m}-1,0},\
I_{20}=(_2^{3\hat{m}-1})\lambda_n^{3\hat{m}-3}J_{3\hat{m}-1,0},\\
J_{3\hat{m}-2,0}&=\lambda_n^{-3\hat{m}+2}I_{01}
=\lambda_n^{-3\hat{m}+2}\lambda_n(3\hat{m}-1)\lambda_n^{3\hat{m}-2}J_{3\hat{m}-1,0}
=(_{3\hat{m}-2}^{3\hat{m}-1})\lambda_nJ_{3\hat{m}-1,0}.
\end{split}
\end{equation}
This proves (\ref{-2ind}) for $t=1$.

Suppose that (\ref{-2ind}) holds for some $t\leq t_0\in [1,
\hat{m}-2]$. Next we will prove it also holds for $t=t_0+1$.

By our assumption, we get, for $k+l\leq 2t_0$ and $l\leq t_0$, the
following
\begin{equation*}
\begin{split}
I_{kl}&=(_k^{k+l})\lambda_n^lI_{k+l,0}=(_k^{k+l})\lambda_n^l \cdot
(_{k+l}^{3\hat{m}-1})\lambda_n^{3\hat{m}-1-k-l}
   J_{3\hat{m}-1,0}\\
   &=(_{k}^{3\hat{m}-1})(_{l}^{3\hat{m}-1-k})\lambda_n^{3\hat{m}-1-k}J_{3\hat{m}-1,0},\\
J_{k,3\hat{m}-1-k-l}&=(_k^{3\hat{m}-1-l})\lambda_n^{3\hat{m}-1-k-l}J_{3\hat{m}-1-l,0}=(_k^{3\hat{m}-1-l})
\lambda_n^{3\hat{m}-1-k-l} \cdot
(_{3\hat{m}-1-l}^{3\hat{m}-1})\lambda_n^{l}
   J_{3\hat{m}-1,0}\\
   &=(_{k}^{3\hat{m}-1})(_{l}^{3\hat{m}-1-k})\lambda_n^{3\hat{m}-1-k}J_{3\hat{m}-1,0}.
\end{split}
\end{equation*}
Hence we get, for $k+l\leq 2t_0$ and $l\leq t_0$, the following
\begin{equation}\label{-2med}
\begin{split}
I_{kl}=
   J_{k,3\hat{m}-1-k-l}.
\end{split}
\end{equation}
 Setting $t=t_0+1$ in (\ref{-2e}) and making use of
(\ref{-2med}), we obtain
\begin{equation}
\begin{split}
&I_{2t_0+1,0}=J_{2t_0+1,3\hat{m}-2t_0-2},\\
&(2t_0+2)I_{2t_0+2,0}+2t_0I_{2t_0,1}=(2t_0+2)J_{2t_0+2,3\hat{m}-2t_0-3}+2t_0J_{2t_0,3\hat{m}-2t_0-2},\\
&I_{2t_0+2,0}+I_{2t_0,1}+I_{0,t_0+1}=J_{2t_0+2,3\hat{m}-2t_0-3}+J_{2t_0,3\hat{m}-2t_0-2}+J_{0,3\hat{m}-t_0-2}.
\end{split}
\end{equation}
From the first equation, we get
$$I_{2t_0+1,0}=(_{2t_0+1}^{3\hat{m}-1})\lambda_n^{3\hat{m}-2t_0-2}J_{3\hat{m}-1,0}.$$
Then (\ref{-2med}) holds for $k+l=2t_0+1,l\leq t_0$. Namely, we
obtain $I_{2t_0,1}=J_{2t_0,3\hat{m}-2t_0-2}$. Hence we have
\begin{equation}
\begin{split}
&I_{2t_0+2,0}=J_{2t_0+2,3\hat{m}-2t_0-3}=(_{2t_0+2}^{3\hat{m}-1})\lambda_n^{3\hat{m}-2t_0-3}J_{3\hat{m}-1,0},\\
&J_{3\hat{m}-t_0-2,0}=\lambda_n^{-3\hat{m}+t_0+2}J_{0,3\hat{m}-t_0-2}=\lambda_n^{-3\hat{m}+t_0+2}I_{0,t_0+1}
  =(_{3\hat{m}-t_0-2}^{3\hat{m}-1})\lambda_n^{t_0+1}J_{3\hat{m}-1,0}.
\end{split}
\end{equation}
This proves (\ref{-2ind}) for $t=t_0+1$.  Hence we get
\begin{equation}\label{923new}
\begin{split}
I_{j0}&=(_j^{3\hat{m}-1})\lambda_n^{3\hat{m}-1-j}I_{3\hat{m}-1,0}\
\text{for}\ 1\leq j\leq 2\hat{m}-2,\\
J_{j0}&=(_j^{3\hat{m}-1})\lambda_n^{3\hat{m}-1-j}J_{3\hat{m}-1,0}\
\text{for}\ 2\hat{m}\leq j\leq 3\hat{m}-2.
\end{split}
\end{equation}
Notice that now (\ref{-2med}) holds for $k+l\leq 2\hat{m}-2,l\leq
\hat{m}-1$. Substituting these relations to (\ref{-2e2}) and making
use of (\ref{-2med}) for $k+l\leq 2\hat{m}-2,l\leq \hat{m}-1$, we
get
\begin{equation*}
\begin{split}
&I_{2\hat{m}-1,0}=J_{2\hat{m}-1,\hat{m}},\\
&{\Re}\big(2\hat{m}I_{2\hat{m},0}+(2\hat{m}-2)I_{2\hat{m}-2,1}\big)
={\Re}\big(2\hat{m}J_{2\hat{m},\hat{m}-1}+(2\hat{m}-2)J_{2\hat{m}-2,\hat{m}}\big).
\end{split}
\end{equation*}
From (\ref{923new}) and the first equation above, we get
\begin{equation}\label{-2new}
I_{2\hat{m}-1,0}=(_{2\hat{m}-1}^{3\hat{m}-1})\lambda^{\hat{m}}J_{3\hat{m}-1,0}.
\end{equation}
Combining this with (\ref{ij}), we obtain
$I_{2\hat{m}-2,1}=J_{2\hat{m}-2,\hat{m}}$. Thus we obtain
$${\Re}\big(I_{2\hat{m},0}-2\hat{m}J_{2\hat{m},\hat{m}-1}\big)=0.$$

Since $b_{(0,3\hat{m}-1)}$ is purely imaginary, we know
$I_{3\hat{m}-1,0}=\frac{1}{2i}b_{(0,3\hat{m}-1)}$ is real. Hence
\begin{equation*}
\begin{split}
&I_{2\hat{m},0}=-\-{J_{2\hat{m},0}}=-(_{2\hat{m}}^{3\hat{m}-1})\lambda^{\hat{m}-1}J_{3\hat{m}-1,0},\
J_{2\hat{m},\hat{m}-1}=(_{\hat{m}-1}^{3\hat{m}-1})\lambda^{\hat{m}-1}J_{3\hat{m}-1,0}.
\end{split}
\end{equation*}
Thus we obtain $J_{3\hat{m}-1,0}=0$. Combining this with
(\ref{923new}) and (\ref{-2new}), we get $I_{k,0}=0$ for $1\leq
k\leq 3\hat{m}-1$. By (\ref{ij0}), we know that
\begin{equation}\label{-2ik0}
I_{k0}=\frac{1}{2i}b_{(0(m_0-2k)k)}+\mathcal{F}\{(b_{(0(m_0-2t)t)})_{t>k}\}.
\end{equation}
In particular, we have $b_{(00(3\hat{m}-1))}=2iI_{3\hat{m}-1,0}$.
Hence we inductively get $b_{(0(m_0-2k)k)}=0$.

By a similar induction argument as that used in the (II$_{-3}$)
case, we  get $b_{(Ij)}=0$. Hence we obtain $B^{(m_0)}(z,\-z)=0$.
\medskip

The cases (II$_{-1}$) and (II$_{1}$) can be  similarly  done as for
(II$_{-3}$), while the cases (II$_{0}$) and (II$_{2}$) can be
similarly done  as in the case (II$_{-2}$). This completes the proof
of Theorem \ref{norm}.
\end{proof}

\section{Proof of Theorem \ref{thm1}--- Part II}

We continue our proof of Theorem \ref{thm1}. In this part, we assume
that $M$ is non-minimal at its CR points near the origin. We will
prove $H\equiv 0$ when it satisfies the normalization in Theorem
\ref{norm}.

The crucial step is to prove the following proposition, which is
more or less the content of Theorem \ref{thm1} when $n=3$:

\begin{prop}\label{lem3}
Suppose that $\lambda_n\neq 1/2$. Then for $t,r,s\geq 0$ with
$t+r+s\leq m$, we have
\begin{equation}\label{324eq1}
H_{(te_n+re_1,(m-t-r-s)e_n+se_1)}=0.
\end{equation}
\end{prop}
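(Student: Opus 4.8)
The plan is to exploit the three basic equations collected in (\ref{add-new}), together with the normalization of Theorem \ref{norm}, to kill all the coefficients $H_{(te_n+re_1,(m-t-r-s)e_n+se_1)}$ (i.e.\ the $H_{[tsrh]}$ in the notation of (\ref{xieta})). First I would reduce everything to coefficients in the two variables $z_1,z_n$: by the key recursion (\ref{111ind}), every $H_{[ts(m-t-s-h)h]}$ is an $\mathcal F$-combination of the ``pure'' coefficients $H_{[t'1(m-t'-2)1]}$ and $H_{[t'0(m-t'-i)i]}$ with $i\le\max(s,h)$; hence it suffices to prove the vanishing of $H_{[t'01(m-t'-1)1]}$ (equivalently $H_{[t'010]}$-type, the ones with a single $z_1$ and single $\bar z_1$) and of the genuinely pure $H_{[t'0(m-t')0]}=H_{(t'e_n,(m-t')e_n)}$. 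Here $\lambda_n\ne\frac12$ (and we have arranged $\lambda_n$ to be the smallest non-parabolic Bishop invariant, with the normalization $\xi=2\lambda_n\ne 1$) is exactly what is needed for the linear-algebra steps below to be non-degenerate.

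Next I would analyze the third equation in (\ref{add-new}), namely
\[
\big(|w_n|^2+|w_1|^2\big)\cdot\big(\-{w_n}\Psi_1-\-{w_1}\Psi_n\big)+\big(2\lambda_nw_n\-w_1-2\lambda_1w_1\-w_n\big)\cdot\Psi=0,
\]
by substituting $w_l=z_l+2\lambda_l\bar z_l$ and expanding $\Psi=\Psi_{(11)}$ in terms of the $H_{[tsrh]}$ via (\ref{btsrh}) and (\ref{atsrh}). Collecting the coefficient of a fixed monomial $z_n^tz_1^{s-1}\bar z_n^{r+3}\bar z_1^h$ yields the big recursion (\ref{bequation}); its leading structure in the variable pair $(s+h)$ (with $s,h$ bounded by their original values) lets one solve downward to get (\ref{beq}), then (\ref{111}), then (\ref{111ind}). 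So the real content is: having reduced to the pure coefficients, show $H_{(te_n,(m-t)e_n)}=0$ and $H_{((t+1)e_n+e_1,(m-t-2)e_n+e_1)}=0$ for all relevant $t$. For this I would feed the normalization (\ref{ng}) — which already forces $E_{(te_n+J,se_n)}=0$ for $t\ge s$, $|J|\ne0$, and $E_{(I,0)}=0$ — into the recursion, and then run an induction on $t$ (or on $t-r$) using the cases (I), (II$_{-3}$)--(II$_{2}$) of Theorem \ref{norm} to pin down the remaining diagonal terms $E_{(te_n,se_n)}$ and the $|z_1|^2$-weighted terms. The point is that the recursion (\ref{bequation}), after imposing these normalization vanishings, becomes an \emph{overdetermined} linear system on the surviving $H$'s whose only solution is $0$, because the coefficient $\xi=2\lambda_n\ne1$ prevents the relevant determinants from vanishing — this is precisely the place where the hypothesis $\lambda_n\ne1/2$ enters.

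The main obstacle, and where most of the work will go, is the bookkeeping in the induction on the pure coefficients: the recursion (\ref{bequation}) mixes $\Psi_{[tsrh]}$ at several shifts of the indices with polynomial coefficients in $\xi,\eta$, and after translating back through (\ref{btsrh}) and (\ref{atsrh}) one must carefully track which $H_{[tsrh]}$ are already known to vanish (by Theorem \ref{norm}) and which combination survives at each stage. I expect the argument to split into the two regimes $\lambda_n=0$ and $\lambda_n\ne0$ exactly as in Theorem \ref{norm}: when $\lambda_n=0$ the pure terms $E_{(te_n,se_n)}$ vanish outright for $t\ge s$ and the induction is short; when $\lambda_n\ne0$ one has the six residue classes of $m_0\bmod 6$ and must chase the binomial-coefficient identities (of the type appearing in (\ref{-2ind}), (\ref{923new})) to force the last few free coefficients to zero. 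Once all pure $H_{[t'0(m-t')0]}$ and the $|z_1|^2$-type coefficients $H_{[t'1(m-t'-2)1]}$ are shown to vanish, (\ref{111ind}) immediately gives $H_{[tsrh]}=0$ for all $t,s,r,h$, which is exactly (\ref{324eq1}); and since this holds for the normalized $E$ at order $m$ while $m$ was arbitrary, one concludes $E^{(m)}\equiv0$, i.e.\ $H\equiv0$, completing Proposition \ref{lem3} and feeding into the proof of Theorem \ref{thm1}.
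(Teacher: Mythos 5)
Your overall strategy is the paper's: use the third equation of (\ref{add-new}), expand in the $H_{[tsrh]}$ notation to obtain (\ref{bequation})--(\ref{111ind}), feed in the normalization of Theorem~\ref{norm}, and then exploit $\xi=2\lambda_n\neq 1$ to force the surviving linear system to be nonsingular. That much is correct, and you correctly flag both the case split on $\lambda_n=0$ versus $\lambda_n\neq 0$ and the residue-class bookkeeping for $m_0\bmod 6$.

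However, there is a genuine gap in the proposed reduction. You claim that (\ref{111ind}) lets one ``reduce to the pure coefficients'' and that it therefore suffices to kill $H_{[t'1(m-t'-2)1]}$ (the $s=h=1$ terms) and the genuinely pure $H_{[t'0(m-t')0]}$ ($s=h=0$). But (\ref{111ind}) expresses a general $H_{[ts(m-t-s-h)h]}$ as an $\mathcal F$-combination of $H_{[t'1(m-t'-2)1]}$ \emph{and} $H_{[t'0(m-t'-i)i]}$ for all $i\leq\max(s,h)$; the second family is not exhausted by $i=0$. Killing $\max(s,h)\leq 1$ does nothing for a coefficient with, say, $s=0,\ h=5$. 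In fact the paper does not reduce Proposition~\ref{lem3} to a single linear system at all: it proves it by a layered induction on $h_0=\min(s,h)-1$ via Lemmas~\ref{lem01} and~\ref{lem11}(II). At each step one first establishes $H_{[t0r(h_0+2)]}=0$ (this is where the normalization data from Theorem~\ref{norm} and the matrix nonsingularity of $S$, $R^{\pm}$, $N$, $T$ from Lemmas~\ref{lem4}, \ref{oddnon}, \ref{evennon} actually enter), then uses (\ref{111ind}) to propagate this to all $H_{[tsr(h_0+2)]}$ with $s\leq h_0+2$, then kills the corresponding $\Phi$ and $\Psi$ slices to set up the next $h_0$. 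Without this inductive architecture your ``overdetermined linear system on the surviving $H$'s'' is simply not closed, and the remaining unknowns $H_{[t'0(m-t'-i)i]}$ for $i\geq 1$ cannot be eliminated by a one-shot argument. A secondary point: Lemma~\ref{lem1} and the first two equations of (\ref{add-new}) play no role in Proposition~\ref{lem3} itself (which is entirely about the two variables $z_1,z_n$); they are used later, in the final step of the proof of Theorem~\ref{thm1}, to propagate the result to coefficients involving $z_k$ with $1<k<n$. So the structure of the argument is correct in outline, but the claimed reduction is wrong, and the actual proof requires the double induction that you only gesture at as ``bookkeeping.''
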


{\it Proof.} The proof of Proposition \ref{lem3} is carried out in
three steps, according to $\lambda_1=\lambda_n=0$ or $\lambda_n=0,
\lambda_1\not =0$ or $\lambda_n\neq 0, \lambda_1\neq 0$. We notice
that when $\lambda_n\not = 0$, it must hold that $\lambda_1\not =0$
by our choice of $\ld_n$.

\medskip
{\bf Step I:}  In this case, we assume that $\lambda_n=\lambda_1=0$.
Then (\ref{5}) has the following form:
\begin{equation}\label{500}
\begin{split}
\-z_n\Psi_1=\-{z_1}\Psi_n.
\end{split}
\end{equation}
By considering the coefficients of
$z_n^tz^{s-1}\-{z_n}^{r+1}\-{z_1}^{h}$ for $t\geq 0,$ $s\geq 1$,
$r\geq 0$ and $h=m+1-t-s-r\geq 0$ in (\ref{500}), we get
\begin{equation}\label{001}
\begin{split}
s\Psi_{[tsrh]}=(t+1)\Psi_{[(t+1)(s-1)(r+1)(h-1)]}.
\end{split}
\end{equation}
Setting $h=0$ in (\ref{001}), we get $\Psi_{[tsr0]}=0$ for $s\geq
1$. Combining this with (\ref{001}), we  inductively get
$\Psi_{[tsrh]}=0$ for $s\geq h+1$. Together with  (\ref{btsrh}), we
obtain:
\begin{equation}\label{002}
\begin{split}
(s+1)\Phi_{[(t-1)(s+1)(r-1)h]}=(t-1)\Phi_{[tsr(h-1)]}\ \text{for}\
s\geq h+1.
\end{split}
\end{equation}
Setting $h=0$ in (\ref{002}), we get $ \Phi_{[tsr0]}=0\ \text{for}\
s\geq 2.$ Combining this with (\ref{002}), we  inductively get $
\Phi_{[tsrh]}=0$ for $s\geq h+2$. Together with (\ref{atsrh}), we
get
\begin{equation}\label{003}
\begin{split}
(h+1)H_{[(t-1)sr(h+1)]}=(r+1)H_{[t(s-1)(r+1)h]}\ \text{for}\ s\geq
h+2.
\end{split}
\end{equation}
Setting $t=0$, we get $H_{[0srh]}=0$ for $s\geq h+1, \ r\geq 1$.
Then we  inductively get $H_{[tsrh]}=0$ for $s\geq h+1, \ r\geq
t+1$. When $s\geq h+1, r\leq t$,   from (\ref{003}), we inductively
get $H_{[tsrh]}=\mathcal{F}\{(H_{[t's'r'0]})_{t'\geq r'}\}$, which
is $0$ by our normalization in (\ref{ng}). Thus we have proved
\begin{equation}\label{004}
\begin{split}
H_{[tsrh]}=0\ \text{for}\ s\geq h+1.
\end{split}
\end{equation}
Next we  will prove that $H_{[tsrs]}=0$. Setting $s=h\geq 1$, $t\geq
0$ and $r=-1$ in (\ref{001}), we get $ \Psi_{[ts0s]}=0\ \text{for}\
t\geq 1. $ Substituting it back to (\ref{001}), we inductively get
$$
\Psi_{[tsrs]}=0\ \text{for}\ t\geq r+1.
$$
Substituting  (\ref{btsrh}) into this equation, we get
\begin{equation}\label{005}
\begin{split}
(s+1)\Phi_{[(t-1)(s+1)(r-1)s]}=(t-1)\Phi_{[tsr(s-1)]}\ \text{for}\
t\geq r+1.
\end{split}
\end{equation}
Setting $s=0$, we get $\Phi_{[t1r0]}=0$ for $t\geq r+1$.
Substituting this back to (\ref{005}), we get $\Phi_{[t(s+1)rs]}=0$
for $t\geq r+1$. Together with (\ref{atsrh}), we get
\begin{equation*}
\begin{split}
(s+1)H_{[(t-1)(s+1)r(s+1)]}=(r+1)H_{[ts(r+1)s]}\ \text{for}\ t\geq
r+1.
\end{split}
\end{equation*}
Notice that $H_{[t0r0]}=0$ by our normalization. Hence we
inductively get
\begin{equation}\label{007}
\begin{split}
H_{[tsrs]}=0\ \text{for}\ t\geq r.
\end{split}
\end{equation}
Since $H_{[tsrh]}=\-{H_{[rhts]}}$, (\ref{004}) and (\ref{007}) imply
(\ref{324eq1}) for the case $\lambda_n=\lambda_1=0$.

\bigskip

{\bf Step II:}  In this step, we assume that $\lambda_n=0$ and
$\lambda_1\neq 0$. Proposition \ref{lem3} is an immediate
consequence of the following lemma:

\begin{lem}\label{lem01}
  Suppose that $\lambda_n=0$ and $\lambda_1\neq 0$. Assume that there exists an $h_0\geq
  -1$ such that
\begin{equation}\label{01a}
\Psi_{[tsrh]}=\Phi_{[tsrh]}=0\ \text{for}\ h\leq h_0,\ H_{[tsrh]}=0\
\text{for}\ \max(s,h)\leq h_0+1.
\end{equation}
Then we have
\begin{equation}\label{01c}
\Psi_{[tsrh]}=\Phi_{[tsrh]}=0\ \text{for}\ h\leq h_0+1,\
H_{[tsrh]}=0\ \text{for}\ \max(s,h)\leq h_0+2.
\end{equation}
\end{lem}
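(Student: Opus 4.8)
The plan is to prove Lemma~\ref{lem01}, which is precisely the \emph{inductive step} for the case $\lambda_n=0\neq\lambda_1$ (so $\xi=2\lambda_n=0$, $\eta=2\lambda_1\neq0$): started from $h_0=-1$, where the $\Psi,\Phi$ clauses of (\ref{01a}) are vacuous and $H_{[t0r0]}=0$ holds by the normalization (\ref{ng})--(\ref{lambda0}) together with $H_{[tsrh]}=\overline{H_{[rhts]}}$, iterating it will after finitely many steps force $H\equiv0$, hence Proposition~\ref{lem3} in this case. The proof follows the Step~I pattern of bootstrapping $\Psi\to\Phi\to H$, now executed at the single new level: $h$-index $h_0+1$ for $\Psi$ and $\Phi$, and $\max(s,h)=h_0+2$ for $H$.

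First I would set $\xi=0$ in the coefficient identity (\ref{bequation}) and discard, using (\ref{01a}), every term whose $\Psi$ has $h$-index $\le h_0$; the surviving identity relates $\Psi$'s of $h$-index $h_0+1$ with a nonzero (multiple-of-$s$) coefficient on the leading one, so for $s\ge1$ it is a recursion lowering the $s$-index by $2$ or $4$. The sign convention (\ref{conv}) kills the $s=1$ case outright, so all odd $s$-indices vanish, and the even ones reduce to the single base quantity $\Psi_{[t0r(h_0+1)]}$. This base quantity I would not kill on its own: passing it through (\ref{btsrh}) and then (\ref{atsrh}) (with $\xi=0$) rewrites it, modulo $H$'s annihilated by (\ref{01a}), as a multiple of an $H$ of $h$-index $h_0+2$ and $s$-index $\le1$; conjugating via $H_{[tsrh]}=\overline{H_{[rhts]}}$ turns that into an $H$ of $s$-index $h_0+2$ and $h$-index $\le1$, which (\ref{atsrh}) at $h$-index $0$ expresses through a $\Phi$ of $h$-index $0$ (zero by (\ref{01a}) once $h_0\ge0$) and an $H$ of $h$-index $0$ (zero by the normalization and conjugation) -- so the loop closes. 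With $\Psi$ dead at level $h_0+1$ (and below, by (\ref{01a})), running (\ref{btsrh}), equivalently the recursion behind (\ref{111}), gives $\Phi_{[tsr(h_0+1)]}=0$ by the same $s$-recursion, its $s$-index $0,1$ base cases handled exactly as above; and since by (\ref{btsrh}) every $\Psi$ of $h$-index $h_0+1$ is built from $\Phi$'s of $h$-index $\le h_0+1$, this also re-confirms $\Psi_{[tsr(h_0+1)]}=0$ for all $t,s,r$.

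Finally, with $\Phi_{[tsrh]}=0$ for all $h\le h_0+1$, the identity (\ref{atsrh}) with $\xi=0$ becomes, on that range, $(h+1)H_{[(t-1)sr(h+1)]}=(r+1)H_{[t(s-1)(r+1)h]}+\eta(r+1)H_{[ts(r+1)(h-1)]}$, a second-order recursion in the $h$-index. For the new $H$'s of $s$-index $\le h_0+1$ both right-hand terms are killed by (\ref{01a}), giving those for free; by conjugation it then remains to treat $H$'s of $s$-index $h_0+2$, which are run down in the $h$-index by this same identity until they hit $h$-index $0$ (resp.\ $1$), where the normal form of Theorem~\ref{norm} -- together with conjugation and a split on whether $t\ge r$ or $t<r$, exactly as in Step~I -- supplies the base. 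This yields $H_{[tsrh]}=0$ for $\max(s,h)\le h_0+2$, which is (\ref{01c}).

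The main obstacle is the small-$s$ (and small-$h$) bookkeeping in the bootstrap above: with $\eta=0$ (Step~I) each such recursion terminated on the normalization after a single pass, but the $\eta$- and $\eta^2$-terms in (\ref{bequation}) and (\ref{btsrh}) couple quantities of different $s$-parity, so the level-$(h_0+1)$ data for $\Psi,\Phi$ and the level-$(h_0+2)$ data for $H$ must be resolved simultaneously through a carefully ordered chain $\Psi\leftrightarrow\Phi\leftrightarrow H$, with Theorem~\ref{norm} and $H_{[tsrh]}=\overline{H_{[rhts]}}$ as the only closing inputs. A secondary nuisance is that (\ref{bequation}), (\ref{beq}) and (\ref{111}) are derived only for $s\ge1$ (resp.\ $s\ge2$), so the $s$-index $0$ and $1$ cases at every tier need a hands-on treatment rather than the generic recursion; keeping all the index shifts and the repeated uses of the conjugation symmetry consistent is where most of the labor goes.
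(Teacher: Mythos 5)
Your plan correctly assembles the tools, and your reading of (\ref{012}) --- odd $s$-indices die outright, even ones collapse to $\Psi_{[t0r(h_0+1)]}$ --- is a valid consequence of that recursion. It is, however, not the consequence the paper extracts, and the difference is fatal for the closing step. Your chain for the $s=0$ base, after $\Psi_{[t0r(h_0+1)]}=(h_0+2)H_{[(t-2)1(r-1)(h_0+2)]}=(h_0+2)\overline{H_{[(r-1)(h_0+2)(t-2)1]}}$ and one application of (\ref{atsrh}) at $h=0$, leaves the term $\Phi_{[r(h_0+2)(t-2)0]}$, which is killed by (\ref{01a}) only when $h_0\ge 0$; for $h_0=-1$ (the first step of the induction!) pushing it once more through (\ref{btsrh}) just returns $\Psi_{[t0r0]}=\overline{\Psi_{[(r+1)0(t-1)0]}}$, an involution carrying no vanishing information. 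Even for $h_0\ge 0$ the parallel closure for the $\Phi$-base $\Phi_{[t0r(h_0+1)]}=(h_0+2)H_{[(t-1)0r(h_0+2)]}=(h_0+2)\overline{H_{[r(h_0+2)(t-1)0]}}$ has no further step available: $\max(s,h)=h_0+2$ falls outside (\ref{01a}), and Theorem~\ref{norm} kills that $H$ only for $r\ge t-1$. Since your Paragraph~3 explicitly requires $\Phi$ dead at level $h_0+1$ as input before the $H$-recursion can start, the dependency is genuinely circular.

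The ingredient missing from your sketch is the \emph{ranged} vanishing that the paper pulls out of (\ref{012}) before any conjugation is invoked: setting $r=-3$ gives $\Psi_{[ts0(h_0+1)]}=0$ for $t\ge 1$, and running (\ref{012}) backwards in the $r$-index yields $\Psi_{[tsr(h_0+1)]}=0$ for $t\ge r+1$, hence (\ref{014}) and $\Phi_{[tsr(h_0+1)]}=0$ for $t\ge r+2$, hence $H_{[t0r(h_0+2)]}=0$ for $t\ge r+1$. This one-sided estimate is exactly complementary to what normalization-plus-conjugation supplies (the range $t\le r$); together they give (\ref{016}) in full, and then (\ref{111ind}) together with $H_{[t1r1]}=0$ (which for $h_0=-1$ is itself obtained from the ranged $\Phi$-vanishing at $s=0$ and Hermitian symmetry, and for $h_0\ge 0$ is already in (\ref{01a})) delivers $H_{[tsrh]}=0$ for $\max(s,h)\le h_0+2$, after which $\Phi$ and $\Psi$ at level $h_0+1$ are recovered by backward substitution. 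Your $s$-parity reduction is orthogonal to this and never produces the $t\ge r+1$ half-range; without it, the small-$s$ bases you flag as a ``secondary nuisance'' are the unresolved heart of the lemma.
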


Once we have Lemma \ref{lem01} at our disposal, since (\ref{01a})
holds for $h_0=-1$ by our normalization, hence (\ref{01c}) holds for
$h_0=-1$. Then by an induction,  we see that (\ref{01c}) holds for
all $h_0\leq m-2$. This will complete the proof of Proposition
\ref{lem3} in this setting.
\medskip

\begin{proof}[Proof of Lemma \ref{lem01}]
Setting $\xi=0$ in (\ref{bequation}) and making use of the assumptions in Lemma \ref{lem01}, we
get:
\begin{equation*}
\begin{split}
&s\Psi_{[(t-1)s(r+1)(h_0+1)]}+(s-2)\eta\Psi_{[t(s-2)(r+2)(h_0+1)]}-t\eta\Psi_{[t(s-2)(r+2)(h_0+1)]}\\
&-(t+1)\eta^2\Psi_{[(t+1)(s-4)(r+3)(h_0+1)]}-\eta\Psi_{[t(s-2)(r+2)(h_0+1)]}=0.
\end{split}
\end{equation*}
Namely, we have
\begin{equation}\label{012}
\begin{split}
&s\Psi_{[(t-1)s(r+1)(h_0+1)]}=(t+3-s)\eta\Psi_{[t(s-2)(r+2)(h_0+1)]}
+(t+1)\eta^2\Psi_{[(t+1)(s-4)(r+3)(h_0+1)]}.
\end{split}
\end{equation}
By setting $r=-3$ in (\ref{012}), we get $\Psi_{[ts0(h_0+1)]}=0$ for
$t\geq 1$. Substituting this back to (\ref{012}), we  inductively
get that  $\Psi_{[tsr(h_0+1)]}=0$ for $t\geq r+1$. Combining this
with (\ref{btsrh}) and (\ref{01a}), we obtain
\begin{equation}\label{014}
\begin{split}
(s+1)\Phi_{[(t-1)(s+1)(r-1)(h_0+1)]}=(t-1)\eta\Phi_{[t(s-1)r(h_0+1)]}\
\text{for}\ t\geq r+1.
\end{split}
\end{equation}
Setting $r=0$ in (\ref{014}), we get $\Phi_{[ts0(h_0+1)]}=0$ for
$t\geq 2$. Hence we  inductively get  $\Phi_{[tsr(h_0+1)]}=0$ for
$t\geq r+2$. In particular, we have $\Phi_{[t0r(h_0+1)]}=0$ for
$t\geq r+2$. Combining this with (\ref{atsrh}), (\ref{01a}) and
$\lambda_n=0$, we get $ (h_0+2)H_{[(t-1)0r(h_0+2)]}=0$ for $t\geq
r+2$. Namely, we obtain $ H_{[t0r(h_0+2)]}=0$ for $t\geq r+1$.
Together with our normalization (\ref{ng}), we obtain:
\begin{equation}\label{016}
\begin{split}
H_{[t0r(h_0+2)]}=0.
\end{split}
\end{equation}

{\bf Case I of Step II:} When $h_0=-1$, setting $s=0$ in
(\ref{014}), we get $\Phi_{[(t-1)1(r-1)0]}=0$ for $t\geq r+1$.
Together with (\ref{atsrh}) and (\ref{lambda0}), we get $
H_{[(t-1)1r1]}=(r+1)H_{[t0(r+1)0]}=0$ for $t\geq r+1$.  Namely, we
obtain $H_{[t1r1]}=0$ for $t\geq r$. By the reality of $H$, we get
\begin{equation}\label{0016}
\begin{split}
H_{[t1r1]}=0.
\end{split}
\end{equation}
From (\ref{atsrh}), (\ref{016}) and (\ref{0016}), we obtain
$\Phi_{[t0r0]}=\Phi_{[t1r0]}=0$. Together with (\ref{btsrh}), we see
that $\Psi_{[t0r0]}=0$.

Setting $h=0$ in (\ref{beq}) and making use of $\Psi_{[t0r0]}=0$, we first get $\Psi_{[t1r0]}=\Psi_{[t2r0]}=0$, then
 inductively get $\Psi_{[tsr0]}=0$. Combining this with
(\ref{btsrh}), we get
\begin{equation}\label{0017}
(s+1)\Phi_{[(t-1)(s+1)(r-1)0]}=(t-1)\eta \Phi_{[t(s-1)r0]},
\end{equation}
Setting $s=0$ in (\ref{0017}), we obtain $\Phi_{[t1r0]}=0$. By an
induction argument, we  get $\Phi_{[tsr0]}=0$. This proves
(\ref{01c}) for the case $h_0=-1$.  \medskip

{\bf Case II  of Step II:} When $h_0\geq 0$, from
(\ref{111ind}),(\ref{016}) and (\ref{0016}), we inductively get
$H_{[tsr(h_0+2)]}=0$ for $s\leq h_0+2$. Combining this with
(\ref{atsrh}) and (\ref{0016}), we get
$\Phi_{[t0r(h_0+1)]}=\Phi_{[t1r(h_0+1)]}=0$. Substituting this back
to (\ref{btsrh}), we obtain $\Psi_{[t0r(h_0+1)]}=0$. Together with
(\ref{beq}), we inductively get $\Psi_{[tsr(h_0+1)]}=0$. Combining
this with (\ref{btsrh}), we obtain
\begin{equation*}
(s+1)\Phi_{[(t-1)(s+1)(r-1)(h_0+1)]}=(t-1)\eta
\Phi_{[t(s-1)r(h_0+1)]}.
\end{equation*}
As in Case I, we inductively get $\Phi_{[tsr(h_0+1)]}=0$. This
proves (\ref{01c}) for the case $h_0\geq 0$ and  thus completes the
proof of Lemma \ref{lem01}.
\end{proof}


\bigskip
{\bf Step III:} In this step, we assume that $\lambda_n\neq 0$ and
$\lambda_1\neq 0$. Similar to the situation in Step II, Proposition
\ref{lem3} in this setting follows from the following lemma:

\begin{lem}\label{lem11}
  Suppose that $\lambda_n\neq 0$ and $\lambda_1\neq 0$. Then we have
  the following:

(I) \begin{equation}
H_{(te_n+e_1,(m-t-2)e_n+e_1)}=H_{(te_n,(m-t)e_n)}=0.
\end{equation}

(II) Assume that there exists an $h_0\geq
  -1$ such that
\begin{equation}\label{11a}
\Psi_{[tsrh]}=\Phi_{[tsrh]}=0\ \text{for}\ h\leq h_0,\ H_{[tsrh]}=0\
\text{for}\ \max(s,h)\leq h_0+1.
\end{equation}
Then we have
\begin{equation}\label{11c}
\Psi_{[tsrh]}=\Phi_{[tsrh]}=0\ \text{for}\ h\leq h_0+1,\
H_{[tsrh]}=0\ \text{for}\ \max(s,h)\leq h_0+2.
\end{equation}
\end{lem}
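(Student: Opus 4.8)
The plan is to run the same two-tier scheme as in the proof of Lemma \ref{lem01}, the new feature being that now $\xi = 2\lambda_n \neq 0$ (and, by the choice of $\lambda_n$, also $\eta = 2\lambda_1 \neq 0$), so the recursion (\ref{bequation}) no longer collapses the way it does after setting $\xi = 0$; instead, for $s \geq 1$, its leading coefficient is a nonzero multiple of $\xi$, and one exploits this invertibility throughout. The compensating difficulty is that the vanishing of the pure terms $H_{[tsrh]}$ with $s = h = 0$, which in Step~II came for free from the normalization (\ref{lambda0}), must now be produced by hand; this is exactly part~(I) of the lemma, and it simultaneously furnishes the base case $h_0 = -1$ of the induction in part~(II).

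For part~(I) the plan is as follows. Specializing (\ref{atsrh}) to $s = 0$ and $s = 1$ and feeding the outputs into the $s = 0$ and $s = 1$ specializations of (\ref{btsrh}) expresses $\Psi_{[t0r0]}$ (and, via the recursion (\ref{beq}), which at $h = 0$ starts from $\Psi_{[t1r0]} = 0$, also $\Psi_{[tsr0]}$ for every $s$) as an $\mathcal{F}$-combination of the two families of ``atoms'' $H_{[t1r1]}$ and $H_{[t0r0]}$, all in the fixed degree $m$. Organizing these identities, which come entirely from the basic equation (\ref{5}) at conjugate-$z_1$-degree $\leq 1$, produces within degree $m$ a closed linear system for the atoms $H_{[t1r1]}$ and $H_{[t0r0]}$, to be solved by an induction analogous to the one carried out in the proof of Theorem \ref{norm}. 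By the reality relation $H_{[tsrh]} = \overline{H_{[rhts]}}$, by $E_{(I,0)} = 0$, and by the relevant one among the normalizations (II$_{-3}$)--(II$_{2}$) of Theorem \ref{norm}, these atoms are already known to vanish on an explicit sub-range of their index plus one real-part relation; and the hypothesis $\lambda_n \neq 1/2$, i.e.\ $\theta = 1 - \xi^{2} \neq 0$, is precisely what makes the linear system non-degenerate, so the vanishing propagates off the normalized sub-range to all admissible indices. I expect this bookkeeping to branch into the same six sub-cases that appear in the proof of Theorem \ref{norm}, and this is the step I anticipate to be the main obstacle.

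For part~(II) the plan is to follow the bootstrap of Case~II of Step~II in the proof of Lemma \ref{lem01}, now keeping $\xi$ nonzero. Assume (\ref{11a}). Substituting $h = h_0 + 1$ into (\ref{bequation}) kills every term of conjugate-$z_1$-degree $\leq h_0$ and leaves, for $s \geq 1$, a recursion among the $\Psi$'s of conjugate-$z_1$-degree exactly $h_0 + 1$ with leading coefficient a nonzero multiple of $\xi$; seeding at the smallest admissible value of $r$ gives $\Psi_{[tsr(h_0+1)]} = 0$ on an initial range of indices. Substituting this into (\ref{btsrh}) at level $h_0 + 1$ (the lower terms dropping by hypothesis), and again using $\xi \neq 0$, gives $\Phi_{[tsr(h_0+1)]} = 0$ on a range; the $s = 0$ case of (\ref{atsrh}) then turns this into a two-term recursion for $H_{[t0r(h_0+2)]}$ which, combined with $E_{(I,0)} = 0$ read through the reality relation, forces $H_{[t0r(h_0+2)]} = 0$ for all $t, r$. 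Together with the hypothesis that $H_{[t0ri]} = 0$ for $i \leq h_0 + 1$, with part~(I), and with the reduction formula (\ref{111ind}), this gives $H_{[tsrh]} = 0$ whenever $\max(s,h) \leq h_0 + 2$; re-running (\ref{atsrh}), then (\ref{btsrh}) and (\ref{beq}), with the full vanishing of $H$ now in hand, upgrades the conclusions on $\Phi$ and $\Psi$ from partial ranges to all $s$, which is (\ref{11c}). Iterating part~(II) from the base case $h_0 = -1$ supplied by part~(I) then yields (\ref{324eq1}) in the remaining case $\lambda_n \neq 0$, $\lambda_1 \neq 0$, completing the proof of Proposition \ref{lem3}.
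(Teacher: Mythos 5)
Your broad plan captures some of the correct strategic landmarks — base the induction in (II) on (I), get vanishing off the normalization of Theorem \ref{norm}, use $\theta=1-\xi^2\neq 0$ as the non-degeneracy condition, and expect the final blow to be a matrix-invertibility argument branching over the six congruence classes of $m_0$. However, the proposal is missing the single organizing idea that makes the whole computation tractable, and without it the plan as stated breaks at a definite place.

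The missing idea is the binomial transform of Lemma \ref{lemk}: one does not work directly with the raw three-index arrays $\Psi_{[tsrh]}$, $\Phi_{[tsrh]}$, $H_{[tsrh]}$, but with the weighted sums
\begin{equation*}
\Psi^{(k)}_{[sh]}={\sum}_{t}(-\xi)^{m+1-t-s-h}\binom{t}{k}\Psi_{[ts(m+1-t-s-h)h]},
\end{equation*}
and likewise for $\Phi^{(k)}_{[sh]}$, $H^{(k)}_{[sh]}$. Equations (\ref{ahexp}), (\ref{baexp}), (\ref{bkh0}) show that the original cascading relations collapse, at the level of these transforms, into two-index recursions in $(s,k)$ — (\ref{odd}) and (\ref{even}) — that can actually be closed by a clean induction. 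If you try, as you propose, to assemble a ``closed linear system for the atoms $H_{[t1r1]}$ and $H_{[t0r0]}$'' directly from (\ref{atsrh}), (\ref{btsrh}), (\ref{bequation}) in a fixed degree, you will get an enormous unstructured system whose non-degeneracy is not accessible; it is precisely the transform that converts that system into the explicit banded matrices $R^\pm$, $N$, $T$, $S$ of Section~6 whose determinants are then shown to be non-zero multiples of a power of $(1\mp\xi)$.

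The gap becomes concrete in part (II). When $\xi\neq 0$, the recursion you extract from (\ref{bequation}) at level $h=h_0+1$ (namely (\ref{beq})) only lowers $s$ by $2$; cascading it from the non-existent level $s\le -1$ therefore kills $\Psi_{[t(2s+1)r(h_0+1)]}$ for \emph{odd} $s$ only — this is (\ref{bindexp}) — and tells you nothing about $\Psi_{[t(2s)r(h_0+1)]}$. Your next step, ``Substituting this into (\ref{btsrh}) \dots gives $\Phi_{[tsr(h_0+1)]}=0$ on a range,'' would therefore fail: what (\ref{btsrh}) gives you from the odd-$\Psi$ vanishing is not the vanishing of even-$s$ $\Phi$'s, but a homogeneous relation \emph{among} them — at the transform level this is (\ref{evenind}). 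One must then prove the relation $\theta\,\Phi^{(2k+1)}_{[2s,h_0+1]}=\Phi^{(2k)}_{[2s,h_0+1]}$ by a separate induction in $(s,k)$ (this is (\ref{even})), specialize to $s=0$ to obtain $\theta^2H^{(2k+1)}_{[0,h_0+2]}=H^{(2k-1)}_{[0,h_0+2]}$, conclude $H^{(2k+1)}_{[0,h_0+2]}=0$, and only then feed this vector of vanishing binomial transforms into the invertibility of the matrix $S$ of Lemma \ref{lem4} (split again by the parity of $m-h_0-1$) to finally deduce $H_{[t0r(h_0+2)]}=0$. The same structural issue affects part (I): the system you plan to write down for $H_{[t1r1]}$ and $H_{[t0r0]}$ only closes up after passing to $\Phi^{(k)}_{[1,0]}$ and $H^{(k)}_{[\cdot\cdot]}$, where one first proves (\ref{odd}) by induction, specializes to $s=1$ to get $\Phi^{(2k-1)}_{[1,0]}=0$, and arrives at (\ref{lasteq}), the single linear equation in $H^{(k)}_{[11]}$ and $H^{(k)}_{[00]}$ whose coefficient matrix, in the six sub-cases, is $R^\pm$, $N$, $T$. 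So the essential obstacle you deferred (``I anticipate to be the main obstacle'') is not mere bookkeeping; it requires the transform mechanism and the explicit determinant computations of Lemmas \ref{oddnon}, \ref{evennon}, \ref{lem4}, which your outline does not supply.
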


Before proving Lemma \ref{lem11}, we first make the needed
preparations. For any fixed $s,h\geq 0$ and nonnegative integer $k$,
set
\begin{equation}\label{psiphik}
\begin{split}
&\Psi^{(k)}_{[sh]}={\sum}_{t=k}^{m+1-s-h}(-\xi)^{m+1-t-s-h}(_k^t)\Psi_{[ts(m+1-t-s-h)h]},\\
&\Phi^{(k)}_{[sh]}={\sum}_{t=k}^{m-s-h}(-\xi)^{m-t-s-h}(_k^t)\Phi_{[ts(m-t-s-h)h]},\\
&H^{(k)}_{[sh]}={\sum}_{t=k}^{m-s-h}(-\xi)^{m-t-s-h}(_k^t)H_{[ts(m-t-s-h)h]}.
\end{split}
\end{equation}
Next we would like to transfer the relations among $\Psi$, $\Phi$
and $H$ into the relations among $\Psi^{(k)}_{[s(h_0+1)]}$,
$\Phi^{(k)}_{[s(h_0+1)]}$ and $H^{(k)}_{[s(h_0+2)]}$.

\begin{lem}\label{lemk}
Assume that there exists an $h_0\geq
  -1$ such that
\begin{equation}\label{11aa}
\Psi_{[tsrh]}=\Phi_{[tsrh]}=0\ \text{for}\ h\leq h_0,\ H_{[tsrh]}=0\
\text{for}\ \max(s,h)\leq h_0.
\end{equation}
Then we have
\begin{align}
\Phi^{(k)}_{[s(h_0+1)]}=&(h_0+2)\theta
H^{(k)}_{[s(h_0+2)]}+(h_0+2)H^{(k-1)}_{[s(h_0+2)]}   \nonumber  \\
&+\frac{1}{\xi}\Big((m-s-h_0-k)H^{(k)}_{[(s-1)(h_0+1)]}
-(k+1)H^{(k+1)}_{[(s-1)(h_0+1)]}\Big), \label{ahexp}\\
 \Psi^{(k)}_{[s(h_0+1)]}=&(s+1)\Big(\xi\theta
\Phi^{(k-1)}_{[(s+1)(h_0+1)]}+\xi \Phi^{(k-2)}_{[(s+1)(h_0+1)]}\Big)                \nonumber   \\
        &-\eta(k+1)\theta\Phi^{(k+1)}_{[(s-1)(h_0+1)]}
        -\eta(k-1)\Phi^{(k)}_{[(s-1)(h_0+1)]}.  \label{baexp}
\end{align}
Moreover, $\Psi^{(k)}_{[s(h_0+1)]}$ satisfies the following
equation:
\begin{equation}\label{bkh0}
\begin{split}
s\xi^2\theta\Psi^{(k-2)}_{[s(h_0+1)]}+s\xi^2\Psi^{(k-3)}_{[s(h_0+1)]}=
&\xi\eta\Big\{(k-1)\theta\Psi^{(k)}_{[(s-2)(h_0+1)]}+(k+1-s)\Psi^{(k-1)}_{[(s-2)(h_0+1)]}\Big\}\\
&+(k+1)\eta^2\Psi^{(k+1)}_{[(s-4)(h_0+1)]}.
\end{split}
\end{equation}
\end{lem}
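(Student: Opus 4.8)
The plan is to obtain the three identities (\ref{ahexp}), (\ref{baexp}) and (\ref{bkh0}) by applying, to the three pointwise recursions already available --- (\ref{atsrh}) expressing $\Phi_{[tsrh]}$ through the $H_{[\,\cdot\,]}$'s, (\ref{btsrh}) expressing $\Psi_{[tsrh]}$ through the $\Phi_{[\,\cdot\,]}$'s, and (\ref{bequation}) the linear relation among the $\Psi_{[\,\cdot\,]}$'s --- the linear operation
$$X_{[tsrh]}\ \longmapsto\ \sum_{t\geq k}(-\xi)^{r}\binom{t}{k}\,X_{[tsrh]},$$
where in the sum $r=r(t)$ is determined by keeping $t+s+r+h$ fixed; this is precisely the operation defining the superscript-$(k)$ quantities in (\ref{psiphik}). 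Concretely, for (\ref{ahexp}) I would set $h=h_0+1$ in (\ref{atsrh}), multiply by $(-\xi)^{m-t-s-h_0-1}\binom{t}{k}$ and sum over $t$. Two elementary binomial identities do all the reorganization: $\binom{t+1}{k}=\binom{t}{k}+\binom{t}{k-1}$ handles the shift $t\mapsto t-1$ in the second term of (\ref{atsrh}), while $(t-k)\binom{t}{k}=(k+1)\binom{t}{k+1}$ handles the factor $r+1$ (affine in $t$) multiplying the third term. Combining the two resulting level-$(h_0+2)$ sums produces the coefficient $1-\xi^2=\theta$, and one reads off exactly (\ref{ahexp}).

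For (\ref{baexp}) I would do the same with (\ref{btsrh}) at $h=h_0+1$: every $\Phi_{[\,\cdot\,]}$ occurring there with last index $h_0$ is annihilated by the hypothesis (\ref{11aa}), so only genuine level-$(h_0+1)$ terms remain, and the shifts $t\mapsto t-1,t-2$ together with the same two binomial identities reorganize the sum into (\ref{baexp}). For (\ref{bkh0}) I would start from (\ref{bequation}) restricted to the subfamily with $h=h_0+1$; then every $\Psi_{[\,\cdot\,]}$ whose last index is $h_0$, $h_0-1$ or $h_0-2$ vanishes by (\ref{11aa}), leaving a relation purely among level-$(h_0+1)$ quantities, and summing it against $(-\xi)^{r}\binom{t}{k}$ (with the $t$-shifts handled again by $\binom{t+1}{k}=\binom{t}{k}+\binom{t}{k-1}$) gives (\ref{bkh0}). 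Throughout, the summation endpoints are automatically consistent because $\Phi_{[tsrh]}$, $\Psi_{[tsrh]}$, $H_{[tsrh]}$ are understood to be $0$ as soon as an index is negative, so the boundary terms created by the index shifts contribute nothing.

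The step I expect to be the most delicate is the first one: one must check that the level-$h_0$ residue produced by the last term $-\eta(r+1)H_{[ts(r+1)h_0]}$ of (\ref{atsrh}) does \emph{not} survive into (\ref{ahexp}); equivalently, that $\sum_{t}(-\xi)^{m-t-s-h_0}\binom{t}{k}H_{[ts(m-t-s-h_0)h_0]}=0$ under (\ref{11aa}). For $s\leq h_0$ this is immediate from the vanishing clause in (\ref{11aa}) and the reality relation $H_{[tsrh]}=\overline{H_{[rhts]}}$; for $s>h_0$ one feeds the relations $\Phi_{[tsrh_0]}=0$ and $\Psi_{[tsrh_0]}=0$ back through (\ref{atsrh}) (and, at the preceding level, the analogue of (\ref{ahexp}) itself) to force this combination to vanish. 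Once (\ref{ahexp})--(\ref{bkh0}) are in hand, the remaining use of Lemma \ref{lemk} inside the proof of Lemma \ref{lem11} is formal: (\ref{bkh0}) becomes a recursion in $s$ and $k$ for $\Psi^{(k)}_{[s(h_0+1)]}$ --- and it is precisely here that $\lambda_n\neq 1/2$, i.e. $\theta=1-\xi^2\neq 0$, is used --- which forces $\Psi^{(k)}_{[s(h_0+1)]}=0$; then (\ref{baexp}) and (\ref{ahexp}) propagate this to $\Phi^{(k)}_{[s(h_0+1)]}=0$ and $H^{(k)}_{[s(h_0+2)]}=0$, completing the inductive step.
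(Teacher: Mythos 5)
Your plan matches the paper's proof: under (\ref{11aa}) reduce (\ref{atsrh}), (\ref{btsrh}), (\ref{bequation}) at $h=h_0+1$ to (\ref{phitsrh}), (\ref{psitsrh}), (\ref{beq*}) and then sum against $(-\xi)^{r}\binom{t}{k}$, with $\binom{t+1}{k}=\binom{t}{k}+\binom{t}{k-1}$ absorbing the $t$-shifts and $(t-k)\binom{t}{k}=(k+1)\binom{t}{k+1}$ absorbing the coefficients that are affine in $t$ --- this is exactly what the paper carries out in (\ref{phic})--(\ref{phic1}), (\ref{11k1})--(\ref{11k2}), (\ref{psik1})--(\ref{psik4}). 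The one place where your argument is looser than the paper's is the ``delicate step'' you yourself flag: dropping $-\eta(r+1)H_{[ts(r+1)h_0]}$ from (\ref{atsrh}) follows from (\ref{11aa}) only when $s\le h_0$ (or, when $h_0=-1$, from the index convention), and your proposed fix for $s>h_0$ --- feeding $\Phi_{[tsrh_0]}=\Psi_{[tsrh_0]}=0$ back through the recursions --- does not actually isolate this coefficient, so it is not a proof; what makes the paper's argument sound is that (\ref{ahexp}) is only ever invoked with $s=1,\,h_0=-1$ (to obtain (\ref{lasteq})) or with $s=0,\,h_0\ge 0$ (entering (\ref{sis0})), in both of which the dropped term is zero outright. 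You are right that no such care is needed for (\ref{baexp}) or (\ref{bkh0}), since the terms dropped in passing to (\ref{psitsrh}) and (\ref{beq*}) are $\Phi$'s and $\Psi$'s with last index $\le h_0$, and these vanish for every $s$ directly by the hypothesis.
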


\begin{proof}[Proof of Lemma \ref{lemk}]

Under the assumption in (\ref{11aa}), we easily conclude that
(\ref{atsrh}) and (\ref{btsrh}) have the following expressions:
\begin{align}
\Phi_{[tsr(h_0+1)]}=&(h_0+2)(\xi
H_{[ts(r-1)(h_0+2)]}+H_{[(t-1)sr(h_0+2)]})-(r+1)H_{[t(s-1)(r+1)(h_0+1)]}.
\label{phitsrh}    \\
\Psi_{[tsr(h_0+1)]}=&(s+1)\big\{\xi\Phi_{[t(s+1)(r-2)(h_0+1)]}
+(1+\xi^2)\Phi_{[(t-1)(s+1)(r-1)(h_0+1)]}\nonumber\\
&+\xi\Phi_{[(t-2)(s+1)r(h_0+1)]}\big\}-\big\{\xi\eta(t+1)\Phi_{[(t+1)(s-1)(r-1)(h_0+1)]}
\label{psitsrh}\\
& +\eta (t-1)\Phi_{[t(s-1)r(h_0+1)]}\big\}.\nonumber
\end{align}

For fixed $s\geq 0$, a direct computation shows that
\begin{equation}\label{phic}
\begin{split}
&{\sum}_{t=k}^{m-s-h_0-1}(-\xi)^{m-t-s-h_0-1}(_k^t)H_{[ts(m-t-s-h_0-2)(h_0+2)]}\\
=&(-\xi){\sum}_{t=k}^{m-s-h_0-1}(-\xi)^{m-t-s-h_0-2}(_k^t)H_{[ts(m-t-s-h_0-2)(h_0+2)]}\\
=&-\xi
H^{(k)}_{[s(h_0+2)]},\\
&{\sum}_{t=k}^{m-s-h_0-1}(-\xi)^{m-t-s-h_0-1}(_k^t)H_{[(t-1)s(m-t-s-h_0-1)(h_0+2)]}\\
=&{\sum}_{t=k}^{m-s-h_0-1}(-\xi)^{m-t-s-h_0-1}
\big((_k^{t-1})+(_{k-1}^{t-1})\big)H_{[(t-1)s(m-t-s-h_0-1)(h_0+2)]}\\
=&H^{(k)}_{[s(h_0+2)]}+H^{(k-1)}_{[s(h_0+2)]}.
\end{split}
\end{equation}
Notice that
$$
(m-t-s-h_0)(_k^t)=\big((m-s-k-h_0)-(t-k)\big)(_k^t)=(m-s-k-h_0)(_k^t)-(k+1)(_{k+1}^t).
$$
Thus we have
\begin{equation}\label{phic1}
\begin{split}
&{\sum}_{t=k}^{m-s-h_0-1}(-\xi)^{m-t-s-h_0-1}(_k^t)(m-t-s-h_0)H_{[t(s-1)(m-t-s-h_0)(h_0+1)]}\\
=&\frac{1}{-\xi}{\sum}_{t=k}^{m-s-h_0}(-\xi)^{m-t-s-h_0}
\big((m-s-k-h_0)(_k^t)-(k+1)\big)(_{k+1}^t)\big)H_{[t(s-1)(m-t-s-h_0)(h_0+1)]}\\
=&\frac{1}{-\xi}\big((m-s-k-h_0)H^{(k)}_{[(s-1)(h_0+1)]}-(k+1)H^{(k+1)}_{[(s-1)(h_0+1)]}\big).
\end{split}
\end{equation}
 Substituting (\ref{phic}) and (\ref{phic1}) into (\ref{phitsrh}), we get
(\ref{ahexp}).

Next we prove (\ref{baexp}). A direct computation shows that
\begin{equation}\label{11k1}
\begin{split}
&{\sum}_{t=k}^{m-s-h_0}(-\xi)^{m-t-s-h_0}(_k^t)\big\{\xi\Phi_{[t(s+1)(m-t-s-h_0-2)(h_0+1)]}
+(1+\xi^2)\Phi_{[(t-1)(s+1)(m-t-s-h_0-1)(h_0+1)]}\\
&  \hskip 2cm   +\xi\Phi_{[(t-2)(s+1)(m-t-s-h_0)(h_0+1)]}\big\}\\
=&{\sum}_{t=k-2}^{m-s-h_0-2}(-\xi)^{m-t-s-h_0-2}\big\{\xi\cdot\xi^2(_k^t)-\xi(1+\xi^2)(_k^{t+1})
+\xi(_k^{t+2})\big\}\Phi_{[t(s+1)(m-t-s-h_0-2)(h_0+1)]}\\
=&{\sum}_{t=k-2}^{m-s-h_0-2}(-\xi)^{m-t-s-h_0-2}\big\{\xi\theta(_{k-1}^{t})
+\xi(_{k-2}^{t})\big\}\Phi_{[t(s+1)(m-t-s-h_0-2)(h_0+1)]}\\
=&\xi\theta\Phi^{(k-1)}_{[(s+1)(h_0+1)]}+\xi
\Phi^{(k-2)}_{[(s+1)(h_0+1)]}.
\end{split}
\end{equation}
We also have
\begin{equation}\label{11k2}
\begin{split}
&{\sum}_{t=k}^{m-s-h_0}(-\xi)^{m-t-s-h_0}(_k^t)\big\{\xi(t+1)\Phi_{[(t+1)(s-1)(m-t-s-h_0-1)(h_0+1)]}\\
&\hskip 1.5cm + (t-1)\Phi_{[t(s-1)(m-t-s-h_0)(h_0+1)]}\big\}\\
=&-\xi^2{\sum}_{t=k}^{m-s-h_0}(-\xi)^{m-t-s-h_0-1}(k+1)(_{k+1}^{t+1})
\Phi_{[(t+1)(s-1)(m-t-s-h_0-1)(h_0+1)]}\\
&+{\sum}_{t=k}^{m-s-h}(-\xi)^{m-t-s-h_0}\big\{(k+1)(_{k+1}^{t})+(k-1)(_k^t)\big\}
\Phi_{[t(s-1)(m-t-s-h_0)(h_0+1)]}\\
=&-\xi^2(k+1)\Phi^{(k+1)}_{[(s-1)(h_0+1)]}+(k+1)\Phi^{(k+1)}_{[(s-1)(h_0+1)]}
+(k-1)\Phi^{(k)}_{[(s-1)(h_0+1)]}\\
=&\theta(k+1)\Phi^{(k+1)}_{[(s-1)(h_0+1)]}+(k-1)\Phi^{(k)}_{[(s-1)(h_0+1)]}.
\end{split}
\end{equation}
Substituting (\ref{11k1}) and (\ref{11k2}) into (\ref{psitsrh}), we
get (\ref{baexp}).

Now we turn to the proof of (\ref{bkh0}). Under the assumption
(\ref{11aa}), (\ref{bequation}) has the following form:
\begin{equation}\label{beq*}
\begin{split}
&s\big\{\xi
\Psi_{[tsr(h_0+1)]}+(2\xi^2+1)\Psi_{[(t-1)s(r+1)(h_0+1)]}
+(\xi^3+2\xi)\Psi_{[(t-2)s(r+2)(h_0+1)]}+\xi^2\Psi_{[(t-3)s(r+3)(h_0+1)]}\big\}\\
&+\eta\big\{(s-2)\Psi_{[t(s-2)(r+2)(h_0+1)]}+\xi(s-2)\Psi_{[(t-1)(s-2)(r+3)(h_0+1)]}\big\}\\
&-\eta\big\{(t+1)\xi
\Psi_{[(t+1)(s-2)(r+1)(h_0+1)]}+t(1+\xi^2)\Psi_{[t(s-2)(r+2)(h_0+1)]}
+(t-1)\xi \Psi_{[(t-1)(s-2)(r+3)(h_0+1)]}\big\}\\
&-(t+1)\eta^2\Psi_{[(t+1)(s-4)(r+3)(h_0+1)]}-\eta\theta
\Psi_{[t(s-2)(r+2)(h_0+1)]} =0.
\end{split}
\end{equation}
Notice that
\begin{equation*}
\begin{split}
&(-\xi)^3\xi
(_k^t)+(-\xi)^2(2\xi^2+1)(_k^{t+1})+(-\xi)(\xi^3+2\xi)(_k^{t+2})+\xi^2(_k^{t+3})\\
=&\xi^2\big\{(_k^{t+3})-2(_k^{t+2})+(_k^{t+1})\big\}-\xi^4\big\{(_k^{t+2})-2(_k^{t+1})+(_k^{t})\big\}\\
=&\xi^2\big\{(_{k-2}^{t})+(_{k-3}^{t})\big\}-\xi^4(_{k-2}^{t})
=\xi^2\theta(_{k-2}^{t})+\xi^2(_{k-3}^{t}).
\end{split}
\end{equation*}
Hence we have
\begin{equation}\label{psik1}
\begin{split}
&{\sum}_{t=k}^{m-s-h_0+3}(-\xi)^{m-t-s-h_0+3}(_k^t)\big\{\xi
\Psi_{[ts(m-t-s-h_0)(h_0+1)]}+(2\xi^2+1)\Psi_{[(t-1)s(m-t-s-h_0+1)(h_0+1)]}\\
&+(\xi^3+2\xi)\Psi_{[(t-2)s(m-t-s-h_0+2)(h_0+1)]}+\xi^2\Psi_{[(t-3)s(m-t-s-h_0+3)(h_0+1)]}\big\}\\
=&{\sum}_{t=k-3}^{m-s-h_0}(-\xi)^{m-t-s-h_0}\big\{(-\xi)^3\xi
(_k^t)+(-\xi)^2(2\xi^2+1)(_k^{t+1})\\
&+(-\xi)(\xi^3+2\xi)(_k^{t+2})+\xi^2(_k^{t+3})\big\}\Psi_{[ts(m-t-s-h_0)(h_0+1)]}\\
=&{\sum}_{t=k-3}^{m-s-h_0}(-\xi)^{m-t-s-h_0}\big\{\xi^2\theta(_{k-2}^{t})+\xi^2(_{k-3}^{t})\big\}
\Psi_{[ts(m-t-s-h_0)(h_0+1)]}\\
=&\xi^2\theta\Psi^{(k-2)}_{[s(h_0+1)]}+\xi^2\Psi^{(k-3)}_{[s(h_0+1)]}.
\end{split}
\end{equation}
A direct computation shows that
\begin{equation}\label{psik2}
\begin{split}
&{\sum}_{t=k}^{m-s-h_0+3}(-\xi)^{m-t-s-h_0+3}(_k^t)\big\{\Psi_{[t(s-2)(m-t-s-h_0+2)(h_0+1)]}
      +\xi\Psi_{[(t-1)(s-2)(m-t-s-h_0-3)(h_0+1)]}\big\}\\
=&{\sum}_{t=k-1}^{m-s-h_0+2}(-\xi)^{m-t-s-h_0+2}\big\{-\xi(_k^t)+\xi(_k^{t+1})\big\}
\Psi_{[t(s-2)(m-t-s-h_0+2)(h_0+1)]}\\
=&{\sum}_{t=k-1}^{m-s-h_0+2}(-\xi)^{m-t-s-h_0+2}\xi(_{k-1}^{t})\Psi_{[t(s-2)(m-t-s-h_0+2)(h_0+1)]}\\
=&\xi\Psi^{(k-1)}_{[(s-2)(h_0+1)]}.
\end{split}
\end{equation}
We also obtain the following formulas:
\begin{equation}\label{psik3}
\begin{split}
&{\sum}_{t=k}^{m-s-h_0+3}(-\xi)^{m-t-s-h_0+3}(_k^t)\big\{(t+1)\xi
\Psi_{[(t+1)(s-2)(m-t-s-h_0+1)(h_0+1)]}\\
&+t(1+\xi^2)\Psi_{[t(s-2)(m-t-s-h_0+2)(h_0+1)]}+(t-1)\xi \Psi_{[(t-1)(s-2)(m-t-s-h_0+3)(h_0+1)]}\big\}\\
=&{\sum}_{t=k-1}^{m-s-h_0+2}(-\xi)^{m-t-s-h_0+2}\big\{(-\xi)^2t\xi(_k^{t-1})-\xi
(1+\xi^2)t(_k^t)+t\xi(_k^{t+1})
\big\}\Psi_{[t(s-2)(m-t-s-h_0+2)(h_0+1)]}\\
=&{\sum}_{t=k-1}^{m-s-h_0+2}(-\xi)^{m-t-s-h_0+2}\big\{k\xi\theta(_k^t)+(k-1)\xi(_{k-1}^t)
\big\}\Psi_{[t(s-2)(m-t-s-h_0+2)(h_0+1)]}\\
=&k\xi\theta\Psi^{(k)}_{[(s-2)(h_0+1)]}+(k-1)\xi\Psi^{(k-1)}_{[(s-2)(h_0+1)]}.
\end{split}
\end{equation}
\begin{equation}\label{psik4}
\begin{split}
&{\sum}_{t=k}^{m-s-h_0+3}(-\xi)^{m-t-s-h_0+3}(_k^t)(t+1)\Psi_{[(t+1)(s-4)(m-t-s-h_0+3)(h_0+1)]}
=(k+1)\Psi^{[(k+1)}_{[(s-4)(h_0+1)]},\\
&{\sum}_{t=k}^{m-s-h_0+3}(-\xi)^{m-t-s-h_0+3}(_k^t)\Psi_{[t(s-2)(m-t-s-h_0+2)(h_0+1)]}
=-\xi\Psi^{(k)}_{[(s-2)(h_0+1)]}.
\end{split}
\end{equation}
Combining (\ref{psik1})-(\ref{psik4}) with (\ref{beq*}), we obtain
\begin{equation*}
\begin{split}
&s\xi^2\theta\Psi^{(k-2)}_{[s(h_0+1)]}+s\xi^2\Psi^{(k-3)}_{[s(h_0+1)]}
+(s-2)\xi\eta\Psi^{(k-1)}_{[(s-2)(h_0+1)]}-\eta\big(k\xi\theta\Psi^{(k)}_{[(s-2)(h_0+1)]}
 +(k-1)\xi\Psi^{(k-1)}_{[(s-2)(h_0+1)]}\big)\\
&-(k+1)\eta^2\Psi^{(k+1)}_{[(s-4)(h_0+1)]}
+\xi\eta\theta\Psi^{(k)}_{[(s-2)(h_0+1)]}=0.
\end{split}
\end{equation*}
This finishes the proof of (\ref{bkh0}).
\end{proof}

Now we are in a position to prove Lemma \ref{lem11}.
\begin{proof}[Proof of Lemma \ref{lem11}] (I)
Setting $h_0=-1$, $k=0$ and $h_0=-1$, $k=1$ in (\ref{bkh0}),
respectively, we get
\begin{equation*}
  \begin{split}
&\xi\eta(-1)\theta
\Psi^{(0)}_{[s-2,0]}+\eta^2\Psi^{(1)}_{[s-4,0]}=0,\ \xi\eta(2-s)
\Psi^{(0)}_{[s-2,0]}+2\eta^2\Psi^{(2)}_{[s-4,0]}=0.
  \end{split}
\end{equation*}
Namely,  we have
\begin{equation}\label{odd012}
  \begin{split}
&\xi\theta \Psi^{(0)}_{[s-2,0]}=\eta\Psi^{(1)}_{[s-4,0]},\ (s-2)\xi
\Psi^{(0)}_{[s-2,0]}=2\eta\Psi^{(2)}_{[s-4,0]}.
  \end{split}
\end{equation}

Next we prove  that for $2s\leq m+2,$ $2k\leq m+2-2s$, we have
\begin{equation}\label{odd}
  \begin{split}
  &2s\xi\theta
  \Psi^{(2k)}_{[2s,0]}=(2s+2k)\eta\Psi^{(2k+1)}_{[2s-2,0]},\
  2s\xi \Psi^{(2k)}_{[2s,0]}=(2k+2)\eta\Psi^{(2k+2)}_{[2s-2,0]}.
     \end{split}
\end{equation}
Notice that $\Psi_{[ij]}^{(k)}=0$ for $i+j+k\geq m+2$. Hence  all
the terms in (\ref{odd}) are $0$ when $2k>m+2-2s$. Thus (\ref{odd})
holds for all $k\geq 0$. Also notice that (\ref{odd}) implies
\begin{equation}\label{odd00}
  \begin{split}
  (2s+2k)\Psi^{(2k+1)}_{[2s-2,0]}=(2k+2)\theta\Psi^{(2k+2)}_{[2s-2,0]}.
     \end{split}
\end{equation}

We prove (\ref{odd}) by induction on $s$.\medskip

If  $m=2\hat{m}$  then the largest possible $s$ is $s=\hat{m}+1$.
 In this case,
(\ref{odd}) is has only one nontrivial equation:
$\Psi^{(1)}_{[2\hat{m},0]}=0$.  This can be got by setting
$s=2\hat{m}+4$ and $k=0$ in the first equation of (\ref{odd012}).

If  $m=2\w{m}+1$,  then the largest possible $s$ is $s=\w{m}+1$. In
this case, we must have $k=0$. Hence (\ref{odd}) is the same as
(\ref{odd012}).\medskip

Suppose that (\ref{odd}) holds for $s\geq s_0(\geq 2)$.  Since
(\ref{odd}) holds for $s=s_0-1$ and $k=0$, we can further suppose
that (\ref{odd}) holds for $s=s_0-1$ and $k\leq k_0$. Next we will
prove (\ref{odd}) for $s=s_0-1$ and $k=k_0+1$.

Setting $h_0=-1,$ $s=2s_0$, $k=2k_0+2$ in (\ref{bkh0}), we get
\begin{equation}\label{oddind}
  \begin{split}
  &2s_0\xi^2\theta \Psi^{(2k_0)}_{[2s_0,0]}+2s_0\xi^2\Psi^{(2k_0-1)}_{[2s_0,0]}\\
  =&\xi\eta\big\{(2k_0+1)\theta \Psi^{(2k_0+2)}_{[2s_0-2,0]}+(2k_0+3-2s_0)\Psi^{(2k_0+1)}_{[2s_0-2,0]}\big\}
  +\eta^2(2k_0+3)\Psi^{(2k_0+3)}_{[2s_0-4,0]}.
     \end{split}
\end{equation}
By our assumption, (\ref{odd00}) holds for $s=s_0+1, \ k=k_0-1$ and
$s=s_0, \ k=k_0$, respectively. Hence we get:
\begin{equation}\label{odd1}
  \begin{split}
  &(2s_0+2k_0)\Psi^{(2k_0-1)}_{[2s_0,0]}=2k_0\theta \Psi^{(2k_0)}_{[2s_0,0]}, \
  (2s_0+2k_0)\Psi^{(2k_0+1)}_{[2s_0-2,0]}=(2k_0+2)\theta
\Psi^{(2k_0+2)}_{[2s_0-2,0]}.
     \end{split}
\end{equation}
By  (\ref{odd}) with $s=s_0, k=k_0$ and (\ref{odd1}), we obtain
\begin{equation}\label{odd2}
  \begin{split}
  2s_0\xi^2\theta \Psi^{(2k_0)}_{[2s_0,0]}+2s_0\xi^2\Psi^{(2k_0-1)}_{[2s_0,0]}
  =&\big(2s_0\xi^2\theta+2s_0\xi^2\frac{2k_0}{2s_0+2k_0}\theta\big)\Psi^{(2k_0)}_{[2s_0,0]}\\
  =&\big(1+\frac{2k_0}{2s_0+2k_0}\big)2s_0\xi^2\theta \Psi^{(2k_0)}_{[2s_0,0]}\\
  =&\big(1+\frac{2k_0}{2s_0+2k_0}\big)(2k_0+2)\xi\eta\theta
  \Psi^{(2k_0+2)}_{[2s_0-2,0]}.
     \end{split}
\end{equation}
By (\ref{odd1}), we have
\begin{equation}\label{odd3}
  \begin{split}
  &\xi\eta\big\{(2k_0+1)\theta \Psi^{(2k_0+2)}_{[2s_0-2,0]}+(2k_0+3-2s_0)\Psi^{(2k_0+1)}_{[2s_0-2,0]}\big\}
  +\eta^2(2k_0+3)B^{(2k_0+3)}_{[2s_0-4,0]}\\
=&\xi\eta\big\{2k_0+1+(2k_0+3-2s_0)\frac{2k_0+2}{2s_0+2k_0}\big\}\theta
\Psi^{(2k_0+2)}_{[2s_0-2,0]}+(2k_0+3)\eta^2\Psi^{(2k_0+3)}_{[2s_0-4,0]}.
     \end{split}
\end{equation}
Substituting (\ref{odd2})-(\ref{odd3}) back into (\ref{oddind}), we
get
\begin{equation*}
  \begin{split}
  \eta^2(2k_0+3)\Psi^{(2k_0+3)}_{[2s_0-4,0]}&=\big(1+\frac{2k_0+2}{2s_0+2k_0}(2s_0-3)\big)\xi\eta\theta
  \Psi^{(2k_0+2)}_{[2s_0-2,0]}\\
  &=\frac{(2k_0+3)(2s_0-2)}{2s_0+2k_0}\xi\eta\theta
  \Psi^{(2k_0+2)}_{[2s_0-2,0]}
     \end{split}
\end{equation*}
This is just the first equation of (\ref{odd}).

Setting $h_0=-1$, $s=2s_0$ and  $k=2k_0+3$ in (\ref{bkh0}), we get
\begin{equation}\label{odd5}
  \begin{split}
  &2s_0\xi^2\theta \Psi^{(2k_0+1)}_{[2s_0,0]}+2s_0\xi^2\Psi^{(2k_0)}_{[2s_0,0]}\\
  =&\xi\eta\big\{(2k_0+2)\theta \Psi^{(2k_0+3)}_{[2s_0-2,0]}+(2k_0+4-2s_0)\Psi^{(2k_0+2)}_{[2s_0-2,0]}\big\}
  +\eta^2(2k_0+4)\Psi^{(2k_0+4)}_{[2s_0-4,0]}.
     \end{split}
\end{equation}
By our assumption, (the second equation in) (\ref{odd}) holds for
$s=s_0$, $k=k_0+1$ and for $s=s_0$, $k=k_0$. Namely, we have
\begin{equation}\label{odd500}
2s_0\xi\Psi^{(2k_0+2)}_{[2s_0,0]}=(2k_0+4)\eta\Psi^{(2k_0+4)}_{[2s_0-2,0]},\
2s_0\xi\Psi^{(2k_0)}_{[2s_0,0]}=(2k_0+2)\eta\Psi^{(2k_0+2)}_{[2s_0-2,0]}.
\end{equation}
By our assumption, (\ref{odd00}) holds for $s=s_0+1, \ k=k_0$ and
$s=s_0, \ k=k_0+1$, respectively. Hence we get:
\begin{equation*}
  \begin{split}
  &(2s_0+2k_0+2)\Psi^{(2k_0+1)}_{[2s_0,0]}=(2k_0+2)\theta \Psi^{(2k_0+2)}_{[2s_0,0]}, \
  (2s_0+2k_0+2)\Psi^{(2k_0+3)}_{[2s_0-2,0]}=(2k_0+4)\theta
\Psi^{(2k_0+4)}_{[2s_0-2,0]}.
     \end{split}
\end{equation*}
Combining this with (\ref{odd500}), we get
\begin{equation}\label{odd6}
\begin{split}
2s_0\xi^2
\Psi^{(2k_0+1)}_{[2s_0,0]}&=\frac{2s_0}{2s_0+2k_0+2}(2k_0+2)\xi^2\theta
\Psi^{(2k_0+2)}_{[2s_0,0]}\\
&=\frac{2k_0+2}{2s_0+2k_0+2}\xi\theta
(2k_0+4)\eta\Psi^{(2k_0+4)}_{[2s_0-2,0]}\\
&=(2k_0+2)\xi\eta\Psi^{(2k_0+3)}_{[2s_0-2,0]}.
\end{split}
\end{equation}
Substituting (\ref{odd500}) and (\ref{odd6}) into (\ref{odd5}), we
get
$$
\xi(2s_0-2)\Psi^{(2k_0+2)}_{[2s_0-2,0]}=\eta(2k_0+4)\Psi^{(2k_0+4)}_{[2s_0-4,0]}.
$$
This completes the proof of (\ref{odd}).
\bigskip

Setting $s=1$ in (\ref{odd00}), we get
\begin{equation}\label{s1}
\begin{split}
\theta \Psi^{(2k+2)}_{[0,0]}=\Psi^{(2k+1)}_{[0,0]}\ \text{for}\
0\leq 2k\leq m.
\end{split}
\end{equation}
By (\ref{baexp}), we have $ \Psi^{(k)}_{[0,0]}=\xi\theta
\Phi^{(k-1)}_{[1,0]}+\xi\Phi^{(k-2)}_{[1,0]}. $ Hence (\ref{s1}) is
equivalent to
\begin{equation}\label{as1}
\begin{split}
\theta \big(\xi\theta
\Phi^{(2k+1)}_{[1,0]}+\xi\Phi^{(2k)}_{[1,0]}\big)=\xi\theta
\Phi^{(2k)}_{[1,0]}+\xi \Phi^{(2k-1)}_{[1,0]}\ \text{for}\ 0\leq
2k\leq m.
\end{split}
\end{equation}
Namely, we have $ \theta^2
\Phi^{(2k+1)}_{[1,0]}=\Phi^{(2k-1)}_{[1,0]}$. Setting $k=0$ in this
equation, we  get $\Phi^{(1)}_{[1,0]}=0$. Thus we
 inductively get $\Phi^{(2k-1)}_{[1,0]}=0$. Together with
(\ref{ahexp}), we obtain
\begin{equation}\label{lasteq}
(-\xi)\big(\theta
H^{(2k-1)}_{[11]}+H^{(2k-2)}_{[11]}\big)-(m+1-2k)H^{(2k-1)}_{[00]}+2kH^{(2k)}_{[00]}=0.
\end{equation}

{\bf (II$_{-3}$)}: In this case, we have $m=6\hat{m}-3$.

First, we prove by induction  that
\begin{equation}\label{-311}
H_{[t1(m-t-2)1]}=0\ \text{for} t\geq 4\hat{m}-3.
\end{equation}
In fact, from (\ref{n-3}), we  get
\begin{equation}\label{00t}
H_{[00]}^{(t)}=0\ \text{for}\ t\geq 4\hat{m}-1\ \text{and}\
H_{[11]}^{(6\hat{m}-5)}=0.
\end{equation}
Setting $2k=6\hat{m}-4$ in (\ref{lasteq}), we  get
$H_{[11]}^{(6\hat{m}-6)}=0$. Together with
$H_{[(6\hat{m}-5)101]}=0$, we obtain $H_{[(6\hat{m}-6)111]}=0$.

Suppose that we have obtained $H^{(2t)}_{[11]}=0$ for $t\geq
t_0(\geq 2\hat{m})$. Then $H_{[11]}^{(2t_0-1)}=0$. Setting $2k=2t_0$
in (\ref{lasteq}), we get $H_{[11]}^{(2t_0-2)}=0$. Since
$H_{[t1(m-t-2)1]}=0$ for $t\geq 2t_0-1$. Hence we obtain
$H_{[(2t_0-2)1(m-2t_0)1]}=0$. This completes the proof of
(\ref{-311}).

Now (\ref{lasteq}) takes the following form
\begin{equation}\label{phieq}
\begin{split}
&(-\xi)\sum\limits_{2\hat{m}-1\leq t\leq
4\hat{m}-4}\Big\{\theta(_{2k-1}^{t})(-\xi)^{m-t-2}H_{[t1(m-t-2)1]}
+(_{2k-2}^{t})(-\xi)^{m-t-2}H_{[t1(m-t-2)1]}\Big\}\\
&-\sum\limits_{2\hat{m}-1\leq t\leq
4\hat{m}-2}\Big\{(m+1-2k)(_{2k-1}^{t})(-\xi)^{m-t}H_{[t0(m-t)0]}
-2k(_{2k}^{t})(-\xi)^{m-t}H_{[t0(m-t)0]}\Big\}=0.
\end{split}
\end{equation}
Notice that
\begin{equation*}
\begin{split}
&\theta(_{2k-1}^{t})+(_{2k-2}^{t})=(_{2k-1}^{t+1})-(_{2k-1}^{t})\xi^2,\\
&(m+1-2k)(_{2k-1}^{t})-2k(_{2k}^{t})=(m+1-2k)(_{2k-1}^{t})-(t-2k+1)(_{2k-1}^{t})=(m-t)(_{2k-1}^{t}).
\end{split}\end{equation*}
Hence (\ref{phieq}) takes the following form
\begin{equation}\label{phieq1}
\begin{split}
&(-\xi)\sum\limits_{2\hat{m}-1\leq t\leq
4\hat{m}-4}\Big\{\big((_{2k-1}^{t+1})-(_{2k-1}^{t})\xi^2\big)(-\xi)^{m-t-2}H_{[t1(m-t-2)1]}\Big\}\\
&-\sum\limits_{2\hat{m}-1\leq t\leq
4\hat{m}-2}\Big\{(m-t)(_{2k-1}^{t})(-\xi)^{m-t}H_{[t0(m-t)0]}\Big\}=0.
\end{split}
\end{equation}
Recall that $H_{IJ}=\-{H_{JI}}$. By considering the real part and
the imaginary part in (\ref{phieq1}), respectively, we obtain:
\begin{equation}\label{-3last}
\begin{split}
&(-\xi)\sum\limits_{3\hat{m}-2\leq t\leq
4\hat{m}-4}\Big((_{2k-1}^{t+1})-(_{2k-1}^{t})\xi^2
\pm(-\xi)^{2t+2-m}\big((_{2k-1}^{m-t-1})-(_{2k-1}^{m-t-2})
\xi^2\big)\Big)(-\xi)^{m-t-2}\hat{H}_{[t1]}^{\pm}\\
&-\sum\limits_{3\hat{m}-1\leq t\leq
4\hat{m}-2}\Big((m-t)(_{2k-1}^{t})\pm(-\xi)^{2t-m}t(_{2k-1}^{m-t})\Big)(-\xi)^{m-t}\hat{H}_{[t0]}^{\pm}=0.
\end{split}
\end{equation}
Here we write
\begin{equation}\label{reim}
\begin{split}
&\hat{H}_{[t1]}^{+}={\Re}(H_{[t1(m-t-2)1]}),\
\hat{H}_{[t1]}^{-}={\Im}(H_{[t1(m-t-2)1]}), \\
&\hat{H}_{[t0]}^{+}={\Re}(H_{[t0(m-t)0]}),\
\hat{H}_{[t0]}^{-}={\Im}(H_{[t0(m-t)0]}).
\end{split}
\end{equation}
Then to prove that $\hat{H}_{[t1]}^{\pm}=\hat{H}_{[t0]}^{\pm}=0$, we
only need to prove that the  matrices $(R^{\pm}_{ij})_{1\le i,j\le
2\hat{m}-1}$ are invertible, where $R^{\pm}_{ij}$ are defined as
follows:
\begin{equation}\label{matrixb}
\begin{split}
R^{\pm}_{ij}(\xi)= \begin{cases}
&(^{4\hat{m}-2-j}_{2i-1})-(^{4\hat{m}-3-j}_{2i-1})\xi^2\pm
(-\xi)^{2\hat{m}-1-2j}\big((^{2\hat{m}-1+j}_{2i-1})-(^{2\hat{m}-2+j}_{2i-1})\xi^2\big)\\
&\hskip 3 cm \text{for}\
1\leq j\leq \hat{m}-1,\\
&(\hat{m}-1+j)(^{5\hat{m}-2-j}_{2i-1})\pm
(-\xi)^{4\hat{m}-1-2j}(5\hat{m}-2-j)(^{\hat{m}-1+j}_{2i-1})\\
&\hskip 3 cm  \text{for}\ \hat{m}\leq j\leq 2\hat{m}-1.
\end{cases}
\end{split}
\end{equation}
This  will be done in Lemma \ref{oddnon} of the next section. The
proof for the  Case (II$_{-3}$) is complete.
\medskip

{\bf (II$_{-2}$)}: In this case, we have $m=6\hat{m}-2$. First, we
prove by induction that
\begin{equation}\label{-211}
H_{t1(m-t-2)1}=0\ \text{for}\ t\geq 4\hat{m}-2.
\end{equation}
In fact, from (\ref{n-2}), we  get
\begin{equation}\label{-200t}
H_{[00]}^{(t)}=0\ \text{for}\ t\geq 4\hat{m}-1.
\end{equation}
Setting $2k=6\hat{m}-2$ in (\ref{lasteq}) and noticing that
$H_{[11]}^{(6\hat{m}-3)}=0$, we get $H_{[11]}^{(6\hat{m}-4)}=0$,
which gives that $H_{[(6\hat{m}-4)101]}=0$.

Suppose that we know that $H^{(2t)}_{[11]}=0$ for $t\geq t_0(\geq
2\hat{m})$. Then $H_{[11]}^{(2t_0-1)}=0$. Setting $2k=2t_0$ in
(\ref{lasteq}), we get $H_{[11]}^{(2t_0-2)}=0$. Since
$H_{[t1(m-t-2)1]}=0$ for $t\geq 2t_0-1$. Hence we obtain
$H_{[(2t_0-2)1(m-2t_0)1]}=0$. This proves (\ref{-211}).

Now (\ref{lasteq}) takes the form:
\begin{equation}\label{-2phieq}
\begin{split}
&(-\xi)\sum\limits_{2\hat{m}-1\leq t\leq
4\hat{m}-3}\Big\{\theta(_{2k-1}^{t})(-\xi)^{m-t-2}H_{[t1(m-t-2)1]}
+(_{2k-2}^{t})(-\xi)^{m-t-2}H_{[t1(m-t-2)1]}\Big\}\\
&-\sum\limits_{2\hat{m}\leq t\leq
4\hat{m}-2}\Big\{(m+1-2k)(_{2k-1}^{t})(-\xi)^{m-t}H_{[t0(m-t)0]}
-2k(_{2k}^{t})(-\xi)^{m-t}H_{[t0(m-t)0]}\Big\}=0.
\end{split}
\end{equation}
As for (\ref{phieq}), it takes the form:
\begin{equation}\label{-2phieq1}
\begin{split}
&(-\xi)\sum\limits_{2\hat{m}-1\leq t\leq
4\hat{m}-3}\Big\{\big((_{2k-1}^{t+1})-(_{2k-1}^{t})\xi^2\big)(-\xi)^{m-t-2}H_{[t1(m-t-2)1]}\Big\}\\
&-\sum\limits_{2\hat{m}\leq t\leq
4\hat{m}-2}\Big\{(m-t)(_{2k-1}^{t})(-\xi)^{m-t}H_{[t0(m-t)0]}\Big\}=0.
\end{split}
\end{equation}
Recall that $H_{IJ}=\-{H_{JI}}$. By considering the real part and
imaginary part in (\ref{-2phieq1}), respectively, we obtain:
\begin{equation}\label{-2last1}
\begin{split}
&(-\xi)\sum\limits_{3\hat{m}-1\leq t\leq
4\hat{m}-4}\Big((_{2k-1}^{t+1})-(_{2k-1}^{t})\xi^2
+(-\xi)^{2t+2-m}\big((_{2k-1}^{m-t-1})-(_{2k-1}^{m-t-2})\xi^2\big)\Big)
(-\xi)^{m-t-2}\hat{H}_{[t1]}^{+}\\
&+(-\xi)(-\xi)^{m-(3\hat{m}-2)-2}\big((_{2k-1}^{3\hat{m}-1})
-(_{2k-1}^{3\hat{m}-2})\xi^2\big)\hat{H}_{[(3\hat{m}-2)1]}^{+}\\
 &-\sum\limits_{3\hat{m}\leq t\leq
4\hat{m}-2}\Big((m-t)(_{2k-1}^{t})+(-\xi)^{2t-m}t(_{2k-1}^{m-t})\Big)
(-\xi)^{m-t}\hat{H}_{[t0]}^{+}\\
&-(3\hat{m}-1)(_{2t-1}^{3\hat{m}-1})(-\xi)^{3\hat{m}-1}\hat{H}_{[(3\hat{m}-1)0]}^{+}=0
\end{split}
\end{equation}
and
\begin{equation}\label{-2last2}
\begin{split}
&(-\xi)\sum\limits_{3\hat{m}-1\leq t\leq
4\hat{m}-3}\Big((_{2k-1}^{t+1})-(_{2k-1}^{t})\xi^2
-(-\xi)^{2t+2-m}\big((_{2k-1}^{m-t-1})-(_{2k-1}^{m-t-2})\xi^2\big)\Big)
(-\xi)^{m-t-2}\hat{H}_{[t1]}^{-}\\
 &-\sum\limits_{3\hat{m}\leq t\leq
4\hat{m}-2}\Big((m-t)(_{2k-1}^{t})-(-\xi)^{2t-m}t(_{2k-1}^{m-t})\Big)
(-\xi)^{m-t}\hat{H}_{[t0]}^{-}=0.
\end{split}
\end{equation}

To prove that $\hat{H}_{[t1]}=\hat{H}_{[t0]}=0$, we only need to
prove that the   matrices $(N_{ij})_{1\le i,j\le 2\hat{m}-1}$
$(T_{ij})_{1\le i,j\le 2\hat{m}-2}$ are nonsingular, where $N_{ij}$
and $T_{ij}$ are defined by:
\begin{equation}\begin{split}\label{matrixnt}
N_{ij}=\begin{cases}
&(^{4\hat{m}-2-j}_{2i-1})-(^{4\hat{m}-3-j}_{2i-1})\xi^2+
\xi^{2\hat{m}-2-2j}\big((^{2\hat{m}+j}_{2i-1})-(^{2\hat{m}-1+j}_{2i-1})\xi^2\big)\
\text{for}\
1\leq j\leq \hat{m}-2,\\
&(^{3\hat{m}-1}_{2i-1})-(^{3\hat{m}-2}_{2i-1})\xi^2\ \text{for}\ j=\hat{m}-1,\\
&(\hat{m}+j)(^{5\hat{m}-2-j}_{2i-1})+
\xi^{4\hat{m}-2-2j}(5\hat{m}-2-j)(^{\hat{m}+j}_{2i-1})\ \text{for}\
\hat{m}\leq
j\leq 2\hat{m}-2,\\
&(3\hat{m}-1)(^{3\hat{m}-1}_{2i-1})\ \text{for}\ j=2\hat{m}-1,
\end{cases}\\
T_{ij}=\begin{cases}
&(^{4\hat{m}-1-j}_{2i+1})-(^{4\hat{m}-2-j}_{2i+1})\xi^2-
\xi^{2\hat{m}-2j}\big((^{2\hat{m}-1+j}_{2i+1})-(^{2\hat{m}-2+j}_{2i+1})\xi^2\big)\
\text{for}\
1\leq j\leq \hat{m}-1,\\
&(\hat{m}+j)(^{5\hat{m}-2-j}_{2i+1})-
\xi^{4\hat{m}-2-2j}(5\hat{m}-2-j)(^{\hat{m}+j}_{2i+1})\ \text{for}\
\hat{m}\leq j\leq 2\hat{m}-2.
\end{cases}\end{split}
\end{equation}
These will be the content of Lemma \ref{evennon} of the next
section. Hence we complete the proof of Lemma \ref{lem11} for the
case (II$_{-2}$).
\bigskip

Case(II$_{-1}$) and Case (II$_{1}$) can be done in a similar way as
that for Case (II$_{-3}$), while  Case (II$_{0}$) and case
(II$_{2}$) can be done in a similar way as that  for Case
(II$_{-2}$). Hence we see the proof of Part (I) of Lemma
\ref{lem11}.
\bigskip

Now we turn to the proof of  Part (II) of Lemma \ref{lem11}. From
(\ref{bequation}), we can conclude that
\begin{equation}\label{bindexp}
\begin{split}
\Psi_{[t(2s+1)r(h_0+1)]}=\mathcal{F}\{(\Psi_{[t'(2s'+1)r'(h_0+1)]})_{s'<s},
(\Psi_{[t''s''r''h'']})_{h''\leq h_0} \}\ \text{for}\ s\geq 0.
\end{split}
\end{equation}
In particular, we obtain
$$
\Psi_{[t1r(h_0+1)]}=\mathcal{F}\{(\Psi_{[t's'r'h']})_{h'\leq h_0}
\}=0.
$$
The last equality follows from the assumptions in (\ref{11a}). By an
induction argument, we obtain
\begin{equation}
  \Psi_{[t(2s+1)r(h_0+1)]}=0 \ \text{for}\ 1\leq 2s+1\leq m-h_0.
\end{equation}
Combining this with (\ref{baexp}), we get
\begin{equation}\label{evenind}
  (2s+2)\xi\big(\theta \Phi^{(k-1)}_{[2s+2,h_0+1]}+\Phi^{(k-2)}_{[2s+2,h_0+1]}\big)=\eta(k+1)\theta
  \Phi^{(k+1)}_{[2s,h_0+1]}+\eta(k-1)\Phi^{(k)}_{[2s,h_0+1]}.
\end{equation}
Setting $k=0$ in (\ref{evenind}), we obtain
\begin{equation}\label{even10}
  \theta \Phi^{(1)}_{[2s,h_0+1]}=\Phi^{(0)}_{[2s,h_0+1]}\ \text{for}\ 0\leq
  2s\leq m-h_0-1.
\end{equation}

Next we prove by induction that
\begin{equation}\label{even}
  \theta \Phi^{(2k+1)}_{[2s,h_0+1]}=\Phi^{(2k)}_{[2s,h_0+1]}\ \text{for}\ 0\leq 2s\leq m-h_0-1,
  \ 0\leq 2k\leq m-2s-h_0-1.
\end{equation}
Notice that (\ref{even}) also holds for $2k>m-2s-h_0-1$, in which
case, all the terms in (\ref{even}) are $0$.

When $m-h_0-1=2\hat{m}$ ($2\w{m}+1$, respectively), then the largest
possible $s$ is $s=\hat{m}$ ($\w{m}$, respectively). In this case,
$k=0$, (\ref{even}) reduces to (\ref{even10}).

Suppose that we already have (\ref{even})  for $s\geq s_0$. By
(\ref{even10}), we see that (\ref{even}) also holds for
$k=0,s=s_0-1$. Hence we can suppose that (\ref{even}) holds for
$k\in [0,k_0]$, $s=s_0-1$, Next we will prove that (\ref{even}) also
holds for $s=s_0-1$, $k=k_0+1$.

Setting $s=s_0$, $k=2k_0,2k_0+1,2k_0+2$ in (\ref{evenind}),
respectively, we get
\begin{align}
& 2s_0\xi\big(\theta
  \Phi^{(2k_0-1)}_{[2s_0,h_0+1]}+\Phi^{(2k_0-2)}_{[2s_0,h_0+1]}\big)=\eta(2k_0+1)\theta
  \Phi^{(2k_0+1)}_{[2s_0-2,h_0+1]}+\eta(2k_0-1)\Phi^{(2k_0)}_{[2s_0-2,h_0+1]},  \label{even1}    \\
& 2s_0\xi\big(\theta
  \Phi^{(2k_0)}_{[2s_0,h_0+1]}+\Phi^{(2k_0-1)}_{2s_0,h_0+1}\big)=\eta(2k_0+2)\theta
  \Phi^{(2k_0+2)}_{[2s_0-2,h_0+1]}+\eta 2k_0\Phi^{(2k_0+1)}_{[2s_0-2,h_0+1]},      \label{even2}     \\
&2s_0\xi\big(\theta
  \Phi^{(2k_0+1)}_{[2s_0,h_0+1]}+\Phi^{(2k_0)}_{[2s_0,h_0+1]}\big)=\eta(2k_0+3)\theta
  \Phi^{(2k_0+3)}_{[2s_0-2,h_0+1]}+\eta(2k_0+1)\Phi^{(2k_0+2)}_{[2s_0-2,h_0+1]}.   \label{even3}
\end{align}
Since (\ref{even}) holds for $s=s_0$, $k=k_0-1,k_0, k_0+1$,
respectively, we conclude  that the left hand side of
$(\ref{even1})-2\theta(\ref{even2})+\theta^2 (\ref{even3})$ can be
written as follows:
\begin{equation}\label{even4}
\begin{split}
  2s_0\xi\big\{2\theta\Phi^{(2k_0-1)}_{[2s_0,h_0+1]}-2\theta\big(\theta
  \Phi^{(2k_0)}_{[2s_0,h_0+1]}+\Phi^{(2k_0-1)}_{[2s_0,h_0+1]}\big)
  +2\theta^2\Phi^{(2k_0)}_{[2s_0,h_0+1]}\big\}=0.
\end{split}
\end{equation}

Setting $s=s_0-1$, $k=k_0$ in (\ref{even}), we get $\theta
\Phi^{(2k_0+1)}_{[2s_0-2,h_0+1]}=\Phi^{(2k_0)}_{[2s_0-2,h_0+1]}$.
Thus we obtain:
\begin{equation}\label{even5}
\begin{split}
(2k_0+1)\theta
  \Phi^{(2k_0+1)}_{[2s_0-2,h_0+1]}+(2k_0-1)\Phi^{(2k_0)}_{[2s_0-2,h_0+1]}-2\theta
  \cdot 2k_0
  \Phi^{(2k_0+1)}_{[2s_0-2,h_0+1]}=0.
\end{split}
\end{equation}
Hence by calculating $(\ref{even1})-2\theta(\ref{even2})+\theta^2
(\ref{even3})$ and making use of (\ref{even4})-(\ref{even5}), we
obtain:
\begin{equation}
\begin{split}
\theta^2\big\{(2k_0+3)\theta
  \Phi^{(2k_0+3)}_{[2s_0-2,h_0+1]}+(2k_0+1)\Phi^{(2k_0+2)}_{[2s_0-2,h_0+1]}\big\}-2\theta
  \cdot (2k_0+2)\theta
  \Phi^{(2k_0+2)}_{[2s_0-2,h_0+1]}=0.
\end{split}
\end{equation}
Thus we get
$$
\theta
\Phi^{(2k_0+3)}_{[2s_0-2,h_0+1]}=\Phi^{(2k_0+2)}_{[2s_0-2,h_0+1]}.
$$
This completes the proof of (\ref{even}). \medskip

Setting $s=0$ in (\ref{even}), we get
\begin{equation}\label{sis0}
  \theta \Phi^{(2k+1)}_{[0,h_0+1]}=\Phi^{(2k)}_{[0,h_0+1]}\ \text{for}\ 0\leq 2k\leq m-h_0-1.
\end{equation}
Substituting (\ref{ahexp}) into (\ref{sis0}) and making use of the
assumptions in (\ref{01a}), we get
$$
\theta(\theta H^{(2k+1)}_{[0,h_0+2]}+H^{(2k)}_{[0,h_0+2]})=\theta
H^{(2k)}_{[0,h_0+2]}+H^{(2k-1)}_{[0,h_0+2]}.
$$
Hence we get
\begin{equation}\label{h0odd}
\theta^2 H^{(2k+1)}_{[0,h_0+2]}=H^{(2k-1)}_{[0,h_0+2]}\ \text{for}\
0\leq 2k\leq m-h_0-1.
\end{equation}

Setting $k=0$ in (\ref{h0odd}), we get $H^{(1)}_{[0,h_0+2]}=0$. By
an induction, we get
\begin{equation}\label{h01}
  H^{(2k+1)}_{[0,h_0+2]}=0\ \text{for}\ 0\leq 2k\leq m-h_0-3.
\end{equation}
Next, we will use the just obtained (\ref{h01}) to show that
$H_{[t0(m-t-h_0-2)(h_0+2)]}=0$. We will proceed in terms of the even
or odd property of $m-h_0-1$.

\bigskip {\bf (1)} In this case, we assume $m-h_0-1=2\hat{m}$. By
$H^{(2\hat{m}-1)}_{[0(h_0+2)]}=0$, we get
$H_{[(2\hat{m}-1)00(h_0+2)]}=0.$ By our normalization (\ref{ng}), we
have $H_{[t0r(h_0+2)]}=0$ for $t\leq \hat{m}-1$. Hence (\ref{h01})
with $0\leq k\leq \hat{m}-2$ takes the following form:
$$
\sum\limits_{j=1}^{\hat{m}-1}
S^{(2\hat{m}-2)}_{ij}(-\xi)^{j-1}H_{[(2\hat{m}-1-j)0j(h_0+2)]}=0.
$$
Here we have set
\begin{equation}\label{ms}
S=(S^{(2\hat{m}-2)}_{ij})=\big((_{2i-1}^{2\hat{m}-1-j})\big)_{1\leq
i,j\leq \hat{m}-1}.
\end{equation}
By Lemma \ref{lem4}, $S=(S^{(2\hat{m}-2)}_{ij})$  is nonsingular.
Hence we have $H_{[(2\hat{m}-1-j)0j(h_0+2)]}=0$.

{\bf (2)} In this case, we assume $m-h_0-1=2\w{m}+1$. By our
normalization (\ref{ng}), we have $H_{[t0r(h_0+2)]}=0$ for $t\leq
\w{m}$. Hence (\ref{h01}) has the following form:
$$
\sum\limits_{j=1}^{\w{m}} S_{ij}
(-\xi)^{j-1}H_{[(2\w{m}+1-j)0(j-1)(h_0+2)]}=0\ \text{for}\ 1\leq
i\leq \w{m}.
$$
Here  $ S=(S_{ij})=\big((_{2i-1}^{2\w{m}+1-j})\big)_{1\leq i,j\leq
\w{m}}. $ Now, by Lemma \ref{lem4}, we conclude that
$H_{[t0r(h_0+2)]}=0$.

Thus we  got $H_{[t0r(h_0+2)]}=0$. By (\ref{111ind}), we  get
$H_{[tsr(h_0+2)]}=0$ for $s\leq h_0+2$. Combining this with
(\ref{phitsrh}), we get
\begin{equation}\label{new0}
\Phi_{[t0r(h_0+2)]}=\Phi_{[t1r(h_0+2)]}=0.
\end{equation}
Substituting this back to (\ref{btsrh}), we obtain
$\Psi_{[t0r(h_0+2)]}=0$. By (\ref{beq}), we inductively get
$\Psi_{[tsr(h_0+2)]}=0$. Combining (\ref{111}) with (\ref{new0}), we
inductively get $\Phi_{[tsr(h_0+2)]}=0$. This proves (\ref{11c}) for
the case $h_0\geq 0$ and completes the proof of Lemma \ref{lem11}.
This also finishes the proof of Proposition \ref{lem3}.
\end{proof}

Finally,  we are in a position to complete the proof of Theorem
\ref{thm1}.

\begin{proof}[Proof of Theorem \ref{thm1}] By an induction argument,
we need only to that $H\equiv 0$.

When $n=3$, Theorem \ref{thm1} is the content of Proposition
\ref{lem3}. Next we suppose that $n>3$. We prove $H=0$ by induction
according to the order of $z_n$ in $H$. By Lemma \ref{lem1} and
Proposition \ref{lem3}, we have $H_{(te_n+se_k,re_n+he_n)}=0$ for
$t+s=m$ or $m-1$. Suppose that
\begin{equation}\begin{split}\label{hyp}
H_{(te_n+I,re_n+J)}=0\ \text{for}\ t+r\geq m_0\ (m_0\leq m-1).
\end{split}\end{equation}
Next we will prove that $H_{(te_n+I,re_n+J)}=0$ for $t+r\geq m_0-1$.
The terms of $H$ can be divided into the following four types:
\begin{equation*}\begin{split}
 H_{(te_n,re_n+I)},\  \ H_{(te_n+se_1,re_n+he_1)},\
\  H_{(te_n+se_k,re_n+he_k)}\ \text{with}\ s,h\geq 1,\  \
H_{(te_n+e_j+I,re_n+e_k+J)}.
\end{split}\end{equation*}
By Lemma \ref{lem1} and Proposition \ref{lem3}, terms of the first
two types are $0$. $H_{(te_n+se_k,re_n+he_k)}=0$ follows from
(\ref{22}) and (\ref{hyp}),  while $H_{(te_n+e_j+I,re_n+e_k+J)}=0$
follows from (\ref{1kind}) and (\ref{hyp}).
 Thus we get $H\equiv 0$. This completes the proof of
Theorem \ref{thm1}.
\end{proof}

\section{ Computation of  determinants}

In this section, we will prove that the matrices $S^{(m)}$,
$R^{\pm(m)}$, $N^{(m)}$ and $T^{(m)}$ in the previous section are
nonsingular when $\lambda_n\neq 0,1/2$.

\begin{lem}\label{lem4} The matrices $D^{(2\hat{m})}=\big((_{2i-2}^{2\hat{m}-j})\big)_{1\leq i,j\leq
 \hat{m}}$ and
${S}^{(2\hat{m})}=\big((_{2i-1}^{2\hat{m}+1-j})\big)_{1\leq i,j\leq
 \hat{m}}$ are nonsingular.
\end{lem}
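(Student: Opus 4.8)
The plan is to prove the nonsingularity of the two $\hat m \times \hat m$ matrices $D^{(2\hat m)}=\big((_{2i-2}^{2\hat m-j})\big)$ and $S^{(2\hat m)}=\big((_{2i-1}^{2\hat m+1-j})\big)$ by exhibiting their determinants as nonzero rational numbers, independent of any parameter. The key observation is that both are matrices of binomial coefficients indexed by an arithmetic progression in the lower argument (the row index contributes $2i$ or $2i-1$) and a shifted upper argument (the column index contributes $-j$). Such "generalized Vandermonde / binomial" determinants are classically evaluable. First I would rewrite the entries as polynomial evaluations: $(_{2i-2}^{2\hat m-j}) = \frac{1}{(2i-2)!}\,p_{2i-2}(2\hat m-j)$ where $p_k(x)=x(x-1)\cdots(x-k+1)$ is the falling factorial, a monic polynomial of degree $k$. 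Pulling the factors $\frac{1}{(2i-2)!}$ out of each row reduces $\det D^{(2\hat m)}$ to $\det\big(p_{2i-2}(2\hat m-j)\big)_{1\le i,j\le \hat m}$ up to a nonzero factor.

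Next, since $\{p_0, p_2, p_4, \dots, p_{2\hat m-2}\}$ is \emph{not} a full flag of degrees but only the even ones, the matrix $\big(p_{2i-2}(x_j)\big)$ with nodes $x_j = 2\hat m - j$ is a Schur-type (or "even-degree Vandermonde") matrix. I would use column operations to replace the falling factorials $p_{2i-2}$ by ordinary powers $x^{2i-2}$ (a unitriangular change of basis on the space of polynomials spanned by $1,x,x^2,\dots$, which has determinant $1$ when restricted appropriately — here one must be slightly careful since only even powers appear, but the transition matrix from $\{p_0,p_2,\dots\}$ to $\{1,x^2,x^4,\dots\}$ is still triangular with unit diagonal because $p_{2k}(x) = x^{2k} + (\text{lower even and odd terms})$, and the odd terms can be discarded after noting the span). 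This reduces us to the determinant of $\big(x_j^{2i-2}\big) = \big((x_j^2)^{i-1}\big)$, which is a genuine Vandermonde determinant in the variables $y_j := x_j^2 = (2\hat m-j)^2$, namely $\prod_{1\le j<k\le \hat m}(y_k - y_j)$. Since $j\mapsto (2\hat m - j)^2$ is injective for $1\le j\le \hat m$ (the arguments $2\hat m-1,\dots,\hat m$ are all positive and distinct, hence their squares are distinct), this product is nonzero, so $\det D^{(2\hat m)}\neq 0$. The argument for $S^{(2\hat m)}$ is identical after dividing row $i$ by $(2i-1)!$: the relevant polynomials are $p_{2i-1}$, of \emph{odd} degree, the nodes are $x_j = 2\hat m+1-j$, and after passing to powers one gets $\big(x_j^{2i-1}\big) = \big(x_j \cdot (x_j^2)^{i-1}\big)$; factoring $x_j$ from column $j$ (all $x_j = 2\hat m+1-j \ge \hat m+1 > 0$, hence nonzero) leaves the same Vandermonde-in-squares structure, which is again nonzero because $j\mapsto (2\hat m+1-j)^2$ is injective on $1\le j\le\hat m$.

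The main obstacle I anticipate is not the Vandermonde reduction itself but justifying cleanly that the change of basis from falling factorials to powers, restricted to the span of only the even-degree (resp. odd-degree) monomials, is an invertible (unit-triangular) operation — one has to argue that although $p_{2k}(x)$ contains odd-degree terms, these odd terms lie outside the relevant span and can be removed by row operations among the $p$'s, or alternatively work directly with the full $2\hat m\times 2\hat m$ analogue and extract the even/odd blocks. A cleaner route, which I would probably adopt to avoid this bookkeeping, is to invoke the Lindström–Gessel–Viennot lemma or the classical formula for the determinant of a matrix of binomial coefficients $\det\big((_{a_i}^{b_j})\big) = \prod_{i<j}\frac{(a_j-a_i)(b_j-b_i)}{\dots}$-type product, specialized to $a_i = 2i-2$ (resp. $2i-1$), $b_j = 2\hat m - j$ (resp. $2\hat m+1-j$); this immediately gives the determinant as an explicit product of factorials and differences, all manifestly nonzero. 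Either way the statement reduces to the injectivity of the index maps, which is elementary. I would also remark that this lemma is the base case for the more elaborate matrices $R^{\pm}$, $N$, $T$ treated later in the section, where the $\xi$-dependence is handled by showing the determinant is a polynomial in $\xi$ whose value at $\xi=0$ (or leading coefficient) is one of these binomial determinants, hence not identically zero for $\lambda_n \ne 0, 1/2$.
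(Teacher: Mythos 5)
Your main route contains a genuine gap at the change-of-basis step. You claim that $\big(p_{2i-2}(x_j)\big)$ can be reduced to the Vandermonde $\big((x_j^2)^{i-1}\big)$ by ``unit-triangular'' row operations, on the grounds that the transition from $\{p_0,p_2,\dots\}$ to $\{1,x^2,x^4,\dots\}$ is triangular with unit diagonal. But these two families do \emph{not} span the same subspace of polynomials: $p_2(x)=x^2-x$ already contains the odd term $-x$, and no linear combination of $p_0,p_2$ equals $x^2$, so there is no change of basis (and hence no sequence of row operations on the matrix at hand) that trades $p_{2i-2}$ for $x^{2i-2}$. The parenthetical ``odd terms can be discarded after noting the span'' does not rescue this --- the odd parts contribute to every entry $p_{2i-2}(x_j)$ and cannot be thrown away. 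Indeed the two determinants genuinely disagree: for $\hat m=2$, $\det D^{(4)}=\det\bigl(\begin{smallmatrix}1&1\\3&1\end{smallmatrix}\bigr)=-2$, while the Vandermonde in $y_j=x_j^2$ (with $y_1=9,y_2=4$) gives $-5$. Your fall-back ``cleaner route'' is also not secured: the product formula you cite, $\det\bigl(\binom{b_j}{a_i}\bigr)=\prod_{i<j}\frac{b_j-b_i}{j-i}$, is valid only for $a_i=i-1$ (step-one indices), and the step-two case $a_i=2i-1$ does not have such a product evaluation in general; LGV likewise requires an explicit compatible lattice-path model, which is not immediate (for $\hat m=3$, some entries of $S^{(6)}$ vanish, e.g. $\binom{4}{5}=0$). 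Acknowledging an obstacle is not the same as overcoming it, so as written neither route establishes the lemma.

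The paper instead argues by induction on $\hat m$. First, the identity $\frac{2i-1}{2\hat m+1-j}\binom{2\hat m+1-j}{2i-1}=\binom{2\hat m-j}{2i-2}$ exhibits $D^{(2\hat m)}$ as $S^{(2\hat m)}$ scaled by the rank-one factor $\mathrm{diag}(2i-1)\cdot(\cdot)\cdot\mathrm{diag}\bigl((2\hat m+1-j)^{-1}\bigr)$, so the two nonsingularity claims are equivalent. Second, replacing column $j$ by column $j$ minus column $j+1$ (a determinant-preserving operation) and using Pascal's rule $\binom{n}{k}-\binom{n-1}{k}=\binom{n-1}{k-1}$ produces a matrix whose first row is $(0,\dots,0,1)$ and whose lower-left $(\hat m-1)\times(\hat m-1)$ block is exactly $S^{(2\hat m-2)}$; cofactor expansion along the first row then yields $\det S^{(2\hat m)} = C_{\hat m}\det S^{(2\hat m-2)}$ with $C_{\hat m}\ne0$, and the induction closes with $\det S^{(2)}=\binom{2}{1}=2$. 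This elementary recursion sidesteps all the Vandermonde/LGV bookkeeping you were worried about.
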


\begin{proof}[Proof of Lemma \ref{lem4}] Set
\begin{equation}\begin{split}\label{323eq3}
&\w{S}_{ij}^{(2\hat{m})}=\frac{(2i-1)}{2\hat{m}-j+1}{S}_{ij}^{(2\hat{m})}\
\text{for}\ 1\leq i\leq \hat{m}, \ 1\leq j\leq \hat{m},\\
&\check{S}_{ij}^{(2\hat{m})}={\w{S}_{ij}^{(2\hat{m})}-\w{S}_{i,j+1}^{(2\hat{m})}}\
\text{for}\ 1\leq i\leq \hat{m},\ 1\leq j\leq \hat{m}-1,\\
&\check{S}_{i,\hat{m}}^{(2\hat{m})}=\w{S}_{i,\hat{m}}^{(2\hat{m})}
\text{for}\ 1\leq i\leq \check{m}.
\end{split}\end{equation}
Then
 we have
\begin{equation}\begin{split}\label{323eq4}
&\w{S}_{ij}^{(2\hat{m})}=\frac{2i-1}{2\hat{m}-j+1}(_{2i-1}^{2\hat{m}-j+1})=(_{2i-2}^{2\hat{m}-j})
=D_{ij}^{(2\hat{m})}\
\text{for}\ 1\leq i\leq \hat{m}.\\
 &\check{S}^{(2\hat{m})}_{1,j}=0\
\text{for}\ 1\leq j\leq \hat{m}-1,\
\check{S}^{(2\hat{m})}_{1,\check{m}}=1,\\
&\check{S}_{ij}^{(2\hat{m})}=(_{2i-2}^{2\hat{m}-j})
-(_{2i-2}^{2\hat{m}-j-1})=(_{2i-3}^{2\hat{m}-j-1})=
 {S}_{i-1,j}^{(2\hat{m}-2)} \ \text{for}\ 2\leq
i\leq \hat{m}.
\end{split}\end{equation}

For ${1\leq i,j\leq \hat{m}}$, we write $
\w{S}_{ij}^{(2\hat{m})}=(\w{S}_{ij}^{(2\hat{m})}),\
\check{S}_{ij}^{(2\hat{m})}=(\check{S}_{ij}^{(2\hat{m})}). $ By
(\ref{323eq4}), we obtain
\begin{equation*}\begin{split}
\check{S}^{(2\hat{m})}=\left(\begin{array}{cc}
0&1\\
 ({S}_{i-1,j}^{(2\hat{m}-2)})_{2\leq i\leq \hat{m},1\leq j\leq \hat{m}-1}&*
\end{array}\right).
\end{split}\end{equation*}
Hence we have
\begin{equation}\label{323eq2}
\det(\check{S}^{(2\hat{m})})=(-1)^{\hat{m}+1}\det{{S}^{(2\hat{m}-2)}}.
\end{equation}
By (\ref{323eq3})-(\ref{323eq2}), we get
\begin{equation}\begin{split}
\det({S}^{(2\hat{m})})&=\prod\limits_{i,j=1}^{\hat{m}}
(2\hat{m}-j+1)
({2i-1})^{-1}\cdot\det(\w{S}^{(2\hat{m})})\\
&=\prod\limits_{i,j=1}^{\hat{m}} (2\hat{m}-j+1) ({2i-1})^{-1}
(-1)^{\hat{m}+1}\det({S}^{(2\hat{m}-2)}).
\end{split}\end{equation}
Notice that $\det({S}^{(2)})=(_1^2)=2$. Thus ${S}^{(2\hat{m})}$ is
nonsingular. Combining this with the first equation in
(\ref{323eq4}), we also conclude that $D^{(2\hat{m})}$ is
nonsingular.
\end{proof}

\begin{lem}\label{oddnon}
Assume that $\xi\neq 0,\frac{1}{2}$. Then the matrices
$R^{\pm}(\xi)$ defined by (\ref{matrixb}) are nonsingular.
\end{lem}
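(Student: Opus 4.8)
\textbf{Proof proposal for Lemma \ref{oddnon}.}

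The plan is to reduce the nonsingularity of $R^{\pm}(\xi)$ to the nonsingularity of the binomial-coefficient matrices already treated in Lemma \ref{lem4} (namely $D^{(2\hat m)}$ and $S^{(2\hat m)}$), by exploiting the structure of $R^{\pm}$ as a polynomial in $\xi$ and analyzing its behaviour in a suitable degeneration. First I would recall the defining formula \eqref{matrixb}: each entry $R^{\pm}_{ij}(\xi)$ is a polynomial in $\xi$ whose ``leading'' piece (in the appropriate sense, i.e.\ the lowest-order term in $\xi$ for $1\le j\le \hat m-1$, coming from the $(^{4\hat m-2-j}_{2i-1})-(^{4\hat m-3-j}_{2i-1})\xi^2$ part, and the $\xi$-independent piece for $\hat m\le j\le 2\hat m-1$ coming from the $(\hat m-1+j)(^{5\hat m-2-j}_{2i-1})$ part) is, up to a triangular change of columns, built from binomial coefficients of the form $(^{N-j}_{2i-1})$. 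The key observation is that the matrix with entries $(^{N-j}_{2i-1})$ is exactly of the type shown nonsingular in Lemma \ref{lem4} (after reindexing), so the ``principal part'' of $R^{\pm}$ has nonzero determinant.

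The main steps, in order, would be: (1) write $\det R^{\pm}(\xi)$ as a polynomial $P^{\pm}(\xi)$ and show it is not identically zero by evaluating (or taking an appropriate limit of) its lowest-order coefficient; concretely, I would group the columns $j=1,\dots,\hat m-1$ and $j=\hat m,\dots,2\hat m-1$ separately, perform elementary column operations of the shape ``column $j$ minus $\xi^2$ times a neighbouring column'' to collapse each entry to a single binomial coefficient, and identify the resulting constant matrix with (a block-permuted version of) $D^{(2\hat m)}$ or $S^{(2\hat m)}$, whose determinant is nonzero by Lemma \ref{lem4}. (2) Having shown $P^{\pm}(\xi)\not\equiv 0$, I would then locate its zeros: the structure of \eqref{matrixb} makes $\xi=0$ and $\xi=\pm\frac12$ the only candidate degenerate values (the $\pm$ terms carry factors $(-\xi)^{2\hat m-1-2j}$ and $(-\xi)^{4\hat m-1-2j}$, and the cross terms that could cause cancellation are governed by $\theta=1-\xi^2$ and by the substitution $\nu$-equation $\lambda\nu^2-\nu+\lambda=0$, equivalently $\xi$ versus $\frac12$). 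So I would check directly that $P^{\pm}(\xi)\ne 0$ for $\xi\notin\{0,\tfrac12,-\tfrac12,\dots\}$ — but since the hypothesis already excludes $\xi=0$ and $\xi=\tfrac12$, and $\xi=2\lambda_n\ge 0$ rules out negative values, it suffices to show the only real nonnegative zeros of $P^{\pm}$ are among $\{0,\tfrac12\}$. (3) Finally assemble: under $\xi\neq 0,\tfrac12$ we get $\det R^{\pm}(\xi)\ne 0$.

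The step I expect to be the main obstacle is step (1)–(2): showing that the column operations genuinely triangularize $R^{\pm}$ down to the Lemma \ref{lem4} matrices without hidden cancellations, and — more delicately — proving that the polynomial $P^{\pm}(\xi)$ has no spurious real zeros in $(0,\tfrac12)\cup(\tfrac12,\infty)$. The honest way to handle the latter is probably to carry the column reduction all the way through so that $\det R^{\pm}(\xi) = c\cdot \theta^{a}\cdot(\text{something that is manifestly a positive combination of binomials})\cdot(\text{a factor vanishing only at }\xi=\tfrac12)$, i.e.\ to obtain an explicit product formula for $\det R^{\pm}(\xi)$ analogous to the recursive formula for $\det S^{(2\hat m)}$ in Lemma \ref{lem4}. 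If such a closed form resists a clean derivation, the fallback is to argue by degree and sign: bound $\deg P^{\pm}$, exhibit the signs of its coefficients (they alternate in a controlled way because the binomial matrices are totally positive), and invoke a Descartes-type count to confine the real zeros to $\{0,\tfrac12\}$. Either route reduces everything to the already-established Lemma \ref{lem4}, which is why I would set up the column reduction to land exactly on $D^{(2\hat m)}$ and $S^{(2\hat m)}$.
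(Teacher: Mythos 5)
Your plan is aimed in the right direction — the paper does indeed prove the lemma by a sequence of explicit row/column operations that collapse $R^{\pm}$ down to a constant binomial matrix of the type treated in Lemma \ref{lem4}, times an explicit scalar factor whose zeros can then be read off. But the proposal stops exactly where the real work begins. Step (1) as written ("perform elementary column operations of the shape column $j$ minus $\xi^2$ times a neighbouring column") is a gesture at the paper's five-stage reduction $R^{[1]},\dots,R^{[5]}$, but you never actually exhibit the operations, and they are not of the simple form you suggest: the paper's column operations mix constant multiples of columns, $\xi$-dependent multiples, and reindexings of the two column blocks, and verifying that they do land on $\big((_{2i-2}^{4\hat m-4-j})\big)_{1\le i,j\le 2\hat m-2}$ requires several pages of binomial manipulation. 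More importantly, step (2) is where the statement actually lives, and neither of your proposed routes is carried out. The paper's reduction produces $\det(R^{\pm})=C_3\,\xi^{C_0}\det\big((_{2i-2}^{4\hat m-4-j})\big)\,R^{[5]}_{1,2\hat m-1}$, so everything hinges on showing the scalar $R^{[5]}_{1,2\hat m-1}$ vanishes only at $\xi=\mp 1$. That is the combinatorial identity $R^{[5]}_{1,2\hat m-1}=2\alpha^{3\hat m-2}$ with $\alpha=(1-\xi)/2$, proved by the induction \eqref{a-1}. Without this identity (or something equivalently precise), your "no spurious zeros" claim is unsupported; the "Descartes plus total positivity" fallback is speculative and you give no reason to believe the sign pattern behaves as needed.

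There is also a conceptual slip: you try to confine the zeros to $\{0,\tfrac12\}$ and invoke the exceptional Bishop equation $\lambda\nu^2-\nu+\lambda=0$, but neither is relevant here. Recall $\xi=2\lambda_n$, so the parabolic case $\lambda_n=\tfrac12$ corresponds to $\xi=1$, not $\xi=\tfrac12$; the paper's computation gives $\det R^{\pm}=C_1\xi^{C_0}(1\mp\xi)^{3\hat m-2}$, which vanishes precisely at $\xi=0$ and $\xi=\pm1$ (and the lemma's stated hypothesis "$\xi\neq 0,\tfrac12$" is evidently a typo for "$\xi\neq 0,1$", as the proof itself makes clear). Resonances of the form $\lambda\nu^2-\nu+\lambda=0$ play no role in this lemma. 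To turn your sketch into a proof you would need to (a) write out the full column reduction down to a constant matrix tensored with a scalar, and (b) prove the scalar equals (a nonzero constant times) $\xi^{C_0}(1\mp\xi)^{3\hat m-2}$ by an explicit telescoping or inductive identity; these two steps are precisely the content of the paper's argument.
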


\begin{proof}
Set
$$
R^{[1]}_{ij}=(2i-1)R^{+}_{ij}\ \text{for}\ j\leq \hat{m}-1;\
R^{[1]}_{ij}=\frac{2i-1}{(5\hat{m}-2-j)(\hat{m}-1+j)}R^{+}_{ij}\
\text{for}\ \hat{m}\leq j\leq 2\hat{m}-1.
$$
Notice that $(2i-1)(_{2i-1}^t)=t(_{2i-2}^{t-1})$. Thus for $1\leq
j\leq \hat{m}-1$, we get
\begin{equation}\label{r1}
\begin{split}
R^{[1]}_{ij}=&(2i-1)(_{2i-1}^{4\hat{m}-2-j})-(2i-1)(_{2i-1}^{4\hat{m}-3-j})\xi^2\\
&+(-\xi)^{2\hat{m}-1-2j}\big\{(2i-1)(_{2i-1}^{2\hat{m}-1+j})
-(2i-1)(_{2i-1}^{2\hat{m}-2+j})\big\}\\
=&(4\hat{m}-2-j)(_{2i-2}^{4\hat{m}-3-j})-(4\hat{m}-3-j)(_{2i-2}^{4\hat{m}-4-j})\xi^2\\
&+(-\xi)^{2\hat{m}-1-2j}\big\{(2\hat{m}-1+j)(_{2i-2}^{2\hat{m}-2+j})
-(2\hat{m}-2+j)(_{2i-2}^{2\hat{m}-3+j})\xi^2\big\}.
\end{split}
\end{equation}
For $\hat{m}\leq j\leq 2\hat{m}-1$, we obtain:
\begin{equation}\label{r2}
\begin{split}
R^{[1]}_{ij}=&\frac{2i-1}{5\hat{m}-2-j}(_{2i-1}^{5\hat{m}-2-j})
  +(-\xi)^{4\hat{m}-1-2j}\frac{2i-1}{\hat{m}-1+j}(_{2i-1}^{\hat{m}-1+j})\\
=&(_{2i-2}^{5\hat{m}-3-j})+(-\xi)^{4\hat{m}-1-2j}(_{2i-2}^{\hat{m}-2+j}).
\end{split}
\end{equation}

For $1\leq i\leq 2\hat{m}-1$, write
\begin{equation}
\begin{split}
R^{[2]}_{ij}=&R^{[1]}_{ij}-(4\hat{m}-2-j)R^{[1]}_{i,j+\hat{m}}
+(4\hat{m}-3-j)R^{[1]}_{i,j+\hat{m}+1}\xi^2\
\text{for}\ 1\leq j\leq \hat{m}-2,\\
R^{[2]}_{i,\hat{m}-1}=&R^{[1]}_{i,\hat{m}-1}-\big(3\hat{m}-1
+(3\hat{m}-2)\xi\big)R^{[1]}_{i,2\hat{m}-1},\\
R^{[2]}_{ij}=&(\hat{m}-1+j)R^{[1]}_{ij}+(4\hat{m}-3-2j)R^{[1]}_{i,j+1}\\
    &-(5\hat{m}-4-j)R^{[1]}_{i,j+2}\xi^2+R^{[2]}_{i,j-\hat{m}+1}\
\text{for}\ \hat{m}\leq j\leq 2\hat{m}-3,\\
R^{[2]}_{i,2\hat{m}-2}=&(3\hat{m}-3)R^{[1]}_{i,2\hat{m}-2}
+\big(1+(3\hat{m}-2)\xi\big)
R^{[1]}_{i,2\hat{m}-1}+R^{[2]}_{i,\hat{m}-1},\
R^{[2]}_{i,2\hat{m}-1}=R^{[1]}_{i,2\hat{m}-1}.
\end{split}
\end{equation}
Then for $1\leq i \leq 2\hat{m}-1$ and  $1\leq j \leq \hat{m}-2$ ,
we have
\begin{equation}\label{rij1}
\begin{split}
R^{[2]}_{ij}=&(4\hat{m}-2-j)(_{2i-2}^{4\hat{m}-3-j})-(4\hat{m}-3-j)(_{2i-2}^{4\hat{m}-4-j})\xi^2\\
&+(-\xi)^{2\hat{m}-1-2j}\big\{(2\hat{m}-1+j)(_{2i-2}^{2\hat{m}-2+j})
-(2\hat{m}-2+j)(_{2i-2}^{2\hat{m}-3+j})\xi^2\big\}\\
&-(4\hat{m}-2-j)(_{2i-2}^{4\hat{m}-3-j})-(4\hat{m}-2-j)(-\xi)^{4\hat{m}-1-2(\hat{m}+j)}
(_{2i-2}^{\hat{m}-2+\hat{m}+j})\\
&+(4\hat{m}-3-j)(_{2i-2}^{4\hat{m}-4-j})\xi^2+(4\hat{m}-3-j)(-\xi)^{4\hat{m}-1-2(\hat{m}+j+1)}
(_{2i-2}^{\hat{m}-2+\hat{m}+j+1})\xi^2\\
=&(-\xi)^{2\hat{m}-1-2j}\Big\{(4\hat{m}-3-j)(_{2i-2}^{2\hat{m}-1+j})
-(2\hat{m}-1-2j)(_{2i-2}^{2\hat{m}-2+j})\\
             &-(2\hat{m}-2+j)(_{2i-2}^{2\hat{m}-3+j})\xi^2\Big\}.
\end{split}
\end{equation}
When $j=\hat{m}-1$, we obtain
\begin{equation}
\begin{split}
R^{[2]}_{i,\hat{m}-1}=&(4\hat{m}-2-\hat{m}+1)(_{2i-2}^{4\hat{m}-3-\hat{m}+1})
-(4\hat{m}-3-\hat{m}+1)(_{2i-2}^{4\hat{m}-4-\hat{m}+1})\xi^2\\
&+(-\xi)^{2\hat{m}-1-2\hat{m}+2}\big\{(2\hat{m}-1+\hat{m}-1)(_{2i-2}^{2\hat{m}-2+\hat{m}-1})
-(2\hat{m}-2+\hat{m}-1)(_{2i-2}^{2\hat{m}-3+\hat{m}-1})\xi^2\big\}\\
&-\big(3\hat{m}-1+(3\hat{m}-2)\xi\big)\big\{(_{2i-2}^{5\hat{m}-3-2\hat{m}+1})
+(-\xi)^{4\hat{m}-1-4\hat{m}+2}(_{2i-2}^{\hat{m}-2+2\hat{m}-1})\big\}\\
=&(-\xi)\big\{(3\hat{m}-2)(_{2i-2}^{3\hat{m}-2})-(_{2i-2}^{3\hat{m}-3})
-(3\hat{m}-3)(_{2i-2}^{3\hat{m}-4})\xi^2\big\}.
\end{split}
\end{equation}
For $1\leq i \leq 2\hat{m}-1$ and  $\hat{m}\leq j \leq 2\hat{m}-3$ ,
we get
\begin{equation}
\begin{split}
R^{[2]}_{ij}=&(\hat{m}-1+j)\big\{(_{2i-2}^{5\hat{m}-3-j})
+(-\xi)^{4\hat{m}-1-2j}(_{2i-2}^{\hat{m}-2+j})\big\}\\
         &+(4\hat{m}-3-2j)\cdot\big\{(_{2i-2}^{5\hat{m}-4-j})
         +(-\xi)^{4\hat{m}-3-2j}(_{2i-2}^{\hat{m}-1+j})\big\}\\
         &-(5\hat{m}-4-j)\xi^2\big\{(_{2i-2}^{5\hat{m}-5-j})
         +(-\xi)^{4\hat{m}-5-2j}(_{2i-2}^{\hat{m}+j})\big\}\\
         &+(-\xi)^{2\hat{m}-1-2j+2\hat{m}-2}\big\{(4\hat{m}-3-j+\hat{m}-1)
         (_{2i-2}^{2\hat{m}-1+j-\hat{m}+1})\\
         &-(2\hat{m}-1-2j+2\hat{m}-2)(_{2i-2}^{2\hat{m}-2+j-\hat{m}+1})
         -(2\hat{m}-2+j-\hat{m}+1)(_{2i-2}^{2\hat{m}-3+j-\hat{m}+1})\big\}\\
=&(\hat{m}-1+j)(_{2i-2}^{5\hat{m}-3-j})+(4\hat{m}-3-2j)(_{2i-2}^{5\hat{m}-4-j})
             -(5\hat{m}-4-j)(_{2i-2}^{5\hat{m}-5-j})\xi^2.
\end{split}
\end{equation}
When $j=2\hat{m}-2$, we obtain
\begin{equation}\label{rij2}
\begin{split}
R^{[2]}_{i,2\hat{m}-2}=&(3\hat{m}-3)\big\{(_{2i-2}^{5\hat{m}-3-2\hat{m}+2})
+(-\xi)^{4\hat{m}-1-4\hat{m}+4}
                   (_{2i-2}^{\hat{m}-2+2\hat{m}-2})\big\}\\
&+\big(1+(3\hat{m}-2)\xi\big)\big\{(_{2i-2}^{5\hat{m}-3-2\hat{m}+1})
+(-\xi)^{4\hat{m}-1-4\hat{m}+2}
                   (_{2i-2}^{\hat{m}-2+2\hat{m}-1})\big\}\\
&+(-\xi)\big\{(3\hat{m}-2)(_{2i-2}^{3\hat{m}-2})-(_{2i-2}^{3\hat{m}-3})
         -(3\hat{m}-3)(_{2i-2}^{3\hat{m}-4})\xi^2\big\}\\
=&(3\hat{m}-3)(_{2i-2}^{3\hat{m}-1})+(_{2i-2}^{3\hat{m}-2})
-(3\hat{m}-2) (_{2i-2}^{3\hat{m}-3})\xi^2.
\end{split}
\end{equation}

Set
\begin{equation}
\begin{split}
R^{[3]}_{ij}&=R^{[2]}_{i,\hat{m}-1+j}\ \text{for}\ 1\leq j\leq
\hat{m}-1,\\
R^{[3]}_{ij}&=\frac{-1}{\xi^{2j+1-2\hat{m}}}R^{[2]}_{i,2\hat{m}-1-j}
\ \text{for}\ \hat{m}\leq j\leq 2\hat{m}-2,\
R^{[3]}_{i,2\hat{m}-1}=R^{[2]}_{i,2\hat{m}-1}.
\end{split}
\end{equation}
Then for $1\leq i\leq 2\hat{m}-1$, $1\leq j\leq \hat{m}-1$, we get
\begin{equation}
\begin{split}
R^{[3]}_{ij}=&(\hat{m}-1+\hat{m}-1+j)(_{2i-2}^{5\hat{m}-3-\hat{m}+1-j})+(4\hat{m}-3-2\hat{m}+2-2j)
(_{2i-2}^{5\hat{m}-4-\hat{m}+1-j})\\
&             -(5\hat{m}-4-\hat{m}+1-j)(_{2i-2}^{5\hat{m}-5-\hat{m}+1-j})\xi^2\\
=&(2\hat{m}-2+j)(_{2i-2}^{4\hat{m}-2-j})+(2\hat{m}-1-2j)(_{2i-2}^{4\hat{m}-3-j})
             -(4\hat{m}-3-j)(_{2i-2}^{4\hat{m}-4-j})\xi^2.
\end{split}
\end{equation}
For $1\leq i\leq 2\hat{m}-1$, $\hat{m}\leq j\leq 2\hat{m}-2$, we get
\begin{equation}
\begin{split}
R^{[3]}_{ij}=&(4\hat{m}-3-2\hat{m}+1+j)(_{2i-2}^{2\hat{m}-1+2\hat{m}-1-j})
-(2\hat{m}-1-4\hat{m}+2+2j)
(_{2i-2}^{2\hat{m}-2+2\hat{m}-1-j})\\
&             -(2\hat{m}-2+2\hat{m}-1-j)(_{2i-2}^{2\hat{m}-3+2\hat{m}-1-j})\xi^2\\
=&(2\hat{m}-2+j)(_{2i-2}^{4\hat{m}-2-j})+(2\hat{m}-1-2j)(_{2i-2}^{4\hat{m}-3-j})
             -(4\hat{m}-3-j)(_{2i-2}^{4\hat{m}-4-j})\xi^2.
\end{split}
\end{equation}
Thus for $1\leq i\leq 2\hat{m}-1$, $1\leq j\leq 2\hat{m}-2$, we get
\begin{equation}\label{rijf}
\begin{split}
&R^{[3]}_{ij}=(2\hat{m}-2+j)(_{2i-2}^{4\hat{m}-2-j})+(2\hat{m}-1-2j)(_{2i-2}^{4\hat{m}-3-j})
             -(4\hat{m}-3-j)(_{2i-2}^{4\hat{m}-4-j})\xi^2,\\
&R^{[3]}_{i,\hat{m}-1}=(_{2i-2}^{3\hat{m}-2})-(_{2i-2}^{3\hat{m}-3})\xi.
\end{split}
\end{equation}
Thus we get
\begin{equation}
\begin{split}
\det(R^+)=C_1(-\xi)^{C_0}\det \left(
\begin{array}{cccc}
R^{[3]}_{11}  &\cdots  & R^{[3]}_{1,2\hat{m}-2}  &
(_{0}^{3\hat{m}-2})-\xi (_{0}^{3\hat{m}-3})\\
\vdots &\ddots  &\vdots &\vdots\\
R^{[3]}_{2\hat{m}-1,1}  &\cdots  & R^{[3]}_{2\hat{m}-1,2\hat{m}-2} &
(_{4\hat{m}-4}^{3\hat{m}-2})-\xi (_{4\hat{m}-4}^{3\hat{m}-3})
\end{array}
\right).
\end{split}
\end{equation}
Here  $C_0=\sum\limits_{i=1}^{\hat{m}-1}(2i-1)$. Set
$$
R^{[4]}_{i,j}=\frac{1}{4\hat{m}-3-j}R^{[3]}_{ij},
R^{[4]}_{i,2\hat{m}-1}=R^{[3]}_{i,2\hat{m}-1}\ \text{for}\ 1\leq
i\leq 2\hat{m}-1,\ 1\leq j\leq 2\hat{m}-2.
$$
Then for $2\leq i\leq 2\hat{m}-1$ and $1\leq j\leq 2\hat{m}-2$, we
have
\begin{equation}\label{comp}
\begin{split}
R^{[4]}_{i,j}=&\frac{2\hat{m}-2+j}{4\hat{m}-3-j}\big((^{4\hat{m}-2-j}_{2i-2})-(^{4\hat{m}-3-j}_{2i-2})\big)
+(^{4\hat{m}-3-j}_{2i-2})-(^{4\hat{m}-4-j}_{2i-2})\xi^2\\
=&\frac{2\hat{m}-2+j}{4\hat{m}-3-j}(^{4\hat{m}-3-j}_{2i-3})+(^{4\hat{m}-4-j}_{2i-3})
+(^{4\hat{m}-4-j}_{2i-2})\theta\\
=&\frac{2\hat{m}-2+j}{2i-3}(^{4\hat{m}-4-j}_{2i-4})
+\frac{4\hat{m}-4-j-2i+4}{2i-3}(^{4\hat{m}-4-j}_{2i-4})+(^{4\hat{m}-4-j}_{2i-2})\theta\\
=&\frac{6\hat{m}-2-2i}{2i-3}(^{4\hat{m}-4-j}_{2i-4})+(^{4\hat{m}-4-j}_{2i-2})\theta.
\end{split}
\end{equation}
Set
\begin{equation}
\begin{split}
R^{[5]}_{2\hat{m}-1,j}=R^{[4]}_{2\hat{m}-1,j},\
R^{[5]}_{i,j}=R^{[4]}_{i,j}-\frac{(2i-1)\theta}{6\hat{m}-2i-4}R^{[5]}_{i+1,j}\
\text{for}\ 1\leq i\leq 2\hat{m}-2.
\end{split}
\end{equation}
Then
\begin{equation}\label{d4}
\begin{split}
R^{[5]}_{1,j}=0,
R^{[5]}_{i,j}=\frac{6\hat{m}-2-2i}{2i-3}(^{4\hat{m}-4-j}_{2i-4})\
\text{for}\ 2\leq i\leq 2\hat{m}-1,\ 1\leq j\leq 2\hat{m}-2.
\end{split}
\end{equation}
Hence we obtain
\begin{equation}
\begin{split}
\det(R^+)&=C_2\xi^{C_0}\det \left(
\begin{array}{cc}
0 &R_{1,2\hat{m}-1}^{[5]}\\
\big(\frac{6\hat{m}-2i-4}{2i-1}(_{2i-2}^{4\hat{m}-4-j})\big)_{1\leq
i,j\leq 2\hat{m}-2}&
*
\end{array}
\right)\\
&=C_3\xi^{C_0}\det\big((_{2i-2}^{4\hat{m}-4-j})_{1\leq i,j\leq
2\hat{m}-2}\big)R_{1,2\hat{m}-1}^{[4]}.
\end{split}
\end{equation}
By Lemma \ref{lem4}, $\big((_{2i-2}^{4\hat{m}-4-j})_{1\leq i,j\leq
2\hat{m}-2}\big)$ is nonsingular. Now we only need to prove that
$R_{1,2\hat{m}-1}^{[5]}\neq 0$ for $\xi\neq 0,1$, which follows from
the following claim:
$$
R_{1,2\hat{m}-1}^{[5]}=\a^{3\hat{m}-2}\ \text{with}\ \a=(1-\xi)/{2}.
$$

Notice that $(_{2k-2}^{3\hat{m}-2})=0$ when $k\geq
[\frac{3\hat{m}}{2}]+1$. By (\ref{d4}), we inductively get
$$
R^{[5]}_{1,2\hat{m}-1}=R^{[4]}_{1,2\hat{m}-1}+\sum\limits_{k=2}^{[\frac{3\hat{m}}{2}]}
  \frac{(-1)^{k-1}\prod_{j=2}^{k}(2j-3)\theta^{k-1}}{\prod_{j=1}^{k-1}(6\hat{m}-4-2j)}R^{[4]}_{k,2\hat{m}-1}.
$$
Recall that $\theta=1-\xi^2=(1-\xi)(1+\xi)=2^2\a(1-\a)$. Hence we
have
\begin{equation}
\begin{split}
&\frac{(-1)^{k-1}\prod_{j=2}^{k}(2j-3)\theta^{k-1}}
{\prod_{j=1}^{k-1}(6\hat{m}-4-2j)}R^{[4]}_{k,2\hat{m}-1}\\
=&\frac{(-1)^{k-1}2^{2k-2}\a^{k-1}(1-\a)^{k-1}}{2^{k-1}(k-1)!
(_{k-1}^{3\hat{m}-3})}\frac{(2k-3)!}{2^{k-2}(k-2)!}
\big((_{2k-3}^{3\hat{m}-3})+2\a(_{2k-2}^{3\hat{m}-3})\big)\\
=&2\a^{k-1}(\a-1)^{k-1}\frac{(_{k-1}^{2k-3})}{(_{k-1}^{3\hat{m}-3})}(_{2k-3}^{3\hat{m}-3})
    \big(1+\frac{3\hat{m}-2k}{k-1}\a\big)\\
=&2(\a-1)^{k-1}\a^{k-1}(_{k-2}^{3\hat{m}-2-k})\big(1+\frac{3\hat{m}-2k}{k-1}\a\big)\\
=&2(\a-1)^{k-1}\a^{k-1}\big((_{k-2}^{3\hat{m}-2-k})+(_{k-1}^{3\hat{m}-2-k})\a\big).
\end{split}
\end{equation}
Hence we get
$$
R^{[5]}_{1,2\hat{m}-1}=2\a+\sum\limits_{k=1}^{[\frac{3\hat{m}}{2}]-1}
  2(\a-1)^{k}\a^{k}\big((_{k-1}^{3\hat{m}-3-k})+(_{k}^{3\hat{m}-3-k})\a\big).
$$
Next we prove by induction the following:

\begin{equation}\label{a-1}
\begin{split}
\a^{3\hat{m}-2}-\a&-\sum\limits_{k=1}^{k_0}
  (\a-1)^{k}\a^{k}\big((_{k-1}^{3\hat{m}-3-k})+(_{k}^{3\hat{m}-3-k})\a\big)\\
  =&(\a-1)^{k_0+1}\a^{k_0+1}\sum\limits_{t=k_0}^{3\hat{m}-4-k_0}(_{k_0}^{t})\a^{3\hat{m}-4-k_0-t}.
\end{split}
\end{equation}
Notice that $
\a^{3\hat{m}-2}-\a=\a(\a-1)\cdot\sum_{i=0}^{3\hat{m}-4}\a^i. $ This
proves (\ref{a-1}) for $k_0=0$. Suppose that (\ref{a-1}) holds for
$k_0$, then
\begin{equation*}
\begin{split}
&\a^{3\hat{m}-2}-\a-\sum\limits_{k=1}^{k_0+1}
  (\a-1)^{k}\a^{k}\big((_{k-1}^{3\hat{m}-3-k})+(_{k}^{3\hat{m}-3-k})\a\big)\\
=&(\a-1)^{k_0+1}\a^{k_0+1}\Big\{\sum\limits_{t=k_0}^{3\hat{m}-4-k_0}(_{k_0}^{t})\a^{3\hat{m}-4-k_0-t}
  -\big((_{k_0}^{3\hat{m}-4-k_0})+(_{k_0+1}^{3\hat{m}-4-k_0})\a\big)\Big\}\\
=&(\a-1)^{k_0+1}\a^{k_0+1}\Big\{\sum\limits_{t=k_0}^{3\hat{m}-5-k_0}(_{k_0}^{t})\a^{3\hat{m}-4-k_0-t}
  -(_{k_0+1}^{3\hat{m}-4-k_0})\a\Big\}.
  \end{split}
\end{equation*}
Notice that
$(_{k_0+1}^{3\hat{m}-4-k_0})=\sum\limits_{t=k_0}^{3\hat{m}-5-k_0}(_{k_0}^{t})$.
Hence we have
\begin{equation}\label{a-2}
\begin{split}
&\a^{3\hat{m}-2}-\a-\sum\limits_{k=1}^{k_0+1}
  (\a-1)^{k}\a^{k}\big((_{k-1}^{3\hat{m}-3-k})+(_{k}^{3\hat{m}-3-k})\a\big)\\
  =&(\a-1)^{k_0+1}\a^{k_0+1}\sum\limits_{t=k_0}^{3\hat{m}-5-k_0}(_{k_0}^{t})\big(\a^{3\hat{m}-4-k_0-t}
  -\a\big)\\
  =&(\a-1)^{k_0+1}\a^{k_0+1}\sum\limits_{t=k_0}^{3\hat{m}-5-k_0}(_{k_0}^{t})(\a-1)\a
  \sum\limits_{i=0}^{3\hat{m}-5-k_0-t}\a^i\\
  =&(\a-1)^{k_0+2}\a^{k_0+2}\sum\limits_{t=k_0+1}^{3\hat{m}-5-k_0}(_{k_0+1}^{t})\a^{3\hat{m}-5-k_0-t}.
  \end{split}
\end{equation}
This proves (\ref{a-1}) for $k=k_0+1$ and  completes the proof of
(\ref{a-1}). Setting $k_0=[\frac{3\hat{m}}{2}]$, we obtain
$R^{[5]}_{1,2\hat{m}-1}=2\a^{3\hat{m}-2}$. Hence
$R^{\pm}=C_1\xi^{C_0}(1\mp\xi)^{3\hat{m}-2}$ for some $C_1\neq 0$.
This finishes the proof of Lemma \ref{oddnon}.
\end{proof}

\begin{lem}\label{evennon}
Assume that $\xi\neq 0$. Then  the matrices $N$ and $T$ defined by
(\ref{matrixnt}) are nonsingular.
\end{lem}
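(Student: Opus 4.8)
The plan is to mimic, step by step, the successful reduction carried out for the matrices $R^{\pm}(\xi)$ in Lemma \ref{oddnon}, now applied to $N$ and $T$. Recall that $N$ and $T$ arise (see (\ref{matrixnt})) as the coefficient matrices obtained when we split the single scalar identity (\ref{lasteq}) — or rather its specialized form (\ref{-2phieq1}) in the case $m=6\hat m-2$ — into its real and imaginary parts. Structurally each entry is a combination of two binomial blocks: a ``high'' block coming from the $H_{[t0\cdots0]}$-terms and a ``low'' block coming from the $H_{[t1\cdots1]}$-terms, tied together by a power of $\xi$ and the factor $\theta=1-\xi^2$. So the first step is purely bookkeeping: record the precise index ranges in (\ref{matrixnt}), and observe — exactly as in (\ref{r1})–(\ref{r2}) — that after multiplying the $i$-th row by $(2i-1)$ (and dividing appropriate columns by the scalar weights $(\hat m+j)$, $(5\hat m-2-j)$, etc.) the identity $(2i-1)\binom{t}{2i-1}=t\binom{t-1}{2i-2}$ lowers every upper index of the binomials by one and turns $\binom{\cdot}{2i-1}$ into $\binom{\cdot}{2i-2}$, producing a matrix whose entries are pure binomials in $2i-2$ times elementary polynomials in $\xi$.

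Second, I would perform the same telescoping column operations that carried $R^{[1]}\to R^{[2]}\to R^{[3]}$: for $1\le j\le \hat m-2$ replace column $j$ by a combination of columns $j$, $j+\hat m$, $j+\hat m+1$ (with the coefficients $(4\hat m-2-j)$, $(4\hat m-3-j)\xi^2$ read off from the ``high'' block), and treat the boundary columns $j=\hat m-1$ and $j=2\hat m-1$ (resp. the $T$-analogues) as the special cases. The point — and this is where one must be careful with the $+$ versus $-$ signs distinguishing $N$ (real part) from $T$ (imaginary part) — is that these operations cancel the leading binomial block against part of the subtracted columns, leaving, as in (\ref{rij1})–(\ref{rij2}), a single binomial block of the form $(\,\ast\,)\binom{4\hat m-2-j}{2i-2}+(\,\ast\,)\binom{4\hat m-3-j}{2i-2}-(\,\ast\,)\binom{4\hat m-4-j}{2i-2}\xi^2$ with the $\xi$-powers absorbed into an overall monomial factor $\xi^{C_0}$, $C_0=\sum_{i}(2i-1)$, pulled out of the determinant. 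Reordering columns (step $R^{[3]}$) then puts this in a uniform shape across all $j$.

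Third, I would run the final two row-reduction steps ($R^{[3]}\to R^{[4]}\to R^{[5]}$): divide row-wise by $(4\hat m-3-j)$ to expose, via $\binom{t}{k}-\binom{t}{k-1}=\binom{t-1}{k}-\binom{t-1}{k-1}$ type manipulations, the factorization of each entry as $\tfrac{6\hat m-2-2i}{2i-3}\binom{4\hat m-4-j}{2i-4}+\binom{4\hat m-4-j}{2i-2}\theta$ (the $N,T$-analogue of (\ref{comp})), then eliminate the $\theta$-term from the top of each column by subtracting multiples of the next row down, exactly as in the passage to (\ref{d4}). After this the determinant factors as a nonzero constant times $\xi^{C_0}$ times the Vandermonde-type determinant $\det\big(\binom{4\hat m-4-j}{2i-2}\big)$ — which is nonsingular by Lemma \ref{lem4} — times one surviving scalar entry, the $N,T$-analogue of $R^{[5]}_{1,2\hat m-1}$. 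So the determinant is $C\,\xi^{C_0}\cdot(\text{that scalar})$.

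The main obstacle, as in Lemma \ref{oddnon}, will be the last scalar: one must show the surviving entry is a nonzero polynomial in $\xi$. For $R^{\pm}$ it turned out to be $2\alpha^{3\hat m-2}$ with $\alpha=(1-\xi)/2$, which vanishes only at $\xi=1$ — hence the extra exclusion $\xi\ne 1/2$, i.e. $\lambda_n\ne 1/4$ in the original normalization, sorry, $\xi=2\lambda_n\ne 1$ i.e. $\lambda_n\ne 1/2$. For $N$ and $T$ the hypothesis is only $\xi\ne 0$, so I expect the corresponding scalar to be a nonzero constant (independent of $\xi$, or a pure power of $\xi$ already accounted for in $\xi^{C_0}$), which is why $\xi=1/2$ need not be excluded here; the claim would be proved by the same inductive identity as (\ref{a-1})–(\ref{a-2}), telescoping $\sum_t \binom{k_0}{t}(\alpha^{N-t}-\alpha)$ using $\alpha^{N-t}-\alpha=\alpha(\alpha-1)\sum_i\alpha^i$ and $\binom{k_0+1}{N}=\sum_{t}\binom{k_0}{t}$. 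I would carry out that induction explicitly for whichever of $N$, $T$ needs it, and then note that the cases $m=6\hat m-1,6\hat m,6\hat m+1,6\hat m+2$ reduce to the $m=6\hat m-2$ or $m=6\hat m-3$ computations by the symmetry already invoked at the end of the proof of Lemma \ref{lem11}, so that only two determinant families really need to be checked. Since all factors — the constant, $\xi^{C_0}$ (nonzero as $\xi\ne 0$), the determinant of Lemma \ref{lem4}, and the surviving scalar — are nonzero, $N$ and $T$ are nonsingular.
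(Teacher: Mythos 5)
Your plan essentially mirrors the paper's treatment of $T$: multiply rows by $(2i+1)$, perform the column combinations $T\to T^{[1]}\to\cdots\to T^{[5]}$, and arrive at $T^{[5]}_{ij}=\binom{4\hat m-4-j}{2i-2}$, which is nonsingular by Lemma \ref{lem4}. The paper itself notes that the $T$-reduction is a simpler instance of the $R^{\pm}$-reduction. However, the part of your proposal you flag as "the main obstacle" — a surviving scalar entry requiring an $\alpha$-telescoping \`a la (\ref{a-1})--(\ref{a-2}) — simply does not occur here. $T$ has size $(2\hat m-2)\times(2\hat m-2)$; unlike $R^{\pm}$, which had a dangling $(2\hat m-1)$-st column producing $R^{[5]}_{1,2\hat m-1}=2\alpha^{3\hat m-2}$ and forcing the extra exclusion of $\xi$, the columns of $T$ close up cleanly and leave a pure binomial determinant times a monomial in $\xi$ times a nonzero constant. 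You speculate that the scalar must be benign ``which is why $\xi=1/2$ need not be excluded''; the actual reason is structural — there is no such scalar to compute — and this is a genuine gap in your plan.

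For $N$ the gap is larger still, because the paper does not run the $R^{\pm}$-style telescoping at all. Look at (\ref{matrixnt}): the two columns $j=\hat m-1$ and $j=2\hat m-1$ of $N$ are, up to scalar factors, already the single binomials $\binom{3\hat m-2}{2i-1}$ and $\binom{3\hat m-1}{2i-1}$, with no $\theta$-mixing. These two anchor columns exist precisely because of the $\Re$-normalization condition in case (II$_{-2}$), and the paper exploits them directly: a short recursive column elimination $N\to N^{[1]}$ peels off the individual binomials $\binom{2\hat m-1+t}{2i-1}$ and $\binom{5\hat m-2-t'}{2i-1}$ one at a time, after which a column reordering gives $N^{[2]}_{ij}=\binom{4\hat m-1-j}{2i-1}$, again nonsingular by Lemma \ref{lem4}. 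There is no row rescaling by $(2i-1)$, no division of the index down to $2i-2$, no $R^{[4]}\to R^{[5]}$ step eliminating $\theta$, and certainly no surviving scalar. Your plan, applied to $N$, would involve several unnecessary steps and would ultimately mislead you into hunting for an $\alpha$-polynomial that never appears. The correct move is to notice the anchor columns first.
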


\begin{proof} For $1\leq t\leq \hat{m}-2$ and $\hat{m}\leq t'\leq
2\hat{m}-2$, we set
\begin{equation}
\begin{split}
&N^{[1]}_{i,2\hat{m}-1}=\frac{1}{3\hat{m}-1}N_{i,2\hat{m}-1}=(_{2i-1}^{3\hat{m}-1}),\\
&N^{[1]}_{i,\hat{m}-1}=-\frac{1}{\xi^2}\big(N_{i,\hat{m}-1}-N^{[1]}_{i,2\hat{m}-1}\big)=(_{2i-1}^{3\hat{m}-2}),\\
&N^{[1]}_{it}=-\frac{1}{\xi^{2(\hat{m}-t)}}\big(N_{it}-N^{[1]}_{i,t+\hat{m}}
    +N^{[1]}_{i,t+\hat{m}+1}\xi^2-\xi^{2\hat{m}-2t-2}N^{[1]}_{i,t+1}\big)=(_{2i-1}^{2\hat{m}-1+t}),\\
&N^{[1]}_{it'}=-\frac{1}{t'+\hat{m}}\big(N_{it'}-(5\hat{m}-2-t')N^{[1]}_{i,t'-m+1}\xi^{4\hat{m}-2-2t'}\big)
=(_{2i-1}^{5\hat{m}-2-t'}).
\end{split}
\end{equation}
Set
\begin{equation}
\begin{split}
&N^{[2]}_{t}=N^{[1]}_{t+\hat{m}-1}\ \text{for}\ 1\leq t\leq
\hat{m},\ N^{[2]}_{t}=N^{[1]}_{2\hat{m}-t} \ \text{for}\
\hat{m}+1\leq t\leq 2\hat{m}-1.
\end{split}
\end{equation}
Then $N^{[2]}_{ij}=(_{2i-1}^{4\hat{m}-1-j})$. By Lemma \ref{lem4},
the matrix $\big((_{2i-1}^{2(2\hat{m}-1)+1-t})\big)_{1\leq t\leq
2\hat{m}-1}$ is nonsingular.

Next we  calculate the determination of the matrix $T$, which is
done by a similar argument as that for $R^{\pm}$ (And  the proof
now, in fact, is much simpler). For the convenience of the reader,
we include the following details.

 Set
$$
T^{[1]}_{ij}=(2i+1)T_{ij}\ \text{for}\ 1\leq j\leq \hat{m}-1,\
T^{[1]}_{ij}=\frac{2i+1}{(5\hat{m}-2-j)(\hat{m}+j)}T_{ij}\
\text{for}\ \hat{m}\leq j\leq 2\hat{m}-2.
$$
 Corresponding to
(\ref{r1}) and (\ref{r2}), we  get
\begin{equation}
\begin{split}
T^{[1]}_{ij}=&(4\hat{m}-1-j)(_{2i}^{4\hat{m}-2-j})-(4\hat{m}-2-j)(_{2i}^{4\hat{m}-3-j})\xi^2\\
&-\xi^{2\hat{m}-2j}\big\{(2\hat{m}-1+j)(_{2i}^{2\hat{m}-2+j})-(2\hat{m}-2+j)(_{2i}^{2\hat{m}-3+j})\xi^2\big\}\
\text{for}\ 1\leq j\leq \hat{m}-1,\\
T^{[1]}_{ij}=&(_{2i}^{5\hat{m}-3-j})-(-\xi)^{4\hat{m}-2-2j}(_{2i}^{\hat{m}-1+j})\
\text{for}\ \hat{m}\leq j\leq 2\hat{m}-2.
\end{split}
\end{equation}
Set
\begin{equation}
\begin{split}
T^{[2]}_{ij}=&\frac{1}{-\xi^{2\hat{m}-2j}}\big(T^{[1]}_{ij}-(4\hat{m}-1-j)T^{[1]}_{i,\hat{m}-1+j}
              +(4\hat{m}-j-2)T^{[1]}_{i,\hat{m}+j}\xi^2\big)\ \text{for}\ 1\leq i\leq \hat{m}-2,\\
T^{[2]}_{\hat{m}-1}=&\frac{1}{-\xi^{2}}\big(T^{[1]}_{i,\hat{m}-1}-3\hat{m}T^{[1]}_{i,2\hat{m}-2}\big),\\
T^{[2]}_{ij}=&(j+\hat{m})T^{[1]}_{ij}+(4\hat{m}-2j-4)T^{[1]}_{j+1}-(5\hat{m}-j-4)T^{[1]}_{i,j+2}\xi^2
-\xi^{4\hat{m}-2j-4}T^{[2]}_{i,j-\hat{m}+2}\\
 &\ \text{for}\ \hat{m}\leq j\leq
2\hat{m}-4,\\
T^{[2]}_{i,2\hat{m}-3}=&(3\hat{m}-3)T^{[1]}_{i,2\hat{m}-3}+2T^{[1]}_{i,2\hat{m}-2},\
T^{[2]}_{i,2\hat{m}-2}=(3\hat{m}-2)T^{[1]}_{i,2\hat{m}-2}.
\end{split}
\end{equation}
By exactly the same argument as that in (\ref{rij1})-(\ref{rij2}),
we get
\begin{equation}
\begin{split}
T^{[2]}_{ij}=&(4\hat{m}-j-2)(_{2i}^{2\hat{m}-1+j})-(2\hat{m}-2j)(_{2i}^{2\hat{m}+j-2})
-(2\hat{m}-2+j)(_{2i}^{2\hat{m}+j-3})\xi^2\\
&\hskip 3cm\ \text{for}\ 1\leq i\leq \hat{m}-1,\\
T^{[2]}_{i,\hat{m}-1}=&(3\hat{m}-1)(_{2i}^{3\hat{m}-2})-2(_{2i}^{3\hat{m}-3})
-(3\hat{m}-3)(_{2i}^{3\hat{m}-4})\xi^2,\\
T^{[2]}_{ij}=&(j+\hat{m})(_{2i}^{5\hat{m}-3-j})+(4\hat{m}-4-2j)(_{2i}^{5\hat{m}-4-j})
-(5\hat{m}-4-j)(_{2i}^{5\hat{m}-5-j})\xi^2\\
 &\hskip 3cm\  \text{for}\ \hat{m}\leq j\leq
2\hat{m}-4,\\
T^{[2]}_{i,2\hat{m}-3}=&(3\hat{m}-3)(_{2i}^{3\hat{m}})+2(_{2i}^{3\hat{m}-1})
-(3\hat{m}-1)(_{2i}^{3\hat{m}-2})\xi^2,\\
T^{[2]}_{i,2\hat{m}-2}=&(3\hat{m}-2)(_{2i}^{3\hat{m}-1})-(3\hat{m}-2)(_{2i}^{3\hat{m}-3})\xi^2.
\end{split}
\end{equation}
Set
\begin{equation}
\begin{split}
&T^{[3]}_{ij}=T^{[2]}_{i,\hat{m}-1+j}\ \text{for}\ 1\leq j\leq
\hat{m}-1,\ N^{[3]}_{ij}=N^{[2]}_{i,2\hat{m}-1-j}\ \text{for}\
\hat{m}\leq j\leq 2\hat{m}-2.
\end{split}
\end{equation}
Then  for $1\leq i\leq 2\hat{m}-2$, corresponding to (\ref{rijf}),
we have
\begin{equation}
\begin{split}
T^{[3]}_{ij}=&(2\hat{m}+j-1)(_{2i}^{4\hat{m}-2-j})-(2j+2-2\hat{m})(_{2i}^{4\hat{m}-j-3})
-(4\hat{m}-3-j)(_{2i}^{4\hat{m}-j-4})\xi^2.
\end{split}
\end{equation}
By the same computation as that used in (\ref{comp}), we obtain
\begin{equation}
\begin{split}
T^{[4]}_{ij}:=&\frac{1}{4\hat{m}-3-j}T^{[3]}_{ij}
=\frac{6\hat{m}-3-2i}{2i-1}(_{2i-2}^{4\hat{m}-4-j})+(_{2i}^{4\hat{m}-4-j})\theta.
\end{split}
\end{equation}
Set
$$
T^{[5]}_{(2\hat{m}-2)j}=\frac{4\hat{m}-5}{2\hat{m}+1}T^{[4]}_{(2\hat{m}-2)j},\
T^{[5]}_{ij}=\frac{2i-1}{6\hat{m}-3-2i}\big(T^{[4]}_{ij}-\theta
T^{[5]}_{(i+1)j}\big)\ \text{for}\ 1\leq i\leq 2\hat{m}-3.
$$
Then $T^{[5]}_{ij}=\big((_{2i-2}^{4\hat{m}-4-j})\big)_{1\leq i,j\leq
2\hat{m}-2}$.  By lemma \ref{lem4}, $T^{[5]}$ is non-singular. Thus
$T$ is non-singular. This completes the proof of Lemma
\ref{evennon}. \end{proof}

\section {Holomorphic flattening, proofs of Theorems \ref{thmm2},
\ref{thmm3}}

Our proof of Theorem \ref{thmm2} is fundamentally  based on Theorem
\ref{thm1} and the two dimensional results  in Kenig-Webster [KW1]
and Huang-Krantz [HK]. First, as we observed already in $\S 1$, when
$M$ can be holomorphically flattened near $p=0$, all CR points of
$M$ near $0$ must be non-minimal. Hence, in Theorem \ref{thmm2}, we
need only to prove the converse. The proof of Theorem \ref{thmm2} is
an immediate consequence of Theorem \ref{thm1}  and the following
result:

\begin{thm}\label {new-1}
Let $M$ be a real analytic hypersurface with a  CR singularity at
$0$. Suppose that for any $N\ge 3$, there is a holomorphic change of
coordinates of the special form $(z',w'):=(z,
w+O(|zw|+|w|^2+|z|^3))$ such that  $M$ in the new coordinates (which
for simplicity we still write as $(z,w)$) is defined by an equation
of the form:
\begin{equation} \label{00-00}
w=G(z,\-{z})+iE(z,\-{z})=O(|z|^2),\
G(z_1,0,\-{z_1},0)=|z_1|^2+\ld_1(z_1^2+\-{z_1}^2)+o(|z_1|^2),
E(z,\-{z})=O(|z|^N).
\end{equation}
Here the constant $\ld_1$ is such that $0\le \ld_1 <\frac{1}{2}$ and
the real analytic functions  $G, E$ are real-valued. Then $M$ can be
holomorphically flattened near $0$,
\end{thm}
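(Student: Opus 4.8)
The plan is to use the hypothesis as a normal-form statement that is robust under slicing, and then import the two-dimensional convergence machinery of Kenig--Webster [KW1] and Huang--Krantz [HK] slice by slice, followed by a careful check that the resulting family fits together into a genuine holomorphic (equivalently, real-analytic) hypersurface. Concretely, after the change of coordinates supplied by the hypothesis we have $M$ defined near $0$ by $w=G(z,\bar z)+iE(z,\bar z)$ with $E=O(|z|^N)$ for $N$ as large as we wish and $G$ elliptic in the $z_1$-direction (Bishop invariant $\lambda_1<1/2$). First I would foliate a neighborhood of $0$ in $\mathbb C^{n+1}$ by the complex $2$-planes $P_c=\{(z_1,c,w):c\in\mathbb C^{n-1}\ \text{fixed}\}$ obtained by freezing $z_2,\dots,z_n$. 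Intersecting $M$ with $P_c$ gives a real analytic surface $M_c\subset P_c\cong\mathbb C^2$ which, for $c$ near $0$, is a Bishop surface with an elliptic complex tangent (the ellipticity being an open condition, guaranteed by $\lambda_1<1/2$ and $E$ small), depending real-analytically on the parameter $c$. By the Kenig--Webster and Huang--Krantz theorems each $M_c$ bounds a unique local Levi-flat surface $\widehat{M_c}$ — a holomorphic disk family — which is real analytic up to $M_c$ and is the local polynomial hull of $M_c$; moreover the construction is through a Bishop-type equation whose solution depends real-analytically on all parameters, so $\widehat M:=\bigcup_c\widehat{M_c}$ is a real-analytic hypersurface-with-boundary in $\mathbb C^{n+1}$ having $M$ (near $0$) as part of its real-analytic boundary.

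The next step is to show $\widehat M$ is actually Levi-flat, not merely Levi-flat in the single elliptic slicing direction. This is exactly where the strength of the hypothesis — flattening to arbitrarily high order $N$ — enters. Any truncation to order $N-1$ of the coordinate change that flattens $E$ beyond order $N$ is a polynomial biholomorphism that preserves $M$ up to order $N$, hence preserves each Bishop surface $M_c$ up to that order, hence (by uniqueness of the disk family and its smooth dependence) moves $\widehat M$ by an amount that is $O(|z|^{N'})$ for $N'\to\infty$ with $N$. Comparing $\widehat M$ with the model flat hypersurface $\{\Im w=0\}$ slice-by-slice, one reads off that the Levi form of $\widehat M$ at $0$, and indeed all its Taylor coefficients up to order $N'$, vanish. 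Since $\widehat M$ is real analytic up to $M$ and $N$ is arbitrary, the Levi form of $\widehat M$ vanishes identically near $0$ by the unique continuation property for real-analytic functions; therefore $\widehat M$ is Levi-flat. By Cartan's theorem a real-analytic Levi-flat hypersurface is locally biholomorphic to $\{\Im w=0\}$, and the biholomorphism carries $M$ (a piece of the real-analytic boundary of $\widehat M$) into $\{\Im w=0\}$, which is precisely the desired holomorphic flattening of $M$; along the way $\widehat M=\widehat M$ is identified with the local hull of holomorphy since it is a Levi-flat hypersurface bounding $M$.

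I would organize the writeup as: (i) set up the slicing and invoke ellipticity stability; (ii) quote [KW1],[HK] for existence, uniqueness, real-analyticity up to the boundary, and parameter-analyticity of $\widehat{M_c}$, assembling $\widehat M$; (iii) the truncated-map invariance argument forcing high-order vanishing of the Levi form; (iv) unique continuation $\Rightarrow$ Levi-flat, then Cartan $\Rightarrow$ flattening. The main obstacle — and the part that needs genuine care rather than citation — is step (iii): one must make precise that the high-order flattening coordinate change, though only formal/truncated, acts on the \emph{bounded} object $\widehat M$ in a controlled way. The subtlety is that $\widehat M$ is produced by a non-algebraic (disk-solving) construction, so ``preserves $M$ to order $N$'' must be upgraded to ``displaces $\widehat M$ by $O(|z|^{N'})$'', which requires the quantitative, uniform-in-parameter stability estimates for the Bishop equation from [HK] (the disk map is $C^{\omega}$ jointly, with explicit modulus controlled by the $C^{k}$-norm of the defining data). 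Once that stability is in hand, the vanishing of finitely many Taylor coefficients of the Levi form, for every finite order, is a clean deduction, and the rest is Cartan plus unique continuation.
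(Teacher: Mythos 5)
Your overall route is the same as the paper's: slice by the $z_1$-direction, build the Levi-flat hull $\widehat M$ from Bishop disks using [KW1],[HK], then use the order-$N$ flattening hypothesis to force the Levi form of $\widehat M$ to vanish to arbitrarily high order, and finish with unique continuation and Cartan. The gap is in your step (iii), which you yourself flag as the delicate point. You treat the flattening change of coordinates as ``formal/truncated'' and propose to upgrade ``preserves $M$ to order $N$'' to ``displaces $\widehat M$ by $O(|z|^{N'})$'' via uniform-in-parameter stability estimates for the Bishop equation. This both under-uses the hypothesis and creates a genuine analytical obstacle. The hypothesis hands you, for each $N$, an \emph{honest holomorphic} coordinate change of the \emph{special form} $z'=z$, $w'=w+h(z,w)$. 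Because $z'=z$, the slicing $\{(z_2,\dots,z_n)=\mathrm{const}\}$ is preserved exactly; applying the two-dimensional Kenig--Webster uniqueness to each slice, the new hull $\widehat{M'_{N'}}$ is therefore the \emph{exact image} of $\widehat{M_N}$ under this biholomorphism — not just close to it. One then invokes the standard transformation law for the Levi form, $\mathcal L_N=\kappa A\,\mathcal L'_{N'}\bar A^t$ with $A$ invertible real-analytic and $\kappa>0$, and since $\mathcal L'_{N'}=O(|z|^{N'/2-3})$ (from the explicit graph estimate $v'=\rho'=O(|t|^{N'-2}+|z_1|^{N'/2})$), one gets $\mathcal L_N=O(|z|^{N'/2-3})$ for every $N'$, hence $\mathcal L_N\equiv 0$ by real-analyticity. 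No quantitative stability estimate for the Bishop equation is needed, because nothing is being approximated. As written, your argument requires estimates you have not supplied and which are avoidable.

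A secondary thinness: you assert that $\widehat M=\bigcup_c\widehat{M_c}$ is a real-analytic hypersurface because ``the construction is through a Bishop-type equation whose solution depends real-analytically on all parameters.'' The Bishop-equation solution indeed depends $C^\omega$ on $(\xi,t,r)$, but the hull is obtained by inverting the map $r\mapsto u'$ to write $\widehat M$ as a graph $v=\rho(z,\bar z,u)$, and that inversion a priori yields only a generalized Puiseux series in $u'^{1/2}$. One must then use the $C^\omega$-regularity of each slice hull up to its boundary to kill the half-integer powers (the [HK] argument). This step deserves to be made explicit rather than absorbed into ``parameter-analyticity.''
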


\begin{proof} We now proceed to the proof of Theorem
\ref{new-1}.
The special form for the change of coordinates in the theorem
suggests us to slice $M$ along the
$t:=(z_2,\cdots,z_n)=const$--direction and apply the two dimensional
result in [HK]. By the stability of the elliptic tangency (see [For]
for instance), we get a family of elliptic Bishop surfaces
parametrized by $t$. By the work in Kenig-Webster [KW1] and
Huang-Krantz [HK], each surface bounds a three dimensional
real-analytic Levi-flat manifold. Putting these manifolds together
and tracing the construction of these manifolds through the Bishop
disks, we will obtain a real-analytic hypersurface $\wh{M_N}$. A
major feature for $\wh{M_N}$ is that it has an order $O(N)$ of
vanishing for its Levi-form at $0$. Now, the crucial point is that
the assumption in the theorem and the uniqueness in Kenig-Webster
[KW1] assures that $\wh{M_N}$ will be biholomorphically transformed
to each other near $0$ when  making $N$ larger and larger.
 Hence, we see that $\wh{M_N}$ is a real-analytic
hypersurface with its Levi-form vanishing to the infinite order at
$0$. Thus  the Levi-form of $\wh{M_N}$  vanishes everywhere. Hence
$\wh{M_N}$ is Levi-flat. This then completes the proof of the
theorem. We next give the details on these.


In the following, we write $t=(z_2,\cdots,z_{n})=$ and  write
$u=\Re{w},\ v=\Im{w}$. For $|t|$ small, define $M_t=\{(z,w)\in M:
(z_2,\cdots,z_{n})=t\}$. Then $M_t$ is a small deformation of the
original $M_0$, which has a unique elliptic complex tangent at
$z_1=0$ for $|z_1|<\e_0<<1$. Since a small deformation of the
surface will only move the complex tangent point to a nearby point
and elliptic complex tangency is stable under small deformation,
intuitively, $M_t$ must have an elliptic complex tangent near
$z_1\approx 0$, which is completely determined by the equation:
$$\frac{\p w}{\p
\-{z_1}}=2\lambda_1\-{z_1}+{z_1}+\frac{\p (p+iE)}{\p \-{z_1}}(z_1,t,
\-{z_1},\-{t})=0.$$ Here, we also write
$p(z,\-{z})=G(z,\-{z})-\left(|z_1|^2+\ld_1(z_1^2+\-{z_1}^2)\right)$.
 By the implicit function theorem, one can solve
uniquely $z_1=a(t,\-{t})=O(|t|)$, which is $C^\o$ in $t$. Then
$$P(t)=\left(a(t,\-{t}),t,(G+\sqrt{-1}E)(a,t,\-{a},\-{t})\right )$$ is
the elliptic complex tangent point over $M_t$ obtained by deforming
the $0$ on $M_0$ to $M_t$. Next, we expand (\ref{00-00}) at
$(a(t,\-{t}),t)$:
\begin{equation}\label{flat-02}
\begin{split}
&w=w_0(t,\-{t})+b(t,\-{t})(z_1-a(t,\-{t}))+2\Re\left (
c(t,\-{t})(z_1-a(t,\-{t}))^2\right ) +d(t,\-{t})|z_1-a(t,\-{t})|^2+
\\ & h^*(z_1-a(t,\-{t}),t,\-{z_1-a(t,\-{t})},\-{t})
+\sqrt{-1}G^*\left( z_1-a(t,\-{t}),t,\-{z_1-a(t,\-{t})},\-{t}\right
)
\end{split}
\end{equation}
Here, all functions appeared above depend $C^\o$-smoothly on their
variables with $w_0(0,0)=0,\ d(0,0)=1, \ b(0,0)=0$,
$c(0,0)=\lambda_1$. Moreover,
$h^*(\eta,t,\-{\eta},\-{t})=O(|\eta|^3)$,
$G^*(\eta,t,\-{\eta},\-{t})=O(|\eta|^2)\cap O(|\eta|^N+|t|^N)$
 and $d(t,\-{t})$
are all real-valued. By  continuity, for $|t|$ small, we have
$A(\eta,\-{\eta},t,\-{t}):=2\Re\left( c(t,\-{t})\eta^2\right
)+d(t,\-{t})|\eta|^2\ge C|\eta|^2$ for a certain positive constant
$C$ independent of  $|t|$.
Hence, for $|t|$ small and for a real number $r$ with $|r|<<1$, the
following defines a simply connected (convex) domain $D_{t}$ in
${\mathbb C}$ with a real analytic boundary:
$$D_t:=\{\eta\in {\mathbb C}:\ \ 2\Re\left( c(t,\-{t})\eta^2\right)
+d(t,\-{t})|\eta|^2+r^{-2}h^*(r\eta,r\-{\eta},t,\-{t})\le 1\}.$$

Let $\sigma(\xi,t,\-{t},r)$ be the Riemann mapping from the unit
disk to $D_t$  preserving the origin. By [Lemma 2.1, Hu1],
$\sigma(\xi,t,\-{t},r)$ depends $C^\o$ on its variables and is
holomorphic in $\xi$ in a fixed neighborhood of $\-{\D}$.
(See also [Lemma 4.1, Hu2] for a detailed proof on this.)

Now, we construct a family of holomorphic disks with parameter
$(t,r)$ for $|t|,|r|<<1$ attached to $M$, which takes the following
form:
\begin{equation}\label{flat-03}
\begin{split}
& z_1(\xi,t,\-{t},r)=a(t,\-{t})+r\sigma(\xi,t,\-{t},r)(1+\psi_1(\xi,t,\-{t},r)),\\
& (z_2,\cdots,z_{n})=t,\\
& w(\xi,t,\-{t},r)=w_0(t,\-{t})+b_1(t,\-{t})\cdot
r\sigma(\xi,t,\-{t},r)(1+\psi_1(\xi,t,\-{t},r))+
r^2(1+\psi_2(\xi,t,\-{t},r)),\\& \Re{\psi_1(0,t,\-{t}, r)}=0,\ \
\Im{\psi_2(0,t,\-{t}, r)}=0,\\
& \psi=(z_1(\xi,t,\-{t},r),t,w(\xi,t,\-{t},r))
\end{split}
\end{equation}
Here $\psi_1,\psi_2$ are holomorphic functions  in $\xi\in \D$,  and
are $C^\o$  on $(\xi,t,r)$ over $\-{\D}\times \{t\in {\mathbb
C}^{n-2}:|t|<\e_0 \}\times \{ r\in {\mathbb R}: |r|<\e_0\}$.
Substituting (\ref{flat-03}) into (\ref{flat-02}) with $|\xi|=1$, we
get the following:
\begin{equation}
\psi_2(\xi,t,\-{t},r)= \O_1+\O_2+\sqrt{-1}\O_3.
\end{equation}
Here $\O_1=2\Re\left(\{\frac{\p A}{\p
\eta}(\sigma,t,\-{\sigma},\-{t})\sigma +\sigma r^{-1}\frac{\p
h^*}{\p \eta}(r\sigma,t,r\-{\sigma},\-{t})\}\psi_1\right)$,
$\Omega_2=O(|\psi_1|^2)$,  and $\Omega_3=O(|t|^{N-2}+|r|^{N-2})$ are
all real-valued. Moreover, $\O_j$ $(j=1,2,3)$ depend $C^\o$ on there
variables $(\psi_1, t,r)$ in a certain  suitable Banach space
 defined in [$\S 5$, Hu1]. Write
$g(\xi,\-{\xi},t,\-{t},r)=2\sigma\{\frac{\p A}{\p
\eta}(\sigma,t,\-{\sigma},\-{t})\sigma +r^{-1}\frac{\p h^*}{\p
\eta}(r\sigma,t,r\-{\sigma},\-{t})\}.$
 Then we similarly have $\Re{g}>0$, which makes results in [Lemma
5.1, Hu1] applicable in our setting. Write $\cal H$ for the standard
Hilbert transform, we obtain the following singular Bishop equation:
\begin{equation}\label{flat-05}
\Re\{g(\xi,\-{\xi},t,\-{t},r)\psi_1\}+\O_2(\psi_1,\-{\psi_1},t,\-{t},r)=-{\cal
H}(\O_3).
\end{equation}
Now, write $\psi_1=U(\xi,\-{\xi},t,\-{t},r)+\sqrt{-1}{\cal
H}(U(\xi,\-{\xi},t,\-{t},r))$ for $|\xi|=1$. By the argument in [$\S
5$, Hu1], from (\ref{flat-05}), one can uniquely solve
$U(\xi,\-{\xi},t,\-{t},r)$ for $|t|,|r|<<1$. Moreover,
$U(\xi,\-{\xi},t,\-{t},r)$ depends $C^\o$ on
$(\xi,\-{\xi},t,\-{t},r)$ and
$U(\xi,\-{\xi},t,\-{t},r)=O(|t|^{N-2}+|r|^{N-2}).$ Hence
$U(\xi,\-{\xi},t,\-{t},r)+\sqrt{-1}{\cal
H}(U(\xi,\-{\xi},t,\-{t},r))$ extends to a holomorphic function in
$\xi$ which also depends $C^\o$ on its variables
$(\xi,\-{\xi},t,\-{t},r)$ with $|\xi|\le 1$. Moreover, we have the
estimates
\begin{equation}\label{fla-01}
\psi_1,\psi_2=O(|t|^{N-2}+|r|^{N-2}).
\end{equation}

Next, we let $\wh{M_N}=\bigcup_{0\le r<<1,\ |t|<<1,\xi\in
\-{\D}}\psi(\xi,t,\-{t},r).$ Let $\wt{M}=\pi(\wh{M_N})$ where $\pi$
is the projection from ${\mathbb C}^{n+1}$ into the $(z,u)$-space.
By the results in Kenig-Webster [KW1] and Huang-Krantz [HK], for
each fixed $t$,  $\wh{M_{N,t}}=\wh{M_N}\cap \{z'=t\}\cap
B_{P(t,\-{t})}(r_0)$ must be  the local hull of holomorphic of
$M_t$, that is a manifold
 $C^\o$-regular up to the boundary $M_t$. Here $B_{a(t,\-{t})}(r_0)$ is
 the ball centered at $P(t,\-{t})$ with
 a certain fixed radius $r_0>0$. Also, since
$v=G(z_1,t,\-{z_1},\-{t})$ defines a strongly pseudoconvex
hypersurface in ${\mathbb C}^2$ for each fixed $t$, we see that
$\pi(\wh{M_{N,t}})\subset \wt{M^*}_{t}$, where $\wt{M^*}:= \{(z,w):
u\ge G(z,\-{z})\}$ and
$\wt{M^*_t}=\wt{M^*}\cap\{(z_2,\cdots,z_{n})=t\}.$ Indeed, $\pi$,
when restricted to $\wh{M_{N,t}}$ is a $C^\o$-diffeomorphism to
$\wt{M^*_t}$ in the intersection of $\wh{M_{N,t}}$ with the ball
centered at $P(t)$ with a certain fixed radius $1>>r_0>0$. To see
this, by our normalization presented in the previous section or  by
the Kenig-Webster [KW], we have a change of variables in $(z_1,w)$:
$$z_1'=z_1-a(t,\-{t}),\ \
w'=w_0(t,\-{t})+\sum_{j=1}^{m}b_j(t,\-{t})(w-w_0(t,\-{t}))^j,$$
where $w_0, \ b_j$ depend smoothly on $t$ and takes values $0$ at
$0$. In this coordinates, $M_t$ is mapped to $M_t'$ that is
flattened to order $m$ at $0$. Hence, for $m>>1$, the holomorphic
hull of  $M'_t$ near $0$ now is tangent to $(z_1',u')$-space (See
[KW] [HK]), in particular, must be transversal to the $v'-$axis.
Since the hull is a biholomorphic invariant, we see that
$\wh{M_{N,t}}$ has to be transversal to the $v$-axis when $|t|$ is
small. Hence,
 $\pi$ is a one to one and onto map from $\wh{M_N}$ to $\wt{M^*}$
near $0$. Write the  inverse map of $\pi$ as
$v(z_1,\-{z_1},t,\-{t})$, which is defined  over $\wt{M^*}$ near
$0$. Notice that it is the graph function of $\wh{M_N}$ near $0$ and
has to be $C^\o$-regular  for each fixed $t$.

 Next, we solve $v(z_1,\-{z_1},t,\-{t})$ from (\ref{flat-03}). For
this, we use  the computation in [HK]. First, we let
\begin{equation}\label{flat-07}
\begin{split}
&z'_1=z_1-a(t,\-{t})\\
& w'=w-\left (w_0(t,\-{t})+b_1(t,\-{t})(z_1-a(t,\-{t}))\right )
\end{split}
\end{equation}
Then (\ref{flat-03}) can be rewritten as
\begin{equation}\label{flat-08}
\begin{split}
& z'_1(\xi,t,\-{t},r)=r\sigma(\xi,t,\-{t},r)(1+\psi_1(\xi,t,\-{t},r)),\\
& (z¡¯_2,\cdots,z¡®_{n})=t,\\
& w'(\xi,t,\-{t},r)=r^2(1+\psi_2(\xi,t,\-{t},r)).
\end{split}
\end{equation}

Write $w'=u'+\sqrt{-1}v'$. Now, by  the proof in [HK, pp 225], we
see that for each $(z_1', u',t)$, there is a unique $v'$ satisfying
(\ref{flat-08}). Moreover $v'$, as a function in $(z_1', u',t)$, has
the following generalized Puiseux expansion:
$$v'=\sum_{i,j,s,\a,\b\ge 0}S_{ijs\a\b}u'^{\frac{i-j-s}{2}}z^j_1\-{z_1}^s
t^\a\-{t^\b},$$ where $|S_{ijk\a\b}|\ale C^{i+j+k}$ for some
positive constant $C$.
 By the regularity of $\wh{M_{N,t}}$ as mentioned above, we know
that $\frac{\p^{j+s} v'}{\p {z'}_1^j\p\-{z'_1}^s}|_{z_1=0}$ must be
smooth for each $|t|$ small and $u'\ge 0$.  This shows that
$S_{ijs\a\b}=0$ when $\frac{i-j-s}{2}$ is  not a positive integer.
As in [HK, pp 227], we see that $v'$ is a real analytic function in
$(z'_1,u', t)$ near $0$. By (\ref{flat-07}), we see that $v$ is
analytic function in $(z_1,u,t)$. Hence, we proved that $\wh{M_{N}}$
is a real analytic manifold, which can be represented as a graph
over $\wt{M^*}$ in $(z, u)$-space. Moreover the analytic graph
function $v=\rho=O(|t|^{N-2}+|z_1|^{N/2}).$ To see this, by
(\ref{flat-03}) (\ref{fla-01}), we need only to explain that
$\Im(w_0)=O(|t|^N)$ and $b_1(t,\-{t})=O(|t|^{N-1})$. Indeed,
$\Im(w_0)=E(a(t,\-{t}),\-{a(t,\-{t})},t,\-{t})=O(|t|^N)$ and
$$b_1=\frac{\p (G+\sqrt{-1}E)}{\p
z_1}(a(t,\-{t}),\-{a(t,\-{t})},t,\-{t}).$$ Since
$$\frac{\p
(G+\sqrt{-1}E)}{\p \-{z_1}}(a(t,\-{t}),\-{a(t,\-{t})},t,\-{t})=0,$$
 we get
$$b_1=2\sqrt{-1}\frac{\p E}{\p
z_1}(a(t,\-{t}),\-{a(t,\-{t})},t,\-{t})=O(|t|^{N-1}).$$
\bigskip

Still let $v=\rho$ be the defining function as mentioned above.
Since $\rho$ is real analytic, we can extend  $\wh{M_{N}}$ to a real
analytic hypersurface $M^{\#}_N$ near the origin by using the graph
of $\rho$. Now, we let
$$\theta=\sqrt{-1}\p (-\frac{w-\-{w}}{2\sqrt{-1}}+\rho),  L_j=\frac{\p }{\p
z_j}+\frac{2\sqrt{-1}\rho_{z_j}}{1-2\sqrt{-1}\rho_w}\frac{\p}{\p
w}.$$ Then $\theta$ is a contact form along $M^{\#}_N$  and
$\{L_j\}_{j=1}^{n}$ forms a basis of real analytic tangent vector
fields of type $(1,0)$ along $M^{\#}_N$ near $0$.
 With respect to such a contact form and a basis of tangent vector
fields of type $(1,0)$,  we obtain the following Levi-matrix, which
is a real analytic $n\times n$-matrix near $0$:
$${\cal L}_N=\left((\sqrt{-1}d\theta, L_j\wedge \-{L_k})\right)_{1\le j,k\le n}.$$
Since $\rho=O(|z|^{N/2})$, we see that ${\cal L}=O( |z|^{N/2-3})$ as
$z\ra 0$. Now, for $N'>N$, by the existence of the special change of
coordinates as in the hypothesis of the theorem, we have a
transformation of the form $z'=z, w'=w+h(z,w)=w+o(1)$, which further
flattens $M$ to $M'$ near $0$ to the order of $N'$. For $M'$, we
similarly have $\wh{M'_{N'}}$, which, by the special property that
$z'=z$ of our transformation, can be seen to be precisely the image
of $\wh{M_{N}}$, near $0$, under the transformation. (Here, it
suffices to use the two dimensional uniqueness  result of
Kenig-Webster [KW]). Next, we can similarly define $\theta'$,
$L_j'$, as well as, the Levi matrix ${\cal L'}_{N'}$. Now ${\cal
L'}_{N'}=O(|z|^{N'/2-3})$. By the transformation formula of the Levi
-form, we see that
 there is an invertible real analytic matrix $B$ near $0$ and a
positive real analytic function $\kappa$ near $0$ such that
$${\cal L}_N=\kappa A{\cal L'}_{N'}\-{A^t}.$$
Hence, ${\cal L}_N=O(|z|^{N'/2-3})$ for any $N'>N.$ By the
analyticity of ${\cal L}_N$. We see that  ${\cal L}_N\equiv 0$ and
thus $\wh{M_{N}}$ is Levi-flat. Next, by the classical theorem of
Cartan, we see that $\wh{M_{N}}$ can be biholomorphically mapped to
an open piece in ${\mathbb C}^n\times {\mathbb R}$. This completes
the proof of the theorem.

\end{proof}

{\it Proof of Theorem \ref{thmm2} and Theorem \ref{thmm3}}: The
proof of Theorem \ref{thmm2} and Theorem \ref{thmm3} is an immediate
consequence of Theorem \ref{thm1} and Theorem \ref{new-1}. Here, we
only need to mention that when $M$ is already flattened, it is
obvious that the $\wh{M_N}$ constructed  is the local hull of
holomorphy of $M$ near $0$. By the invariant property of holomorphic
hull, we conclude that this is also the case when $M$ is not
flattened yet.
$\endpf$

\bigskip

\begin{example}\label {example-new}{\rm
Define $M\subset {\mathbb C}^3$ by the following equation near $0$:
$$w=q(z,\-{z})+p(z,\-{z})+iE(z,\-{z}).$$
Here as before
$q=|z_1|^2+\ld_1(z_1^2+\-{z_1^2})+|z_2|^2+\ld_2(z_2^2+\-{z_2^2})$
with $0\le \ld_1,\ld_2<\infty$, and $p, E=O(|z|^3)$ are real-valued.
Also $G(z,\-{z}):=q(z,\-{z})+p(z,\-{z})$.
 For any $c\in {\mathbb R}\sm \{0\}$, define the real hypersurface
 $K_c$ by the equation $q(z,\-{z})=c$. Then  $K_c$ intersects  transversally $M$
 along a submanifold $L_c$ of real dimension $3$. Then $L_c$ is a
 CR submanifold of CR dimension $1$ if and only if
$L(q)\equiv 0$ along $L_c$. Here

\begin{equation}\begin{split}\label{325eq22}
L=&(G_2-iE_2)\frac{\p}{\p z_1}-(G_1-iE_1)\frac{\p}{\p
z_2}+2i(G_2E_1-G_1E_1)\frac{\p}{\p w},
\end{split}\end{equation}
that is non-zero and tangent to $M\sm \{0\}$.

Write $\Psi=p(z,\-{z})-iE(z,\-{z})$. Then the above is equivalent to
the equation $\Psi_2\cdot(\-{z_1}+2\ld_1z_1)=\Psi_1\cdot
(\-{z_2}+2\ld_2z_2)$. Namely, $M$ is  non-minimal at its CR points
near $0$ if and only if the just mentioned equation holds.

One solution is given by
$\Psi=p(z,\-{z})-iE(z,\-{z})=\mu_1(|z_1|^2\-{z_1}+\ld_1|z_1|^2z_1)
+\mu_2(|z_2|^2\-{z_2}+\ld_2|z_2|^2z_2)
+\mu_1\-{z_1}(|z_2|^2+\ld_2z_2^2)+\mu_2\-{z_2}(|z_1|^2+\ld_1z_1^2),$
with $\mu_1,\mu_2\in{\mathbb C}$. Then
$\Psi_1=(\mu_1\-{z_1}+\mu_2\-{z_2})(\-{z_1}+2\ld_1z_1)$ and
$\Psi_2=(\mu_2\-{z_2}+\mu_1\-{z_1})(\-{z_2}+2\ld_1z_2)$. Thus
$\Psi_2\cdot (\-{z_1}+2\ld_1z_1)=\Psi_1\cdot (\-{z_2}+2\ld_2z_2)$
holds trivially.
}
\end{example}

Finally, We also mention  a  recent preprint [Bur2] for some
generalization of the work in Kenig-Webster [KW] and Huang-Krantz
[HK].

 \vfill \eject
\section{Appendix}

In this appendix, for convenience of the reader, we give a detailed
proof of Theorem \ref{thmm1} for the  case  $n=2$, $m=3$ to
demonstrate the basic ideas of the complicated calculations for the
proof  of Theorem \ref{thmm1} performed in Sections 4-6 of this
paper.
\medskip

In Section $3$, we have showed, by making use of the non-minimality,
the following:
\begin{equation} \label{0005}
\begin{split}
&\big(|w_n|^2+|w_1|^2\big)\cdot
\big(\-{w_n}\Psi_1-\-{w_1}\Psi_n\big)+\big(2\lambda_nw_n\-w_1-2\lambda_1w_1\-w_n\big)\cdot
\Psi=0, \ \ \hbox{where}\\
&\Psi=w_n\-{w_n} \Phi_1-{w_n}\-{w_1}\Phi_n+\-{w_1}\cdot \Phi,\
\Phi=w_nH_{\-1}-w_1H_{\-n},\\
&  H=E^{(m)},\
 \  w_j=z_j+2\lambda_j\-z_j\  \text{for}\ 1\leq j\leq n=2.
\end{split}
\end{equation}
We  use  the following notations
\begin{equation}\label{xieta00}
\begin{split}
&\xi=2\lambda_n,\ \eta=2\lambda_1,\ \theta=1-\xi^2,\\
&H_{[tsrh]}=H_{(te_n+se_1,re_n+he_1)}\ \text{for}\ t+s+r+h=m,\\
&\Phi_{[tsrh]}=\Phi_{(te_n+se_1,re_n+he_1)}\ \text{for}\ t+s+r+h=m,\\
&\Psi_{[tsrh]}=\Psi_{(te_n+se_1,re_n+he_1)}\ \text{for}\ t+s+r+h=m+1
\end{split}
\end{equation}
Here, for any homogeneous polynomial $\chi(z,\-{z})$ of degree $k\ge
1$, we write
$$\chi=\sum_{\a\ge 0,\b\ge 0, |\a|+|\b|=k}H_{(\a,\b)}z^{\a}\-{z^{\b}}.$$

We first set up more notations and establish formulas which are
crucial in the general case discussed in $\S 4-\S 6$.

 By (\ref{0005}), we have
\begin{equation}\label{atsrh00}
\begin{split}
\Phi_{[tsrh]}=&\xi(h+1)H_{[ts(r-1)(h+1)]}+(h+1)H_{[(t-1)sr(h+1)]}\\
         &-(r+1)H_{[t(s-1)(r+1)h]}-\eta(r+1)H_{[ts(r+1)(h-1)]},
\end{split}
\end{equation}
and
\begin{equation}\label{btsrh00}
\begin{split}
\Psi_{[tsrh]}=&(s+1)\big\{\xi\Phi_{[t(s+1)(r-2)h]}+(1+\xi^2)\Phi_{[(t-1)(s+1)(r-1)h]}
+\xi\Phi_{[(t-2)(s+1)rh]}\big\}\\
&-\xi(t+1)\Phi_{[(t+1)s(r-1)(h-1)]}
-t\Phi_{[tsr(h-1)]}-\xi\eta(t+1)\Phi_{[(t+1)(s-1)(r-1)h]}\\
& -\eta t\Phi_{[t(s-1)rh]}+\Phi_{[tsr(h-1)]}+\eta\Phi_{[t(s-1)rh]}.
\end{split}
\end{equation}

Collecting the coefficients of
$z_n^tz_1^{s-1}\-{z_n}^{r+3}\-{z_1}^h$ for $t\geq 0$, $s\geq 1$,
$r\geq -3$ and $h=m+1-t-s-r\geq 0$ in (\ref{0005}), we get
\begin{equation*}
\begin{split}
s\big\{&\xi \Psi_{[tsrh]}+(2\xi^2+1)\Psi_{[(t-1)s(r+1)h]}+(\xi^3+2\xi)\Psi_{[(t-2)s(r+2)h]}\\
&+\xi^2\Psi_{[(t-3)s(r+3)h]}\big\}+
\mathcal{F}\{(\Psi_{[t's'r'h']})_{s'+h'\leq s+h-2,s'\leq s,h'\leq
h}\}=0.
\end{split}
\end{equation*}
Here, for a set of complex numbers (or polynomials) $\{a_j,
b\}_{j=1}^{k}$, we say $b\in {\cal F}\{a_1,\cdots, a_k\}$ if
$b=\sum_{j=1}^{k}(c_ja_j+d_j\-{c_j})$ with $c_j,d_j\in
{\mathbb{C}}$. Also, we set up the convention that $\chi_{[tsrh]}=0$
if one of the indices is negative.

Thus for $s\geq 1$, we can inductively get
\begin{equation}\label{beq00}
\begin{split}
\Psi_{[tsrh]}=\mathcal{F}\{(\Psi_{[t's'r'h']})_{s'+h'\leq
s+h-2,s'\leq s,h'\leq h}\}.
\end{split}
\end{equation}
Substituting (\ref{btsrh00}) into (\ref{beq00}), we get, for $s\geq
1$, the following
\begin{equation*}
\begin{split}
&(s+1)\big\{\xi\Phi_{[t(s+1)(r-2)h]}+(1+\xi^2)\Phi_{[(t-1)(s+1)(r-1)h]}
+\xi\Phi_{[(t-2)(s+1)rh]}\big\}\\
&=\mathcal{F}\{(\Phi_{[t's'r'h']})_{s'+h'\leq s+h-1,s'\leq
s+1,h'\leq h}\}.
\end{split}
\end{equation*}
Hence for $s\geq 2$, we can inductively obtain
\begin{equation}\label{11100}
\begin{split}
\Phi_{[tsrh]}=\mathcal{F}\{(\Phi_{[t's'r'h']})_{s'+h'\leq
s+h-2,s'\leq s,h'\leq h}\}.
\end{split}
\end{equation}
Substituting (\ref{atsrh00}) into (\ref{11100}), we get, for $s\geq
2$ and $h\geq0$, the following
\begin{equation*}
\begin{split}
&\xi(h+1)H_{[ts(r-1)(h+1)]}+(h+1)H_{[(t-1)sr(h+1)]}=\mathcal{F}\{(\Phi_{[t's'r'h']})_{s'+h'\leq
s+h-2,s'\leq s,h'\leq h}\}\\
&+(r+1)H_{[t(s-1)(r+1)h]}+\eta(r+1)H_{[ts(r+1)(h-1)]}.\\
&=\mathcal{F}\big\{(H_{[t's'(m-t'-s'-h')h']})_{s'+h'\leq
s+h-1,s'\leq s,h'\leq h+1}\big\}.\\
\end{split}
\end{equation*}
Hence for $s\geq 2$ and $h\geq 1$, we can inductively get that
\begin{equation*}\begin{split}
H_{[ts(m-t-s-h)h]}=\mathcal{F}\big\{(H_{[t's'(m-t'-s'-h')h']})_{s'+h'\leq
s+h-2,s'\leq s,h'\leq h}\big\}.
\end{split}\end{equation*}
Notice that $H_{[tsrh]}=\-{H_{[rhts]}}$.  Keeping applying the above
until the assumption that $s\ge 2$ and $h\ge 1$ do not hold anymore,
  we can  inductively get the following crucial formula:
\begin{equation}\begin{split}\label{111ind00}
H_{[ts(m-t-s-h)h]}=\mathcal{F}\big\{(H_{[t'1(m-t'-2)1]})_{1\leq
t'\leq m-2},(H_{[t'0(m-t'-i)i]})_{i\leq \max(s,h),0\leq t'\leq
m-i}\big\}.
\end{split}\end{equation}

\bigskip

Now, we assume $m=3$. We first normalize $H:=E^{(3)}$ without using
the non-minimality condition.

Let $z'=z, w'=w+B(z,w)$ be a holomorphic transformation that
transforms $w=G(z,\-z)+iE(z,\-z)$ to $w'=G'(z',\-z')+iE'(z',\-z')$.
Then we get
$$
{\Im}B(z,w)=E'(z,\-z)-E(z,\-z).
$$
Here $B(z,w)$ is a weighted holomorphic homogeneous polynomial in
$(z,w)$ of degree $3$, with $wt(z)=1$ and $wt(w)=2$.

\medskip
{\bf Sub-appendix I:} In this part, we first prove the following:
\begin{lem}
After a holomorphic transformation, we can have  $E(z,\-z)$ defined
in
(\ref{429eq1}) to  satisfy  the following normalization:\\
{\bf (1)} When $\lambda_n=0$, then
 \begin{equation}\begin{split}\label{norm30}
E_{(3e_n,0)}=E_{(2e_n+e_1,0)}=E_{(e_n+2e_1,0)}=E_{(3e_1,0)}=E_{(2e_n,e_n)}=E_{(e_n+e_1,e_n)}=0.
\end{split}\end{equation}
{\bf (2)} When $\lambda_n\neq 0$, then
 \begin{equation}\begin{split}\label{norm3}
E_{(3e_n,0)}=E_{(2e_n+e_1,0)}=E_{(e_n+2e_1,0)}=E_{(3e_1,0)}=E_{(e_n+e_1,e_1)}=E_{(e_n+e_1,e_n)}=0.
\end{split}\end{equation}
\end{lem}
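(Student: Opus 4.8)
The statement is a normal-form result: we have freedom to use holomorphic changes of coordinates of the special form $(z,w)\mapsto(z'=z,\ w'=w+B(z,w))$ where $B$ is a weighted-homogeneous polynomial of weighted degree $3$ (with $\mathrm{wt}(z_j)=1$, $\mathrm{wt}(w)=2$), and we want to kill six specific cubic coefficients of the imaginary part $E=E^{(3)}$. Writing $B^{(3)}(z,w)=\sum_{|I|+2j=3}b_{(Ij)}z^Iw^{j}$ (so $j\in\{0,1\}$) and substituting $w=q(z,\bar z)+O(3)$, the relation ${\Im}B(z,q(z,\bar z))=E'(z,\bar z)-E(z,\bar z)$ at third order becomes a \emph{linear} map from the space of admissible $B^{(3)}$ to a subspace of real-valued cubic polynomials in $(z,\bar z)$; the normalization asserted is exactly solvability of this linear map onto the six-dimensional (in the $n=2$, $m=3$ case) complement spanned by the six monomials listed in \eqref{norm30} or \eqref{norm3}. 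So the plan is: (i) count dimensions to see the map between the relevant spaces could be an isomorphism; (ii) show the map is injective, i.e. if ${\Im}B(z,q(z,\bar z))$ projects to zero on the chosen complement (and, when the weighted degree is even, an extra reality normalization like ${\Re}(b_{(0\,m_0/2)})=0$ is imposed — here $m_0=3$ is odd so this does not arise), then $B^{(3)}\equiv0$.

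Concretely I would proceed as in the proof of Theorem \ref{norm}. Fix the case $\lambda_n=0$ first. Write $q(z,\bar z)=|z_1|^2+\lambda_1(z_1^2+\bar z_1^2)+|z_2|^2$ (recall $z_n=z_2$ here). Expand $B^{(3)}(z,q(z,\bar z))$ and take its imaginary part. One sees that the monomials $z_n^3,z_n^2z_1,z_nz_1^2,z_1^3$ come only from the $j=0$ part $\sum_{|I|=3}b_{(I0)}z^I$ (pure holomorphic cubic), whose imaginary part's projection onto holomorphic-times-antiholomorphic-free monomials recovers $\frac1{2i}b_{(I0)}$ up to terms involving the $j=1$ coefficients; and the monomials $z_n^2\bar z_n$ and $z_nz_1\bar z_n$ come from the $j=1$ part $\sum_{|I|=1}b_{(I1)}z^I\cdot(|z_1|^2+\lambda_1(z_1^2+\bar z_1^2)+|z_2|^2)$. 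Then I would run the standard triangular elimination: first consider coefficients of monomials in $z_n,\bar z_n$ only to force the corresponding $b$'s to vanish, then coefficients of monomials of the form $z_1 z_n^a \bar z_n^b$, etc., using an induction on the $z_1$-degree exactly as in case (I) of Theorem \ref{norm}. Injectivity follows, and since the source and target spaces have equal dimension the map is onto, which yields \eqref{norm30}.

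For the case $\lambda_n\neq0$ (case (II) with $m_0=3$), the structure is the same but the presence of the term $\lambda_n(z_n^2+\bar z_n^2)$ in $q$ couples $z_n^a\bar z_n^b$ monomials of different bidegrees, so I cannot separate the pure-$z_n$ part as cleanly. This is exactly the situation analyzed via the quantities $I_{kl},J_{kl}$ in \eqref{ij0}–\eqref{ij}: one projects $\Im B(z,q(z,\bar z))$ onto the space $\widetilde P$ of polynomials of the form $|z_1|^2 P_1(z_n,\bar z_n)+P_2(z_n,\bar z_n)$, writes the coefficients of $z_n^t\bar z_n^s$ and $|z_1|^2 z_n^{t-1}\bar z_n^{s-1}$ via \eqref{00coe}, and uses the relations $J_{kl}=-\overline{I_{kl}}$ and $I_{kl}=\binom{k+l}{k}\lambda_n^l I_{k+l,0}$ to reduce to showing the small linear system forcing $I_{k,0}=0$ has only the trivial solution. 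For $m_0=3$ this is a tiny explicit system (the relevant "determinant" lemmas of \S6 degenerate to trivial $1\times1$ or $2\times2$ checks), so it is immediate; then one backs out $b_{(0hj)}=0$ from $I_{k0}=\frac1{2i}b_{(0(m_0-2k)k)}+\mathcal F\{\cdots\}$, and finishes the non-$z_n$-pure coefficients by the same $z_1$-degree induction as before. This gives \eqref{norm3}.

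\textbf{Main obstacle.} The only real subtlety is the case $\lambda_n\neq 0$: one must verify that the chosen complement — in which we keep $E_{(e_n+e_1,e_1)}$ and $E_{(e_n+e_1,e_n)}$ rather than, say, $E_{(2e_n,e_n)}$ — is genuinely transverse to the image of the $\Im B$ map, i.e. that the specific small linear system in the $I_{k0}$'s (equivalently in the $b_{(Ij)}$'s) is nonsingular for $\lambda_n\neq0$. For general $m_0$ this is precisely the content of the determinant computations in \S6 (Lemmas \ref{lem4}, \ref{oddnon}, \ref{evennon}), and the delicate point there is that nonsingularity can fail at $\lambda_n=1/2$; but for the flattening normalization of Theorem \ref{norm} (and hence for the present $m=3$ statement) no $\lambda_n\neq1/2$ hypothesis is needed, because the particular monomials being eliminated avoid the bad locus. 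For $m_0=3$ everything is explicit and the check is routine, so the proof reduces to bookkeeping once the dimension count and the triangular elimination scheme are set up.
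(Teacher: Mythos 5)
Your proof proposal is correct and follows essentially the same route as the paper's Sub-appendix I: count dimensions, then prove injectivity of $B^{(3)}\mapsto \Im B^{(3)}(z,q(z,\bar z))|_{\hat P}$ by a triangular elimination, first on the pure $z_n,\bar z_n$ coefficients (equivalently $b_{(011)}, b_{(030)}$) and then inducting on the $z_1$-degree. One small inaccuracy in your ``main obstacle'' discussion: for general $m_0$ the normalization Theorem~\ref{norm} does \emph{not} rely on the determinant Lemmas~\ref{lem4}, \ref{oddnon}, \ref{evennon} of \S 6 — those are invoked only in \S 5 to prove Lemma~\ref{lem11} (the flattening step that uses non-minimality, and that is where $\lambda_n\neq\tfrac12$ is needed); the injectivity in Theorem~\ref{norm} is established by a direct induction on the quantities $I_{kl},J_{kl}$ in cases (II$_{\pm k}$), with no nonsingularity-of-a-matrix step. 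For $m_0=3$ this distinction is moot, since as you say the check reduces to the two explicit relations $b_{(011)}=0$ and $b_{(030)}+\lambda_n b_{(011)}=0$.
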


\begin{proof}
First, notice that the real dimension of  the space of all such
$B^{(3)}$ is
 \begin{equation}\begin{split}\label{bdim00}
&2\cdot \sharp\big\{(i_1,i_{n},j)\in \mathbb{R}^{3}:\
i_1,i_{n},j\geq 0,\  i_1+i_{n}+2j=3
\big\}\\
=&2\cdot \sharp\big\{(i_1,i_{n},j)\in \mathbb{R}^{3}:\ i_1\neq 0,\
i_1+i_{n}+2j=3 \big\}+4.
\end{split}\end{equation}

{\bf (1)} Assume that $\lambda_n=0$. Set
$$
\hat{P}^{(3)}=\big\{\text{polynomials  of the form}\
2{\Re}\sum_{i+j+2k=3}
    a_{(ijk)}z_1^iz_n^j|z_n|^{2k}\big\}.
$$
To get the normalization condition (\ref{norm30}), we only need to
prove that
\begin{equation}\label{227eq004}
{\Im}\big(B^{(3)}(z,q(z,\-z))\big)\big|_{\hat{P}^{(3)}}=Q^{(3)}(z,\-z)
\end{equation}
is solvable for any  $Q^{(3)}(z,\-z)\in \hat{P}^{(3)}$.  Here, we
choose an orthonormal basis $\{z^\a\-{z^\b}\}$ for the space of (not
necessarily holomorphic)   polynomials. Then for any subspace $P$
and a polynomial $A$, we write $A|_{P}$ for the orthogonal
projection of $A$ to $P$.

Notice that $\hat{P}^{(3)}$ and the space
$\{B^{(3)}(z,q(z,\-{z}))\}$ have the same dimension. Hence to prove
(\ref{227eq004}),  we  need to show that
$$
 {\Im}\big(B^{(3)}(z,{q}(z,\-z))\big)\big|_{\hat{P}^{(3)}}=0\
 \Longleftrightarrow B=0.
$$
By considering the terms involving only $z_n$ and $\-z_n$, we get
$$
{\Im}\big(b_{(011)} z_n|z_n|^{2}+b_{(030)} z_n^3\big)=0.
$$
Thus we get $b_{(011)}=b_{(030)}=0$. Hence, if
${\Im}\big(B^{(3)}(z,{q}(z,\-z))\big)\big|_{\hat{P}^{(3)}}=0$, we
have   that $B^{(3)}(z,|z_n|^2+\lambda_1z_1^2)=0$. Namely, we have
\begin{equation}
\begin{split}
b_{(120)}z_1z_n^2+b_{(101)}z_1|z_n|^2+b_{(210)}z_1^2z_n+(b_{(300)}+\lambda_1
b_{101})z_1^3=0.
\end{split}
\end{equation}
Hence we get $b_{(ijk)}=0$. Furthermore, we obtain
$B^{(3)}(z,\-z)=0$.
\bigskip

{\bf (2)} In this case, we assume $\lambda_n\neq 0$. Write
 \begin{equation*}\begin{split}
\hat{P}_{-3}^{(3)}=\{&\text{homogeneous polynomials of the form: }\\
&2{\Re}\big(\sum\limits_{i\geq
1,i+j+2k=3}a_{ijk}z_1^iz_n^j|z_n|^{2k}+bz_n^3+cz_n|z_1|^2\big)\}.
\end{split}\end{equation*}

To get the normalization condition (\ref{norm3}), we only need to
prove that
\begin{equation}\label{227eq-300}
{\Im}\big(B^{(3)}(z,q(z,\-z))\big)\big|_{\hat{P}_{-3}^{(3)}}=Q^{(3)}(z,\-z)
\end{equation}
is solvable for any real valued polynomial $Q^{(3)}(z,\-z)\in
\hat{P}_{-3}^{(3)}$.
 Notice that the real dimension of $\hat{P}_{-3}^{(3)}$ is
$$
2\cdot \sharp\big\{(i_1,i_{n},j)\in \mathbb{R}^{3}:\ i_1\neq 0,\
i_1+i_{n}+2j=3 \big\}+4,
$$
which is the same as the real dimension of all such  $B(z,w)'s$.
Hence to prove (\ref{227eq-300}), we need to show that
$$
 {\Im}\big(B^{(3)}(z,{q}(z,\-z))\big)\big|_{\hat{P}_{-3}^{(3)}}=0
 \Longleftrightarrow B^{(3)}=0.
$$
Notice that
\begin{equation}\label{3eq1}
\begin{split}
0&={\Im}B^{(3)}(z,w)\big|_{\hat{P}^{(3)}_{-3}}\\
&={\Im}\Big(\sum\limits_{i\geq
1,i_j+2k=3}b_{(ijk)}z_1^iz_n^jw^k+b_{(030)}z_n^3+b_{(011)}z_n(|z_n|^2
+\lambda_nz_n^2+\lambda_n\-{z_n}^2+\l_1z_1^2+|z_1|^2)\Big)\big|_{\hat{P}^{(3)}_{-3}}.
\end{split}
\end{equation}
Collecting the coefficients of $z_n|z_1|^2$ and $z_n^3$,
respectively, in (\ref{3eq1}), we get $b_{(011)}=0$ and
$b_{(030)}+\lambda_nb_{(011)}=0$. Thus we get
$b_{(030)}=b_{(011)}=0$. Hence
${\Im}B^{(3)}(z,w)\big|_{\hat{P}^{(3)}_{-3}}=0$ implies that
$$
b_{(300)}z_1^3+b_{(210)}z_1^2z_n+b_{(120)}z_1z_n^2+b_{(101)}z_1(|z_n|^2
+\lambda_nz_n^2+z_1^2)=0.
$$
Now it is obvious that $b_{(101)}=b_{(120)}=b_{(300)}=b_{(210)}=0$.
Hence we have get $B^{(3)}=0$. This completes the proof of
(\ref{norm3}).
\end{proof}

\medskip

 {\bf Sub-appendix II}: Now we proceed to prove Theorem \ref{thmm1} for  $n=2$ and $m=3$.

\medskip
{\bf Case I:}  In this case, we assume that $\lambda_n=\lambda_1=0$.
Then (\ref{0005}) has the following form:
\begin{equation}\label{00500}
\begin{split}
\-z_n\Psi_1=\-{z_1}\Psi_n.
\end{split}
\end{equation}
By considering the coefficients of
$z_n^tz_1^{s-1}\-{z_n}^{r+1}\-{z_1}^{h}$ for $t\geq 0,$ $s\geq 1$,
$r\geq 0$ and $h=(m+1)-t-s-r\geq 0$ in (\ref{00500}), we get
\begin{equation}\label{00100}
\begin{split}
s\Psi_{[tsrh]}=(t+1)\Psi_{[(t+1)(s-1)(r+1)(h-1)]}.
\end{split}
\end{equation}
Setting $h=0$ in (\ref{00100}), we get $\Psi_{[tsr0]}=0$ for $s\geq
1$. Combining this with (\ref{00100}), we  inductively get
$\Psi_{[tsrh]}=0$ for $s\geq h+1$. Now, we will apply
(\ref{btsrh00}). Notice that we now have $\xi=\eta=0$.
We thus obtain:
\begin{equation}\label{00200}
\begin{split}
(s+1)\Phi_{[(t-1)(s+1)(r-1)h]}=(t-1)\Phi_{[tsr(h-1)]}\ \text{for}\
s\geq h+1.
\end{split}
\end{equation}
Setting $h=0$ in (\ref{00200}), we get $ \Phi_{[tsr0]}=0\
\text{for}\ s\geq 2.$ Combining this with (\ref{00200}), we
inductively get $ \Phi_{[tsrh]}=0$ for $s\geq h+2$. Together with
(\ref{atsrh00}), we get
\begin{equation}\label{00300}
\begin{split}
(h+1)H_{[(t-1)sr(h+1)]}=(r+1)H_{[t(s-1)(r+1)h]}\ \text{for}\ s\geq
h+2.
\end{split}
\end{equation}
Setting $t=0$, we get $H_{[0srh]}=0$ for $s\geq h+1, \ r\geq 1$.
Then we  inductively get $H_{[tsrh]}=0$ for $s\geq h+1, \ r\geq
t+1$. When $s\geq h+1, r\leq t$,   from (\ref{00300}), we
inductively get $H_{[tsrh]}=\mathcal{F}\{(H_{[t's'r'0]})_{t'\geq
r'}\}$, which is $0$ by our normalization in (\ref{norm30}). Thus we
have proved
\begin{equation}\label{00400}
\begin{split}
H_{[tsrh]}=0\ \text{for}\ s\geq h+1.
\end{split}
\end{equation}
Next we  will prove that $H_{[tsrs]}=0$. Setting $s=h\geq 1$, $t\geq
0$ and $r=-1$ in (\ref{00100}), we get $ \Psi_{[ts0s]}=0\
\text{for}\ t\geq 1. $ Substituting it back to (\ref{00100}), we
inductively get
$$
\Psi_{[tsrs]}=0\ \text{for}\ t\geq r+1.
$$
Substituting  (\ref{btsrh00}) into this equation, we get
\begin{equation}\label{005000}
\begin{split}
(s+1)\Phi_{[(t-1)(s+1)(r-1)s]}=(t-1)\Phi_{[tsr(s-1)]}\ \text{for}\
t\geq r+1.
\end{split}
\end{equation}
Setting $s=0$, we get $\Phi_{[t1r0]}=0$ for $t\geq r+1$.
Substituting this back to (\ref{005000}), we get
$\Phi_{[t(s+1)rs]}=0$ for $t\geq r+1$. Together with
(\ref{atsrh00}), we get
\begin{equation*}
\begin{split}
(s+1)H_{[(t-1)(s+1)r(s+1)]}=(r+1)H_{[ts(r+1)s]}\ \text{for}\ t\geq
r+1.
\end{split}
\end{equation*}
Notice that $H_{[t0r0]}=0$ by our normalization. Hence we
inductively get
\begin{equation}\label{00700}
\begin{split}
H_{[tsrs]}=0\ \text{for}\ t\geq r.
\end{split}
\end{equation}
Since $H_{[tsrh]}=\-{H_{[rhts]}}$, (\ref{00400}) and (\ref{00700})
imply $H\equiv 0$ for the case $\lambda_n=\lambda_1=0$.

\bigskip

{\bf Step II:}  In this step, we assume that $\lambda_n=0$ and
$\lambda_1\neq 0$. Theorem \ref{thmm1} with $m=3$ in this setting is
an immediate consequence of the following lemma:

\begin{lem}\label{lem0100}
  Suppose that $\lambda_n=0$ and $\lambda_1\neq 0$. Assume that there exists an $h_0\geq
  -1$ such that
\begin{equation}\label{01a00}
\Psi_{[tsrh]}=\Phi_{[tsrh]}=0\ \text{for}\ h\leq h_0,\ H_{[tsrh]}=0\
\text{for}\ \max(s,h)\leq h_0+1.
\end{equation}
Then we have
\begin{equation}\label{01c00}
\Psi_{[tsrh]}=\Phi_{[tsrh]}=0\ \text{for}\ h\leq h_0+1,\
H_{[tsrh]}=0\ \text{for}\ \max(s,h)\leq h_0+2.
\end{equation}
\end{lem}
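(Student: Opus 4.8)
\textbf{Proof proposal for Lemma \ref{lem0100}.}

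The plan is to run an induction exactly parallel to the ``Step II'' computation already carried out for Proposition \ref{lem3} in the general case (since $n=2$, $m=3$ is just a truncated instance of that argument), exploiting that $\lambda_n=0$ forces $\xi=0$ in all the recursion formulas. First I would set $\xi=0$ in the master equation (\ref{bequation}) and use the hypothesis (\ref{01a00}) to kill all terms with $h\le h_0$; this collapses (\ref{bequation}) down to a relation of the shape
\begin{equation*}
s\,\Psi_{[(t-1)s(r+1)(h_0+1)]}=(t+3-s)\eta\,\Psi_{[t(s-2)(r+2)(h_0+1)]}+(t+1)\eta^2\,\Psi_{[(t+1)(s-4)(r+3)(h_0+1)]}.
\end{equation*}
Setting $r=-3$ gives $\Psi_{[ts0(h_0+1)]}=0$ for $t\ge 1$, and then a downward induction on $r$ yields $\Psi_{[tsr(h_0+1)]}=0$ whenever $t\ge r+1$. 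This is the first of the three cascading steps: $\Psi\Rightarrow\Phi\Rightarrow H$.

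Next I would feed $\Psi_{[tsr(h_0+1)]}=0$ (for $t\ge r+1$) together with (\ref{01a00}) into (\ref{btsrh}) to obtain a two-term relation $(s+1)\Phi_{[(t-1)(s+1)(r-1)(h_0+1)]}=(t-1)\eta\,\Phi_{[t(s-1)r(h_0+1)]}$ valid for $t\ge r+1$; starting from $r=0$ this gives $\Phi_{[ts0(h_0+1)]}=0$ for $t\ge2$ and inductively $\Phi_{[tsr(h_0+1)]}=0$ for $t\ge r+2$. In particular $\Phi_{[t0r(h_0+1)]}=0$ for $t\ge r+2$, and then (\ref{atsrh}) with $\lambda_n=0$ (so the $\xi$-term drops) plus (\ref{01a00}) yields $H_{[t0r(h_0+2)]}=0$ for $t\ge r+1$; combined with the normalization (\ref{norm30})/(\ref{norm3}) --- i.e.\ $H_{[t0r0]}=0$ when $t\ge r$ --- we get $H_{[t0r(h_0+2)]}=0$ outright. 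Here I would split into the two cases exactly as in the body: \textbf{Case $h_0=-1$}, where I additionally have to handle the ``level-one'' coefficients $H_{[t1r1]}$ (using (\ref{atsrh}) and the normalization $H_{[t0r0]}=0$ to get $H_{[t1r1]}=0$, then conjugation-symmetry $H_{[tsrh]}=\overline{H_{[rhts]}}$), and then push the $\Psi_{[tsr0]}=0$, $\Phi_{[tsr0]}=0$ conclusions up in $s$ via (\ref{beq}) and (\ref{btsrh}); and \textbf{Case $h_0\ge0$}, where the crucial input is the structural formula (\ref{111ind00}): since $H_{[t0r(h_0+2)]}=0$ and $H_{[t1r(h_0+2)]}$ (which is conjugate to an $H_{[\cdot0\cdot1]}$-type term handled by the normalization) vanish, (\ref{111ind00}) forces $H_{[tsr(h_0+2)]}=0$ for all $s\le h_0+2$.

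With $H_{[tsr(h_0+2)]}=0$ for $\max(s,h)\le h_0+2$ in hand, I would close the loop: (\ref{atsrh}) gives $\Phi_{[t0r(h_0+1)]}=\Phi_{[t1r(h_0+1)]}=0$, substituting into (\ref{btsrh}) gives $\Psi_{[t0r(h_0+1)]}=0$, then (\ref{beq}) bootstraps this to $\Psi_{[tsr(h_0+1)]}=0$ for all $s$, and finally (\ref{111}) together with $\Phi_{[t0r(h_0+1)]}=\Phi_{[t1r(h_0+1)]}=0$ bootstraps to $\Phi_{[tsr(h_0+1)]}=0$ for all $s$ --- which is precisely (\ref{01c00}). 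The main obstacle, as in the general proof, is the bookkeeping at the bottom of the cascade: one must be careful that each inductive step in $r$ (and later in $s$) has a genuine base case supplied either by the ``$r=-3$'' or ``$r=0$'' specialization of the recursion or by the normalization (\ref{norm30})/(\ref{norm3}), and that the $t\ge r+1$ versus $t\ge r+2$ index shifts propagate consistently through the three stages; the case distinction $h_0=-1$ versus $h_0\ge0$ exists precisely because at the very first level there are extra low-order coefficients ($H_{[t1r1]}$) not covered by (\ref{111ind00}), and these must be disposed of by hand using the reality relation $H_{[tsrh]}=\overline{H_{[rhts]}}$.
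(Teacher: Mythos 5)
Your proposal tracks the paper's own proof (Lemma \ref{lem01} in \S 5, reproduced as Lemma \ref{lem0100} in the Appendix) essentially step for step: the same cascade $\Psi\Rightarrow\Phi\Rightarrow H$, the same reduction of (\ref{bequation}) at $\xi=0$, the same index ranges $t\ge r+1$, $t\ge r+2$, the same two cases $h_0=-1$ versus $h_0\ge 0$, and the same closing bootstrap via (\ref{beq})/(\ref{btsrh}).

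Two small imprecisions worth flagging. First, in passing from ``$H_{[t0r(h_0+2)]}=0$ for $t\ge r+1$'' to ``$H_{[t0r(h_0+2)]}=0$ outright,'' the parenthetical ``i.e.\ $H_{[t0r0]}=0$ when $t\ge r$'' is not the relevant piece of the normalization: you need the full clause of (\ref{ng}) (or (\ref{norm30})/(\ref{norm3}) in the $m=3$ case) stating $E_{(te_n+J,se_n)}=0$ for $t\ge s$, $J\neq 0$, applied together with the reality $H_{[tsrh]}=\overline{H_{[rhts]}}$ to cover the range $r\ge t$. Second, in the $h_0\ge 0$ case the ``level-one'' coefficients fed into (\ref{111ind00}) are of the form $H_{[t'1(m-t'-2)1]}$, i.e.\ with $s=h=1$, not $H_{[t1r(h_0+2)]}$; for $h_0\ge 0$ these vanish directly from the hypothesis (\ref{01a00}) (since $\max(1,1)\le h_0+1$), not by a conjugation to an $H_{[\cdot 0\cdot 1]}$-type term. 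Neither slip changes the structure or outcome of the argument.
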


Once we have Lemma \ref{lem0100} at our disposal, since
(\ref{01a00}) holds for $h_0=-1$ by our normalization, hence
(\ref{01c00}) holds for $h_0=-1$. Then by an induction,  we see that
(\ref{01c00}) holds for all $h_0\leq m-2$. This  completes the proof
of Theorem \ref{thmm1} in this setting.
\medskip

\begin{proof}[Proof of Lemma \ref{lem0100}]
First, notice that  (\ref{0005}) has the following form:
\begin{equation*}
\begin{split}
&(|z_n|^2+\eta z_1^2)(\-{z_n}\Psi_1-\eta z_1\Psi_n)-\eta
z_1\-{z_n}\Psi+\-{z_1}\mathcal{F}\{\Psi_1,\Psi_n,\Psi\}=0.
\end{split}
\end{equation*}
Collecting the coefficients of
$z_n^tz_1^{s-1}\-{z_n}^{r+3}\-{z_1}^{h_0+1}$ in the above equation
and making use of  the assumptions in Lemma \ref{lem0100}, we have:
\begin{equation*}
\begin{split}
&s\Psi_{[(t-1)s(r+1)(h_0+1)]}+(s-2)\eta\Psi_{[t(s-2)(r+2)(h_0+1)]}-t\eta\Psi_{[t(s-2)(r+2)(h_0+1)]}\\
&-(t+1)\eta^2\Psi_{[(t+1)(s-4)(r+3)(h_0+1)]}-\eta\Psi_{[t(s-2)(r+2)(h_0+1)]}=0.
\end{split}
\end{equation*}
Namely, we have
\begin{equation}\label{01200}
\begin{split}
&s\Psi_{[(t-1)s(r+1)(h_0+1)]}=(t+3-s)\eta\Psi_{[t(s-2)(r+2)(h_0+1)]}
+(t+1)\eta^2\Psi_{[(t+1)(s-4)(r+3)(h_0+1)]}.
\end{split}
\end{equation}
By setting $r=-3$ in (\ref{01200}), we get $\Psi_{[ts0(h_0+1)]}=0$
for $t\geq 1$. Substituting this back to (\ref{01200}), we
inductively get that  $\Psi_{[tsr(h_0+1)]}=0$ for $t\geq r+1$.
Combining this with (\ref{btsrh00}) and the hypothesis,
we obtain
\begin{equation}\label{01400}
\begin{split}
(s+1)\Phi_{[(t-1)(s+1)(r-1)(h_0+1)]}=(t-1)\eta\Phi_{[t(s-1)r(h_0+1)]}\
\text{for}\ t\geq r+1.
\end{split}
\end{equation}
Setting $r=0$ in (\ref{01400}), we get $\Phi_{[ts0(h_0+1)]}=0$ for
$t\geq 2$. Hence we  inductively get  $\Phi_{[tsr(h_0+1)]}=0$ for
$t\geq r+2$. In particular, we have $\Phi_{[t0r(h_0+1)]}=0$ for
$t\geq r+2$. Combining this with (\ref{atsrh00}),the hypothesis and
$\lambda_n=0$, we get $ (h_0+2)H_{[(t-1)0r(h_0+2)]}=0$ for $t\geq
r+2$. Namely, we obtain $ H_{[t0r(h_0+2)]}=0$ for $t\geq r+1$.
Together with our normalization (\ref{norm30}) and the reality of
$H$, we obtain:
\begin{equation}\label{01600}
\begin{split}
H_{[t0r(h_0+2)]}=0.
\end{split}
\end{equation}

{\bf (1)} When $h_0=-1$, setting $s=0$ in (\ref{01400}), we get
$\Phi_{[(t-1)1(r-1)0]}=0$ for $t\geq r+1$. Together with
(\ref{atsrh00}) and (\ref{norm30}), we get  that $
H_{[(t-1)1r1]}=(r+1)H_{[t0(r+1)0]}=0$ for $t\geq r+1$.  Namely, we
obtain $H_{[t1r1]}=0$ for $t\geq r$. By the reality of $H$, we get
for all $t,r$ the following:
\begin{equation}\label{001600}
\begin{split}
H_{[t1r1]}=0.
\end{split}
\end{equation}
From (\ref{atsrh00}), (\ref{01600}) and (\ref{001600}), we obtain
$\Phi_{[t0r0]}=\Phi_{[t1r0]}=0$. Together with (\ref{btsrh00}), we
see that $\Psi_{[t0r0]}=0$.

Setting $h=0$ in (\ref{beq00}) and making use of $\Psi_{[t0r0]}=0$,
we first get $\Psi_{[t1r0]}=\Psi_{[t2r0]}=0$, then
 inductively get $\Psi_{[tsr0]}=0$. Combining this with
(\ref{btsrh00}), we get
\begin{equation}\label{001700}
(s+1)\Phi_{[(t-1)(s+1)(r-1)0]}=(t-1)\eta \Phi_{[t(s-1)r0]},
\end{equation}
Setting $s=0$ in (\ref{001700}), we obtain $\Phi_{[t1r0]}=0$. By an
induction argument, we  get $\Phi_{[tsr0]}=0$. This proves
(\ref{01c00}) for the case $h_0=-1$.  \medskip

{\bf (2)} When $h_0\geq 0$, from (\ref{111ind00}),(\ref{01600}) and
(\ref{001600}), we inductively get $H_{[tsr(h_0+2)]}=0$ for $s\leq
h_0+2$. Combining this with (\ref{atsrh00}) and (\ref{001600}), we
get $\Phi_{[t0r(h_0+1)]}=\Phi_{[t1r(h_0+1)]}=0$. Substituting this
back to (\ref{btsrh00}), we obtain $\Psi_{[t0r(h_0+1)]}=0$. Together
with (\ref{beq00}), we inductively get $\Psi_{[tsr(h_0+1)]}=0$.
Combining this with (\ref{btsrh00}), we obtain
\begin{equation*}
(s+1)\Phi_{[(t-1)(s+1)(r-1)(h_0+1)]}=(t-1)\eta
\Phi_{[t(s-1)r(h_0+1)]}.
\end{equation*}
As in Case I, we inductively get $\Phi_{[tsr(h_0+1)]}=0$. This
proves (\ref{01c00}) for the case $h_0\geq 0$ and  thus completes
the proof of Theorem \ref{thmm1} for the case $\lambda_n=0$ and
$\lambda_1\neq 0$.
\end{proof}

{\bf Case III:} In this case, we assume $\lambda_n\neq 0$ and
$\lambda_1\neq 0$. Considering the coefficients of $z_1^3\-{z_n}^3$,
$z_1^3z_n\-{z_n}^2$, $z_1^3z_n^2\-{z_n}$ and $z_1^3z_n^3$,
respectively, in (\ref{0005}), we get
\begin{align*}
&4\xi\Psi_{[0400]}+2\eta\Psi_{[0220]}-\eta\xi\Psi_{[1210]}-\eta^2\Psi_{[1030]}-\eta\theta
\Psi_{[0220]}=0,                     \\
&4(2\xi^2+1)\Psi_{[0400]}+2\eta(\Psi_{[1210]}+\xi\Psi_{[0220]})
-\eta(2\xi\Psi_{[2200]}                \\
&\hskip 1cm+(1+\xi^2) \Psi_{[1210]})-2\eta^2\Psi_{[2020]}-\eta\theta
\Psi_{[1210]}=0,                     \\
&4(\xi^3+2\xi)\Psi_{[0400]}+2\eta(\Psi_{[2200]}+\xi\Psi_{[1210]})-\eta(2(1+\xi^2)\Psi_{[2200]}     \\
&\hskip 1cm +\xi \Psi_{[1210]})-3\eta^2\Psi_{[3010]}-\eta\theta
\Psi_{[2200]}=0,                \\
&4\xi^2\Psi_{[0400]}+2\eta\xi\Psi_{[2200]}-\eta2\xi\Psi_{[2200]}-4\eta^2\Psi_{[4000]}=0.
\end{align*}
Simplifying the above from the last equation to the first one, we
get
\begin{align}
&4\xi^2\Psi_{[0400]}=4\eta^2\Psi_{[4000]},     \label{n004}      \\
&4(\xi^3+2\xi)\Psi_{[0400]}+\eta\big\{(-2\xi^2-\theta)\Psi_{[2200]}+\xi\Psi_{[1210]}\big\}=
3\eta^2\Psi_{[3010]},                  \label{n003}\\
&4(2\xi^2+1)\Psi_{[0400]}+\eta\big\{-2\xi\Psi_{[2200]}+2\xi\Psi_{[0220]}\big\}
=2\eta^2\Psi_{[2020]},          \label{n002}              \\
&4\xi\Psi_{[0400]}+\eta\big\{-\xi\Psi_{[1210]}+(2-\theta)\Psi_{[0220]}\big\}=\eta^2
\Psi_{[1030]}.       \label{n001}
\end{align}
A direct computation shows that
\begin{equation}\label{3eq3}
\begin{split}
&(2-3\theta)\xi^2-(2-2\theta)\xi(\xi^3+2\xi)+(2-\theta)\xi^2
(2\xi^2+1)-2\xi^3 \xi\\
=&2\xi^2-2\xi(\xi^3+2\xi)+2\xi^2 (2\xi^2+1)-2\xi^3
\xi+\theta(-3\xi^2+2\xi(\xi^3+2\xi)-\xi^2 (2\xi^2+1))=0.
\end{split}
\end{equation}
We also have the following computation:
\begin{equation}\label{3eq004}
\begin{split}
&-(2-2\theta)\xi\big\{(-2\xi^2-\theta)\Psi_{[2200]}+\xi\Psi_{[1210]}\big\}
+(2-\theta)\xi^2
\big\{-2\xi\Psi_{[2200]}+2\xi\Psi_{[0220]}\big\}\\
&-2\xi^3
\big\{-\xi\Psi_{[1210]}+(2-\theta)\Psi_{[0220]}\big\}\\
=&(-\xi)\big\{(2-2\theta)(\theta-2)+2\xi^2(2-\theta)\big\}\Psi_{[2200]}
+\xi^2\big\{-(2-2\theta)+2\xi^2\big\}\Psi_{[1210]}\\
&-\xi^3\big\{-2(2-\theta)+2(2-\theta)\big\}\Psi_{[0220]}\\
=&0.
\end{split}
\end{equation}

Computing
$(2-3\theta)(\ref{n004})-(2-2\theta)\xi(\ref{n003})+(2-\theta)\xi^2(\ref{n002})-2\xi^3(\ref{n001})$
and making use of (\ref{3eq3})-(\ref{3eq004}), we get
\begin{equation}\label{3eq7}
\begin{split}
(2-3\theta)4\Psi_{[4000]}-(2-2\theta)\xi3\Psi_{[3010]}+(2-\theta)\xi^22\Psi_{[2020]}-2\xi^3\Psi_{[1030]}=0.
\end{split}
\end{equation}
Substituting (\ref{btsrh00}) into (\ref{3eq7}), we get
\begin{equation}\label{3eq8}
\begin{split}
&(2-3\theta)4\Psi_{[4000]}-(2-2\theta)\xi3\Psi_{[3010]}+(2-\theta)\xi^22\Psi_{[2020]}-2\xi^3\Psi_{[1030]}\\
=&(2-3\theta)4\xi\Phi_{[2100]}-(2-2\theta)\xi3\big\{(1+\xi^2)\Phi_{[2100]}+\xi\Phi_{[1110]}\big\}\\
&+(2-\theta)\xi^22\big\{\xi\Phi_{[2100]}+(1+\xi^2)\Phi_{[1110]}+\xi\Phi_{[0120]}\big\}\\
&-2\xi^3\big\{\xi\Phi_{[1110]}+(1+\xi^2)\Phi_{[0120]}\big\}\\
=&\xi\big\{4(2-3\theta)-3(2-2\theta)(1+\xi^2)+2\xi^2(2-\theta)\big\}\Phi_{[2100]}\\
&+\xi^2\big\{-3(2-2\theta)+2(2-\theta)(1+\xi^2)-2\xi^2\big\}\Phi_{[1110]}\\
&+\xi^3\big\{2(2-\theta)-2-2\xi^2\big\}\Phi_{[0120]}\\
=&-4\xi\theta^2\Phi_{[2100]}+\xi^22\theta^2\Phi_{[1110]}=-2\xi\theta^2(2\Phi_{[2100]}+(-\xi)\Phi_{[1110]}).
\end{split}
\end{equation}
Since $\theta\not =0$ with the assumption that $\l_n\not
=\frac{1}{2}$.
Hence we get $2\Phi_{[2100]}+(-\xi)\Phi_{[1110]}=0$.
From (\ref{atsrh00}), we get
\begin{equation}\label{3eq9}
\begin{split}
0=2\Phi_{[2100]}+(-\xi)\Phi_{[1110]}=2(H_{[1101]}-H_{[2010]})-\xi\big(\xi
H_{[1101]}+H_{[0111]}-2H_{[1020]}\big).
\end{split}
\end{equation}
 By our normalization
(\ref{norm3}), we have $H_{[1101]}=H_{[0111]}=0$. Thus we get
$2H_{[2010]}-2\xi H_{[1020]}=0$. By the reality of $H$, we obtain:
$$
2(1-\xi){\Re}H_{[2010]}+\sqrt{-1}\cdot2(1+\xi) {\Im}H_{[2010]}=0.
$$
When $\lambda_n\neq 1/2$, then $1-\xi\neq 0$. Hence we get
$H_{[2010]}=0$.

\medskip


Collecting the terms of the form $z_n^t\-{z_n}^{6-t}$ $(0\leq t\leq
6)$ in (\ref{0005}), we get
$$
|w_n|^2\-{w_n}\sum\limits_{t'+r'=3}\Psi_{[t'1r'0]}z_n^{t'}\-{z_n}^{r'}=0.
$$
Thus we get
\begin{equation}\label{new1}
\Psi_{[t1r0]}=0\ \text{for}\ t+r=3.
\end{equation}
Combining this with (\ref{btsrh00}), we get
\begin{equation}\label{3eq900}
\begin{split}
0=&\Psi_{[3100]}+(-\xi)\Psi_{[2110]}+(-\xi)^2\Psi_{[1120]}+(-\xi)^3\Psi_{[0130]}\\
=&2\xi\Phi_{[1200]}-2\eta\Phi_{[3000]}+(-\xi)\big\{2(1+\xi^2)\Phi_{[1200]}+2\xi
\Phi_{[0210]}-3\xi\eta\Phi_{[3000]}-\eta\Phi_{[2010]}\big\}\\
&+(-\xi)^2\big\{2\xi
\Phi_{[1200]}+2(1+\xi^2)\Phi_{[0210]}-2\xi\eta\Phi_{[2010]}\big\}\\
&+(-\xi)^3\big\{2\xi
\Phi_{[0210]}-\xi\eta\Phi_{[1020]}+\eta\Phi_{[0030]}\big\}\\
=&(-2+3\xi^2)\eta\Phi_{[3000]}+(\xi-2\xi^3)\eta\Phi_{[2010]}+\xi^4\eta\Phi_{[1020]}-\xi^3\eta\Phi_{[0030]}\\
=&(1-3\theta)\eta\Phi_{[3000]}+(-\xi)(1-2\theta)\eta\Phi_{[2010]}+\xi^2(1-\theta)\eta\Phi_{[1020]}
+(-\xi)^3\eta\Phi_{[0030]}.
\end{split}
\end{equation}
Substituting (\ref{atsrh00}) into the equation above, we get
\begin{equation}
\begin{split}
& (1-3\theta)\eta H_{[2001]}+(-\xi)(1-2\theta)\eta\big(\xi
H_{[2001]}+H_{[1011]}\big)\\
&+(-\xi)^2(1-\theta)\eta\big(\xi
H_{[1011]}+H_{[0021]}\big)+(-\xi)^3\eta\xi H_{[0021]}=0.
\end{split}
\end{equation}
 By (\ref{norm3}), we have
$H_{[1011]}=H_{[0021]}=0$. Hence we get $-2\theta^2H_{[2001]}=0.$
When $\theta\not =0$ or $\xi\not = \frac{1}{2}$, we conclude that
$H_{[2001]}=0$. Together with $H_{[2010]}$, the normalization in
(\ref{atsrh00}) and the reality of $H$,   we see that $H_{[tsrh]}=0\
\text{for}\ s,h\leq 1. $ Substituting this back to (\ref{atsrh00})
and (\ref{btsrh00}), we get $\Phi_{[t'0r'0]}=\Phi_{[t''1r''0]}=0$
and thus also $\Psi_{[t0r0]}=0$ for $t+r=4$ and $t'+r'=3$,
$t''+r''+1=3$.

Collecting  terms of the form $z_n^t\-{z_1}\cdot\-{z_n}^{5-t}$
$(0\leq t\leq 5)$ in (\ref{0005}), and making use of
$\Psi_{[t'0r'0]}=0$, we get
$$
|w_n|^2\-{w_n}\sum\limits_{t'+r'=3}\Psi_{[t'1r'1]}z_n^{t'}\-{z_n}^{r'}=0.
$$
Thus we get $\Psi_{[t1r1]}=0$.  Combining this with (\ref{btsrh00}),
we get
\begin{equation}
\begin{split}
0=&\Psi_{[2101]}+(-\xi)\Psi_{[1111]}+(-\xi)^2\Psi_{[0121]}\\
=&2\xi\Phi_{[0201]}-\eta\Phi_{[2001]}+(-\xi)\big\{2(1+\xi^2)\Phi_{[0201]}-2\xi\eta\Phi_{[2001]}\big\}\\
&+(-\xi)^2\big\{2\Phi_{[0201]}-\xi\eta\Phi_{[1011]}+\eta\Phi_{[0021]}\big\}\\
=&(-1+2\xi^2)\eta\Phi_{[2001]}-\xi^3\eta\Phi_{[1011]}+\xi^2\eta\Phi_{[0021]}\\
=&(1-2\theta)\eta\Phi_{[2001]}+(-\xi)(1-\theta)\eta\Phi_{[1011]}+\xi^2\eta\Phi_{[0021]}.
\end{split}
\end{equation}
Substituting (\ref{atsrh00}) into this equation, we get
\begin{equation}
\begin{split}
 (1-2\theta)\eta2H_{[1002]}+(-\xi)(1-\theta)\eta\big(2\xi
H_{[1002]}+ 2H_{[0012]}\big)+\xi^2\eta\xi 2 H_{[0012]}=0.
\end{split}
\end{equation}
By (\ref{norm3}), we have $H_{[0012]}=0$. Hence we get
$-\theta^2H_{[1002]}=0$. Since $\theta\not = 0$, we see that
$H_{[1002]}=0$.

By (\ref{new1}), we have $\Psi_{[0130]}=0$. Combining this with
(\ref{btsrh00}), we get
\begin{equation}
\begin{split}
\Psi_{[0130]}=&2\xi\Phi_{[0210]}-(\xi\eta\Phi_{[1020]}-\eta\Phi_{[0030]})\\
=&2\xi(\xi H_{[0201]}-2H_{[0120]})=2\xi^2H_{[0201]}.
\end{split}
\end{equation}
Here, we used the fact that $\Phi_{[1020]}=\Phi_{[0030]}=0$ and
$H_{[tsrh]}=0$ for $s,h\le 1$. Thus we get $H_{[0201]}=0$. Now,
combing the normalization in (\ref{btsrh00}) with $H_{[0201]}=0,
H_{[2010]}=0, H_{[tsrh]}=0$ for $s, h\le 1$, we conclude  that $H
\equiv 0$. This completes the proof of Theorem \ref{thmm1} for the
case of $\lambda_n\neq 0$, $n=2$ and $m=3$.
\bigskip

\vfill \eject

\end{document}